\theoremstyle{plain}
\newtheorem{theorem}{Theorem}
\newtheorem{lemma}[theorem]{Lemma}
\newtheorem{proposition}[theorem]{Proposition}
\theoremstyle{definition}
\newtheorem{definition}[theorem]{Definition}
\newtheorem{example}[theorem]{Example}
\newtheorem{assumption}[theorem]{Assumption}
\theoremstyle{remark}
\newtheorem{remark}[theorem]{Remark}
\newtheorem{construction}[theorem]{Construction}
\newcommand\ol{\overline}
\newcommand\EE{{\mathbb E}}
\newcommand\RR{{\mathbb R}}
\newcommand\ZZ{{\mathbb Z}}
\newcommand\NN{{\mathbb N}}
\newcommand\HH{{\mathbb H}}
\newcommand\si{\sigma}
\newcommand\q{\quad}
\newcommand\Si{\Sigma}
\renewcommand\ell{l}
\newcommand\GT{\mathbb{G}\mathbb{T}}
\newcommand\CC{\mathbb{C}}
\newcommand\bm{\mathbf{m}}
\newcommand\SH{\mathrm{SH}}
\newcounter{mycount}
\numberwithin{equation}{section}
\numberwithin{theorem}{section}
\numberwithin{figure}{section}
\title[Fluctuations of dimer heights on contracting square-hexagon lattices]{Fluctuations of dimer heights on contracting square-hexagon lattices}
\date{}
\author{Zhongyang Li}
\begin{document}
\maketitle

\begin{abstract}We study perfect matchings on the square-hexagon lattice with $1\times n$ periodic edge weights such that the boundary condition is given by either (1) each remaining vertex on the bottom boundary is followed by $(m-1)$ removed vertices; (2) the bottom boundary can be divided into finitely many alternating line segments where all the vertices along each line segment are either removed or remained. In Case (1), we show that under certain homeomorphism from the liquid region to the upper half plane, the height fluctuations converge to the Gaussian free field in the upper half plane. In Case (2), when the edge weights $x_1,\ldots,x_n$ in one period satisfy the condition that $x_{i+1}=O\left(\frac{x_i}{N^{\alpha}}\right)$, where $\alpha>0$ is a constant independent of $N$, we show that the height fluctuations converge to a sum of independent Gaussian free fields.\end{abstract}

\section{Introduction}

A perfect matching, or a dimer configuration on a graph is a subset of edges such that each vertex is incident to exactly one edge in the subset. Dimer configurations appear naturally in statistical physics to model the structure of matter, for example, the perfect matchings on the hexagon lattice is a mathematical model for the molecule structure of graphite. With explicit combinatorial correspondence, the dimer model is also closely related to other lattice models in statistical mechanics, including the Ising model (\cite{ZL12,ZLsp}), the 1-2 model (\cite{ZL14,ZL141,GL15,GL17}) and a general polygon model (\cite{GLPO}). By developing the technique of Kasteleyn, Temperley and Fisher (\cite{Ka61,TF61}), the partition function (weighted sum of configurations) of dimer configurations on a finite plane graph can be expressed explicitly as the determinant or pfaffian of a weighted adjacency matrix; the local statistics can be computed (\cite{Ken01}). By study the spectral curve of the periodic dimer model using algebraic geometry technique, the sharp phase transition result can be established (\cite{KOS,Ken06}). The asymptotics of the rescaled dimer height function on a graph approximating a simply-connected domain can also studied by a variational principle (\cite{CKP,KO}); and also by the asymptotics of certain symmetric functions (see \cite{OR01,AB07,AB11,LP14,LP142,GP15,bg,bk,BL17,Li18}).

The Gaussian free field (GFF) is a high-dimensional time analogue of Brownian motion. The main aim of this paper is to investigate the connection between the height fluctuations of the dimer model on a contracting square-hexagon lattice and the Gaussian free field. It was first shown in \cite{RK00,RK01} that the (non-rescaled) height function for the dimer model with uniform underlying measure on a simply-connected square grid with Temperley boundary condition converge to a GFF in distribution. The result was later proved for the whole-plane isoradial graph (\cite{BdT07}) and the simply-connected isoradial graph with Temperley boundary condition (\cite{Li13}). For boundary conditions other than the Temperley boundary condition, the convergence of height fluctuation for the dimer model with uniform underlying measure on a contracting hexagon lattice to GFF was proved in \cite{LP142}; the corresponding result on a Aztec diamond (contracting square grid) with uniform underlying measure was proved in \cite{bk}, by analyzing the asymptotics of the Schur function in a neighborhood of $(1,1,\ldots,1)$ (\cite{bg16}).

A related model is the dimer model on a contracting square-hexagon lattice whose underlying measure depends on periodically assigned edge weights with period $1\times n$. In \cite{BL17,Li18}, we studied this model by establishing an identity of the partition function of the dimer model on such a graph and the value of the Schur function depending on edge weights; and then analyze the asymptotics of the Schur function in a neighborhood of a generic point $(x_1,\ldots,x_N)$. The law of large numbers for the rescaled height function was proved for two specific boundary conditions on the bottom boundary (1) each remaining vertex on the boundary is followed by $(m-1)$ removed vertices, where $m\geq 2$ is a positive integer; (2) the bottom boundary is divided to alternate line segments with either all vertices removed or all the vertices preserved in each segment. We shall call the first boundary condition the \textbf{uniform boundary condition} and the second boundary condition the \textbf{piecewise boundary condition}. In this paper, we study the non-rescaled height fluctuations for the dimer model on the contracting square-hexagon lattice with the above two boundary conditions, and show that they converge to GFF in certain sense, building on the analysis of the Schur function at a generic point (see \cite{BL17,Li18}) and the techniques to relate fluctuations of particle systems determined by Schur generating functions and GFF  developed in \cite{bg16}.

The organization of the paper is as follows. In Sect.~\ref{bkg}, we introduce the contracting square-hexagon lattice and the main technical tools used in this paper. In Sect.~\ref{unr}, we introduce the uniform boundary conditions and review the limit shape result for the dimer model on a contracting square hexagon lattice with $1\times n$ periodic edge weights and  the uniform boundary conditions. In Sect.~\ref{unc}, we prove that certain statistics constructed from the  dimer model on a contracting square hexagon lattice with $1\times n$ periodic edge weights and  the uniform boundary conditions convergence to Gaussian distribution in the scaling limit. In Sect.~\ref{pbr}, we introduce the piecewise boundary conditions and review the limit shape result for the dimer model on a contracting square hexagon lattice with $1\times n$ periodic edge weights and  the piecewise boundary conditions. In Sect.~\ref{pbc}, we prove that certain statistics contracted from the  dimer model on a contracting square hexagon lattice with $1\times n$ periodic edge weights and  the piece boundary conditions convergence to a sum of finitely many independent Gaussian random variables  in the scaling limit; where the number of independent Gaussian random variables depends on the size of the period $n$. In Sect.~\ref{gffu}, we show that the statistics constructed from the  dimer model on a contracting square hexagon lattice with $1\times n$ periodic edge weights and  the uniform boundary conditions convergence to GFF in the upper half plane, under an homeomorphism from the liquid region to the upper half plane.
In Sect.~\ref{gffp}, we show that the statistics constructed from the  dimer model on a contracting square hexagon lattice with $1\times n$ periodic edge weights and  the piecewise boundary conditions convergence to a sum of $n$ independent GFFs in the upper half plane.

\section{Background}\label{bkg}

In this section, we define a general class graph (the contracting square-hexagon lattice) on which the height fluctuations of the dimer model is studied in this paper. Dimer model on such graphs has been studied in (\cite{BF15,bbccr,BL17,Li18}), and the limit shape result was explicitly established. We also review the main technical tools used in this paper, including the Schur function, the Young diagram, etc.

\subsection{Square-hexagon Lattices}Consider a doubly-infinite binary sequence indexed by integers $\ZZ=\{\ldots,-2,-1,0,1,2,\ldots\}$.
\begin{eqnarray}
\check{c}=(\ldots,c_{-2},c_{-1},c_0,c_1,c_2,\ldots)\in\{0,1\}^{\ZZ}.\label{ca}
\end{eqnarray}

The \textbf{whole-plane square-hexagon lattice} associated with the  sequence $\check{c}$, is a bipartite plane graph $\mathrm{SH}(\check{c})$ defined as follows. Its vertex set is a subset of $\frac{\ZZ}{2}\times \frac{\ZZ}{2} $. Each vertex of $\mathrm{SH}(\check{c})$ is either black or white, and we identify the vertices with points on the plane. 
 For $m\in \ZZ $, the black vertices have $y$-coordinate $m$; while the white vertices have $y$-coordinate $m-\frac{1}{2}$. We will label all the vertices with coordinate $m$ as vertices in the $(2m)$th row, and all the vertices with coordinate $m-\frac{1}{2}$ as vertices in the $(2m-1)$th row. We further require that
 \begin{itemize}
\item  each black vertex in the $(2m)$th row is adjacent to two white vertices in the $(2m+1)$th row; and
\item if $c_m=1$, each white vertex on the $(2m-1)$th row is adjacent to exactly one black vertex in the $(2m)$th row; 
 if $c_m=0$, each white vertex on the $(2m-1)$th row is adjacent to two black vertices in the $(2m)$th row. 
\end{itemize}
 See Figure \ref{lcc}.
 
 \begin{figure}
\subfloat[Structure of $\mathrm{SH}(\check{c})$ between the $(2m)$th row and the $(2m+1)$th row]{\includegraphics[width=.6\textwidth]{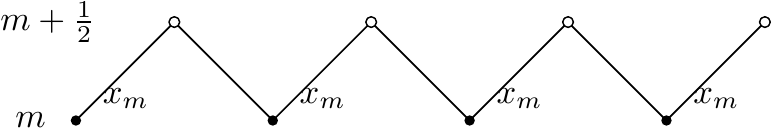}}\\
\subfloat[Structure of $\mathrm{SH}(\check{c})$ between the $(2m-1)$th row and the $(2m)$th row when $c_m=0$]{\includegraphics[width = .6\textwidth]{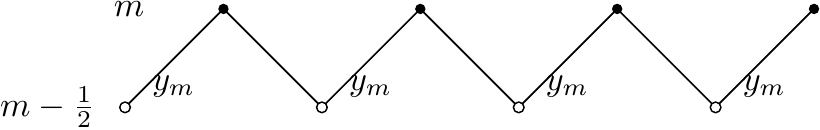}}\\
\subfloat[Structure of $\mathrm{SH}(\check{c})$ between the $(2m-1)$th row and the $(2m)$th row when $c_m=1$]{\includegraphics[width = .55\textwidth]{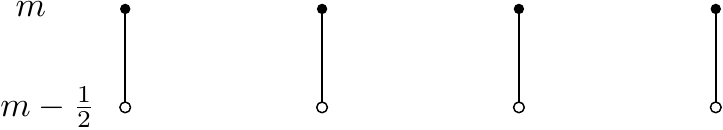}}
\caption{Graph structures of the square-hexagon lattice on the $(2m-1)$th, $(2m)$th, and $(2m+1)$th rows depend on the values of $(c_m)$. Black vertices are along the $(2m)$th row, while white vertices are along the $(2m-1)$th and $(2m+1)$th row.}
\label{lcc}
\end{figure}

Note that for any $\check{c}\in \{0,1\}^{\ZZ}$, the faces of $\SH(\check{c})$ is either a square or a hexagon. if $c_i=0$ for all $i\in\ZZ$, $\SH(\check{c})$ is a square grid; while if $c_i=1$ for all $\SH(\check{c})$ is a  hexagonal lattice. See Figure \ref{fig:SH} for an example of a square-hexagon lattice.

\begin{figure}
\includegraphics{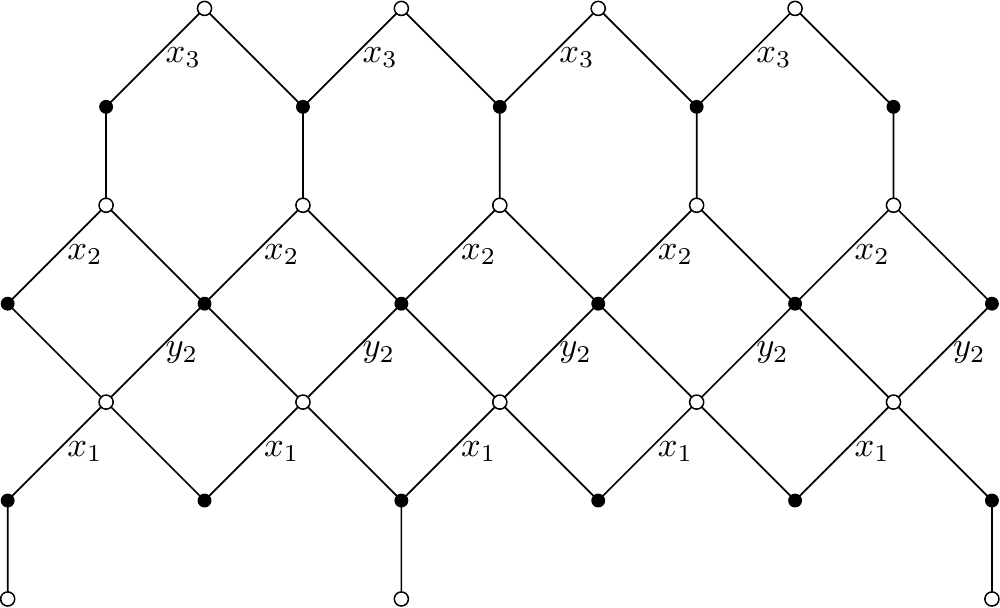}
\caption{Contracting square-hexagon lattice with $N=3$, $m=3$, $\Omega=(1,3,6), (c_1,c_2,c_3)=(1,0,1)$.}
\label{fig:SH}
\end{figure}

We shall assign edge weights to the whole-plane square-hexagon lattice $\SH(\check{c})$ satisfying the following assumption; see Figure \ref{lcc}.

\begin{assumption}\label{apew} For $m\geq 1$, we assign weight $x_m>0$ to each NE-SW edge joining the $(2m)$th row  to the $(2m+1)$th row of $\mathrm{SH}(\check{c})$. We assign weight $y_m>0$ to each NE-SW edge joining the $(2m-1)$th row to the $(2m)$th row of $\mathrm{SH}(\check{c})$, if such an edge exists. We assign weight $1$ to all the other edges. 
\end{assumption}

A \textbf{contracting square-hexagon lattice} is built from a whole-plane square-hexagon lattice as follows:

\begin{definition}\label{dfr}Let $N\in \NN$. Let $\Omega=(\Omega_1,\ldots,\Omega_N)$ be an $N$-tuple of positive integers, such that  $1=\Omega_1<\Omega_2<\cdots<\Omega_{N}$. Set $m=\Omega_N-N$.

 The contracting square-hexagon lattice $\mathcal{R}(\Omega,\check{c})$ is a subgraph of $\mathrm{SH}(\check{c})$ built of $2N$ or $2N+1$ rows.
 The rows of $\mathcal{R}(\Omega,\check{c})$ inductively, starting from the bottom, can be enumerated as follows:
\begin{itemize}
\item The first row consists of vertices $(i,j)$ with $i=\Omega_1-\frac{1}{2},\ldots,\Omega_N-\frac{1}{2}$ and $j=\frac{1}{2}$. We call this row the boundary row of $\mathcal{R}(\Omega,\check{c})$.
\item When $k=2s$, for $s=1,\ldots N$,  the $k$th row consists of vertices $(i,j)$ with $j=\frac{k}{2}$ and incident to at least one vertex in the $(2s-1)th$ row of the whole-plane square-hexagon lattice $\mathrm{SH}(\check{c})$ lying between the leftmost vertex and rightmost vertex of the $(2s-1)$th row of $\mathcal{R}(\Omega,\check{c})$
\item When $k=2s+1$, for $s=1,\ldots N$,  the $k$th row consists of vertices $(i,j)$ with $j=\frac{k}{2}$ and incident to two vertices in the $(2s)$th row of  of $\mathcal{R}(\Omega,\check{c})$.
\end{itemize}
\end{definition}

\subsection{Partitions, Young diagrams and Schur functions}

We denote by $\GT_N$ the set of $N$-tuples $\lambda$ of integers satisfying $\lambda_1\geq \lambda_2\ldots\geq \lambda_N$, and let $\GT_N^+$ be a subset of $\GT_N$ consisting of all the $\lambda$'s in $\GT_N$ such that $\lambda_N\geq 0$. For $\lambda\in \GT_N^+$, Let
\begin{eqnarray*}
|\lambda|:=\sum_{i=1}^{N}\lambda_i.
\end{eqnarray*}
 
 A graphic way to represent a non-negative signature $\mu$ is through its
\emph{Young diagram} $Y_\lambda$, a collection of $|\lambda|$ boxes arranged on
non-increasing rows aligned on the left: with
$\lambda_1$ boxes on the first row, $\lambda_2$ boxes on the second row,\dots $\lambda_N$
boxes on the $N$th row.  Note that elements in $\GT_N^+$ are in bijection with all the Young diagrams with $N$ rows (rows are allowed to have zero length).

\begin{definition}
 Let $Y,W$ be two Young diagrams. We say that $Y\subset W$ \emph{differ by a
 horizontal strip} if the
  collection of boxes in $Z=W\setminus Y$ contains at most one box in every
  column. We say that they \emph{differ by a vertical strip} if $Z$ contains at
  most one box in every row.

  We say that two non-negative signatures $\lambda$ and $\mu$ \emph{interlace}, and
  write $\lambda \prec \mu$ if $Y_\lambda\subset Y_\mu$ differ by a horizontal
  strip. We say they \emph{cointerlace} and write $\lambda\prec'\mu$ if
  $Y_\lambda\subset Y_\mu$ differ by a vertical strip.
\end{definition}

\begin{definition}
Let $\lambda\in\GT_N^+$ be a partition of length $N$. We define the
counting measure $m(\lambda)$ corresponding to $\lambda$ as follows.

\begin{equation}
m(\lambda)=\frac{1}{N}\sum_{i=1}^{N}\delta\left(\frac{\lambda_i+N-i}{N}\right).\label{ml}
\end{equation}
\end{definition}

\begin{definition}Let $\lambda\in \GT_N$. The rational Schur function is
\begin{eqnarray*}
s_{\lambda}(u_1,\ldots,u_N)=\frac{\det_{i,j=1,\ldots,N}(u_i^{\lambda_j+N-j})}{\prod_{1\leq i<j\leq N}(u_i-u_j)}
\end{eqnarray*}
\end{definition}

\subsection{Dimer model}

\begin{definition}\label{dfvl}A dimer configuration, or a perfect matching $M$ of a contracting square-hexagon lattice $\mathcal{R}(\Omega,\check{c})$ is a set of edges $((i_1,j_1),(i_2,j_2))$, such that each vertex of $\mathcal{R}(\Omega,\check{c})$ belongs to an unique edge in $M$.
 The set of perfect matchings of $\mathcal{R}(\Omega,\check{c})$ is denoted by
  $\mathcal{M}(\Omega,\check{a})$.
\end{definition}

\begin{definition}
  Let $M\in \mathcal{M}(\Omega,\check{c})$ be a perfect matching of
  $\mathcal{R}(\Omega,\check{c})$. We call an edge $e=((i_1,j_1),(i_2,j_2))\in
  M$ a \emph{$V$-edge} if $\max\{j_1,j_2\}\in\NN$ (i.e.\@ if its higher
  extremity is black) and we call it a \emph{$\Lambda$-edge}
  otherwise. In other words, the edges going upwards starting from an odd row
  are $V$-edges and those ones starting from an even row are $\Lambda$-edges. We
  also call the corresponding vertices-$(i_1,j_1)$ and $(i_2,j_2)$ $V$-vertices
  and $\Lambda$-vertices accordingly.
\end{definition}

\begin{definition} The partition function of the dimer model of a finite graph
$G$ with edge weights $(w_e)_{e\in E(G)}$ is given by
\begin{equation*}
Z=\sum_{M\in \mathcal{M}}\prod_{e\in M}w_e,
\end{equation*}
where $\mathcal{M}$ is the set of all perfect matchings of $G$. The
Boltzmann dimer probability measure on $M$ induced by the weights $w$ is
thus defined by declaring that probability of a perfect matching is equal to
\begin{equation*}
  \frac{1}{Z}\prod_{e\in M} w_e.
\end{equation*}
\end{definition}

We shall associate to each perfect matching in $\mathcal{M}(\Omega,\check{a})$
a sequence of non-negative signatures, one for each row of the graph.

\begin{construction}\label{ct}To the boundary row $\Omega=(\Omega_1<\cdots<\Omega_N)$ of a contracting
square-hexagon lattice is naturally associated a non-negative signature $\omega$
of length $N$ by:
\begin{equation*}
  \omega=(\Omega_N-N,\dotsc,\Omega_1-1).
\end{equation*}

  Let $j\in\{2,\dots,2N+1\}$. Assume that the $j$th row of
  $\mathcal{R}(\Omega,\check{a})$ has $n_j$ V-vertices and $m_j$
  $\Lambda$-vertices. The a dimer configuration at the $j$th row of $\mathcal{R}(\Omega,\check{a})$
  corresponds to a signature $\mu\in \GT_{n_j}^+$, such that 
  \begin{itemize}
  \item $\mu=(\mu_1,\ldots,\mu_{n_j})$;
  \item We label all the $V$-vertices on the $j$th row by the 1st $V$-vertex, the 2nd $V$-vertex, \ldots, the $n_j$th $V$-vertex,
  such that the $1$st $V$-vertex is the rightmost $V$-vertex on the $j$th row.
  for $1\leq k\leq n_j$, $\mu_k$ is the number of $\Lambda$-vertices to the left of the $k$th $V$-vertex.
  \end{itemize}
\end{construction}

Then we have 
\begin{theorem}[\cite{BL17} Theorem 2.13]
  For given $\Omega$, $\check{c}$, let $\omega$ be the signature associated to
  $\Omega$. Then the construction~\ref{ct} defines a
  bijection between the set of perfect matchings
  $\mathcal{M}(\Omega,\check{c})$ and the set $S(\omega,\check{c})$ of
  sequences of non-negative signatures
  \begin{equation*}
    \{(\mu^{(N)},\nu^{(N)},\dots,\mu^{(1)}, \nu^{(1)}, \mu^{(0)}\}
  \end{equation*}
  where the signatures satisfy the following properties:
  \begin{itemize}
    \item All the parts of $\mu^{(0)}$ are equal to 0;
    \item The signature $\mu^{(N)}$ is equal to $\omega$;
    \item For $0\leq i\leq N$, $\mu^{(i)}\in \GT_i^{+}$.
    \item The signatures satisfy the following (co)interlacement relations:
      \begin{equation*}
        \mu^{(N)} \prec' \nu^{(N)} \succ \mu^{(N-1)} \prec' \cdots
        \mu^{(1)} \prec' \nu^{(1)} \succ \mu^{(0)}.
      \end{equation*}
  \end{itemize}
  Moreover, if $a_m=1$, then $\mu^{(N+1-k)}=\nu^{(N+1-k)}$.
  \label{myb}
\end{theorem}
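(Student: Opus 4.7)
The plan is to prove the bijection by constructing both directions explicitly and verifying they are mutually inverse. The core of the argument is a strip-by-strip local analysis that translates the dimer condition in each horizontal strip of $\mathcal{R}(\Omega,\check{c})$ into a specific (co)interlacement relation between the associated pair of signatures.

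For the forward map, given $M\in\mathcal{M}(\Omega,\check{c})$, I would apply Construction~\ref{ct} to each row $j\in\{1,\ldots,2N+1\}$, producing a signature $\mu^{(k)}$ or $\nu^{(k)}$. The identification $\mu^{(N)}=\omega$ on the boundary row is a direct unravelling of the definitions: the $k$th $V$-vertex (counted from the right) sits at horizontal coordinate $\Omega_{N+1-k}-\frac{1}{2}$, and the number of $\Lambda$-positions strictly to its left (which equals the number of gaps in $\{\Omega_1,\ldots,\Omega_N\}$ below $\Omega_{N+1-k}$) is $\Omega_{N+1-k}-(N+1-k)=\omega_k$. The condition on $\mu^{(0)}$ is either vacuous or follows because the topmost strip contains no $\Lambda$-vertices below its $V$-vertex. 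A direct count of vertices per row (from Definition~\ref{dfr}) establishes $\mu^{(i)}\in\GT_i^+$ for each $i$.

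The main step is the verification of interlacements, for which I would perform a case analysis on each of the three local geometries shown in Figure~\ref{lcc}. In a strip between rows $2m-1$ and $2m$, an enumeration of the finitely many local matching patterns consistent with covering each vertex exactly once shows that the Young-diagram difference $Y_{\nu^{(k)}}\setminus Y_{\mu^{(k)}}$ is a vertical strip, yielding $\mu^{(k)}\prec'\nu^{(k)}$; in the hexagonal case $c_m=1$, the rigidity that each white vertex has a unique black neighbor below collapses this strip to empty, forcing $\mu^{(N+1-k)}=\nu^{(N+1-k)}$, which is exactly the \emph{Moreover} clause. In the strip between rows $2m$ and $2m+1$, each black vertex has exactly two white neighbors in the row above, and the corresponding enumeration produces a horizontal-strip difference $Y_{\nu^{(k)}}\setminus Y_{\mu^{(k-1)}}$, giving $\nu^{(k)}\succ\mu^{(k-1)}$.

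For the inverse map, given any sequence in $S(\omega,\check{c})$, I would reconstruct the matching inductively, starting from the boundary row (determined by $\mu^{(N)}=\omega$) and building one strip at a time upward. Each step is the inverse of the local bijection from the previous paragraph: a valid (co)interlacement uniquely prescribes the positions of the $V$- and $\Lambda$-edges crossing the strip, and the remaining horizontal edges are then forced by the perfect-matching constraint. Both compositions are the identity by construction. The principal obstacle I foresee is the middle step: a clean, uniform accounting for why each local geometry in Figure~\ref{lcc} produces precisely a horizontal or vertical strip of the claimed flavor, while tracking consistently the orientation conventions of Construction~\ref{ct} (labeling $V$-vertices from the right, counting $\Lambda$-vertices strictly to the left) through the induction.
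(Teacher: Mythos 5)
Your proposal is correct in outline and follows essentially the same route as the proof this paper relies on: the paper itself gives no argument here (it simply cites Theorem 2.13 of \cite{BL17}), and the proof there is exactly this row-by-row strip analysis, matching each local geometry of Figure \ref{lcc} to a horizontal-strip (interlacing) or vertical-strip (cointerlacing) transition, with the hexagonal case $c_m=1$ forcing $\mu=\nu$, and inverting strip by strip. The only slip is directional and harmless: when $c_m=1$ each white vertex of row $2m-1$ has its unique black neighbor \emph{above} (in row $2m$), not below.
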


The following proposition, proved in \cite{BL17}, shows that the partition function of dimer configurations on a contracting square-hexagon lattice can be computed by a Schur function depending on the boundary condition and the edge weights. Therefore it opens the door for investigating the asymptotics of periodic dimer model on a contracting square-hexagon lattice by studying the corresponding Schur functions.

\begin{proposition}\label{p16}Let $\mathcal{R}(\Omega,\check{c})$ be a contracting square-hexagon lattice built from a whole-plane square-hexagon lattice $\SH(\check{c})$ with edge weights $\{x_i,y_i,1\}_{1\leq i\leq N}$ assigned as in Assumption \ref{apew}.
Let
\begin{eqnarray*}
I_2=\{i|i\in\{1,2,\ldots,N\}, c_i=0\}
\end{eqnarray*}
Then 
 the partition function for perfect matchings on $\mathcal{R}(\Omega,\check{c})$ is given by
\begin{eqnarray*}
Z=\left[\prod_{i\in I_2}\Gamma_i\right] s_{\omega}(x_{1},\ldots,x_{N})
\end{eqnarray*}
where $\omega$ is the $N$-tuple corresponding to the boundary row of $\mathcal{R}(\Omega,\check{c})$, and $\Gamma_i$ is defined by

\begin{eqnarray}
\Gamma_i=\prod_{t=i+1}^{N}\left(1+y_{i}x_{t}\right).\label{gi}
\end{eqnarray}

\end{proposition}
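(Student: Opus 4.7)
The strategy is to combine the bijection of Theorem~\ref{myb} between perfect matchings of $\mathcal{R}(\Omega,\check{c})$ and sequences of (co)interlacing signatures with classical Schur function identities. First I would write
\[
Z \;=\; \sum_{(\mu^{(N)},\nu^{(N)},\ldots,\mu^{(0)})\in S(\omega,\check{c})}\;\prod_{e\in M}w_{e},
\]
and apply Assumption~\ref{apew} together with the identification of NE-SW edge counts with strip sizes to factor the weight of $M$ across layers. Each cointerlacing step $\mu^{(k)}\prec'\nu^{(k)}$ contributes $y_{k}^{|\nu^{(k)}|-|\mu^{(k)}|}$, counting the $y_{k}$-weighted NE-SW edges between rows $2k-1$ and $2k$ (present only when $c_{k}=0$), and each interlacing step $\nu^{(k)}\succ\mu^{(k-1)}$ contributes $x_{k}^{|\nu^{(k)}|-|\mu^{(k-1)}|}$, counting the $x_{k}$-weighted NE-SW edges between rows $2k$ and $2k+1$. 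When $c_{k}=1$ the cointerlacing step is trivial because $\nu^{(k)}=\mu^{(k)}$.

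Next I would proceed by induction on $N$, stripping off the contribution of one layer at a time. For layers with $c_{k}=1$, the branching rule
\[
s_{\lambda}(x_{1},\ldots,x_{k}) \;=\; \sum_{\mu\prec\lambda}x_{k}^{|\lambda|-|\mu|}\,s_{\mu}(x_{1},\ldots,x_{k-1})
\]
absorbs the $x_{k}$-factors summed over the interlacing exactly. For layers with $c_{k}=0$, one must additionally sum out the auxiliary signature $\nu^{(k)}$ weighted by $y_{k}$, and this is where the factor $\Gamma_{k}=\prod_{t=k+1}^{N}(1+y_{k}x_{t})$ appears. Once every layer has been processed, what remains is the Gelfand--Tsetlin sum defining $s_{\omega}(x_{1},\ldots,x_{N})$, multiplied by the accumulated product $\prod_{i\in I_{2}}\Gamma_{i}$. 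As a sanity check, when $c_{i}=1$ for all $i$ one has $I_{2}=\emptyset$ and the bijection reduces to the classical GT-pattern description of Schur polynomials, in agreement with the claim.

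The principal obstacle is the emergence of $\Gamma_{k}$ with product range $t\in\{k+1,\ldots,N\}$ rather than $t\in\{1,\ldots,k\}$: a naive local application of the dual Pieri rule $e_{r}(x_{1},\ldots,x_{k})\,s_{\mu}=\sum_{\mu\prec'\lambda,\,|\lambda|-|\mu|=r}s_{\lambda}$ would couple $y_{k}$ to $x_{1},\ldots,x_{k}$ rather than to $x_{k+1},\ldots,x_{N}$. The resolution is that the summation over $\nu^{(k)}$ cannot be carried out locally in isolation; one must either commute the cointerlacing step past the subsequent horizontal-strip transitions using the commutativity of the underlying row-transfer operators, or interpret $Z$ as a signed sum over systems of non-intersecting lattice paths and invoke the Lindstr\"om--Gessel--Viennot lemma, in which the $(1+y_{i}x_{t})$ factors appear naturally as entries of the transfer matrix between successive black layers. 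Verifying the precise product range and the cancellation of extraneous terms is the main combinatorial task.
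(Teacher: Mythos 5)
Your plan — the bijection of Theorem~\ref{myb}, factoring the weight of a matching into powers of $x$'s over horizontal strips and powers of $y$'s over vertical strips, and then summing layer by layer with the branching rule and the dual Pieri rule — is exactly the argument behind the cited result (the paper itself only refers to \cite{BL17} for the proof). The genuine gap is that you stop at the decisive step and defer it as ``the main combinatorial task'': identifying which variables the $y$-sum couples to. Your framing of that difficulty also rests on an indexing slip: the pair $(\mu^{(k)},\nu^{(k)})$ sits at rows $2(N-k)+1$ and $2(N-k)+2$, so the vertical-strip step $\mu^{(k)}\prec'\nu^{(k)}$ carries the weight $y_{N-k+1}$ and the horizontal-strip step $\nu^{(k)}\succ\mu^{(k-1)}$ carries $x_{N-k+1}$ (see the final sentence of Theorem~\ref{myb}), not $y_k$ and $x_k$. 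Once this is straightened out, no commutation of transfer operators and no Lindstr\"om--Gessel--Viennot argument is needed: sum from the top, where $\mu^{(0)}=\emptyset$, downward. The branching rule then introduces the variables in the order $x_N,x_{N-1},\ldots,x_1$, so at the stage where the $y_i$-weighted vertical-strip sum is performed the accumulated Schur functions are in the variables $x_i,\ldots,x_N$, and the identity $\sum_{\nu\supset\mu,\ \nu/\mu\ \mathrm{vertical\ strip}}y_i^{|\nu|-|\mu|}s_\nu(x_i,\ldots,x_N)=\prod_j(1+y_ix_j)\,s_\mu(x_i,\ldots,x_N)$ yields the $\Gamma$-type factor in one stroke, after which the remaining branchings reconstitute $s_\omega(x_1,\ldots,x_N)$. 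Neither of the detours you propose is carried out, and neither is necessary; as written, the proof is incomplete at exactly the point where the claimed product has to be produced.

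A further warning about the ``precise product range'' you leave open: carrying out the bookkeeping under Assumption~\ref{apew} exactly as stated (the $y_i$-edges lie between rows $2i-1$ and $2i$, i.e.\ \emph{below} the $x_i$-edges), the factor one obtains is $\prod_{t=i}^{N}(1+y_ix_t)$, including $t=i$, rather than the range $t\geq i+1$ in (\ref{gi}). A minimal check makes this visible: for $N=1$, $\Omega=(1)$, $c_1=0$ the graph is a single quadrilateral with $Z=1+x_1y_1$, while $s_{\omega}(x_1)=s_{(0)}(x_1)=1$; consistently, it is the full product $\prod_{j=1}^{N}(1+y_1x_j)$ (with $j=1$ included) that normalizes the second-row measure in Lemma~\ref{lmm212}. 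So the range in (\ref{gi}) corresponds to a shifted convention for which rows carry the $y_i$-weights (or is an off-by-one), and a correct execution of your own strategy will not literally reproduce (\ref{gi}) under Assumption~\ref{apew} as printed. In particular, do not try to force the range $t\geq k+1$ by commuting row-transfer operators; instead fix the level-to-weight indexing, run the top-down summation, and sanity-check the resulting product on a small lattice.
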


\begin{proof}See Proposition 2.18 of \cite{BL17}.
\end{proof}

\section{Uniform Boundary Conditions}\label{unr}

In this section, we introduce the uniform boundary conditions on the bottom boundary of a contracting square hexagon lattice, and review the limit shape result of the dimer model on such a lattice.

Consider a contracting square-hexagon lattice $\mathcal{R}(\Omega,\check{c})$ with edge weights assigned as in Assumption \ref{apew}. Suppose that the configuration on the bottom row corresponds to the following signature
\begin{eqnarray}
\lambda(N)=((m-1)(N-1),(m-1)(N-2),\ldots,(m-1),0),\label{ln}
\end{eqnarray}
where $m\geq 1$ is a positive integer. More precisely, each remaining vertex on the boundary row is followed by $(m-1)$ removed vertices in the boundary row; the left most vertex and the rightmost vertex on the boundary row are remaining vertices. In Example  1.3.7 of \cite{IGM15}, the Schur function of such a signature is computed explicitly as follows
\begin{eqnarray}
s_{\lambda(N)}(x_1,\ldots,x_N)=\prod_{1\leq i<j\leq N}\frac{x_i^m-x_j^m}{x_i-x_j}.\label{sm}
\end{eqnarray}

\begin{definition}\label{df33}Let
\begin{eqnarray}
X=(x_1,x_2,\ldots,x_N)\in\RR^N\label{xn}
\end{eqnarray}
Let $\rho_N$ be a probability measure on $\GT_N$. The the \textbf{Schur generating function} with respect to $\rho_N$, $X$ is given by
\begin{eqnarray*}
\mathcal{S}_{\rho_N,X}(u_1,\ldots,u_N)=\sum_{\lambda\in \GT_N}\rho_N(\lambda)\frac{s_{\lambda}(u_1,\ldots,u_N)}{s_{\lambda}(x_1,\ldots,x_N)}
\end{eqnarray*}
\end{definition}

For a positive integer $s$, let $\ol{s}=s\mod n$. We make the following assumption on edge weights.

\begin{assumption}\label{pw}Assume that the edge weights $x_i$ ($1\leq i\leq N$), $y_j$ ($j\in I_2$) changes periodically with period $n$; i.e.
\begin{eqnarray}
x_{\ol{i}}=x_i\label{px};\\
y_{\ol{j}}=y_j\label{py}
\end{eqnarray}
for $1\leq i\leq n$.
\end{assumption}

\begin{lemma}\label{lmm212}Let $x_i>0 (1\leq i\leq N)$, $y_j (j\in I_2)$ be edge weights of a contracting square- hexagon lattice $\mathcal{R}(\Omega,\check{c})$ satisfying  Assumptions \ref{apew} and \ref{pw}.
Let 
\begin{eqnarray*}
X^{(N-t)}&=&(x_{\ol{t+1}},\ldots,x_{\ol{N}}),\\
Y^{(t)}&=&(x_{\ol{1}},\ldots,x_{\ol{t}}) 
\end{eqnarray*}
for each integer $t$ satisfying $0\leq t\leq N-1$, where $x_{i}>0\ (1\leq i\leq n)$ are weights of NE-SW edges joining the $(2i)$th row to the $(2i+1)$th row of the contracting square-hexagon lattice, see Figure \ref{fig:SH}. Let $\lambda(N)$ be the partition corresponding to the configuration on the boundary row; and let $\rho^k$ be the probability measure on $\GT_{N-t}^+$ which is the distribution of partitions corresponding to the dimer configuration on the $k$th row of vertices of $\mathcal{R}(\Omega,\check{c})$, counting from the bottom. Then
we have
\begin{eqnarray*}
\mathcal{S}_{\rho^k,X^{(N-t)}}(u_1,\ldots,u_{N-t})&=&\frac{s_{\lambda(N)}\left(u_1,\ldots,u_{N-t},Y^{(t)}\right)}{s_{\lambda(N)}(X^{(N)})}\prod_{i\in\{1,\ldots,t\}\cap I_2}\prod_{j=1}^{N-t}\left(\frac{1+y_{\ol{i}}u_j}{1+y_{\ol{i}}x_{\ol{t+j}}}\right),
\end{eqnarray*}
if $k=2t+1$, for $t=0,1,\ldots,N-1$

Moreover,
\begin{eqnarray*}
\mathcal{S}_{\rho_{N-t},X^{(N-t)}}(u_1,\ldots,u_{N-t})&=&\frac{s_{\lambda(N)}\left(u_1,\ldots,u_{N-t},Y^{(t)}\right)}{s_{\lambda(N)}(X^{(N)})}\prod_{i\in\{1,\ldots,t+1\}\cap I_2}\prod_{j=1}^{N-t}\left(\frac{1+y_{\ol{i}}u_j}{1+y_{\ol{i}}x_{\ol{t+j}}}\right),
\end{eqnarray*}
for $k=2t+2,\ t=0,1,\ldots,N-1$.
\end{lemma}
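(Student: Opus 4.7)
The plan is to decompose the row-$k$ marginal using the bijection of Theorem~\ref{myb}, compute the relevant partition functions via Proposition~\ref{p16}, and combine them through a Schur-function identity derived from an auxiliary \emph{extended lattice}. By the bijection, conditioning on the signature at row $k$ being $\mu$ factors each sequence of signatures into an independent bottom and top segment, so
\begin{equation*}
\rho^{k}(\mu)=\frac{Z_{\mathrm{bot}}(\mu)\,Z_{\mathrm{top}}(\mu)}{Z_{\mathrm{total}}},\qquad Z_{\mathrm{total}}=\prod_{i\in I_{2}}\Gamma_{i}\cdot s_{\lambda(N)}(X^{(N)}).
\end{equation*}
Applied to the top sub-lattice, Proposition~\ref{p16} yields $Z_{\mathrm{top}}(\mu)=\Gamma^{\mathrm{top}}\cdot s_{\mu}(X^{(N-t)})$, where $\Gamma^{\mathrm{top}}$ is the product of $\Gamma_{i}$ over an index subset of $I_{2}$ depending on the parity of $k$.

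The essential ingredient is a closed form for $\sum_{\mu}Z_{\mathrm{bot}}(\mu)\,s_{\mu}(u_{1},\ldots,u_{N-t})$. I would obtain it by introducing an extended lattice with the same underlying graph as $\mathcal{R}(\Omega,\check{c})$ but with the top $x$-weights replaced by the formal variables $u_{1},\ldots,u_{N-t}$ and the top $y$-weights set to zero. Since Proposition~\ref{p16} is a polynomial identity in the weights, it applies to the extended lattice and gives an explicit expression for $Z_{\mathrm{ext}}$ as a product of $(1+y_{i}x_{t'})$ and $(1+y_{i}u_{j})$ factors times $s_{\lambda(N)}(Y^{(t)},u_{1},\ldots,u_{N-t})$. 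Simultaneously, zeroing the top $y$-weights forces every vertical-strip transition in the top segment to be trivial, so that segment collapses to a chain of horizontal-strip transitions whose weighted sum is exactly $s_{\mu}(u_{1},\ldots,u_{N-t})$ by the standard Schur-function expansion; combining the two expressions yields $Z_{\mathrm{ext}}=\sum_{\mu}Z_{\mathrm{bot}}(\mu)\,s_{\mu}(u_{1},\ldots,u_{N-t})$.

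Plugging this identity into the definition of $\mathcal{S}_{\rho^{k},X^{(N-t)}}$, the Schur factors $s_{\mu}(X^{(N-t)})$ introduced by $Z_{\mathrm{top}}$ cancel those in the denominator of the Schur generating function. The residual ratio $\prod_{i\in I_{2}}\Gamma_{i}/\Gamma^{\mathrm{top}}$ reduces to a product of $\Gamma_{i}$ over $\{1,\ldots,t\}\cap I_{2}$ in the odd case and over $\{1,\ldots,t+1\}\cap I_{2}$ in the even case; splitting each such $\Gamma_{i}$ into its part indexed by $t'\leq t$ and its part indexed by $t'>t$ cancels the former against the corresponding below-cut factors in the closed form for $Z_{\mathrm{ext}}$ and produces exactly the claimed ratio $\prod_{j}(1+y_{\overline{i}}u_{j})/(1+y_{\overline{i}}x_{\overline{t+j}})$.

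The main technical obstacle is the careful bookkeeping of which $y$- and $x$-edges fall into each segment, together with identifying the correct form of $\Gamma^{\mathrm{top}}$ in each parity. The difference between the odd case $k=2t+1$ and the even case $k=2t+2$ manifests precisely through the additional index $i=t+1$ appearing in the final product in the even case: this reflects the fact that the $y_{t+1}$-edges connecting rows $2t+1$ and $2t+2$ lie in the bottom segment when $k=2t+2$ but in the top segment when $k=2t+1$, so that the residual $\Gamma$-product picks up one extra factor indexed by $i=t+1$.
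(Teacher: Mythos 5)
Your proposal is correct, but it is organized differently from the argument the paper relies on (the paper simply cites Lemma 3.17 of \cite{BL17}, whose proof propagates the Schur generating function row by row: starting from $\mathcal{S}_{\rho_N}=s_{\lambda(N)}(u_1,\ldots,u_N)/s_{\lambda(N)}(X^{(N)})$ at the deterministic bottom row, each passage of an $x$-row specializes one variable $u\mapsto x$ via the branching rule, and each passage of a $y$-row with $i\in I_2$ multiplies the generating function by $\prod_j\frac{1+y_iu_j}{1+y_ix_j}$ via the dual Pieri identity, so the formula follows by induction on $t$). You instead make a single global cut at row $k$: the Gibbs factorization $\rho^k(\mu)\propto Z_{\mathrm{bot}}(\mu)Z_{\mathrm{top}}(\mu)$, Proposition \ref{p16} applied to the top sub-lattice to cancel $s_\mu(X^{(N-t)})$, and Proposition \ref{p16} applied once more to an ``extended lattice'' (top $x$-weights replaced by $u_1,\ldots,u_{N-t}$, top $y$-weights set to $0$) to evaluate $\sum_\mu Z_{\mathrm{bot}}(\mu)s_\mu(u_1,\ldots,u_{N-t})$ in closed form, the zero $y$-weights collapsing the top segment to a Gelfand--Tsetlin chain whose weighted sum is $s_\mu(u_1,\ldots,u_{N-t})$. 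Both routes ultimately rest on the same two Schur identities (branching rule and the $\prod_j(1+yu_j)$ generating function for vertical strips), but your version replaces the induction by two applications of the partition-function formula, which is a clean and legitimate alternative; the substitution of formal variables and zero weights is justified because Proposition \ref{p16} is a polynomial identity in the edge weights. Two bookkeeping points you should make explicit in a full write-up: (i) for $k=2t+2$ the cut is at a black row, so the sub-lattice above it is not literally a contracting square-hexagon lattice in the sense of Definition \ref{dfr} (whose boundary row is white); you need one extra branching-rule step $\sum_{\mu\prec\nu}x_{t+1}^{|\nu|-|\mu|}s_\mu(x_{t+2},\ldots,x_N)=s_\nu(x_{t+1},\ldots,x_N)$ before Proposition \ref{p16} applies, and correspondingly the $y_{t+1}$-edges stay in the bottom segment, which is exactly what produces the extra index $i=t+1$ you identified; (ii) the cancellation at the end uses the periodicity assumption (\ref{px}), (\ref{py}) to match $x_{t'}$ with $x_{\ol{t+j}}$ in the claimed ratio.
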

\begin{proof}See Lemma 3.17 of \cite{BL17}.
\end{proof}

\begin{lemma}\label{l33}Let $\kappa\in(0,1)$. Let $\mathcal{R}(\Omega,\check{c})$ be a contracting square-hexagon lattice.
Let $\{\lambda(\lfloor1-\kappa)N\rfloor)\}_{N\in \NN}$ be a sequence of partitions corresponding to dimer configurations on the $[2(N-\lfloor (1-\kappa )N\rfloor)+1]$th row of $\mathcal{R}(\Omega,\check{c})$, counting from the bottom. Let $X$ be an $N$-tuple of integers given by (\ref{xn}), which are also edge weights of $\mathcal{R}(\Omega,\check{c})$ satisfying (\ref{px}). Let $\rho_{\lfloor (1-\kappa )N\rfloor}:=\rho^{2(N-\lfloor (1-\kappa )N)\rfloor)+1}$ be a probability measure on $\GT_{\lfloor (1-\kappa )N\rfloor}^+$.  Note that $\rho_{N}:=\delta_{\lambda(N)}$ is the distribution of partitions corresponding to the dimer configurations on the bottom row, in which $\lambda(N)$ has probability 1 to occur, while any other configuration has probability 0 to occur. Let $\mathcal{S}_{\rho_{\lfloor(1-\kappa)N\rfloor},X}(u_1,\ldots,u_{\lfloor(1-\kappa)N\rfloor})$ be the Schur generating function corresponding to $\rho_{\lfloor(1-\kappa)N\rfloor}$ and $X$.
  Then we have
\begin{enumerate}
\item Assume $1\leq i\leq n$, then
\begin{eqnarray*}
&&\lim_{N\rightarrow\infty}\left.\frac{1}{(1-\kappa)N}\frac{\partial\log \mathcal{S}_{\rho_{(1-\kappa)N},X}(u_1,\ldots,u_{\lfloor(1-\kappa)N\rfloor})}{\partial u_i}\right|_{(u_1,\ldots,u_{\lfloor(1-\kappa)N\rfloor})=(x_{1+N-\lfloor(1-\kappa) N\rfloor},\ldots,x_{N})}\\
&=& H_i(X,Y,\kappa);
\end{eqnarray*}
where
\begin{eqnarray*}
H_i(X,Y,\kappa)&=&\frac{1}{(1-\kappa)n}\left\{\left[\sum_{j\in\{1,2,\ldots,n\},j\neq i}\left(\frac{mx_{i}^{m-1}}{x_{i}^m-x_j^m}-\frac{1}{x_{i}-x_j}\right)\right]+\frac{m-1}{2x_{i}}\right\}\\&&+\frac{\kappa}{(1-\kappa)n}\sum_{j\in\{1,2,\ldots,n\}\cap I_2}\frac{y_j}{1+y_j x_{i}}
\end{eqnarray*}
\item Assume $1\leq i,j\leq \lfloor(1-\kappa)N\rfloor$ and $i\neq j$. Then
\begin{eqnarray*}
\left.\lim_{N\rightarrow\infty}\frac{\partial^2\log \mathcal{S}_{\lfloor\rho_{(1-\kappa)N}\rfloor,X}(u_1,\ldots,u_{\lfloor(1-\kappa)N\rfloor})}{\partial u_i\partial u_j}\right|_{(u_1,\ldots,u_{\lfloor(1-\kappa)N\rfloor})=(x_{1+N-\lfloor(1-\kappa) N\rfloor},\ldots,x_{N})}=G(x_i,x_j);
\end{eqnarray*}
where
\begin{eqnarray*}
G(x_i,x_j)=
&=&\left\{\begin{array}{cc}\frac{m^2x_i^{m-1}x_j^{m-1}}{(x_i^m-x_j^m)^2}-\frac{1}{(x_i-x_j)^2}&\mathrm{if}\ x_i\neq x_j\\0&\mathrm{otherwise}\end{array}\right.
\end{eqnarray*}
\item Assume $i,j,k\in\{1,2,\ldots,n\}$ are three distinct integers, then
\begin{eqnarray*}
\left.\lim_{N\rightarrow\infty}\frac{\partial^3\log \mathcal{S}_{\rho_{\lfloor(1-\kappa)N\rfloor},X}(u_1,\ldots,u_{\lfloor(1-\kappa)N\rfloor})}{\partial u_i\partial u_j\partial u_k}\right|_{(u_1,\ldots,u_N)=(x_1,\ldots,x_N)}=0.
\end{eqnarray*}
\end{enumerate}
\end{lemma}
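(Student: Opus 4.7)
The plan is to rest the entire computation on the explicit factorization of the Schur generating function provided by Lemma \ref{lmm212}, combined with the closed form (\ref{sm}) for $s_{\lambda(N)}$. Setting $t:=N-\lfloor(1-\kappa)N\rfloor$, Lemma \ref{lmm212} gives
\begin{equation*}
\log\mathcal{S}_{\rho_{\lfloor(1-\kappa)N\rfloor},X}(u)=\log s_{\lambda(N)}(u_1,\ldots,u_{N-t},Y^{(t)})+\sum_{k\in\{1,\ldots,t\}\cap I_2}\sum_{j=1}^{N-t}\log(1+y_{\ol k}u_j)+C(X,Y),
\end{equation*}
where $C(X,Y)$ is independent of $u$ and hence is killed by any $u$-derivative. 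By (\ref{sm}),
\begin{equation*}
\log s_{\lambda(N)}(u_1,\ldots,u_{N-t},Y^{(t)})=\sum_{1\le a<b\le N-t}\log\frac{u_a^m-u_b^m}{u_a-u_b}+\sum_{a=1}^{N-t}\sum_{b=1}^{t}\log\frac{u_a^m-x_{\ol b}^m}{u_a-x_{\ol b}}+\text{const}.
\end{equation*}
Every summand is a smooth function of the $u$-variables (the ratio extends smoothly across the diagonal as the polynomial $\sum_{k=0}^{m-1}u^k v^{m-1-k}$), so derivatives and the diagonal limits are well defined.

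For part (1), I would differentiate once to get $\partial_{u_i}\log\mathcal{S}$ as an explicit sum, then evaluate at $u_j=x_{\ol{j+t}}$. Setting $\iota=\ol{i+t}$, I would partition the summation indices according to their residue class mod $n$. Each class $s\in\{1,\ldots,n\}$ contributes $\sim(N-t)/n$ terms from the first sum and $\sim t/n$ terms from the second sum, totalling $\sim N/n$ terms per class; for $s\ne\iota$ each such term equals $\frac{m x_\iota^{m-1}}{x_\iota^m-x_s^m}-\frac{1}{x_\iota-x_s}$, while for $s=\iota$ (the only delicate point) the evaluation is indeterminate and must be computed as the diagonal limit
\begin{equation*}
\lim_{u\to v}\left[\frac{m u^{m-1}}{u^m-v^m}-\frac{1}{u-v}\right]=\frac{m-1}{2v},
\end{equation*}
obtained by Taylor expansion of $u^m$ about $v$. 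Dividing by $(1-\kappa)N$ turns the combinatorial prefactor $N/n$ into $1/[(1-\kappa)n]$ and yields the first bracket in $H_i$. The derivative of the $\log(1+y_{\ol k}u_j)$ terms gives $\sum_{k\in\{1,\ldots,t\}\cap I_2}\frac{y_{\ol k}}{1+y_{\ol k}x_\iota}$, and an identical counting argument using $t/N\to\kappa$ produces the second contribution $\frac{\kappa}{(1-\kappa)n}\sum_{s\in\{1,\ldots,n\}\cap I_2}\frac{y_s}{1+y_s x_\iota}$.

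Parts (2) and (3) are considerably easier. For the mixed second derivative with $i\ne j$, both the $y$-product and every diagonal ($a=b$) term in the Schur sum drop out, and only the single summand $\log\frac{u_i^m-u_j^m}{u_i-u_j}$ contributes. A direct differentiation gives
\begin{equation*}
\partial_{u_i}\partial_{u_j}\log\frac{u_i^m-u_j^m}{u_i-u_j}=\frac{m^2 u_i^{m-1}u_j^{m-1}}{(u_i^m-u_j^m)^2}-\frac{1}{(u_i-u_j)^2},
\end{equation*}
which is already independent of $N$, so the limit is trivial when $x_i\ne x_j$; the $x_i=x_j$ convention is adopted merely to assign a finite symbol in the degenerate case. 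For part (3), note that every summand in $\log\mathcal{S}$ depends on at most two of the variables $u_1,\ldots,u_{N-t}$; consequently for three distinct indices $i,j,k$ the third mixed derivative $\partial_{u_i}\partial_{u_j}\partial_{u_k}\log\mathcal{S}$ vanishes identically (in particular uniformly in $N$), and the limit is zero.

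The main obstacle, and really the only one that requires care, is the first-derivative calculation, specifically the bookkeeping when grouping terms by residue class mod $n$ and the diagonal limit that produces the $\frac{m-1}{2x_i}$ contribution. Once that is set up correctly the $o(N)$ corrections from the floor function and from the $-1$ in $\frac{N-t}{n}-1$ are absorbed, the combinatorial factor $N/n$ combines cleanly with the division by $(1-\kappa)N$, and the stated expression for $H_i(X,Y,\kappa)$ falls out.
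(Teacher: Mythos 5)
Your proposal is correct and follows exactly the route the paper itself takes: its one-line proof invokes Lemma \ref{lmm212}, the closed form (\ref{sm}) and Definition \ref{df33} ``by explicit computations,'' which is precisely your factorization of $\log\mathcal{S}$, term-by-term differentiation, residue-class counting with $(N-t)/N\to 1-\kappa$, $t/N\to\kappa$, and the diagonal limit $(m-1)/(2x_i)$. No further comment is needed.
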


\begin{proof}Applying Lemma \ref{lmm212}, (\ref{sm}) and Definition \ref{df33}, and by explicit computations.
\end{proof}

\begin{remark}\label{rm25}Lemma \ref{l33} still holds if we define $\rho_{\lfloor (1-\kappa )N\rfloor}:=\rho^{2(N-\lfloor (1-\kappa )N)\rfloor)+2}$ and $\{\lambda(\lfloor1-\kappa)N\rfloor)\}_{N\in \NN}$ be a sequence of signatures corresponding to dimer configurations on the $[2(N-\lfloor (1-\kappa )N)\rfloor)+2]$th row of $\mathcal{R}(\Omega,\check{c})$.
\end{remark}

\begin{proposition}\label{plm}Let $\mathcal{R}(\Omega(N),\check{c})$ be a contracting square hexagon lattice with the configuration at the bottom boundary given by 
\begin{eqnarray*}
\Omega(N)=(1,m+1,2m+1,\ldots,(N-1)m+1)
\end{eqnarray*}
Assume also that the edge weights are assigned as in Assumption \ref{apew} (see  Figure \ref{fig:SH} for an example) and periodically with period $n$; i.e. the edge weights satisfy (\ref{px}) and (\ref{py}).
Let
\begin{eqnarray*}
F_{\kappa,m}(z)=\frac{\kappa z}{n(1-\kappa)}\sum_{i\in\{1,2,\ldots,n\}\cap I_2}\frac{y_i}{1+y_iz}+\sum_{j=1}^{n}\frac{z}{n(z-x_{j})}+\frac{z}{n(1-\kappa)}\sum_{j=1}^{n}\left(\frac{mz^{m-1}}{z^m-x_j^m}-\frac{1}{z-x_j}\right)
\end{eqnarray*}
Let $\rho_N^k$ be the measure on the configurations of the $k$th row, and let $\kappa\in (0,1)$, such that $k=[2\kappa N]$, Then the corresponding counting measure $m(\rho_N^k)$ converges to $\mathbf{m}^{\kappa}$ in probability as $N\rightarrow\infty$, and the moments of $\mathbf{m}^{\kappa}$ is given by
\begin{eqnarray}
\int_{\RR}y^{p}\textbf{m}^{\kappa}(dy)=\sum_{i=1}^{n}\frac{1}{2(p+1)\pi \mathbf{i}}\oint_{x_{t+i}}\frac{dz}{z}\left[ F_{\kappa,m}(z)\right]^{p+1}\label{mtl}
\end{eqnarray}

\end{proposition}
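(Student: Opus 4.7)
The plan is to apply the framework of \cite{bg16}, which turns asymptotic information on the Schur generating function into a law of large numbers (together with a contour-integral moment formula) for the associated counting measure of signatures. Lemma~\ref{lmm212} writes $\mathcal{S}_{\rho_{\lfloor(1-\kappa)N\rfloor},X^{(N-t)}}$ explicitly as the ratio of Schur functions $s_{\lambda(N)}(u_1,\ldots,u_{N-t},Y^{(t)})/s_{\lambda(N)}(X^{(N)})$ times a product, so that---combined with the closed form (\ref{sm})---differentiating its logarithm at the point $(x_{t+1},\ldots,x_N)$ yields exactly the three asymptotics recorded in Lemma~\ref{l33}: a first-derivative limit $H_i(X,Y,\kappa)$ depending on the index $i$ only through $x_i$ (by $n$-periodicity), a bounded mixed second derivative $G(x_i,x_j)$, and a vanishing mixed third derivative.

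First, I would deduce convergence in probability of $m(\rho_N^k)$ to a deterministic measure $\mathbf{m}^\kappa$. Following \cite{bg16}, the expected normalized power sum $\EE\int y^p\,m(\rho_N^k)(dy)$ is produced by a symmetric differential operator of order $p$ (a suitable combination of $u_i\partial_{u_i}$) applied to $\log \mathcal{S}$ and then evaluated at $u_i=x_{t+i}$. The bounded second derivative (and the vanishing of the third) controls the variance of the corresponding random variable so that it tends to $0$; since the support of $m(\rho_N^k)$ is uniformly bounded by the geometry of the boundary partition, tightness together with this moment concentration upgrades to convergence in probability.

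Second, I would derive the contour-integral representation (\ref{mtl}). By Cauchy's formula, the operator $\sum_i u_i\partial_{u_i}$ acting on $\log\mathcal{S}$ can be rewritten as a contour integral in $z$ enclosing the evaluation points $x_{t+i}$. The empirical measure of $\{x_{t+1},\ldots,x_N\}$ is, up to negligible error, $\frac{1}{n}\sum_{j=1}^n\delta_{x_j}$ (each $x_j$ appearing with multiplicity $\approx (1-\kappa)N/n$), producing the term $\sum_{j=1}^n\frac{z}{n(z-x_j)}$ of $F_{\kappa,m}$; translating the first-derivative limit $H_i$ into a function of $z$ (formally replacing $x_i$ by $z$ and multiplying by $z$) yields the remaining two terms $\frac{\kappa z}{n(1-\kappa)}\sum_{j\in I_2\cap[n]}\frac{y_j}{1+y_jz}$ and $\frac{z}{n(1-\kappa)}\sum_{j=1}^n\left[\frac{mz^{m-1}}{z^m-x_j^m}-\frac{1}{z-x_j}\right]$. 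A small contour around each $x_{t+i}$ then picks out the order-$(p+1)$ pole of $[F_{\kappa,m}(z)]^{p+1}/z$, and summing over $i=1,\ldots,n$ gives the right-hand side of (\ref{mtl}).

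The main obstacle is the bookkeeping forced by the $n$-periodicity of the weights: many evaluation variables share the common value $x_j$, so the asymptotic derivative formulas of Lemma~\ref{l33} (which are stated only for distinct indices) must be assembled into the correct residue structure at the clustered points. A subtler technical check is that the combination $\frac{mz^{m-1}}{z^m-x_j^m}-\frac{1}{z-x_j}$ is regular at $z=x_j$ with limiting value $\frac{m-1}{2x_j}$, and this regular piece has to exactly reproduce the stray $\frac{m-1}{2x_i}$ term appearing in $H_i$---a matching that must be verified in order for the residue computation to yield (\ref{mtl}).
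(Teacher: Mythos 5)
Your proposal is correct in outline and follows essentially the same route as the paper, which itself only cites Section 8 of \cite{BL17}: there the limit shape is obtained exactly by combining Lemma \ref{lmm212} with the asymptotics of Lemma \ref{l33} in the Schur-generating-function framework of \cite{bg,bg16} (first-derivative limits give the moments, second/third-derivative bounds give concentration, and the clustered evaluation points $x_{\ol{t+1}},\ldots,x_{\ol N}$ produce the sum of residues at $x_1,\ldots,x_n$). Your observation that the removable singularity of $\frac{mz^{m-1}}{z^m-x_j^m}-\frac{1}{z-x_j}$ at $z=x_j$ accounts for the $\frac{m-1}{2x_i}$ term in $H_i$ is precisely the matching needed, so no substantive gap remains beyond the routine bookkeeping you already flag.
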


\begin{proof}See Section 8 of \cite{BL17}.
\end{proof}

We can compute the Stieltjes transform of the limit measure $\mathbf{m}^{\kappa}$ when $x$ is in neighborhood of infinity by
\begin{eqnarray*}
\sum_{j=0}^{\infty}\frac{\int_{\RR}y^{j}\textbf{m}^{\kappa}(dy)}{x^{j+1}}=-\sum_{i=1}^{n}\frac{1}{2\pi\mathbf{i}}\oint_{x_{t+i}}\frac{dz}{z}\mathrm{log}\left(1-\frac{F_{\kappa,m}(z)}{x}\right)
\end{eqnarray*}
Integrating by parts we have
\begin{eqnarray*}
\sum_{j=0}^{\infty}\frac{\int_{\RR}y^{j}\textbf{m}^{\kappa}(dy)}{x^{j+1}}=\sum_{i=1}^{n}\frac{1}{2\pi\mathbf{i}}\oint_{x_{t+i}}\log(z)\frac{\partial_z\left(1-\frac{F_{\kappa,m}(z)}{x}\right)}{\left(1-\frac{F_{\kappa,m}(z)}{x}\right)}dz
\end{eqnarray*}

The integrand has poles at roots of 
\begin{eqnarray}
F_{\kappa,m}(z)=x.\label{fmz}
\end{eqnarray}

\begin{definition}
  \label{df41}Let $\mathcal{R}$ be the rescaled square-hexagon lattice, i.e. $\mathcal{R}=\frac{1}{N}\mathcal{R}(\Omega,\check{c})$, with coordinates $(\chi,\kappa)$.
  Let $\mathcal{L}$ be the set of $(\chi,\kappa)$ inside $\mathcal{R}$ such that
  the density $d\mathbf{m}^{\kappa}\left(\frac{\chi}{1-\kappa}\right)$ is not
  equal to 0 or 1. Then $\mathcal{L}$ is called the \emph{liquid region}. Its boundary
  $\partial \mathcal{L}$ is called the \emph{frozen boundary}.
\end{definition}

\section{Central Limit Theorem for Uniform Boundary Conditions} \label{unc}

In this section, we construct certain statistics from the (random) dimer configuration on a contracting square hexagon lattice with uniform boundary conditions, and show that the converge in distribution to Gaussian random variables in the scaling limit. The main theorem proved in this section is Theorem \ref{gff1}.

Let $X$ be given by (\ref{xn}), and let $V_N(X)$ be the Vandermonde determinant, i.e.
\begin{eqnarray*}
V_N(X)=\prod_{1\leq i<j\leq N}(x_j-x_i)
\end{eqnarray*}

\begin{proposition}\label{pn41}Let $\rho_N$ be a probability measure on $\GT_N$, and let $\lambda\in\GT_N$. Let
\begin{eqnarray*}
U=(u_1,u_2,\ldots,u_N)\in \CC^N.
\end{eqnarray*}
 Then
\begin{eqnarray}
\mathbf{E}\sum_{i=1}^{N}(\lambda_i+N-i)^k:&=&\sum_{\lambda\in\GT_N}\rho_N(\lambda)\sum_{i=1}^N(\lambda_i+N-i)^k\notag\\
&=&\left.\frac{1}{V_N(U)}\sum_{i=1}^{N}(u_i\partial_i)^k V_N(U) \mathcal{S}_{\rho_N,X}(U)\right|_{U=X};\label{e1}
\end{eqnarray}
and
\begin{eqnarray}
&&\mathbf{E}\left(\sum_{i=1}^{N}(\lambda_i+N-i)^k\sum_{j=1}^{N}(\lambda_j+N-j)^l\right)\notag\\
&=&\left.\frac{1}{V_N(U)}\sum_{i=1}^N(u_i\partial_i)^k\sum_{j=1}^{N}(u_j\partial_j)^l V_N(U) \mathcal{S}_{\rho_N,X}(U)\right|_{U=X}\label{e2}
\end{eqnarray}
\end{proposition}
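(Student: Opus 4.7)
The plan is to reduce both identities to a single eigenvalue observation: the operator $\sum_{i=1}^N(u_i\partial_i)^k$ acts on the antisymmetrized polynomial $V_N(U)s_\lambda(U) = \det_{1\le i,j\le N}(u_i^{\lambda_j+N-j})$ as multiplication by the scalar $\sum_{i=1}^N(\lambda_i+N-i)^k$. To see this, I would expand the determinant as $\sum_{\sigma\in S_N}\mathrm{sgn}(\sigma)\prod_i u_i^{\lambda_{\sigma(i)}+N-\sigma(i)}$, note that $(u_i\partial_i)^k$ pulls down the factor $(\lambda_{\sigma(i)}+N-\sigma(i))^k$ from the $i$th monomial, and sum over $i$. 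The resulting scalar $\sum_{i=1}^N(\lambda_{\sigma(i)}+N-\sigma(i))^k$ is $\sigma$-independent by reindexing $j=\sigma(i)$, so it factors out of the sum over $S_N$ and the determinant reassembles.

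For (\ref{e1}), I would substitute the definition of the Schur generating function to write $V_N(U)\mathcal{S}_{\rho_N,X}(U)=\sum_{\lambda\in\GT_N}\rho_N(\lambda)V_N(U)s_\lambda(U)/s_\lambda(X)$, apply $\sum_i(u_i\partial_i)^k$ termwise using the eigenvalue identity, multiply by $1/V_N(U)$, and set $U=X$. The surviving prefactor $V_N(X)s_\lambda(X)/[V_N(X)s_\lambda(X)]=1$, leaving exactly $\sum_\lambda\rho_N(\lambda)\sum_i(\lambda_i+N-i)^k$, which is the expectation on the left.

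For the joint moment (\ref{e2}), I would iterate the argument. After the inner operator $\sum_j(u_j\partial_j)^l$ acts, each summand $V_N(U)s_\lambda(U)/s_\lambda(X)$ carries an extra $U$-independent scalar $\sum_j(\lambda_j+N-j)^l$. Applying $\sum_i(u_i\partial_i)^k$ is then a repetition of the one-variable computation on the same antisymmetric lift and appends the independent scalar $\sum_i(\lambda_i+N-i)^k$, since the first scalar commutes with the second differential operator. Dividing by $V_N(U)$ at $U=X$ and summing over $\lambda$ with weight $\rho_N(\lambda)$ recovers the mixed moment.

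No serious obstacle arises. The only subtlety, and the reason the statement conjugates by the Vandermonde $V_N(U)$ rather than acting directly on $\mathcal{S}_{\rho_N,X}$, is that $\sum_i(u_i\partial_i)^k$ is diagonal on $\det(u_i^{\lambda_j+N-j})$ but not on $s_\lambda(U)$ itself; this is because the correct eigenvalue involves the shifted exponents $\lambda_j+N-j$ rather than $\lambda_j$, which only appear explicitly after antisymmetrization. Once the conjugation by $V_N$ is set up, both identities follow from the same short computation.
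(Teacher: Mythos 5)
Your proposal is correct and follows essentially the same route as the paper: the paper's proof invokes the identity $\frac{1}{V_N(U)}\sum_{i=1}^{N}(u_i\partial_i)^k V_N(U) s_{\lambda}(U)=\bigl(\sum_{i=1}^{N}(\lambda_i+N-i)^k\bigr) s_{\lambda}(U)$ (cited from Proposition 4.3 of \cite{bg}), divides by $s_\lambda(X)$, averages over $\lambda$ with respect to $\rho_N$, evaluates at $U=X$, and iterates for the mixed moment, exactly as you do. The only difference is that you supply a direct proof of that key eigenvalue identity via the Leibniz expansion of $\det(u_i^{\lambda_j+N-j})$ rather than citing it, which is a correct and self-contained substitute.
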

\begin{proof}Let $\lambda\in \GT_N$, and let $s_{\lambda}$ be the Schur function with respect to $\lambda$. By Proposition 4.3 of \cite{bg}, we have
\begin{eqnarray}
\frac{1}{V_N(U)}\sum_{i=1}^{N}(u_i\partial_i)^k V_N(U) s_{\lambda}(U)=\sum_{i=1}^{N}(\lambda_i+N-1)^k s_{\lambda}(U).\label{ds}
\end{eqnarray}
Dividing by $s_{\lambda}(X)$ to both sides of (\ref{ds}), then taking expectations for $\lambda$ with respect to the distribution $\rho_N$; then evaluate at $U=X$, we obtain (\ref{e1}). The expression (\ref{e2}) can be obtained similarly by performing the above process twice.
\end{proof}

Let $f(x_1,\ldots,x_r)$ be a function of $r$ variables. Define
\begin{eqnarray*}
\mathrm{Sym}_{x_1,\ldots,x_r}f(x_1,\ldots,x_r)=\frac{1}{r!}\sum_{\sigma\in \Si_r} f(x_{\sigma(1)},x_{\sigma(2)},\ldots,x_{\sigma(r)}),
\end{eqnarray*}
where $\Si_r$ is the symmetric group of $r$ elements.

For an integer $l>0$, and $t\in(0,1]$, let
\begin{eqnarray}
U_t&=&(u_1,u_2,\ldots,u_{\lfloor tN\rfloor});\label{ut}\\
X_{t}&=&(x_{N-\lfloor tN\rfloor+1},\ldots, x_N).\label{xt}
\end{eqnarray}
and
\begin{eqnarray}
\mathcal{F}_{(l,t)}(U_t)=\frac{1}{\mathcal{S}_{\rho_{\lfloor tN\rfloor},X_t}(U_t)V_{\lfloor tN\rfloor}(U_t)}\sum_{i=1}^{\lfloor tN\rfloor}(u_i\partial_i)^l V_{\lfloor tN\rfloor}(U_t)\mathcal{S}_{\rho_{\lfloor tN\rfloor},X_t}(U_t);\label{flu}
\end{eqnarray}
where $\rho_{\lfloor tN\rfloor}$ is a probability measure on $\GT_{\lfloor tN\rfloor}$.

In order to analyze the asymptotics, we first introduce the following technical lemma.

\begin{lemma}\label{l552}Let $f(z)$ be a complex analytic function in a neighborhood of $1$ and let $r$ be a positive integer. Then
\begin{eqnarray*}
\mathrm{Sym}_{z_1,\ldots,z_{r+1}}\left.\left(\frac{f(z_1)}{(z_1-z_2)\ldots(z_1-z_{r+1})}\right)\right|_{(z_1,\ldots,z_{r+1})=(1,\ldots,1)}=\left.\frac{1}{(r+1)!}\frac{\partial^r f(z)}{\partial z^r}\right|_{z=1}.
\end{eqnarray*}
\end{lemma}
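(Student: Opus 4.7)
The plan is to recognize the symmetrization as a divided difference of $f$ and then pass to the confluent limit via Cauchy's integral formula. First, I would rewrite the sum over $\Si_{r+1}$ by grouping permutations according to the value of $\sigma(1)$. Since the denominator $(z_{\sigma(1)}-z_{\sigma(2)})\cdots(z_{\sigma(1)}-z_{\sigma(r+1)})=\prod_{k\neq \sigma(1)}(z_{\sigma(1)}-z_k)$ depends only on $\sigma(1)$, and there are exactly $r!$ permutations with each fixed value of $\sigma(1)$, the symmetrization collapses to
\[
\mathrm{Sym}_{z_1,\ldots,z_{r+1}}\frac{f(z_1)}{(z_1-z_2)\cdots(z_1-z_{r+1})}=\frac{1}{r+1}\sum_{i=1}^{r+1}\frac{f(z_i)}{\prod_{k\neq i}(z_i-z_k)}.
\]
The right-hand side is the classical divided difference $f[z_1,\ldots,z_{r+1}]$.

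Next, I would invoke its contour-integral representation. Pick a small positively oriented loop $C$ around $1$ contained in the disk of analyticity of $f$, and restrict $(z_1,\ldots,z_{r+1})$ to a polydisk about $(1,\ldots,1)$ small enough that all $z_k$ lie inside $C$. When the $z_k$ are pairwise distinct, the residue theorem gives
\[
\sum_{i=1}^{r+1}\frac{f(z_i)}{\prod_{k\neq i}(z_i-z_k)}=\frac{1}{2\pi\mathbf{i}}\oint_C\frac{f(z)\,dz}{\prod_{k=1}^{r+1}(z-z_k)}.
\]
The integral on the right is jointly holomorphic in $(z_1,\ldots,z_{r+1})$ throughout the polydisk, so the identity extends by continuity to the full diagonal. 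Setting $z_1=\cdots=z_{r+1}=1$ reduces it to $\frac{1}{2\pi\mathbf{i}}\oint_C\frac{f(z)\,dz}{(z-1)^{r+1}}=\frac{f^{(r)}(1)}{r!}$ by Cauchy's formula for the $r$th derivative. Combined with the prefactor $\frac{1}{r+1}$ from the first step, this yields the claimed value $\frac{f^{(r)}(1)}{(r+1)!}$.

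There is no serious obstacle. The only minor subtlety is justifying that the naive Sym-expression, which term-by-term has apparent poles on each diagonal $z_i=z_j$, in fact extends analytically to the full diagonal $z_1=\cdots=z_{r+1}=1$; this is built into the contour-integral reformulation (equivalently, into the classical theory of confluent divided differences) and so requires no separate argument.
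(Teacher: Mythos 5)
Your proof is correct. The paper itself contains no argument for this lemma --- it simply cites Lemma 5.5 of \cite{bg} --- so your write-up supplies a complete, self-contained proof where the paper defers to a reference. Both steps check out: grouping the $(r+1)!$ permutations according to $\sigma(1)$ (the denominator $\prod_{k\neq\sigma(1)}(z_{\sigma(1)}-z_k)$ indeed depends only on $\sigma(1)$, and each value of $\sigma(1)$ occurs $r!$ times) collapses the symmetrization to $\frac{1}{r+1}\sum_{i=1}^{r+1}\frac{f(z_i)}{\prod_{k\neq i}(z_i-z_k)}$, i.e.\ $\frac{1}{r+1}$ times the $r$th divided difference; and the Hermite-type contour representation
\begin{eqnarray*}
\sum_{i=1}^{r+1}\frac{f(z_i)}{\prod_{k\neq i}(z_i-z_k)}=\frac{1}{2\pi\mathbf{i}}\oint_C\frac{f(z)\,dz}{\prod_{k=1}^{r+1}(z-z_k)}
\end{eqnarray*}
is exactly the right device to make sense of the evaluation on the diagonal, since the right-hand side is jointly holomorphic in the $z_k$ inside $C$ and reduces at $z_1=\cdots=z_{r+1}=1$ to $\frac{f^{(r)}(1)}{r!}$ by Cauchy's formula, giving $\frac{f^{(r)}(1)}{(r+1)!}$ after the $\frac{1}{r+1}$ prefactor. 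Your closing remark correctly identifies the only subtlety (the term-by-term poles on the diagonals), and the contour reformulation does dispose of it, since it exhibits the symmetrized expression as the restriction of a function analytic in a full polydisk around $(1,\ldots,1)$.
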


\begin{proof}See Lemma 5.5 of \cite{bg}.
\end{proof}

\begin{proposition}\label{p36}Assume the assumption of Lemma \ref{l33} holds. We use the notation $\partial_i$ to denote $\frac{\partial}{\partial u_i}$. Then we have
\begin{enumerate}
\item the functions $\mathcal{F}_{(l,t)}(U_t)|_{U_t=X_t}$ have $N$-degree at most $l+1$;
\item for $1\leq i\leq N$, the functions $\partial_i\mathcal{F}_{(l,t)}(U_t)|_{U_t=X_t}$ have $N$-degree at most $l$; moreover
\begin{eqnarray*}
&&\partial_i\mathcal{F}_{(l,t)}(U_t)|_{U_t=X_t}=\partial_i\left[\sum_{r=0}^{l}\left(\begin{array}{c}l\\r\end{array}\right)(r+1)!\right.\\
&&\times\sum_{\{a_1,\ldots,a_{r+1}\}\subset\{1,2,\ldots,\lfloor{tN}\rfloor\}}\left.\left.\mathrm{Sym}_{a_1,\ldots,a_{r+1}}\left(\frac{u_{a_1}^l(\partial_{a_1}[\log S_{\rho_{\lfloor tN\rfloor},X_t}])^{l-r}}{(u_{a_1}-u_{a_2})\ldots(u_{a_1}-u_{a_{r+1}})}\right)\right]\right|_{U_t=X_t}+T_{(l,t)}(U_t)|_{U_t=X_t}
\end{eqnarray*}
where $T_{(l,t)}(X_t)$ has $N$-degree less than $l$.
\item for any  $1\leq i, j\leq N$ and $i\neq j$, the functions $\partial_i\partial_j \mathcal{F}_{(l,t)}(U_t)|_{U_t=X_t}$ have $N$-degree at most $l-1$.
\end{enumerate}
\end{proposition}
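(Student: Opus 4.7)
The plan is to adapt the algebraic framework developed by Borodin--Gorin in \cite{bg} (particularly their Section 5) to the present setting, where the evaluation point $X_t$ has many repeated entries coming from the $n$-periodicity of the edge weights. Abbreviating $V = V_{\lfloor tN\rfloor}(U_t)$ and $S = \mathcal{S}_{\rho_{\lfloor tN\rfloor},X_t}(U_t)$, I would start from the observation that $D_i := u_i\partial_i$ is a derivation, so Leibniz's rule iterates to
\[
\frac{D_i^l(VS)}{VS} = \sum_{k=0}^{l}\binom{l}{k}\frac{D_i^k V}{V}\cdot \frac{D_i^{l-k} S}{S}.
\]
I would then expand each $D_i^{l-k}S/S$ in Bell-polynomial (Fa\`a di Bruno) form as a polynomial in the logarithmic derivatives $D_i^j\log S$, and use $\partial_i\log V = \sum_{j\ne i}(u_i-u_j)^{-1}$ to expand the Vandermonde factor. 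After summing over $i\in\{1,\ldots,\lfloor tN\rfloor\}$ and regrouping the resulting expression by ``type'' (number of $\log S$-derivative factors versus Vandermonde denominators), the apparent $1/(u_i-u_j)$ singularities at the evaluation point $X_t$ are resolved via the symmetrization identity of Lemma \ref{l552}, producing the symmetric combination displayed in part (2).

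For the $N$-degree counts, the essential inputs are the estimates of Lemma \ref{l33}: at $U_t = X_t$, first logarithmic derivatives $\partial_i\log S$ are of order $N$, mixed second derivatives $\partial_i\partial_j\log S$ (for $i\ne j$) are of order $1$, and mixed third derivatives vanish to leading order; diagonal higher derivatives remain of order $N$ or smaller by directly inspecting the explicit Schur formula (\ref{sm}) combined with Lemma \ref{lmm212}. In the expansion above, the $r=0$ piece equals $\sum_i u_i^l(\partial_i\log S)^l$, a sum of $\lfloor tN\rfloor$ terms each of $N$-degree $l$, giving total degree $l+1$. The $r\ge 1$ pieces, whether their variables in the subset $A$ collide at the evaluation point (handled via Lemma \ref{l552}, which converts the fraction into an $r$-fold derivative evaluated at the common value) or remain separated, contribute at most $\binom{\lfloor tN/n\rfloor}{r+1}\cdot O(N^{l-r}) = O(N^{l+1})$ by straightforward bookkeeping; this establishes part (1).

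For part (2), differentiate the resulting expression for $\mathcal{F}_{(l,t)}(U_t)$ in $u_i$ and evaluate at $U_t = X_t$. Acting on the $r=0$ piece $\sum_j u_j^l(\partial_j\log S)^l$: the $j=i$ term contributes $\partial_i[u_i^l(\partial_i\log S)^l] = O(N^l)$, while the off-diagonal sum is $\sum_{j\ne i} l u_j^l(\partial_j\log S)^{l-1} u_j \partial_i\partial_j\log S = O(N)\cdot O(N^{l-1})\cdot O(1) = O(N^l)$ by the mixed second-derivative estimate. An analogous degree analysis of $\partial_i$ applied to each $r\ge 1$ piece, again using Lemma \ref{l552}, shows it is also $O(N^l)$; collecting the leading-order parts reproduces precisely the explicit expression in the statement, and residual sub-leading contributions are absorbed into $T_{(l,t)}$. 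Part (3) then follows by one more differentiation $\partial_j$ with $j\ne i$: each such derivative either converts a factor $\partial_k\log S$ it touches into a mixed second derivative of order $1$ (losing a factor of $N$), or acts on a Vandermonde-type denominator, lowering the effective sum by one factor of $N$. In either case the $N$-count drops by one, yielding the bound $N^{l-1}$.

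The main obstacle will be the combinatorial bookkeeping of the symmetrization when the entries of $X_t$ collide: because the edge weights repeat with period $n$, the Leibniz expansion produces formal $0/0$ indeterminacies that must be resolved by Lemma \ref{l552}. Verifying that the sub-leading remainder $T_{(l,t)}$ has the claimed degree requires carefully tracking how each application of Lemma \ref{l552} consumes one derivative of $\log S$ from the numerator, and confirming, via the explicit formula (\ref{sm}) and Lemma \ref{lmm212}, that diagonal logarithmic derivatives of $S$ do not grow faster than their mixed counterparts at the evaluation point $X_t$. A secondary subtlety is that the factor $u_{a_1}^l$ inside the symmetrization also carries $a_1$-dependence, so the ``function of one variable'' in Lemma \ref{l552} must be assembled by restricting all $u_{a_j}$ in the subset to a common value while freezing the remaining variables, and this substitution must be handled uniformly in $r$.
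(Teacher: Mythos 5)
Your proposal follows essentially the same route as the paper's proof: expand $\mathcal{F}_{(l,t)}$ (arising from the action of $\sum_i(u_i\partial_i)^l$ on $V_{\lfloor tN\rfloor}\mathcal{S}_{\rho_{\lfloor tN\rfloor},X_t}$) into symmetrized terms built from powers of logarithmic derivatives of the Schur generating function over Vandermonde-type denominators, resolve the coincidences of the periodic evaluation point via Lemma \ref{l552} (the paper does this by splitting the symmetrization over mod-$n$ equivalence classes of indices), and count $N$-degrees using Lemma \ref{l33}. The only difference is presentational: the paper imports the expansion from Lemma 5.5 of \cite{bg16} and writes out only Part (1) in detail, whereas you re-derive the expansion by Leibniz/Fa\`a di Bruno and sketch Parts (2)--(3) explicitly.
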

\begin{proof}When  $t=1$ and $X_1=(1,\ldots,1)$, the proposition is proved in Lemma 5.5 of \cite{bg16}. Consider a general $S_{\rho_{\lfloor tN\rfloor},X_t}$ with $X_t$ given by (\ref{xt}). Since $S_{\rho_{\lfloor tN\rfloor},X_t}(X_t)=1$, the function $\log S_{\rho_{\lfloor tN\rfloor},X_t}$ is well defined in a neighborhood of $X_t$. Note that
\begin{eqnarray*}
\frac{\partial_i S_{\rho_{\lfloor tN\rfloor},X_t}}{S_{\rho_{\lfloor tN\rfloor},X_t}}=\partial_i(\log S_{\rho_{\lfloor tN\rfloor},X_t}).
\end{eqnarray*}
This way we can write $\mathcal{F}_{(l,t)}(U_t)$ as a large sum of factors of the form
\begin{eqnarray*}
\frac{c_0u_i^{l-s_0}(\partial_i^{s_1}[\log S_{\rho_{\lfloor tN\rfloor}, X_t}])^{d_1}\ldots (\partial_i^{s_t}[\log S_{\rho_{\lfloor tN\rfloor}, X_t}])^{d_t}}{(u_i-u_{a_1})\ldots (u_i-u_{a_r})}
\end{eqnarray*}
where $i,\ a_1,\ \ldots,\ a_r$ are distinct indices, $s_j,d_j\in \NN\cup\{0\}$ for $j=1,\ldots t$, and
\begin{eqnarray}
s_1<s_2<\ldots<s_t;\notag\\
r+s_0+\sum_{j=1}^t s_jd_j=l.\label{fsc}
\end{eqnarray}
Moreover, $c_0$ depends on $r$, $s_j$, $d_j$, but is independent of $N$, $a_1,\ldots,a_r$. By symmetry we can write
\begin{eqnarray}
&&\mathcal{F}_{(l,t)}(U_t)=\sum_{r,\{s_j\},\{d_j\}}(r+1)!\times\sum_{\{a_1,\ldots,a_{r+1}\}\subset \{1,2,\ldots,\lfloor tN\rfloor\}}\label{exf}\\
&& \mathrm{Sym}_{a_1,\ldots,a_{r+1}}\left(\frac{c_0u_{a_1}^{l-s_0}(\partial_{a_1}^{s_1}[\log S_{\rho_{\lfloor tN\rfloor}, X_t}])^{d_1}\ldots (\partial_{a_1}^{s_t}[\log S_{\rho_{\lfloor tN\rfloor}, X_t}])^{d_t}}{(u_{a_1}-u_{a_2})\ldots (u_{a_1}-u_{a_{r+1}})}\right),\notag
\end{eqnarray}
where the first sum are over $r,\{s_j\},\{d_j\}$ satisfying (\ref{fsc}), and $c_0$ depends on $r,\{s_j\},\{d_j\}$.

By Lemma \ref{l33}, for each $1\leq w\leq t$ the degree of $N$ in each factor $(\partial_{a_1}^{s_w}[\log S_{\rho_{\lfloor tN\rfloor}, X_t}])^{d_w}$ is at most $d_w$. For each given choice of $\{a_1,\ldots,a_{r+1}\}$, we define an equivalence relation on the set $\{a_1,\ldots,a_{r+1}\}$: for $1\leq i,j\leq r+1$, we say $a_i$ and $a_j$ are equivalent if and only if $[a_i\mod n]=[a_j\mod n]$. Let $A_1,\ldots, A_{w}$ be all the distinct equivalence classes under this equivalence relation, where $w$ is a positive integer satisfying $w\leq r+1$. For $1\leq i\leq w$, let $C_i=\{a_1,\ldots,a_{r+1}\}\setminus A_i$

For $1\leq i\leq r+1$, let $a_1,\ldots, \hat{a}_i,\ldots,a_{r+1}$ be $r$ distinct integers obtained from $a_1,\ldots, a_{r+1}$ by removing $a_i$. Then
\begin{eqnarray}
&&\mathrm{Sym}_{a_1,\ldots,a_{r+1}}\left(\frac{c_0u_{a_1}^{l-s_0}(\partial_{a_1}^{s_1}[\log S_{\rho_{\lfloor tN\rfloor}, X_t}])^{d_1}\ldots (\partial_{a_1}^{s_t}[\log S_{\rho_{\lfloor tN\rfloor}, X_t}])^{d_t}}{(u_{a_1}-u_{a_2})\ldots (u_{a_1}-u_{a_{r+1}})}\right)\label{sm1}\\
&=&\frac{1}{(r+1)!}\sum_{i=1}^{w}{{r}\choose{|C_i|}}|A_i|!|C_i|!\mathrm{Sym}_{A_i}\left[\mathrm{Sym}_{C_i}\right.\notag\\&&\left(\frac{c_0u_{a_i}^{l-s_0}(\partial_{a_i}^{s_1}[\log S_{\rho_{\lfloor tN\rfloor}, X_t}])^{d_1}\ldots (\partial_{a_i}^{s_t}[\log S_{\rho_{\lfloor tN\rfloor}, X_t}])^{d_t}}{\prod_{j\in C_i}(u_{a_i}-u_{a_j})}\right)\notag\\
&&\times\left.\left(\frac{1}{\prod_{a_j\in A_i}(u_{a_i}-u_{a_j})}\right)\right]\notag\\
&=&\sum_{i=1}^{w}\frac{|A_i|}{r+1}\mathrm{Sym}_{A_i}\left[\left(\frac{c_0u_{a_i}^{l-s_0}(\partial_{a_i}^{s_1}[\log S_{\rho_{\lfloor tN\rfloor}, X_t}])^{d_1}\ldots (\partial_{a_i}^{s_t}[\log S_{\rho_{\lfloor tN\rfloor}, X_t}])^{d_t}}{\prod_{j\in C_i}(u_{a_i}-u_{a_j})}\right)\right.\notag\\
&&\times\left.\left(\frac{1}{\prod_{a_j\in A_i}(u_{a_i}-u_{a_j})}\right)\right]\notag
\end{eqnarray}
By Lemma \ref{l33} and (\ref{fsc}), the degree of $N$ in 
\begin{eqnarray*}
\left(\frac{c_0u_{a_i}^{l-s_0}(\partial_{a_i}^{s_1}[\log S_{\rho_{\lfloor tN\rfloor}, X_t}])^{d_1}\ldots (\partial_{a_i}^{s_t}[\log S_{\rho_{\lfloor tN\rfloor}, X_t}])^{d_t}}{\prod_{j\in C_i}(u_{a_i}-u_{a_j})}\right)
\end{eqnarray*}
is at most $l-r$. By Lemma \ref{l552}, the degree of $N$ in (\ref{sm1}) is at most $l-r$. Summing over all the choices $\{a_1,\ldots,a_{r+1}\}\subset\{1,2,\ldots,\lfloor tN\rfloor\}$ (there are $O(N^{r+1})$ such choices), we obtain that the degree of $N$ in $\mathcal{F}_{(l,t)}(U_t)$ is at most $l+1$; then Part (1) of the proposition follows.
\end{proof}

For positive integers $l_1,l_2$, we define
\begin{eqnarray*}
\mathcal{G}_{l_1,l_2,t}(U_t)&=&l_1\sum_{r=0}^{l_1-1}\left(\begin{array}{c}l_1-1\\r\end{array}\right)\sum_{\{a_1,\ldots,a_{r+1}\}\subset\{1,2,\ldots,\lfloor tN\rfloor\}}(r+1)!\\&&\times\mathrm{Sym}_{a_1,\ldots,a_{r+1}}\frac{u_{a_1}^{l_1}\partial_{a_1}[\mathcal{F}_{(l_2,t)}](\partial_{a_1}[\log S_{\rho_{\lfloor tN\rfloor},X_t}])^{l_1-1-r}}{(u_{a_1}-u_{a_2})\ldots (u_{a_1}-u_{a_{r+1}})}
\end{eqnarray*}

\begin{lemma}Assume the assumption of Lemma \ref{l33} holds.
Let $l_1,l_2$ be arbitrary positive integers, and $t\in(0,1]$, then
\begin{eqnarray}
&&\frac{1}{V_{\lfloor tN\rfloor}S_{\rho_{\lfloor tN\rfloor},X_t}}\sum_{i_1=1}^{\lfloor tN\rfloor}(u_{i_1}\partial_{i_1})^{l_1}\sum_{i_2=1}^{\lfloor tN\rfloor} (u_{i_2}\partial_{i_2})^{l_2}[V_{\lfloor tN\rfloor} S_{\rho_{\lfloor tN\rfloor},X_t}]\notag\\
&=&\mathcal{F}_{(l_1,t)}(U_t)\mathcal{F}_{(l_2,t)}(U_t)+\mathcal{G}_{(l_1,l_2,t)}(U_t)+T(U_t)\label{l5i}
\end{eqnarray}
where $\mathcal{G}_{(l_1,l_2,t)}(U_t)|_{U_t=X_t}$ has $N$-degree at most $l_1+l_2$ and $T(U_t)|_{U_t=X_t}$ has $N$ degree less than $l_1+l_2$. Moreover, for any index $i$ the function $\partial_i\mathcal{G}_{(l_1,l_2,t)}(U_t)|_{U_t=X_t}$ has $N$-degree less than $l_1+l_2$.
\end{lemma}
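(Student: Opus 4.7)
The plan is to absorb the inner operator $\sum_{i_2}(u_{i_2}\partial_{i_2})^{l_2}$ into $\mathcal{F}_{(l_2,t)}$ via the definition \eqref{flu}, then expand the outer operator $\sum_{i_1}(u_{i_1}\partial_{i_1})^{l_1}$ using the Leibniz rule for the derivation $u_{i_1}\partial_{i_1}$, and finally identify the three summands on the right side of \eqref{l5i} by matching structural forms with the expansion \eqref{exf} of $\mathcal{F}_{(l_1,t)}$ from the proof of Proposition~\ref{p36}. Throughout I abbreviate $VS:=V_{\lfloor tN\rfloor}(U_t)\mathcal{S}_{\rho_{\lfloor tN\rfloor},X_t}(U_t)$.

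By \eqref{flu}, $\sum_{i_2}(u_{i_2}\partial_{i_2})^{l_2}(VS)=VS\cdot\mathcal{F}_{(l_2,t)}$, so the left side of \eqref{l5i} equals $(VS)^{-1}\sum_{i_1}(u_{i_1}\partial_{i_1})^{l_1}(VS\cdot\mathcal{F}_{(l_2,t)})$. Since each $u_{i_1}\partial_{i_1}$ is a derivation, the binomial formula yields
\begin{equation*}
\text{LHS}=\sum_{r=0}^{l_1}\binom{l_1}{r}\frac{1}{VS}\sum_{i_1=1}^{\lfloor tN\rfloor}\bigl[(u_{i_1}\partial_{i_1})^{l_1-r}(VS)\bigr]\bigl[(u_{i_1}\partial_{i_1})^{r}\mathcal{F}_{(l_2,t)}\bigr].
\end{equation*}
The $r=0$ term is precisely $\mathcal{F}_{(l_1,t)}\mathcal{F}_{(l_2,t)}$ by \eqref{flu}. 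For the $r=1$ term, I expand $(u_{i_1}\partial_{i_1})^{l_1-1}(VS)/(VS)$ by iterating the identity $\partial_i(VS)/(VS)=\sum_{j\neq i}(u_i-u_j)^{-1}+\partial_i[\log\mathcal{S}_{\rho_{\lfloor tN\rfloor},X_t}]$, mirroring the derivation of \eqref{exf}; after symmetrization, the leading piece (where only first derivatives $\partial_{a_1}[\log\mathcal{S}]$ appear, i.e.\ $s_0=0$ and all $s_j=1$) is exactly $\mathcal{G}_{(l_1,l_2,t)}$, with the additional factor $u_{i_1}\partial_{i_1}\mathcal{F}_{(l_2,t)}$ playing the role of one $\partial_{a_1}[\log\mathcal{S}]$ factor. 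The subleading pieces of the $r=1$ expansion and the entire $r\geq 2$ Leibniz sum are collected into $T(U_t)$.

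For the $N$-degree bounds, I follow the argument of Proposition~\ref{p36}(1) applied to $\mathcal{G}$: each symmetrized summand indexed by $r$ has numerator of $N$-degree at most $l_2+(l_1-1-r)$ (using Proposition~\ref{p36}(2) for $\partial_{a_1}\mathcal{F}_{(l_2,t)}$ and Lemma~\ref{l33}(1) for each $\partial_{a_1}[\log\mathcal{S}]$ factor), and summing over the $O(N^{r+1})$ choices of $\{a_1,\ldots,a_{r+1}\}$ through Lemma~\ref{l552} yields total $N$-degree at most $l_1+l_2$. The subleading $r=1$ pieces drop at least one factor of $N$ because any second or higher derivative $\partial_i^s[\log\mathcal{S}]$ with $s\geq 2$ has strictly smaller $N$-degree by Lemma~\ref{l33}(3). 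For $r\geq 2$, an inductive extension of Proposition~\ref{p36}(2) gives $\deg_N[(u_{i_1}\partial_{i_1})^r\mathcal{F}_{(l_2,t)}]\leq l_2+1-r$, and combining this with the structural expansion of $(u_{i_1}\partial_{i_1})^{l_1-r}(VS)/(VS)$ produces total $N$-degree at most $l_1+l_2-2r+2<l_1+l_2$. Finally, $\deg_N[\partial_i\mathcal{G}_{(l_1,l_2,t)}]<l_1+l_2$ follows in parallel to Proposition~\ref{p36}(2): each $\partial_i$ either lands on a $\partial_{a_1}[\log\mathcal{S}]$ factor (creating a second derivative, which strictly decreases $N$-degree by Lemma~\ref{l33}(3)), on $\partial_{a_1}\mathcal{F}_{(l_2,t)}$ (creating a mixed derivative, strictly smaller by iterating Proposition~\ref{p36}(2)--(3)), or on the Cauchy kernel (costing a factor $N$ via Lemma~\ref{l552}).

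The main obstacle is the inductive higher-derivative bound $\deg_N[\partial_i^r\mathcal{F}_{(l_2,t)}]\leq l_2+1-r$, which is not stated directly in Proposition~\ref{p36} and must be extracted by iterating the structural expansion \eqref{exf} together with careful application of Lemma~\ref{l552} at each differentiation; some bookkeeping is also required to confirm that no term in the $r=1$ expansion of $(u_{i_1}\partial_{i_1})^{l_1-1}(VS)/(VS)$ is missing or double-counted when matched against the explicit form of $\mathcal{G}_{(l_1,l_2,t)}$.
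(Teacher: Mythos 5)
Your proposal is correct and follows essentially the same route as the paper, which likewise rewrites the left side as $\frac{1}{V_{\lfloor tN\rfloor}\mathcal{S}_{\rho_{\lfloor tN\rfloor},X_t}}\sum_{i_1}(u_{i_1}\partial_{i_1})^{l_1}\bigl[V_{\lfloor tN\rfloor}\mathcal{S}_{\rho_{\lfloor tN\rfloor},X_t}\,\mathcal{F}_{(l_2,t)}\bigr]$, expands into symmetrized terms with weights constrained by $s_0+s_1+\sum_j s_jd_j+r=l_1$, reads off $\mathcal{F}_{(l_1,t)}\mathcal{F}_{(l_2,t)}$ from the terms with no derivative on $\mathcal{F}_{(l_2,t)}$ and $\mathcal{G}_{(l_1,l_2,t)}$ from the terms with $s_0=0$, $s_1=s_2=1$, $d_2=l_1-1-r$, and bounds the remainder by the degree counts of Lemma \ref{l33} and Proposition \ref{p36}. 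The one simplification worth noting is that the inductive bound $\deg_N[(u_i\partial_i)^r\mathcal{F}_{(l_2,t)}]\le l_2+1-r$ you flag as the main obstacle is not actually needed: the weight constraint already forces every term carrying two or more derivatives of $\mathcal{F}_{(l_2,t)}$ to lose at least one power of $N$, so the weaker statement $\deg_N\partial_i^{s}\mathcal{F}_{(l_2,t)}\le l_2$ for $s\ge 1$ suffices, which is all the paper's counting uses.
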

\begin{proof}The proof follows from similar arguments as in the proof of Lemma 5.7 in \cite{bg16}, in which the case $X=1^N$ and $t=1$ is proved. We sketch the idea here. Note that the left hand side of (\ref{l5i}) is exactly
\begin{eqnarray*}
\frac{1}{V_{\lfloor tN\rfloor} S_{\rho_{\lfloor tN\rfloor},X_t}}\sum_{i_1=1}^{N}(u_{i_1}\partial_{i_1})^{l_1}[V_{\lfloor tN\rfloor} \mathcal{S}_{\rho_{\lfloor tN\rfloor},X_t} \mathcal{F}_{(l_2)}(U_t)]
\end{eqnarray*}
It can be rewritten as the sum of terms of the form
\begin{eqnarray*}
\mathrm{Sym}_{a_1,\ldots,a_{r+1}}\frac{c_0 u_{a_1}^{l_1-s_0}\partial_{a_1}^{s_1}[\mathcal{F}_{(l_2,t)}](\partial_{a_1}^{s_2}[\log S_{\rho_{\lfloor tN\rfloor},X_t}])^{d_2}\ldots(\partial_{a_1}^{s_p}[\log S_{\rho_{\lfloor tN\rfloor},X_t}])^{d_p}}{(u_{a_1}-u_{a_2})(u_{a_1}-u_{a_3})\ldots(u_{a_1}-u_{a_{r+1}})},
\end{eqnarray*}
where $r,s_0,s_1,\ldots,s_p,d_2,\ldots,d_p$ are nonnegative integers and 
\begin{eqnarray*}
&&s_2<s_3<\ldots< s_p;\\
&&s_0+s_1+s_2d_2+\ldots+s_pd_p+r=l_1.
\end{eqnarray*}
Then $\mathcal{F}_{(l_1,t)}(U_t)\mathcal{F}_{(l_2,t)}(U_t)$ comes from the terms with $s_1=0$; $\mathcal{G}_{(l_1,l_2,t)}(U_t)$ comes from the terms with $s_0=0$, $s_1=1$, $s_2=1$, $d_2=l_1-1-r$. The $N$-degrees of these terms can be obtained by applying Lemma \ref{l33}.
\end{proof}

Let $s$ be a positive integer. For a subset $\{j_1,\ldots,j_p\}\subset \{1,2,\ldots s\}$, let $\mathcal{P}^s_{j_1,\ldots,j_p}$ be the set of all pairings of the set $\{1,2,\ldots,s\}\setminus\{j_1,\ldots,j_p\}$. The set $\mathcal{P}^s_{j_1,\ldots,j_p}$ is non-empty only when $s-p$ is even. For a pairing $P$ let  $\prod_{(a,b)\in P}$ denote the product over all pairs $(a,b)$ from this pairing.

\begin{proposition}Assume that the assumption of Lemma \ref{l33} holds.
Let $s,l_1,\ldots,l_s$ be arbitrary positive integers, and let $t\in (0,1]$. Then
\begin{eqnarray*}
&&\frac{1}{V_{\lfloor tN\rfloor} S_{\rho_{\lfloor tN\rfloor},X_t}}\sum_{i_1=1}^{\lfloor tN\rfloor}(u_{i_1}\partial_{i_1})^{l_1}\sum_{i_2=1}^{\lfloor tN\rfloor}(u_{i_2}\partial_{i_2})^{l_2}\ldots \sum_{i_s=1}^{\lfloor tN\rfloor}(u_{i_s}\partial_{i_s})^{l_s}[V_{\lfloor tN\rfloor} S_{\rho_{\lfloor tN\rfloor},X_t}]\\
&=&\sum_{p=0}^{s}\sum_{\{j_1,\ldots,j_p\}\subset\{1,2,\ldots,s\}}\mathcal{F}_{(l_{j_1},t)}(U_t)\ldots \mathcal{F}_{(l_{j_p},t)}(U_t)\left(\sum_{P\in \mathcal{P}^s_{j_1,\ldots,j_p}}\prod_{(a,b)\in P}\mathcal{G}_{(l_a,l_b,t)}(U_t)+T_{j_1,\ldots,j_p}^{1;s}(U_t)\right),
\end{eqnarray*}
where $T_{j_1,\ldots,j_p}^{1;s}(U_t)|_{U_t=X_t}$ has $N$-degree less than $\sum_{i=1}^{s}l_i-\sum_{i=1}^p l_{j_i}$.
\end{proposition}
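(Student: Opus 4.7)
I would prove this by induction on $s$. The case $s=1$ is immediate from the definition of $\mathcal{F}_{(l_1,t)}$ in (\ref{flu}), and $s=2$ is exactly the preceding lemma. The structural content of the identity is a Wick (or Gaussian moment) expansion: each unpaired index $j_q$ contributes a ``mean'' factor $\mathcal{F}_{(l_{j_q},t)}$, and each pair $(a,b)$ contributes a ``covariance'' factor $\mathcal{G}_{(l_a,l_b,t)}$, up to a remainder whose $N$-degree is strictly smaller than that of the $\mathcal{G}$-product it sits next to.

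For the inductive step, abbreviate $V=V_{\lfloor tN\rfloor}$ and $\mathcal{S}=\mathcal{S}_{\rho_{\lfloor tN\rfloor},X_t}$, and set
\begin{equation*}
A(U_t):=\frac{1}{V\mathcal{S}}\sum_{i_2=1}^{\lfloor tN\rfloor}(u_{i_2}\partial_{i_2})^{l_2}\cdots\sum_{i_s=1}^{\lfloor tN\rfloor}(u_{i_s}\partial_{i_s})^{l_s}[V\mathcal{S}],
\end{equation*}
so that by the inductive hypothesis $A$ has the claimed Wick expansion over the index set $\{2,\ldots,s\}$. Since $u_{i_1}\partial_{i_1}$ is a derivation, the binomial/Leibniz formula gives
\begin{equation*}
\frac{1}{V\mathcal{S}}\sum_{i_1=1}^{\lfloor tN\rfloor}(u_{i_1}\partial_{i_1})^{l_1}[V\mathcal{S}\cdot A]=\sum_{k=0}^{l_1}\binom{l_1}{k}\sum_{i_1=1}^{\lfloor tN\rfloor}\frac{(u_{i_1}\partial_{i_1})^{k}[V\mathcal{S}]}{V\mathcal{S}}\,(u_{i_1}\partial_{i_1})^{l_1-k}A.
\end{equation*}
The $k=l_1$ term equals $\mathcal{F}_{(l_1,t)}\cdot A$, which by induction reproduces every Wick summand in which index $1$ is unpaired.

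For $k<l_1$, I would expand $(u_{i_1}\partial_{i_1})^{l_1-k}$ by Leibniz inside each summand of $A$, distributing the remaining derivatives among the factors of that summand. For a summand of $A$ containing an unpaired mean $\mathcal{F}_{(l_q,t)}$, the contribution in which all $l_1-k$ derivatives land on this single factor, resummed over $k$ and $i_1$, equals
\begin{equation*}
\frac{1}{V\mathcal{S}}\sum_{i_1=1}^{\lfloor tN\rfloor}(u_{i_1}\partial_{i_1})^{l_1}[V\mathcal{S}\cdot\mathcal{F}_{(l_q,t)}]\qquad\text{times the other factors},
\end{equation*}
which by the preceding lemma is $(\mathcal{F}_{(l_1,t)}\mathcal{F}_{(l_q,t)}+\mathcal{G}_{(l_1,l_q,t)}+T)$ times those factors. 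The $\mathcal{F}\cdot\mathcal{F}$ piece is already accounted for in the $k=l_1$ block, so the net new contribution is $\mathcal{G}_{(l_1,l_q,t)}$ times the remaining factors---precisely the Wick summand obtained by pairing $1$ with $q$.

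The main obstacle is to collect everything else into the remainder $T^{1;s}_{j_1,\ldots,j_p}$ with the sharp degree bound. These leftover contributions come from derivatives landing on a $\mathcal{G}$ factor, on the inductive remainder $T^{1;s-1}$, or split across two or more factors of a summand of $A$. I would estimate each by combining Proposition~\ref{p36}(2)--(3) (which charges at least one power of $N$ per $\partial_{i_1}$ applied to an $\mathcal{F}$ factor), the strict-drop statement for $\partial_{i_1}\mathcal{G}$ from the preceding lemma, and the inductive $N$-degree estimate on $T^{1;s-1}$. After summing over the $O(N)$ choices of $i_1$, each such contribution evaluated at $U_t=X_t$ has $N$-degree strictly less than $\sum_{i\notin\{j_1,\ldots,j_p\}}l_i$, completing the induction.
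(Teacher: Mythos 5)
Your argument is correct and is essentially the inductive scheme that the paper itself cites: the paper simply refers to the proof of Proposition 5.10 in Bufetov--Gorin, and both your Leibniz decomposition of $(u_{i_1}\partial_{i_1})^{l_1}[V\mathcal{S}\cdot A]$ (with the $k=l_1$ block producing $\mathcal{F}_{(l_1,t)}\cdot A$, the ``all remaining derivatives on a single unpaired $\mathcal{F}$'' blocks producing the new $\mathcal{G}_{(l_1,l_q,t)}$ pairings via the preceding lemma, and everything else estimated into the remainder) and the degree bookkeeping match that reference and the paper's own detailed analogs (Lemma~\ref{p510}, Lemma~\ref{l58}). One small point worth tightening if you write this out fully: when a $\partial_{i_1}$ lands on the inductive remainder $T^{1;s-1}_{j_1,\ldots,j_p}$, you need a degree bound on its \emph{derivative}, not only on $T^{1;s-1}$ itself at $U_t=X_t$; this is most cleanly handled by strengthening the induction hypothesis to record that bound, as the paper implicitly does through its $\mathcal{H}$-notation and Lemma~\ref{ll66}/Lemma~\ref{l67}.
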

\begin{proof}The proposition can be proved by induction on $s$, similar to the proof of Proposition 5.10 of \cite{bg}, where the case when $X=1^N$ is proved.
\end{proof}

Let $l$ be a positive integer and $t\in(0,1]$. Let 
\begin{eqnarray*}
E_{l,t}:=\mathcal{F}_{(l,t)}(X_t)=\frac{1}{V_{\lfloor tN\rfloor}S_{\rho_{\lfloor tN\rfloor}, X_t}}\sum_{i=1}^{\lfloor tN\rfloor}(u_i\partial_i)^l V_{\lfloor tN\rfloor} S_{\rho_{\lfloor tN\rfloor}, X_t}(U_t)|_{U_t=X_t}
\end{eqnarray*}

\begin{lemma}\label{lc}Assume the assumption of Lemma \ref{l33} holds. Let $s, l_1,\ldots,l_s$ be arbitrary positive integers, and let $t\in(0,1]$. Then
\begin{eqnarray*}
&&\frac{1}{V_{\lfloor tN\rfloor} S_{\rho_{\lfloor tN\rfloor}, X_t}}\left(\sum_{i_1=1}^{\lfloor t N\rfloor}(u_{i_1}\partial_{i_1})^{l_1}-E_{l_1,t}\right)\left(\sum_{i_2=1}^{\lfloor tN\rfloor}(u_{i_2}\partial_{i_2})^{l_2}-E_{l_2,t}\right)\\
&&\left.\times\cdots\left(\sum_{i_2=1}^{\lfloor tN\rfloor}(u_{i_s}\partial_{i_s})^{l_s}-E_{l_s,t}\right)V_{\lfloor tN\rfloor} S_{\rho_{\lfloor tN\rfloor},X_t}\right|_{U_t=X_t}=\left.\sum_{P\in \mathcal{P}_{\emptyset}^s}\prod_{(a,b)\in P}\mathcal{G}_{(l_a,l_b,t)}(U)\right|_{U_t=X_t}+T_{\emptyset}(U_t)|_{U_t=X_t},
\end{eqnarray*}
where $T_{\emptyset}(U_t)|_{U_t=X_t}$ has $N$-degree less than $\sum_{i=1}^s l_i$.
\end{lemma}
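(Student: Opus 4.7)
The plan is to derive the identity from the preceding proposition by a Möbius-type cancellation. Write $\mathcal{D}_{l_k}:=\sum_{i=1}^{\lfloor tN\rfloor}(u_i\partial_i)^{l_k}$ and abbreviate $\Phi:=V_{\lfloor tN\rfloor}S_{\rho_{\lfloor tN\rfloor},X_t}$. Because each $E_{l_k,t}=\mathcal{F}_{(l_k,t)}(X_t)$ is a scalar, it commutes with every differential operator, so
$$\prod_{k=1}^s(\mathcal{D}_{l_k}-E_{l_k,t})=\sum_{A\subseteq\{1,\ldots,s\}}(-1)^{|A|}\Bigl(\prod_{k\in A}E_{l_k,t}\Bigr)\prod_{k\in A^c}\mathcal{D}_{l_k}.$$
Applying the preceding proposition with $|A^c|$ in place of $s$ to $\frac{1}{\Phi}\prod_{k\in A^c}\mathcal{D}_{l_k}\Phi\big|_{X_t}$ and substituting $\mathcal{F}_{(l_j,t)}(X_t)=E_{l_j,t}$, I obtain a triple sum over $(A,C,P)$ where $A\subseteq\{1,\ldots,s\}$, $C\subseteq A^c$, and $P$ is a pairing of $A^c\setminus C$, together with residual terms $T^{A^c}_{C}(X_t)$ of $N$-degree strictly below $\sum_{k\in A^c\setminus C}l_k$.

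The next step is to reindex by $B:=A\sqcup C$, so that $A\subseteq B$ and $C=B\setminus A$. Then $A^c\setminus C=B^c$, so both the pair-partition factor and the $E$-product depend only on $B$ and not on the particular splitting. Factoring these out of the $A$-sum gives
$$\sum_B\Bigl(\prod_{k\in B}E_{l_k,t}\Bigr)\sum_{P\in\mathcal{P}^{B^c}_\emptyset}\prod_{(a,b)\in P}\mathcal{G}_{(l_a,l_b,t)}(X_t)\cdot\sum_{A\subseteq B}(-1)^{|A|}.$$
The innermost alternating sum vanishes unless $B=\emptyset$, in which case it equals $1$. This leaves exactly the claimed main term $\sum_{P\in\mathcal{P}^s_\emptyset}\prod_{(a,b)\in P}\mathcal{G}_{(l_a,l_b,t)}(U_t)\big|_{U_t=X_t}$.

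The residue is
$$T_\emptyset(X_t)=\sum_B\Bigl(\prod_{k\in B}E_{l_k,t}\Bigr)\sum_{A\subseteq B}(-1)^{|A|}T^{A^c}_{B\setminus A}(X_t),$$
and the main obstacle is to show that it has $N$-degree strictly less than $\sum_{i=1}^s l_i$. Combining the bound that $E_{l_k,t}$ has $N$-degree at most $l_k+1$ with the fact that $T^{A^c}_{B\setminus A}$ has $N$-degree strictly below $\sum_{k\in B^c}l_k$ only yields that each summand has $N$-degree strictly below $\sum_i l_i+|B|$, so for $|B|>0$ additional cancellation inside $\sum_{A\subseteq B}(-1)^{|A|}T^{A^c}_{B\setminus A}(X_t)$ is needed. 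To supply it, I would argue by induction on $s$: peel off the outermost factor $(\mathcal{D}_{l_1}-E_{l_1,t})$, expand via the Leibniz rule, apply the inductive statement for $s-1$ to the resulting $(s-1)$-fold centered product, and invoke Proposition \ref{p36}(2)--(3), which states that each additional $\partial_i$ applied to $\mathcal{F}_{(l,t)}$ or to $\log S_{\rho_{\lfloor tN\rfloor},X_t}$ lowers the $N$-degree by at least one. This derivative-degree tradeoff furnishes precisely the $N^{-|B|}$ gain required to beat the naive bound. The same induction was carried out for the unweighted case $X=1^N$, $t=1$ in the proof of Lemma 5.8 of \cite{bg16}; the present argument is its weighted generalization, with Proposition \ref{p36} playing the role of the degree bounds there.
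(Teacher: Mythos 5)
Your Möbius expansion does produce the main term correctly: the operators commute, the $E_{l_k,t}$ are scalars, and the identity $\sum_{A\subseteq B}(-1)^{|A|}=\delta_{B,\emptyset}$ isolates $\sum_{P\in\mathcal{P}^s_{\emptyset}}\prod_{(a,b)\in P}\mathcal{G}_{(l_a,l_b,t)}$; and you have correctly diagnosed that the stated bounds (degree at most $l_k+1$ for $E_{l_k,t}$ by Proposition \ref{p36}(1), degree below $\sum_{k\in B^c}l_k$ for each remainder) only give degree below $\sum_i l_i+|B|$ for the leftover alternating sum, so the Möbius route cannot finish on its own. The gap lies in the repair. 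As sketched, your induction would use Lemma \ref{lc} at level $s-1$ as the hypothesis; but that statement is only an identity of numbers at $U_t=X_t$, whereas after peeling off $\left(\sum_{i}(u_i\partial_i)^{l_1}-E_{l_1,t}\right)$ the Leibniz step differentiates the inner centered product, so you need its structure as a function of $U_t$ near $X_t$. Concretely, the function-level expansion of the inner product contains factors $\mathcal{F}_{(l_j,t)}(U_t)-E_{l_j,t}$, which vanish at $X_t$ but whose $u$-derivatives have degree up to $l_j$ (Proposition \ref{p36}(2)); these are not negligible error terms --- a single derivative from the $(u_i\partial_i)^{l_1}$ block landing on such a factor is exactly what creates the new pairings $\mathcal{G}_{(l_1,l_j,t)}$, so they must be carried in the inductive statement. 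Moreover Proposition \ref{p36} says nothing about derivatives of $\log \mathcal{S}_{\rho_{\lfloor tN\rfloor},X_t}$ (that is Lemma \ref{l33}), and besides \ref{p36}(2)--(3) you need the bound on $\partial_i\mathcal{G}_{(l_a,l_b,t)}$ stated in the lemma containing (\ref{l5i}), together with derivative control of the remainder terms propagated through the induction; none of this follows from the evaluated statement alone.

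For calibration: the paper states Lemma \ref{lc} without writing a proof; the intended argument is precisely the function-level induction with the $S_1$, $S_2$, $S_3$ bookkeeping that the paper does carry out for the analogous statements (Lemma \ref{p510} in the multi-level case and Lemmas \ref{l58}--\ref{le59} in the piecewise case), following \cite{bg16}. So the efficient fix is to drop the Möbius detour (it proves nothing the induction does not) and run that induction with a strengthened hypothesis: an identity of functions of $U_t$ whose main part is a sum over partial pairings of products of $\mathcal{G}$'s multiplied by centered factors $\mathcal{F}_{(l_j,t)}(U_t)-E_{l_j,t}$, and whose remainder, together with its first $u$-derivatives, has the required low $N$-degree at $U_t=X_t$.
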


Let $\kappa\in[0,1)$ and $\lambda\in\GT_{N-\lfloor \kappa N\rfloor}$. 
\begin{eqnarray*}
p_j^{(N-\lfloor \kappa N\rfloor)}=\sum_{i=1}^{N-\lfloor\kappa N\rfloor}(\lambda_i+(N-\lfloor\kappa N\rfloor)-i)^j;\qquad \mathrm{for}\ j=1,2,\ldots.
\end{eqnarray*}
Assume the distribution of $\lambda$ is $\rho_{N-\lfloor\kappa N\rfloor}$. Let $\mathbf{E}$ be the expectation under the probability measure $\rho_{N-\lfloor \kappa N\rfloor}$, and let
\begin{eqnarray*}
\mathrm{cov}\left(p_{k}^{(N-\lfloor\kappa N\rfloor)},p_{l}^{(N-\lfloor\kappa N\rfloor)}\right)=\mathbf{E}\left(p_{k}^{(N-\lfloor \kappa N \rfloor)},p_{l}^{(N-\lfloor \kappa N \rfloor)}\right)-\mathbf{E}p_{k}^{(N-\lfloor\kappa N\rfloor)}\mathbf{E}p_{l}^{(N-\lfloor \kappa N\rfloor)}.
\end{eqnarray*}
 then by Lemma \ref{lc}, we have
\begin{eqnarray}
\lim_{N\rightarrow\infty}\frac{\mathrm{cov}\left(p_{k}^{(N-\lfloor \kappa N\rfloor)},p_{l}^{(N-\lfloor \kappa N\rfloor)}\right)}{N^{k+l}}=\lim_{N\rightarrow\infty}\frac{\mathcal{G}_{(k,l)}(x_{\lfloor\kappa N\rfloor+1},\ldots,x_N)}{N^{k+l}}.\label{cov}
\end{eqnarray}

We have
\begin{eqnarray*}
&&\mathcal{G}_{(k,l)}(x_{\lfloor\kappa N\rfloor+1},\ldots,x_N)\\
&=&k\sum_{q=0}^{k-1}\sum_{\{a_1,\ldots,a_{q+1}\}\subset\{1,2,\ldots,N-\lfloor\kappa N \rfloor\}}\left(\begin{array}{c}k-1\\q\end{array}\right)(q+1)!\\&&\left.\mathrm{Sym}_{a_1,\ldots,a_{q+1}}\frac{u_{a_1}^k\partial_{a_1}[\mathcal{F}_{(l)}](\partial_{a_1}[\log S_{\rho_{N-\lfloor\kappa N\rfloor},(x_{\lfloor\kappa N\rfloor+1},\ldots,x_N)}]^{k-1-q})}{(u_{a_1}-u_{a_2})\ldots (u_{a_1}-u_{a_{q+1}})}\right|_{(u_1,\ldots,u_{N-\lfloor \kappa N\rfloor})=X_{1-\kappa}}\\
&\approx&k\sum_{q=0}^{k-1}\sum_{\{a_1,\ldots,a_{q+1}\}\subset\{1,2,\ldots,N-\lfloor\kappa N \rfloor\}}\left(\begin{array}{c}k-1\\q\end{array}\right)(q+1)!\\
&&\mathrm{Sym}_{a_1,\ldots,a_{q+1}}\left(\frac{u_{a_1}^k(\partial_{a_1}[\log S_{\rho_{N-\lfloor\kappa N\rfloor},(x_{\lfloor\kappa N\rfloor+1},\ldots,x_N)}]^{k-1-q})}{(u_{a_1}-u_{a_2})\ldots (u_{a_1}-u_{a_{q+1}})}\right.\\
&\times&\partial_{a_1}\left[\sum_{r=0}^{l}\sum_{\{b_1,\ldots,b_{r+1}\}\subset\{1,2,\ldots,N-\lfloor\kappa N \rfloor\}}\left(\begin{array}{c}l\\r\end{array}\right)(r+1)!\right.\\
&\times &\left.\left.\left.\mathrm{Sym}_{b_1,\ldots,b_{r+1}}\frac{u_{b_1}^l(\partial_{b_1}[\log S_{\rho_{N-\lfloor\kappa N\rfloor},(x_{\lfloor\kappa N\rfloor+1},\ldots,x_N)}])^{l-r}}{(u_{b_1}-u_{b_2})\ldots(u_{b_1}-u_{b_{r+1}})}\right]\right)\right|_{(u_1,\ldots,u_{N-\lfloor \kappa N\rfloor})=X_{1-\kappa}}
\end{eqnarray*}
The approximate equality above contains only leading terms of $\partial_{a_1}[\mathcal{F}_{(l,1-\kappa)}]$; see Proposition \ref{p36} (2).

We first consider the case that
\begin{eqnarray*}
\{a_1,\ldots,a_{q+1}\}\cap\{b_1,\ldots,b_{r+1}\}=\emptyset.
\end{eqnarray*}

By Lemma \ref{l33}, we have
\begin{eqnarray*}
&&\partial_{a_1}\left[\sum_{r=0}^{l}\sum_{\{b_1,\ldots,b_{r+1}\}\subset\{1,2,\ldots,N-\lfloor\kappa N \rfloor\}}\left(\begin{array}{c}l\\r\end{array}\right)(r+1)!\right.\\
&\times &\left.\left.\left.\mathrm{Sym}_{b_1,\ldots,b_{r+1}}\frac{u_{b_1}^l(\partial_{b_1}[\log S_{\rho_{N-\lfloor\kappa N\rfloor},X_{\kappa}}])^{l-r}}{(u_{b_1}-u_{b_2})\ldots(u_{b_1}-u_{b_{r+1}})}\right]\right)\right|_{(u_1,\ldots,u_{N-\lfloor \kappa N\rfloor})=X_{1-\kappa}}\\
&=&\sum_{r=0}^{l}\sum_{\{b_1,\ldots,b_{r+1}\}\subset\{1,2,\ldots,N-\lfloor\kappa N \rfloor\}}\left(\begin{array}{c}l\\r\end{array}\right)(r+1)!(l-r)\\
&\times &\left.\mathrm{Sym}_{b_1,\ldots,b_{r+1}}\frac{u_{b_1}^l(\partial_{b_1}[\log S_{\rho_{N-\lfloor\kappa N\rfloor},X_{\kappa}}])^{l-r-1}\partial_{a_1}\partial_{b_1}[\log S_{\rho_{N-\lfloor\kappa N\rfloor},X_{\kappa}}]}{(u_{b_1}-u_{b_2})\ldots(u_{b_1}-u_{b_{r+1}})}\right|_{(u_1,\ldots,u_{N-\lfloor \kappa N\rfloor})=X_{1-\kappa}}\\
&\approx&\sum_{r=0}^{l}\sum_{\{b_1,\ldots,b_{r+1}\}\subset\{1,2,\ldots,N-\lfloor\kappa N \rfloor\}}\left(\begin{array}{c}l\\r\end{array}\right)(r+1)!(l-r)\\
&\times &\left.\mathrm{Sym}_{b_1,\ldots,b_{r+1}}\frac{u_{b_1}^l(H_{b_1}(X,Y,\kappa))^{l-r-1}N^{l-r-1}G(x_{a_1},x_{b_1})}{(u_{b_1}-u_{b_2})\ldots(u_{b_1}-u_{b_{r+1}})}\right|_{(u_1,\ldots,u_{N-\lfloor \kappa N\rfloor})=X_{1-\kappa}}.
\end{eqnarray*}
Moreover,
\begin{eqnarray*}
&&\mathrm{Sym}_{a_1,\ldots,a_{q+1}}\left(\frac{u_{a_1}^k(\partial_{a_1}[\log S_{\rho_{N-\lfloor\kappa N\rfloor},(x_{\lfloor\kappa N\rfloor+1},\ldots,x_N)}]^{k-1-q})}{(u_{a_1}-u_{a_2})\ldots (u_{a_1}-u_{a_{q+1}})}\right.\\
&\times&\partial_{a_1}\left[\sum_{r=0}^{l}\sum_{\{b_1,\ldots,b_{r+1}\}\subset\{1,2,\ldots,N-\lfloor\kappa N \rfloor\}}\left(\begin{array}{c}l\\r\end{array}\right)(r+1)!\right.\\
&\times &\left.\left.\left.\mathrm{Sym}_{b_1,\ldots,b_{r+1}}\frac{u_{b_1}^l(\partial_{b_1}[\log S_{\rho_{N-\lfloor\kappa N\rfloor},(x_{\lfloor\kappa N\rfloor+1},\ldots,x_N)}])^{l-r}}{(u_{b_1}-u_{b_2})\ldots(u_{b_1}-u_{b_{r+1}})}\right]\right)\right|_{(u_1,\ldots,u_{N-\lfloor \kappa N\rfloor})=(x_{\lfloor\kappa N\rfloor+1},\ldots,x_N)}\\
&\approx&\mathrm{Sym}_{a_1,\ldots,a_{q+1}}\left(\frac{u_{a_1}^k([H_{a_1}(X,Y,\kappa)]^{k-1-q} N^{k-1-q})}{(u_{a_1}-u_{a_2})\ldots (u_{a_1}-u_{a_{q+1}})}\right.\\
&&\sum_{r=0}^{l}\sum_{\{b_1,\ldots,b_{r+1}\}\subset\{1,2,\ldots,N-\lfloor\kappa N \rfloor\}}\left(\begin{array}{c}l\\r\end{array}\right)(r+1)!(l-r)\\
&\times &\left.\mathrm{Sym}_{b_1,\ldots,b_{r+1}}\frac{u_{b_1}^l(H_{b_1}(X,Y,\kappa))^{l-r-1}N^{l-r-1}G(x_{a_1},x_{b_1})}{(u_{b_1}-u_{b_2})\ldots(u_{b_1}-u_{b_{r+1}})}\right|_{(u_1,\ldots,u_{N-\lfloor \kappa N\rfloor})=X_{1-\kappa}}.
\end{eqnarray*}

\begin{lemma}
Let 
\begin{eqnarray*}
H(z)&=&\frac{1}{n}\left[\sum_{j\in\{1,2,\ldots,n\}}\left(\frac{mz^{m-1}}{z^m-x_j^m}-\frac{1}{z-x_j}\right)\right]
+\frac{\kappa}{n}\sum_{j\in\{1,2,\ldots,n\}\cap I_2}\frac{y_j}{1+y_j z}.
\end{eqnarray*}

The contribution of the terms when $\{a_1,\ldots,a_{q+1}\}\cap\{b_1,\ldots,b_{r+1}\}=\emptyset$ to $\mathcal{G}_{(k,l,1-\kappa)}(X_{1-\kappa})$, as $N\rightarrow\infty$, is asymptotically
\begin{eqnarray}
\frac{(1-\kappa)^{k+l}N^{k+l}}{(2\pi\mathbf{i})^2}\sum_{i,j=1}^{n}\oint_{|z-x_i|=\epsilon }\oint_{|w-x_j|=\epsilon}\left(\sum_{i=1}^n\frac{z}{n(z-x_i)}+\frac{z H(z)}{1-\kappa}\right)^k\\
\times\left(\sum_{i=1}^n\frac{w}{n(w-x_i)}+\frac{w H(w)}{1-\kappa}\right)^l
 G(z,w)dz dw\label{ct0}
\end{eqnarray}
where $\epsilon>0$ is sufficiently small such that for each $1\leq i\leq n$ the disk centered at $x_i$ with radius $\epsilon$ contains exactly one singularity $x_i$ of the integrand.
\end{lemma}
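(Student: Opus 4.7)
The plan is to perform the asymptotic analysis of the double symmetrized sum defining $\mathcal{G}_{(k,l,1-\kappa)}(X_{1-\kappa})$ and recognize the result as the double contour integral~(\ref{ct0}) via residue calculus. The key ingredients are Lemma~\ref{l33} (leading asymptotics for the derivatives of $\log S_{\rho,X}$), Lemma~\ref{l552} (symmetrization at coincident points), and Cauchy's integral formula.

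First, I would convert each index sum into a contour integral. Because the edge weights are $n$-periodic, the distinguished variable $a_1$ satisfies
\begin{equation*}
\sum_{a_1=1}^{M} g(u_{a_1})\big|_{U=X_{1-\kappa}} = \sum_{i'=1}^{n}|A_{i'}|\,g(x_{i'}) = \frac{1}{2\pi\mathbf{i}}\oint_{\mathcal{C}} g(z)\sum_{i'=1}^{n}\frac{|A_{i'}|}{z-x_{i'}}\,dz,
\end{equation*}
for a contour $\mathcal{C}$ enclosing all $x_{i'}$, where $M=N-\lfloor\kappa N\rfloor$ and $|A_{i'}|$ is the number of indices in residue class $i'$ mod $n$, satisfying $|A_{i'}|\sim (1-\kappa)N/n$. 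The remaining symmetrizations over $a_2,\dotsc,a_{q+1}$ and over $b_2,\dotsc,b_{r+1}$ are resolved using the residue-class decomposition of Proposition~\ref{p36} combined with Lemma~\ref{l552}. Substituting the leading-order replacements $\partial_{a_1}\log S_{\rho,X}\sim NH(u_{a_1})$ (using the identification $H(x_{i'}) = (1-\kappa)H_{i'}(X,Y,\kappa)$ from Lemma~\ref{l33} to extend $H(u_{a_1})$ analytically in $z$) and $\partial_{a_1}\partial_{b_1}\log S_{\rho,X}\sim G(x_{\bar a_1},x_{\bar b_1})$, and using the combinatorial identities $k\binom{k-1}{q} = (q+1)\binom{k}{q+1}$ and $\sum_{s=1}^k\binom{k}{s}A^s B^{k-s} = (A+B)^k-B^k$, the $a$-part of the summation collapses into a single contour integral whose integrand is
\begin{equation*}
z^k\left(\sum_{i'=1}^{n}\frac{|A_{i'}|}{z-x_{i'}}+NH(z)\right)^{\!k}.
\end{equation*}

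Second, I would use the identity $z\bigl(\sum_{i'}\frac{|A_{i'}|}{z-x_{i'}}+NH(z)\bigr) = (1-\kappa)N\,\mathcal{I}(z)$, where $\mathcal{I}(z) := \sum_{i'}\frac{z}{n(z-x_{i'})}+\frac{zH(z)}{1-\kappa}$ is exactly $F_{\kappa,m}(z)$ from Proposition~\ref{plm} and the integrand appearing in~(\ref{ct0}). This reduces the $z$-integrand to $((1-\kappa)N)^k\mathcal{I}(z)^k$, and the analogous manipulation on the $b$-side yields $((1-\kappa)N)^l\mathcal{I}(w)^l$, with the cross factor $G(z,w)$ arising directly from the mixed derivative $\partial_{a_1}\partial_{b_1}\log S_{\rho,X}$ extended analytically in $(z,w)$. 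Deforming each large contour $\mathcal{C}$ into a union of small circles $\{|z-x_i|=\epsilon\}_{i=1}^n$ (and analogously for $w$) produces the sum $\sum_{i,j=1}^{n}$ and the prefactor $(1-\kappa)^{k+l}N^{k+l}$ appearing in~(\ref{ct0}).

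The main technical difficulty I anticipate is rigorously controlling the error terms: Lemma~\ref{l33} gives the leading-order asymptotic of $\partial\log S$ only at $U=X_{1-\kappa}$, and one must argue that the $o(N)$ errors in this approximation --- together with the subleading terms in Proposition~\ref{p36}~(2) and the corrections arising from the restriction $a_1\neq a_j$ in the original sum (which the $n$-point identity above ignores) --- contribute only to lower orders of $N$ in $\mathcal{G}_{(k,l,1-\kappa)}$. This is the analogue of the degree bookkeeping in~\cite{bg16} for the uniform single-weight case, and in the present periodic setting reduces to the $N$-degree estimates established in Proposition~\ref{p36}.
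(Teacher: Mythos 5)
Your proposal is correct and follows essentially the same route as the paper's proof: decompose the index sets into residue classes mod $n$, apply Lemma \ref{l552} to turn symmetrization at coincident points into derivatives, substitute the leading asymptotics of $\partial\log S$ and $\partial\partial\log S$ from Lemma \ref{l33}, reconstitute the $k$-th and $l$-th powers via the binomial sums, and pass to residues/contour integrals around the $x_i$. The only differences are cosmetic (you introduce the Cauchy kernel $\sum_{i'}|A_{i'}|/(z-x_{i'})$ at the outset, whereas the paper converts derivatives to residues at the end, and you make explicit the normalization $H(x_{i'})=(1-\kappa)H_{i'}(X,Y,\kappa)$ and the harmless omission of the pole-free $B^k$ term), so the argument matches the paper's.
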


\begin{proof}Note that
\begin{eqnarray*}
\sum_{r=0}^{l}{{l}\choose{r}}(l-r)=l\sum_{r=0}^{l-1}{{l-1}\choose {r}}
\end{eqnarray*}
By the computations above, The contribution of the terms when $\{a_1,\ldots,a_{q+1}\}\cap\{b_1,\ldots,b_{r+1}\}=\emptyset$ to $\mathcal{G}_{(k,l,1-\kappa)}(X_{1-\kappa})$, as $N\rightarrow\infty$, is asymptotically
\begin{eqnarray*}
I:&=&k\sum_{q=0}^{k-1}\sum_{\{a_1,\ldots,a_{q+1}\}\subset\{1,2,\ldots,N-\lfloor\kappa N \rfloor\}}{{k-1}\choose{q}}(q+1)!\\
&&\mathrm{Sym}_{a_1,\ldots,a_{q+1}}\left[\frac{u_{a_1}^k([H_{a_1}(X,Y,\kappa)]^{k-1-q} N^{k-1-q})}{(u_{a_1}-u_{a_2})\ldots (u_{a_1}-u_{a_{q+1}})}\right.\\
&&l\sum_{r=0}^{l-1}\sum_{\{b_1,\ldots,b_{r+1}\}\subset\{1,2,\ldots,N-\lfloor\kappa N \rfloor,\ \{b_1,\ldots,b_{r+1}\}\cap \{a_1,\ldots,a_{q+1}\}=\emptyset\}}{l-1\choose r}(r+1)!\\
&\times &\left.\left.\mathrm{Sym}_{b_1,\ldots,b_{r+1}}\frac{u_{b_1}^l(H_{b_1}(X,Y,\kappa))^{l-r-1}N^{l-r-1}G(x_{a_1},x_{b_1})}{(u_{b_1}-u_{b_2})\ldots(u_{b_1}-u_{b_{r+1}})}\right]\right|_{(u_1,\ldots,u_{N-\lfloor \kappa N\rfloor})=X_{1-\kappa}}.
\end{eqnarray*}
We consider the equivalence relation on $\{a_1,\ldots,a_{q+1}\}$ (resp.\ $\{b_1,\ldots,b_{r+1}\}$) such that for $1\leq i\leq j\leq q+1$, $a_i$ and $a_j$  (resp.\ for $1\leq i\leq j\leq r+1$, $b_i$ and $b_j$) are equivalent if and only if $(i\mod n)=(j\mod n)$. Let $A_1,\ldots, A_h$ (resp.\ $B_1,\ldots,B_g$) be all the equivalence classes in $\{a_1,\ldots,a_{q+1}\}$ (resp.\ $\{b_1,\ldots,b_{r+1}\}$) under such an equivalence relation, where $h,g$ are positive integers satisfying $1\leq h\leq q+1$, $1\leq g\leq r+1$. For $1\leq i\leq h$ and $1\leq j\leq g$, let 
\begin{eqnarray*}
C_i=\{a_1,\ldots,a_{q+1}\}\setminus A_i;\qquad D_j=\{b_1,\ldots,b_{r+1}\}\setminus B_j.
\end{eqnarray*}
Then we have
\begin{eqnarray*}
I&=&k\sum_{q=0}^{k-1}\sum_{\{a_1,\ldots,a_{q+1}\}\subset\{1,2,\ldots,N-\lfloor\kappa N \rfloor\}}{{k-1}\choose{q}}(q+1)!\sum_{i=1}^{g}\frac{|A_i|}{(q+1)}\\
&&\mathrm{Sym}_{A_i}\left[\frac{u_{a_i}^k([H_{a_i}(X,Y,\kappa)]^{k-1-q} N^{k-1-q})}{\prod_{a_s\in C_i}(u_{a_i}-u_{a_s})}\right.\frac{1}{\prod_{a_t\in A_i\setminus\{a_i\}}(u_{a_i}-u_{a_t})}\\
&&l\sum_{r=0}^{l-1}\sum_{\{b_1,\ldots,b_{r+1}\}\subset\{1,2,\ldots,N-\lfloor\kappa N \rfloor,\ \{b_1,\ldots,b_{r+1}\}\cap \{a_1,\ldots,a_{q+1}\}=\emptyset\}}{l-1\choose r}(r+1)!\sum_{j=1}^{h}\frac{|B_j|}{(r+1)}\\
&\times &\left.\left.\mathrm{Sym}_{B_j}\left(\frac{u_{b_j}^l(H_{b_j}(X,Y,\kappa))^{l-r-1}N^{l-r-1}G(x_{a_i},x_{b_j})}{\prod_{b_v\in D_j}(u_{b_j}-u_{b_v})}\frac{1}{\prod_{b_w\in B_j\setminus\{b_j\}}(u_{b_j}-u_{b_w})}\right)\right]\right|_{(u_1,\ldots,u_{N-\lfloor \kappa N\rfloor})=X_{1-\kappa}}.
\end{eqnarray*}
By Lemma \ref{l552}, we have
\begin{eqnarray*}
I&=&k\sum_{q=0}^{k-1}\sum_{\{a_1,\ldots,a_{q+1}\}\subset\{1,2,\ldots,N-\lfloor\kappa N \rfloor\}}{{k-1}\choose{q}}(q+1)!\sum_{i=1}^{g}\frac{|A_i|}{(q+1)}\\
&&\frac{1}{|A_i|!}\frac{\partial^{|A_i|-1}}{\partial u_{a_i}^{|A_i|-1}}\left[\frac{u_{a_i}^k([H_{a_i}(X,Y,\kappa)]^{k-1-q} N^{k-1-q})}{\prod_{a_s\in C_i}(u_{a_i}-u_{a_s})}\right.\\
&&l\sum_{r=0}^{l-1}\sum_{\{b_1,\ldots,b_{r+1}\}\subset\{1,2,\ldots,N-\lfloor\kappa N \rfloor,\ \{b_1,\ldots,b_{r+1}\}\cap \{a_1,\ldots,a_{q+1}\}=\emptyset\}}{l-1\choose r}(r+1)!\sum_{j=1}^{h}\frac{|B_j|}{(r+1)}\\
&\times &\frac{1}{|B_j|!}\frac{\partial^{|B_j|-1}}{\partial u_{b_j}^{|B_j|-1}}\left.\left.\left(\frac{u_{b_j}^l(H_{b_j}(X,Y,\kappa))^{l-r-1}N^{l-r-1}G(x_{a_i},x_{b_j})}{\prod_{b_v\in D_j}(u_{b_j}-u_{b_v})}\right)\right]\right|_{(u_1,\ldots,u_{N-\lfloor \kappa N\rfloor})=X_{1-\kappa}}\\
&=&\sum_{q=0}^{k-1}\sum_{\{a_1,\ldots,a_{q+1}\}\subset\{1,2,\ldots,N-\lfloor\kappa N \rfloor\}}\sum_{i=1}^{g}\frac{k!}{(k-1-q)!}\\
&&\frac{1}{(|A_i|-1)!}\frac{\partial^{|A_i|-1}}{\partial u_{a_i}^{|A_i|-1}}\left[\frac{u_{a_i}^k([H_{a_i}(X,Y,\kappa)]^{k-1-q} N^{k-1-q})}{\prod_{a_s\in C_i}(u_{a_i}-u_{a_s})}\right.\\
&&\sum_{r=0}^{l-1}\sum_{\{b_1,\ldots,b_{r+1}\}\subset\{1,2,\ldots,N-\lfloor\kappa N \rfloor,\ \{b_1,\ldots,b_{r+1}\}\cap \{a_1,\ldots,a_{q+1}\}=\emptyset\}}\frac{l!}{(l-1-r)!}\sum_{j=1}^{h}\\
&\times &\frac{1}{(|B_j|-1)!}\frac{\partial^{|B_j|-1}}{\partial u_{b_j}^{|B_j|-1}}\left.\left.\left(\frac{u_{b_j}^l(H_{b_j}(X,Y,\kappa))^{l-r-1}N^{l-r-1}G(x_{a_i},x_{b_j})}{\prod_{b_v\in D_j}(u_{b_j}-u_{b_v})}\right)\right]\right|_{(u_1,\ldots,u_{N-\lfloor \kappa N\rfloor})=X_{1-\kappa}}
\end{eqnarray*}

Using the residue theorem, we obtain
\begin{eqnarray*}
I&=&\sum_{q=0}^{k-1}\sum_{\{a_1,\ldots,a_{q+1}\}\subset\{1,2,\ldots,N-\lfloor\kappa N \rfloor\}}\sum_{i=1}^{g}\frac{k!}{(k-1-q)!}\\
&&\mathrm{Res}_{z=u_{a_i}}\left[\frac{z^k([H(z)]^{k-1-q} N^{k-1-q})}{(z-u_{a_i})^{|A_i|}\prod_{a_s\in C_i}(z-u_{a_s})}\right.\\
&&\sum_{r=0}^{l-1}\sum_{\{b_1,\ldots,b_{r+1}\}\subset\{1,2,\ldots,N-\lfloor\kappa N \rfloor,\ \{b_1,\ldots,b_{r+1}\}\cap \{a_1,\ldots,a_{q+1}\}=\emptyset\}}\frac{l!}{(l-1-r)!}\sum_{j=1}^{h}\\
&\times &\mathrm{Res}_{w=u_{b_j}}\left.\left.\left(\frac{w^l(H(w))^{l-r-1}N^{l-r-1}G(z,w)}{(w-u_{b_j})^{|B_j|}\prod_{b_v\in D_j}(w-u_{b_v})}\right)\right]\right|_{(u_1,\ldots,u_{N-\lfloor \kappa N\rfloor})=X_{1-\kappa}}\\
&\approx&N^{k+l}\sum_{s=1}^{n}
\mathrm{Res}_{z=x_s}\left[\left(\frac{1}{N}\sum_{i=\lfloor \kappa N\rfloor+1}^{N}\frac{z}{z-x_i}+zH(z)\right)^k\right.\\
&&\times\left.\left[\sum_{j=1}^{n}\mathrm{Res}_{w=x_j}\left(\left(\frac{1}{N}\sum_{i=\lfloor \kappa N\rfloor+1}^{N}\frac{w}{w-x_i}+wH(w)\right)^lG(z,w)\right)\right]\right]\\
&\approx&\frac{N^{k+l}(1-\kappa)^{k+l}}{(2\pi \mathbf{i})^2}\sum_{i=1}^{n}\sum_{j=1}^{n}
\oint_{\left|z-x_i\right|=\epsilon}\left(\frac{1}{n}\sum_{p=1}^{n}\frac{z}{z-x_p}+\frac{zH(z)}{1-\kappa}\right)\\
&\times &\oint_{\left|w-x_j\right|=\epsilon}\left(\frac{1}{n}\sum_{q=1}^{n}\frac{w}{w-x_q}+\frac{wH(w)}{1-\kappa}\right)G(z,w)dwdz
\end{eqnarray*}

Then the lemma follows.

\end{proof}

Now we consider the case when
\begin{eqnarray}
|\{a_1,\ldots,a_{q+1}\}\cap\{b_1,\ldots,b_{r+1}\}|=1.\label{io}
\end{eqnarray}
Without loss of generality we suppose that $a_1=b_1$, and all the other indices are distinct. Then we have
\begin{eqnarray*}
&&\mathrm{Sym}_{a_1,\ldots,a_{q+1}}\left(\frac{u_{a_1}^k(\partial_{a_1}[\log S_{\rho_{N-\lfloor\kappa N\rfloor},(x_{\lfloor\kappa N\rfloor+1},\ldots,x_N)}]^{k-1-q})}{(u_{a_1}-u_{a_2})\ldots (u_{a_1}-u_{a_{q+1}})}\right.\\
&\times&\partial_{a_1}\left[\sum_{r=0}^{l}\sum_{\{b_2,\ldots,b_{r+1}\}\subset\{1,2,\ldots,N-\lfloor\kappa N \rfloor\}}\left(\begin{array}{c}l\\r\end{array}\right)(r+1)!\right.\\
&\times &\left.\left.\left.\mathrm{Sym}_{a_1,b_2,\ldots,b_{r+1}}\frac{u_{a_1}^l(\partial_{a_1}[\log S_{\rho_{N-\lfloor\kappa N\rfloor},(x_{\lfloor\kappa N\rfloor+1},\ldots,x_N)}])^{l-r}}{(u_{a_1}-u_{b_2})\ldots(u_{a_1}-u_{b_{r+1}})}\right]\right)\right|_{(u_1,\ldots,u_{N-\lfloor \kappa N\rfloor})=(x_{\lfloor\kappa N\rfloor+1},\ldots,x_N)}\\
&\approx&\mathrm{Sym}_{a_1,\ldots,a_{q+1}}\left(\frac{u_{a_1}^k N^{k-1-q} H(u_{a_1})^{k-1-q}}{(1-\kappa)^{k-1-q}(u_{a_1}-u_{a_2})\ldots (u_{a_1}-u_{a_{q+1}})}\right.\\&&\times\partial_{a_1}\left[\sum_{r=0}^{l}\sum_{\{b_2,\ldots,b_{r+1}\}\subset\{1,2,\ldots,N-\lfloor\kappa N \rfloor\}}\left(\begin{array}{c}l\\r\end{array}\right)(r+1)!\right.\\
&&\times \left.\left.\left.\mathrm{Sym}_{a_1,b_2,\ldots,b_{r+1}}\frac{u_{a_1}^l N^{l-r}[H(u_{a_1})]^{l-r}}{(1-\kappa)^{l-r}(u_{a_1}-u_{b_2})\ldots (u_{a_1}-u_{b_{q+1}})}\right]\right)\right|_{(u_1,\ldots,u_{N-\lfloor \kappa N\rfloor})=(x_{\lfloor\kappa N\rfloor+1},\ldots,x_N)}
\end{eqnarray*}
The summation over indices when (\ref{io}) holds gives terms of order $N^{r+q+1}$. We can see that the contribution of the terms when (\ref{io}) holds to $\mathcal{G}_{(l,k,1-\kappa)}(X_{1-\kappa})$ as $N\rightarrow\infty$, is
\begin{eqnarray*}
\tilde{I}:&=&\tilde{I}_1+\tilde{I}_2;
\end{eqnarray*}
where
\begin{eqnarray*}
\tilde{I}_1&=&k\sum_{q=0}^{k-1}\sum_{\{a_1,\ldots,a_{q+1}\}\subset\{1,2,\ldots,N-\lfloor\kappa N \rfloor\}}{{k-1}\choose{q}}(q+1)!\sum_{i=1}^{g}\frac{|A_i|}{(q+1)}\\
&&\mathrm{Sym}_{A_i}\left\{\frac{u_{a_i}^k([H_{a_i}(X,Y,\kappa)]^{k-1-q} N^{k-1-q})}{\prod_{a_s\in C_i}(u_{a_i}-u_{a_s})}\right.\frac{1}{\prod_{a_t\in A_i\setminus\{a_i\}}(u_{a_i}-u_{a_t})}\\
&&\frac{\partial}{\partial u_{a_i}}\left[\sum_{r=0}^{l}\sum_{\{b_1,\ldots,\hat{b}_j,\ldots,b_{r+1}\}\subset\{1,2,\ldots,N-\lfloor\kappa N \rfloor,\ \{b_1,\ldots,\hat{b}_j,\ldots,b_{r+1}\}\cap \{a_1,\ldots,a_{q+1}\}=\emptyset,\ b_j=a_i\}}\sum_{j=1}^{h}\frac{|B_j|}{(r+1)}\mathbf{1}_{b_j\in B_j}{l\choose r}(r+1)!\right.\\
&\times &\left.\left.\left.\mathrm{Sym}_{B_j}\left(\frac{u_{b_j}^l(H_{b_j}(X,Y,\kappa))^{l-r}N^{l-r}}{\prod_{b_v\in D_j}(u_{b_j}-u_{b_v})}\frac{1}{\prod_{b_w\in B_j\setminus\{b_j\}}(u_{b_j}-u_{b_w})}\right)\right]\right\}\right|_{(u_1,\ldots,u_{N-\lfloor \kappa N\rfloor})=X_{1-\kappa}}\\
\end{eqnarray*}
and
\begin{eqnarray*}
\tilde{I}_2&=&k\sum_{q=0}^{k-1}\sum_{\{a_1,\ldots,a_{q+1}\}\subset\{1,2,\ldots,N-\lfloor\kappa N \rfloor\}}{{k-1}\choose{q}}(q+1)!\sum_{i=1}^{g}\frac{|A_i|}{(q+1)}\\
&&\mathrm{Sym}_{A_i}\left\{\frac{u_{a_i}^k([H_{a_i}(X,Y,\kappa)]^{k-1-q} N^{k-1-q})}{\prod_{a_s\in C_i}(u_{a_i}-u_{a_s})}\right.\frac{1}{\prod_{a_t\in A_i\setminus\{a_i\}}(u_{a_i}-u_{a_t})}\\
&&\frac{\partial}{\partial u_{a_i}}\left[\sum_{r=0}^{l}\sum_{\{b_1,\ldots,\hat{b}_s,\ldots,b_{r+1}\}\subset\{1,2,\ldots,N-\lfloor\kappa N \rfloor,\ \{b_1,\ldots,\hat{b}_s,\ldots,b_{r+1}\}\cap \{a_1,\ldots,a_{q+1}\}=\emptyset,\ b_s=a_i\}}\sum_{j=1}^{h}\frac{|B_j|}{(r+1)}\mathbf{1}_{b_s\notin B_j}\right.\\&&{l\choose r}(r+1)!
\left.\left.\left.\mathrm{Sym}_{B_j}\left(\frac{u_{b_j}^l(H_{b_j}(X,Y,\kappa))^{l-r}N^{l-r}}{\prod_{b_v\in D_j}(u_{b_j}-u_{b_v})}\frac{1}{\prod_{b_w\in B_j\setminus\{b_j\}}(u_{b_j}-u_{b_w})}\right)\right]\right\}\right|_{(u_1,\ldots,u_{N-\lfloor \kappa N\rfloor})=X_{1-\kappa}}\\
\end{eqnarray*}
Here we use $\mathbf{1}$ to denote the indicator function.
By Lemma \ref{l552}, we infer 
\begin{eqnarray*}
\tilde{I}_1&=&\sum_{q=0}^{k-1}\sum_{\{a_1,\ldots,a_{q+1}\}\subset\{1,2,\ldots,N-\lfloor\kappa N \rfloor\}}\frac{k!}{(k-1-q)!}\sum_{i=1}^{g}|A_i|\\
&&\frac{1}{|A_i|!}\frac{\partial^{|A_i|-1}}{\partial u_{a_i}^{|A_i|-1}}\left[\frac{u_{a_i}^k([H(u_{a_i})]^{k-1-q} N^{k-1-q})}{\prod_{a_s\in C_i}(u_{a_i}-u_{a_s})}\right.\\
&&\frac{\partial}{\partial u_{a_i}}\left(\sum_{r=0}^{l}\sum_{\{b_1,\ldots,\hat{b}_j,\ldots,b_{r+1}\}\subset\{1,2,\ldots,N-\lfloor\kappa N \rfloor,\ \{b_1,\ldots,\hat{b}_j,\ldots,b_{r+1}\}\cap \{a_1,\ldots,a_{q+1}\}=\emptyset,\ b_j=a_i\}}\frac{l!}{(l-r)!}\right.\\
&\times &\sum_{j=1}^{h}|B_j|\mathbf{1}_{b_j\in B_j}\left.\left.\left.\frac{1}{|B_j|!}\frac{\partial^{|B_j|-1}}{\partial u_{b_j}^{|B_j|-1}}\frac{u_{b_j}^l(H(u_{b_j}))^{l-r}N^{l-r}}{\prod_{b_v\in D_j}(u_{b_j}-u_{b_v})}\right)\right]\right|_{(u_1,\ldots,u_{N-\lfloor \kappa N\rfloor})=X_{1-\kappa}}.
\end{eqnarray*}
and
\begin{eqnarray*}
\tilde{I}_2&=&\sum_{q=0}^{k-1}\sum_{\{a_1,\ldots,a_{q+1}\}\subset\{1,2,\ldots,N-\lfloor\kappa N \rfloor\}}\frac{k!}{(k-1-q)!}\sum_{i=1}^{g}|A_i|\\
&&\frac{1}{|A_i|!}\frac{\partial^{|A_i|-1}}{\partial u_{a_i}^{|A_i|-1}}\left[\frac{u_{a_i}^k([H(u_{a_i})]^{k-1-q} N^{k-1-q})}{\prod_{a_s\in C_i}(u_{a_i}-u_{a_s})}\right.\\
&&\frac{\partial}{\partial u_{a_i}}\left(\sum_{r=0}^{l}\sum_{\{b_1,\ldots,\hat{b}_s,\ldots,b_{r+1}\}\subset\{1,2,\ldots,N-\lfloor\kappa N \rfloor,\ \{b_1,\ldots,\hat{b}_s,\ldots,b_{r+1}\}\cap \{a_1,\ldots,a_{q+1}\}=\emptyset,\ b_s=a_i\}}\frac{l!}{(l-r)!}\right.\\
&\times &\sum_{j=1}^{h}|B_j|\mathbf{1}_{b_s\notin B_j}\left.\left.\left.\frac{1}{|B_j|!}\frac{\partial^{|B_j|-1}}{\partial u_{b_j}^{|B_j|-1}}\frac{u_{b_j}^l(H(u_{b_j}))^{l-r}N^{l-r}}{\prod_{b_v\in D_j}(u_{b_j}-u_{b_v})}\right)\right]\right|_{(u_1,\ldots,u_{N-\lfloor \kappa N\rfloor})=X_{1-\kappa}}.
\end{eqnarray*}
By the residue theorem, we have 
\begin{eqnarray*}
\tilde{I}_1&=&\sum_{q=0}^{k-1}\sum_{\{a_1,\ldots,a_{q+1}\}\subset\{1,2,\ldots,N-\lfloor\kappa N \rfloor\}}\frac{k!}{(k-1-q)!}\sum_{i=1}^{g}|A_i|\\
&&\frac{1}{|A_i|!}\frac{\partial^{|A_i|-1}}{\partial u_{a_i}^{|A_i|-1}}\left[\frac{u_{a_i}^k([H(u_{a_i})]^{k-1-q} N^{k-1-q})}{\prod_{a_s\in C_i}(u_{a_i}-u_{a_s})}\right.\\
&&\left(\sum_{r=0}^{l}\sum_{\{b_1,\ldots,\hat{b}_j,\ldots,b_{r+1}\}\subset\{1,2,\ldots,N-\lfloor\kappa N \rfloor,\ \{b_1,\ldots,\hat{b}_j,\ldots,b_{r+1}\}\cap \{a_1,\ldots,a_{q+1}\}=\emptyset,\ b_j=a_i\}}\frac{l!}{(l-r)!}\sum_{j=1}^{h}|B_j|\mathbf{1}_{b_j\in B_j}\right.\\
&\times &\left.\left.\left.\frac{1}{|B_j|!}\frac{\partial^{|B_j|}}{\partial u_{a_i}^{|B_j|}}\frac{u_{a_i}^l(H(u_{a_i}))^{l-r}N^{l-r}}{\prod_{b_v\in D_j}(u_{a_i}-u_{b_v})}\right)\right]\right|_{(u_1,\ldots,u_{N-\lfloor \kappa N\rfloor})=X_{1-\kappa}}\\
&=&\sum_{q=0}^{k-1}\sum_{\{a_1,\ldots,a_{q+1}\}\subset\{1,2,\ldots,N-\lfloor\kappa N \rfloor\}}\frac{k!}{(k-1-q)!}\sum_{i=1}^{g}\\
&&\mathrm{Res}_{z=u_{a_i}} \left[\frac{z^k([H(z)]^{k-1-q} N^{k-1-q})}{\prod_{a_s\in C_i}(z-u_{a_s})}\frac{1}{(z-u_{a_i})^{|A_i|}}\right.\\
&&\left(\sum_{r=0}^{l}\sum_{\{b_1,\ldots,\hat{b}_j,\ldots,b_{r+1}\}\subset\{1,2,\ldots,N-\lfloor\kappa N \rfloor,\ \{b_1,\ldots,\hat{b}_j,\ldots,b_{r+1}\}\cap \{a_1,\ldots,a_{q+1}\}=\emptyset,\ b_j=a_i\}}\sum_{j=1}^{h}|B_j|\mathbf{1}_{b_j\in B_j}\frac{l!}{(l-r)!}\right.\\
&\times &\left.\left.\left.\mathrm{Res}_{w=u_{a_i}}\left(\frac{w^l(H(w))^{l-r}N^{l-r}}{\prod_{b_v\in D_j}(w-u_{b_v})}\frac{1}{(w-u_{a_i})^{|B_j|-1}}\frac{1}{(w-z)^2}\right)\right)\right]\right|_{(u_1,\ldots,u_{N-\lfloor \kappa N\rfloor})=X_{1-\kappa}}\\
&=&\frac{1}{(2\pi \mathbf{i})^2}\sum_{j=1}^{n}
\oint _{|z-x_j|=\epsilon}\left(\sum_{i=1}^{N-\lfloor \kappa N\rfloor}\frac{z}{z-u_i}+z N H(z)\right)^k
\oint_{|w-x_j|=\epsilon}\left(\sum_{i=1}^{N-\lfloor \kappa N\rfloor}\frac{w}{w-u_i}+w N H(w)\right)^l\\
&&\frac{1}{(w-z)^2}dwdz
\end{eqnarray*}
We also have
\begin{eqnarray*}
\tilde{I}_2&=&\sum_{q=0}^{k-1}\sum_{\{a_1,\ldots,a_{q+1}\}\subset\{1,2,\ldots,N-\lfloor\kappa N \rfloor\}}\frac{k!}{(k-1-q)!}\sum_{i=1}^{g}|A_i|\\
&&\frac{1}{|A_i|!}\frac{\partial^{|A_i|-1}}{\partial u_{a_i}^{|A_i|-1}}\left[\frac{u_{a_i}^k([H(u_{a_i})]^{k-1-q} N^{k-1-q})}{\prod_{a_s\in C_i}(u_{a_i}-u_{a_s})}\right.\\
&&\left(\sum_{r=0}^{l}\sum_{\{b_1,\ldots,\hat{b}_s,\ldots,b_{r+1}\}\subset\{1,2,\ldots,N-\lfloor\kappa N \rfloor,\ \{b_1,\ldots,\hat{b}_s,\ldots,b_{r+1}\}\cap \{a_1,\ldots,a_{q+1}\}=\emptyset,\ b_s=a_i\}}\frac{l!}{(l-r)!}\right.\\
&\times &\sum_{j=1}^{h}|B_j|\mathbf{1}_{b_s\notin B_j}\left.\left.\left.\frac{1}{|B_j|!}\frac{\partial^{|B_j|-1}}{\partial u_{b_j}^{|B_j|-1}}\frac{u_{b_j}^l(H(u_{b_j}))^{l-r}N^{l-r}}{\prod_{b_v\in D_j\setminus\{b_s\}}(u_{b_j}-u_{b_v})}\frac{1}{(u_{b_j}-u_{b_s})^2}\right)\right]\right|_{(u_1,\ldots,u_{N-\lfloor \kappa N\rfloor})=X_{1-\kappa}}.
\end{eqnarray*}
where $b_j\in B_j$.
Using the residue theorem, we can also infer 
\begin{eqnarray*}
\tilde{I}_2&=&\sum_{q=0}^{k-1}\sum_{\{a_1,\ldots,a_{q+1}\}\subset\{1,2,\ldots,N-\lfloor\kappa N \rfloor\}}\frac{k!}{(k-1-q)!}\sum_{i=1}^{g}\\
&&\mathrm{Res}_{z=u_{a_i}}\left[\frac{z^k([H(z)]^{k-1-q} N^{k-1-q})}{\prod_{a_s\in C_i}(z-u_{a_s})}\frac{1}{(z-u_{a_i})^{|A_i|}}\right.\\
&&\sum_{r=0}^{l}\sum_{\{b_2,\ldots,b_{r+1}\}\subset\{1,2,\ldots,N-\lfloor\kappa N \rfloor,\ \{b_2,\ldots,b_{r+1}\}\cap \{a_1,\ldots,a_{q+1}\}=\emptyset,\ b_1=a_i\}}\frac{l!}{(l-r)!}\sum_{j=1}^{h}\mathbf{1}_{b_1\notin B_j}\\
&\times &\left.\left.\mathrm{Res}_{w=u_{b_j}}\left(\frac{w^l(H(w))^{l-r}N^{l-r}}{\prod_{b_v\in D_j\setminus\{b_1\}}(w-u_{b_v})}\frac{1}{(w-u_{b_j})^{|B_j|}}\frac{1}{(w-z)^2}\right)\right]\right|_{(u_1,\ldots,u_{N-\lfloor \kappa N\rfloor})=X_{1-\kappa}}\\
&=&\frac{1}{(2\pi\mathbf{i})^2}\sum_{j\in\{1,2,\ldots,n\}}\sum_{p\in\{1,2,\ldots,n\},p\neq j}
\oint _{|z-x_j|=\epsilon}\left(\sum_{i=1}^{N-\lfloor \kappa N\rfloor}\frac{z}{z-u_i}+z N H(z)\right)^k\\
&&\oint_{|w-x_p|=\epsilon}\left(\sum_{i=1}^{N-\lfloor \kappa N\rfloor}\frac{w}{w-u_i}+w N H(w)\right)^l
\left.\frac{1}{(w-z)^2}dwdz\right|_{(u_1,\ldots,u_{N-\lfloor \kappa N\rfloor})=X_{1-\kappa}}
\end{eqnarray*}
Therefore we have
\begin{eqnarray}
&&\lim_{N\rightarrow\infty}\frac{\tilde{I}}{N^{k+l}}\label{ct1}\\
&=&\frac{(1-\kappa)^{k+l}}{(2\pi\mathbf{i})^2}\sum_{i=1}^{n}\sum_{j=1}^{n}\oint_{|z-x_i|=\epsilon}\oint_{|w-x_j|=\epsilon}\left(\sum_{i=1}^n\frac{z}{n(z-x_i)}+ \frac{zH(z)}{1-\kappa}\right)^k\notag\\
&&\times\left(\sum_{i=1}^n\frac{w}{n(w-x_i)}+ \frac{wH(w)}{1-\kappa}\right)^l\frac{1}{(z-w)^2}dz dw\notag
\end{eqnarray}

Finally let us consider the case when
\begin{eqnarray*}
|\{a_1,\ldots,a_{q+1}\}\cap\{b_1,\ldots,b_{r+1}\}|\geq 2.
\end{eqnarray*}
By Lemma \ref{l33}, we can see that the contribution of these terms to $\mathcal{G}_{(l,k)}(X_{1-\kappa})$ has $N$-degree strictly less than $k+l$.

Therefore we have the following proposition.
\begin{proposition}Assume the assumption of Lemma \ref{l33} holds. Then
\begin{eqnarray*}
&&\lim_{N\rightarrow\infty}\frac{\mathrm{cov}(p_k^{((1-\kappa)N)},p_{l}^{((1-\kappa)N)})}{N^{k+l}}\\
&=&\frac{(1-\kappa)^{k+l}}{(2\pi\mathbf{i})^2}\sum_{i=1}^{n}\sum_{j=1}^{n}\oint_{|z-x_i|=\epsilon}\oint_{|w-x_j|=\epsilon}\left(\sum_{i=1}^{n}\frac{z}{n(z-x_i)}+ \frac{zH(z)}{1-\kappa}\right)^k\\
&&\times\left(\sum_{j=1}^{n}\frac{z}{n(z-x_j)}+ \frac{zH(z)}{1-\kappa}\right)^l Q(z,w)dzdw
\end{eqnarray*}
where
\begin{eqnarray*}
Q(z,w)=G(z,w)+\frac{1}{(z-w)^2}.
\end{eqnarray*}
\end{proposition}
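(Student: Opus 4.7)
My plan is to assemble the final formula by combining the three partial contributions already identified in the preceding pages, and then to collect them as a single kernel $Q(z,w)=G(z,w)+\frac{1}{(z-w)^2}$.

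First, I would start from the identity (\ref{cov}), which reduces the proposition to computing $\lim_{N\to\infty} N^{-(k+l)}\mathcal{G}_{(k,l,1-\kappa)}(X_{1-\kappa})$. I would then partition the double sum defining $\mathcal{G}_{(k,l,1-\kappa)}$ according to the cardinality of the intersection $\{a_1,\ldots,a_{q+1}\}\cap\{b_1,\ldots,b_{r+1}\}$ into three blocks: empty intersection, intersection of cardinality exactly one, and intersection of cardinality at least two. These are the only three cases since they are mutually exclusive and exhaustive.

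Second, I would appeal to the block computations already carried out in the preceding text. The empty-intersection block contributes, in the $N\to\infty$ limit after division by $N^{k+l}$, exactly the integral (\ref{ct0}), whose kernel is $G(z,w)$. The single-overlap block, handled via the decomposition $\tilde I=\tilde I_1+\tilde I_2$ and evaluated via residue calculus, contributes the integral (\ref{ct1}), whose kernel is $\frac{1}{(z-w)^2}$; here $\tilde I_1$ produces the "diagonal" piece where the two contours enclose the same singularity $x_j$, while $\tilde I_2$ produces the "off-diagonal" pieces where the contours enclose distinct $x_j,x_p$, and together they form the full double sum over $i,j\in\{1,\dots,n\}$. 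The third block was shown, using the $N$-degree bounds from Lemma~\ref{l33}, to have $N$-degree strictly less than $k+l$, hence to vanish in the rescaled limit. Adding (\ref{ct0}) and (\ref{ct1}) then yields the stated formula with $Q(z,w)=G(z,w)+\frac{1}{(z-w)^2}$.

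The main technical obstacle is the bookkeeping of subleading terms: one must verify that the remainders in Proposition~\ref{p36}(2) (the term $T_{(l,t)}$), the higher-order derivatives $\partial_{a_1}^{s}[\log \mathcal{S}_{\rho_{\lfloor tN\rfloor},X_t}]$ with $s\geq 2$, and all contributions from sub-patterns of the symmetrization where extra indices collapse into the same residue class modulo $n$, are each genuinely $o(N^{k+l})$. This amounts to applying the $N$-degree bounds of Lemma~\ref{l33} and Proposition~\ref{p36} to each pattern produced by the decomposition~(\ref{exf}) and checking, via Lemma~\ref{l552}, that the residue extractions reduce the $N$-power by the expected amount. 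Once that bookkeeping is done, the three-block decomposition above immediately delivers the claimed covariance asymptotic.
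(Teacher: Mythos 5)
Your proposal matches the paper's proof essentially exactly: both start from (\ref{cov}), split $\mathcal{G}_{(k,l,1-\kappa)}$ by the cardinality of $\{a_1,\ldots,a_{q+1}\}\cap\{b_1,\ldots,b_{r+1}\}$ into the three cases (empty, one, at least two), identify the first two blocks with (\ref{ct0}) and (\ref{ct1}) and the third as subleading via the $N$-degree bounds, and then add to obtain $Q=G+\frac{1}{(z-w)^2}$. Your reading of $\tilde I_1$ as the diagonal ($i=j$) contour contribution and $\tilde I_2$ as the off-diagonal ($i\neq j$) one is also how the paper's computation is organized.
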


\begin{proof}By (\ref{cov}), $\lim_{N\rightarrow\infty}\frac{\mathrm{cov}(p_k^{((1-\kappa)N)},p_{l}^{((1-\kappa)N)})}{N^{k+l}}$ should be the sum of (\ref{ct0}) and (\ref{ct1}), divided by $N^{k+l}$. Then the proposition follows.
\end{proof}

\subsection{Multi-Level Correlations}

Define a mapping $\phi: \{1,\ldots,2N+1\}\rightarrow \{\mu^{(i)},\nu^{(j)}:i,j\in \{1,2,\ldots,N\}\}$ as follows
\begin{eqnarray*}
\phi(n)=\left\{\begin{array}{cc}\mu^{\left(\frac{n-1}{2}\right)}&\mathrm{if}\ n\ \mathrm{is\ odd}\\\nu^{\left(\frac{n}{2}\right)}&\mathrm{if}\ n\ \mathrm{is\ even}\end{array}\right.
\end{eqnarray*}

For $i\in \{1,2,\ldots,N\}$, define
\begin{eqnarray*}
C_i=(x_i,x_{i+1},\ldots,x_N)\in \RR^{N-i+1};
\end{eqnarray*}
and for $i\in I_2$, let
\begin{eqnarray*}
B_i=y_i C_i=(y_ix_i,y_ix_{i+1},\ldots,y_ix_N)\in \RR^{N-i+1}.
\end{eqnarray*}

Let $1\leq n_1\leq n_2\leq\ldots \leq n_s=2N+1$ be positive row numbers of the square-hexagon lattice, counting from the top.
For $1\leq i\leq s$, let $\rho_{n_i}$ be the induced probability measure on dimer configurations of the $n_i$th row. Then the induced  probability measure on the state space
\begin{eqnarray*}
\GT_{\lfloor\frac{n_1}{2}\rfloor}\times\GT_{\lfloor\frac {n_2}{2}\rfloor}\times \ldots\times \GT_{\lfloor \frac{n_s}{2}\rfloor}
\end{eqnarray*}
by the measure proportional to the product of weights of present edges of dimer configurations on the square-hexagon lattice $\mathcal{R}(\Omega,\check{c})$ can be expressed as follows:
\begin{eqnarray}
\mathrm{Prob}\left(\phi(n_s),\ldots,\phi(n_1)\right)=\rho_{n_s}\left((\phi(n_s)\right)\prod_{i=2}^{k}\mathrm{Prob}\left[\phi(n_{i-1})|\phi(n_i)\right]\label{prb}
\end{eqnarray}
Here $\mathrm{Prob}\left[\phi(n_{i-1})|\phi(n_i)\right]$ is the probability of $\phi(n_{i-1})$ conditional on $\phi(n_i)$. In particular, we have
\begin{eqnarray}
\mathrm{Prob}[\mu^{(t-1)}|\nu^{(t)}]&=&\mathrm{pr}_{C_{N-t+1}}(\nu^{(t)}\rightarrow\mu^{(t-1)})\label{cc1}\\
\mathrm{Prob}[\nu^{(t)}|\mu^{(t)}]&=&\begin{cases}\mathrm{st}_{B_{N-t+1}}(\mu^{(t)}\rightarrow\nu^{(t)}),\ \mathrm{If}\ N-t+1\in I_2\\ \mathbf{1}_{\nu^{(t)}=\mu^{(t)}},\ \mathrm{otherwise}\end{cases}\label{cc2}
\end{eqnarray}
where $\mathbf{1}_{\nu^{(t)}=\mu^{(t)}}$ is the indicator of $\nu^{(t)}=\mu^{(t)}$; $x_i,y_j$ are edge weights; and
\begin{eqnarray*}
\mathrm{pr}_{C_{N-t+1}}(\nu^{(t)}\rightarrow\mu^{(t-1)})=\begin{cases}x_{N-t+1}^{|\nu^{(t)}|-|\mu^{(t-1)}|}\frac{s_{\mu^{(t-1)}}(x_{N-t+2},\ldots,x_{N})}{s_{\nu^{(t)}}(x_{N-t+1},\ldots,x_N)},\ \mathrm{If}\ \mu^{(t-1)}\prec \nu^{(t)}\\0,\ \mathrm{otherwise}\end{cases}\\
\mathrm{st}_{B_{N-t+1}}(\mu^{(t)}\rightarrow\nu^{(t)})=\begin{cases}\frac{y_{N-t+1}^{|\nu(t)|-|\mu(t)|}}{\prod_{j=N-t+1}^{N}(1+y_{N-t+1}x_j)}\frac{s_{\nu^{(t)}}(x_{N-t+1},\ldots,x_N)}{s_{\mu^{(t)}}(x_{N-t+1},\ldots,x_N)},\ \mathrm{If}\ \mu^{(t)}\subset \nu^{(t)}\\0,\ \mathrm{otherwise}\end{cases};
\end{eqnarray*}
see Section 2.4 of \cite{BL17}.

\begin{definition}\label{df59}(Multi-dimensional Schur generating function) Let $1\leq N_1\leq N_2\leq\ldots\leq N_s$ be positive integers. For a probability measure $\rho$ on $\prod_{t=1}^{s}\GT_{N_t}$, we define the $s$-dimensional Schur generating function with respect to $X=(x_1,\ldots,x_N)$ by
\begin{eqnarray*}
&&\mathcal{S}_{\rho,X}(u_{1,1},\ldots,u_{N_1,1};\ldots;u_{1,s},\ldots,u_{N_s,s})\\
&=&\sum_{\lambda^1\in \GT_{N_1},\ldots,\lambda^s\in\GT_{N_s}}\rho(\lambda^1,\ldots,\lambda^s)\prod_{t=1}^{s}\frac{s_{\lambda^t}(u_{1,t},\ldots,u_{\lfloor\frac{n_t}{2} \rfloor,t})}{s_{\lambda^t}(x_{N-\lfloor\frac{n_s}{2}\rfloor+1},\ldots,x_N)}
\end{eqnarray*}
\end{definition}

The multi-dimensional Schur generating function with respect to $(1,\ldots,1)$ was defined in \cite{BG17}.

\begin{lemma}\label{l59}Let $m_1,\ldots,m_k$ be positive integers. Let $1\leq n_1\leq n_2\leq\ldots \leq n_k\leq 2N+1$ be positive row numbers of the square-hexagon lattice, counting from the top. Assume that $(\phi(n_s),\ldots,\phi(n_1))$ has distribution $\rho$ defined by (\ref{prb}).
For $1\leq s\leq k$, let $\mathcal{D}_{l}^{(s)}$ be the $l$-th order differential operator defined by
\begin{eqnarray}
\mathcal{D}_{l}^{(n_s)}=\frac{1}{V_{\lfloor \frac{n_s}{2}\rfloor}}\left(\sum_{i=1}^{\lfloor \frac{n_s}{2}\rfloor}\left(u_{i,s}\frac{\partial}{\partial u_{i,s}}\right)^l\right)V_{\lfloor \frac{n_s}{2}\rfloor}\label{dd}
\end{eqnarray}
where $V_{\lfloor \frac{n_s}{2}\rfloor}$ is the Vandermonde determinant on $\lfloor \frac{n_s}{2}\rfloor$ variables $u_{1,s},\ldots,u_{\lfloor \frac{m}{2}\rfloor,s}$.
Let 
\begin{eqnarray*}
\ol{X}_{k}=\left(x_{N-\lfloor \frac{n_k}{2}\rfloor+1},\ldots,x_{N}\right).
\end{eqnarray*}
 Then
\begin{eqnarray*}
&&\left.\mathcal{D}_{m_1}^{(n_1)}\mathcal{D}_{m_2}^{(n_2)}\ldots\mathcal{D}_{m_k}^{(n_k)}\mathcal{S}_{\rho,X}(u_{1,1},\ldots,u_{\lfloor \frac{n_1}{2}\rfloor,1};\ldots;u_{1,k},\ldots,u_{\lfloor \frac{n_k}{2}\rfloor,k})\right|_{(u_{1,s},\ldots,u_{\lfloor \frac{n_s}{2}\rfloor,s})=\ol{X}_s,\ \forall 1\leq s\leq k}\\
&=&\mathbf{E}\left(\sum_{i=1}^{\lfloor \frac{n_1}{2}\rfloor}(\phi(n_1)_{i_1}+\lfloor \frac{n_1}{2}\rfloor-i_1)^{m_1}\sum_{i_2=1}^{\lfloor\frac {n_2}{2}\rfloor}(\phi(n_2)_{i_2}+\lfloor\frac{n_2}{2}\rfloor-i_2)^{m_2}\cdot\ldots\cdot\sum_{i_k=1}^{\lfloor\frac{n_k}{2}\rfloor}(\phi(n_k)_{i_k}+\lfloor \frac{n_k}{2}\rfloor-i_k)^{m_k}\right).
\end{eqnarray*}
where $\mathbf{E}$ is the expectation with respect to the probability measure defined by (\ref{prb}), and $\mathcal{S}_{\rho,X}$ is the multi-dimensional Schur generating function as defined in Definition \ref{df59}.
\end{lemma}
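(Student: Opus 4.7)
The plan is to reduce the multi-dimensional statement to $k$ copies of the single-variable identity from Proposition \ref{pn41} (equivalently Proposition 4.3 of \cite{bg}), exploiting the crucial fact that the operator $\mathcal{D}_{m_s}^{(n_s)}$ acts only on the block of variables $(u_{1,s},\dots,u_{\lfloor n_s/2\rfloor,s})$, so these operators commute and their actions on the product of Schur functions in the definition of $\mathcal{S}_{\rho,X}$ decouple.

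First, I would substitute the definition of $\mathcal{S}_{\rho,X}$ from Definition \ref{df59} to obtain
\begin{equation*}
  \mathcal{S}_{\rho,X}(u_{*,1};\dots;u_{*,k})=\sum_{\lambda^1,\dots,\lambda^k}\rho(\lambda^1,\dots,\lambda^k)\prod_{s=1}^{k}\frac{s_{\lambda^s}(u_{1,s},\dots,u_{\lfloor n_s/2\rfloor,s})}{s_{\lambda^s}(\ol{X}_s)}.
\end{equation*}
The denominators $s_{\lambda^s}(\ol{X}_s)$ are constants with respect to every $u_{i,s}$, so they pass through the differential operators unchanged. Next, because the $k$ operators act on disjoint sets of variables, they commute, and it suffices to compute the effect of each $\mathcal{D}_{m_s}^{(n_s)}$ on the single factor $s_{\lambda^s}(u_{1,s},\dots,u_{\lfloor n_s/2\rfloor,s})$.

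The heart of the argument is the identity (\ref{ds}) from Proposition \ref{pn41}: for any signature $\lambda\in\GT_{\lfloor n_s/2\rfloor}$,
\begin{equation*}
  \frac{1}{V_{\lfloor n_s/2\rfloor}(U_s)}\sum_{i=1}^{\lfloor n_s/2\rfloor}(u_{i,s}\partial_{i,s})^{m_s}V_{\lfloor n_s/2\rfloor}(U_s)\,s_{\lambda^s}(U_s)=\Bigl(\sum_{i=1}^{\lfloor n_s/2\rfloor}(\lambda^s_i+\lfloor n_s/2\rfloor-i)^{m_s}\Bigr)s_{\lambda^s}(U_s),
\end{equation*}
which exhibits $\mathcal{D}_{m_s}^{(n_s)}$ as diagonalised on the Schur basis with eigenvalues equal to the desired power sums of shifted parts. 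Applying this identity once per operator $\mathcal{D}_{m_s}^{(n_s)}$ yields
\begin{equation*}
  \prod_{s=1}^{k}\mathcal{D}_{m_s}^{(n_s)}\mathcal{S}_{\rho,X}=\sum_{\lambda^1,\dots,\lambda^k}\rho(\lambda^1,\dots,\lambda^k)\prod_{s=1}^{k}\Bigl[\sum_{i_s=1}^{\lfloor n_s/2\rfloor}(\lambda^s_{i_s}+\lfloor n_s/2\rfloor-i_s)^{m_s}\Bigr]\prod_{s=1}^{k}\frac{s_{\lambda^s}(U_s)}{s_{\lambda^s}(\ol{X}_s)}.
\end{equation*}

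Finally, I would specialise $U_s=\ol{X}_s$ for every $s$; each ratio $s_{\lambda^s}(U_s)/s_{\lambda^s}(\ol{X}_s)$ becomes $1$, and the remaining sum over $(\lambda^1,\dots,\lambda^k)$ weighted by $\rho$ is by definition the expectation $\mathbf{E}$ of the product of the power-sum factors with $\lambda^s=\phi(n_s)$, which is exactly the claim. There is no significant obstacle here since the crucial identity (\ref{ds}) is already available and the multi-level structure is handled by the trivial commutation and decoupling of the operators across disjoint blocks of variables; the only bookkeeping point worth double-checking is that the Vandermonde $V_{\lfloor n_s/2\rfloor}$ appearing inside $\mathcal{D}_{m_s}^{(n_s)}$ is the one in the same $\lfloor n_s/2\rfloor$ variables as the Schur function $s_{\lambda^s}$, so that the eigenvalue identity applies verbatim.
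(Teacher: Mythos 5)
Your proof is correct and is essentially the paper's argument: the paper only says the lemma "follows from explicit computations," and those computations are exactly your decoupling of the commuting operators $\mathcal{D}_{m_s}^{(n_s)}$ across the disjoint variable blocks, followed by the eigenrelation (\ref{ds}) from Proposition \ref{pn41} and the specialization $U_s=\ol{X}_s$ (reading the denominator in Definition \ref{df59} as $s_{\lambda^t}(\ol{X}_t)$, as you do). No gaps.
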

\begin{proof}The theorem follows from explicit computations.
\end{proof}

\begin{lemma}\label{ll511}Suppose the assumptions in Lemma \ref{l59} hold. For $1\leq s\leq k$, let
\begin{eqnarray*}
t_s=N-\lfloor\frac{n_s}{2} \rfloor.
\end{eqnarray*}
For $1\leq s\leq k-1$, let
\begin{eqnarray*}
\tilde{\mathcal{D}}_{m_s,k}^{(n_s)}:=\frac{1}{\hat{V}_{\lfloor \frac{n_s}{2}\rfloor}}\left(\sum_{i=t_s+1}^{N}\left(u_{i}\frac{\partial}{\partial u_{i}}\right)^{m_s}\right)\hat{V}_{\lfloor \frac{n_s}{2}\rfloor}\prod_{i\in \{t_{s+1}+1,\ldots,t_s\}\cap I_2}\prod_{j= t_s+1}^{N}\left(\frac{1+y_{\ol{i}}u_j}{1+y_{\ol{i}}x_{\ol{j}}}\right),
\end{eqnarray*}
and let 
\begin{eqnarray*}
\tilde{\mathcal{D}}_{m_k,k}^{(n_k)}:=\frac{1}{\hat{V}_{\lfloor \frac{n_k}{2}\rfloor}}\left(\sum_{i=t_k+1}^{N}\left(u_{i}\frac{\partial}{\partial u_{i}}\right)^{m_k}\right)\hat{V}_{\lfloor \frac{n_k}{2}\rfloor};
\end{eqnarray*}
where $\hat{V}_{\lfloor \frac{n_s}{2}\rfloor}$ is the Vandermonde determinant on $\lfloor \frac{n_s}{2}\rfloor$ variables $u_{t_s+1},\ldots,u_{N}$. Then
\begin{eqnarray*}
&&\left.\mathcal{D}_{m_1}^{(n_1)}\mathcal{D}_{m_2}^{(n_2)}\ldots\mathcal{D}_{m_k}^{(n_k)}\mathcal{S}_{\rho,X}(u_{1,1},\ldots,u_{\lfloor \frac{n_1}{2}\rfloor,1};\ldots;u_{1,k},\ldots,u_{\lfloor \frac{n_k}{2}\rfloor,k})\right|_{(u_{1,s},\ldots,u_{\lfloor \frac{n_s}{2}\rfloor,s})=\ol{X}_s,\ \forall 1\leq s\leq k}\\
&=&\tilde{\mathcal{D}}_{m_1,k}^{(n_1)}\tilde{\mathcal{D}}_{m_2,k}^{(n_2)}\ldots\tilde{\mathcal{D}}_{m_k,k}^{(n_k)}\left\{
\left.\mathcal{S}_{\rho_{\lfloor\frac{n_k}{2} \rfloor},X}(u_{N-\lfloor\frac{n_k}{2} \rfloor+1},\ldots,u_N)\right\}\right|_{(u_{1},\ldots,u_N)=(x_1,\ldots,x_N)}
\end{eqnarray*}
where $\mathcal{S}_{\rho_{\lfloor\frac{n_k}{2} \rfloor},X}(u_1,\ldots,u_N)$ is the one-dimensional Schur generating function defined as in Definition \ref{df33}, and $\rho_{\frac{n_k}{2}}$ is a probability measure on $\GT_{\lfloor \frac{n_k}{2}\rfloor}^+$ defined as in Lemma \ref{l33}.
\end{lemma}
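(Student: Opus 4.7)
The plan is to prove this identity by induction on the number of levels $k$. The main technical tool will be a transfer identity relating the single-dimensional Schur generating functions at consecutive levels via the Markov decomposition (\ref{prb}).

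First, I will establish a transfer identity by iterating the elementary row transitions. Using the Schur-function branching rules
\[
\sum_{\mu \prec \nu} x^{|\nu|-|\mu|} s_\mu(U) = s_\nu(x,U), \qquad \prod_{j} (1+y u_j)\, s_\mu(U) = \sum_{\substack{\nu \supset \mu\\ \nu/\mu\ \text{vertical strip}}} y^{|\nu|-|\mu|} s_\nu(U),
\]
together with the explicit forms (\ref{cc1}) and (\ref{cc2}) of the transition probabilities, I will marginalize the intermediate signatures between $\phi(n_s)$ and $\phi(n_{s+1})=\lambda$ row by row. Each (\ref{cc1})-type row (horizontal strip) inserts one new $x$-argument into the Schur function via the first branching identity, while each non-trivial (\ref{cc2})-type row (vertical strip with index in $I_2$) produces one Pieri factor via the dual Pieri identity. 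The resulting transfer identity takes the form
\begin{align*}
\sum_\nu \mathrm{Prob}[\nu\mid\lambda]\, \frac{s_\nu(U)}{s_\nu(\ol X_s)} = \frac{s_\lambda(x_{t_{s+1}+1},\ldots,x_{t_s}, U)}{s_\lambda(\ol X_{s+1})} \cdot W_s(U),
\end{align*}
where, under the natural identification $u_{j,s}\leftrightarrow u_{t_s+j}$, the weight $W_s(U)$ reproduces the Pieri product defining $\tilde{\mathcal{D}}_{m_s,k}^{(n_s)}$.

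For the base case $k=1$, the weight factor in $\tilde{\mathcal{D}}_{m_1,1}^{(n_1)}$ is the empty product, so the operator coincides with $\mathcal{D}_{m_1}^{(n_1)}$ under the identification $u_{j,1}\leftrightarrow u_{t_1+j}$, and the identity follows directly from Lemma~\ref{l59}. For the inductive step, I will expand $\mathcal{S}_{\rho,X}$ using (\ref{prb}) and apply the transfer identity with $s=1$ to marginalize $\phi(n_1)$; the level-$1$ Schur factor becomes an enlarged level-$2$ factor multiplied by $W_1$. Under the variable identification, composing $\mathcal{D}_{m_1}^{(n_1)}$ with multiplication by $W_1$ yields exactly $\tilde{\mathcal{D}}_{m_1,k}^{(n_1)}$. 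Specializing $u_{\cdot,1}=\ol X_1$ then merges $(x_{t_2+1},\ldots,x_{t_1},\ol X_1)=\ol X_2$ by symmetry of the Schur function, leaving a $(k-1)$-level structure to which the inductive hypothesis applies.

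The hardest part will be the row-by-row bookkeeping in the transfer identity: one must carefully track which intermediate rows produce horizontal-strip steps (which change the number of parts and insert a new $x$-argument) versus non-trivial vertical-strip steps (which preserve the number of parts and contribute a Pieri factor only when the row index lies in $I_2$), and verify via Lemma~\ref{lmm212} that the accumulated contributions match precisely the definition of $\tilde{\mathcal{D}}_{m_s,k}^{(n_s)}$.
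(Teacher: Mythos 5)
Your toolkit is essentially the right one: induction on $k$, the transfer identity (which is exactly Lemma \ref{lmm212}, so it need not be re-derived from the branching and Pieri rules), and the correct observation that composing the Vandermonde-conjugated operator with multiplication by the Pieri weight $W_s$ produces $\tilde{\mathcal{D}}_{m_s,k}^{(n_s)}$; the base case is fine. The genuine gap is in the inductive step, and it comes from peeling the levels from the wrong end of the Markov chain (\ref{prb}). After you marginalize $\phi(n_1)$ conditionally on $\phi(n_2)=\lambda^2$, the level-$1$ factor becomes $W_1(U_1)\,s_{\lambda^2}(x_{t_2+1},\ldots,x_{t_1},U_1)/s_{\lambda^2}(\ol{X}_2)$. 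But $\mathcal{D}_{m_1}^{(n_1)}$ must act on the $U_1$-variables \emph{before} the specialization $U_1=\ol{X}_1$; hence the specialization does not ``merge'' the arguments into $s_{\lambda^2}(\ol{X}_2)$ and cancel the denominator, but instead leaves behind a scalar weight $c_{m_1}(\lambda^2):=\tilde{\mathcal{D}}_{m_1,k}^{(n_1)}\bigl\{s_{\lambda^2}(x_{t_2+1},\ldots,x_{t_1},u_{t_1+1},\ldots,u_N)/s_{\lambda^2}(\ol{X}_2)\bigr\}\big|_{u=x}$ depending on $\lambda^2$. What remains is therefore not the $(k-1)$-level Schur generating function of a probability measure but a $\lambda^2$-weighted sum, so the inductive hypothesis does not apply as you claim; to close the induction you would need an extra exchange lemma identifying the insertion of the weight $c_{m_1}(\lambda^2)$ in the lower-level expectation with the application of $\tilde{\mathcal{D}}_{m_1,k}^{(n_1)}$ as the \emph{outermost} operator on the shared variables $u_{t_1+1},\ldots,u_N$ (i.e.\ after $\tilde{\mathcal{D}}_{m_2,k}^{(n_2)},\ldots,\tilde{\mathcal{D}}_{m_k,k}^{(n_k)}$ have already acted on those same variables), and this does not follow formally from the unweighted $(k-1)$-level statement.

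The repair is to run the induction from the deepest selected row, as the paper does: condition on $\lambda^k\in\GT_{\lfloor n_k/2\rfloor}$, whose marginal in (\ref{prb}) is the unconditioned $\rho_{\lfloor n_k/2\rfloor}$. Then $\mathcal{D}_{m_k}^{(n_k)}$ acts only on its own variables and, by the eigenrelation (\ref{ds}), contributes the scalar $\sum_j(\lambda^k_j+\lfloor n_k/2\rfloor-j)^{m_k}$, independent of all remaining variables, while the rest of the expression is literally the multi-dimensional Schur generating function of the conditional measure $\rho(\cdot\,|\,\lambda^k)$, so the induction hypothesis applies verbatim. Your transfer identity (Lemma \ref{lmm212}, applied with $\lambda^k$ in the role of the boundary signature) then rewrites the conditional one-dimensional generating function as the Pieri product times $s_{\lambda^k}(x_{t_k+1},\ldots,x_{t_{k-1}},u_{t_{k-1}+1},\ldots,u_N)/s_{\lambda^k}(\ol{X}_k)$; the Pieri product is absorbed to upgrade $\tilde{\mathcal{D}}_{m_{k-1},k-1}^{(n_{k-1})}$ to $\tilde{\mathcal{D}}_{m_{k-1},k}^{(n_{k-1})}$, and the eigenrelation lets you resum over $\lambda^k$ to reinstate $\tilde{\mathcal{D}}_{m_k,k}^{(n_k)}$ acting first on $\mathcal{S}_{\rho_{\lfloor n_k/2\rfloor},X}$. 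With the peeling order reversed in this way, your argument becomes the paper's proof.
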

\begin{proof}We shall prove the lemma by induction on $k$. First of all, when $k=1$, the lemma obviously holds. Assume that the lemma holds when $k=l-1$, where $\geq 2$ is a positive integer. Then when $k=l$, we have
\begin{eqnarray*}
&&\left.\mathcal{D}_{m_1}^{(n_1)}\mathcal{D}_{m_2}^{(n_2)}\ldots\mathcal{D}_{m_l}^{(n_l)}\mathcal{S}_{\rho,X}(u_{1,1},\ldots,u_{\lfloor \frac{n_1}{2}\rfloor,1};\ldots;u_{1,l},\ldots,u_{\lfloor \frac{n_l}{2}\rfloor,l})\right|_{(u_{1,s},\ldots,u_{\lfloor \frac{n_s}{2}\rfloor,s})=\ol{X}_s,\ \forall 1\leq s\leq l}\\
&=&\sum_{\lambda^{l}\in \GT_{\lfloor \frac{n_l}{2}\rfloor}}\rho_{\lfloor\frac{n_l}{2} \rfloor}(\lambda^l)\mathcal{D}_{m_l}^{(n_l)}\left(\frac{s_{\lambda^l}(u_{1,l},\ldots,u_{\lfloor \frac{n_l}{2}\rfloor,l})}{s_{\lambda^l}(\ol{X}_l)}\right)\\
&&\left.\mathcal{D}_{m_1,l-1}^{(n_1)}\ldots\mathcal{D}_{m_{l-1},l-1}^{(n_{l-1})}\mathcal{S}_{\rho(\cdot|\lambda^{l}),X}(u_{1,1},\ldots,u_{\lfloor \frac{n_1}{2}\rfloor,1};\ldots;u_{1,{l-1}},\ldots,u_{\lfloor \frac{n_{l-1}}{2}\rfloor,{l-1}})\right|_{(u_{1,s},\ldots,u_{\lfloor \frac{n_s}{2}\rfloor,s})=\ol{X}_s,\ \forall 1\leq s\leq l}
\end{eqnarray*}
where $\rho(\cdot|\lambda^l)$ is a probability on $\prod_{s=1}^{l-1}\GT_{\lfloor\frac{n_s}{2} \rfloor}$ obtained from $\rho$ by conditional on the configuration $\lambda^l$ on $\GT_{\lfloor\frac{n_l}{2} \rfloor}$. By the induction hypothesis and Lemma \ref{lmm212}, we have
\begin{eqnarray*}
&&\left.\mathcal{D}_{m_1}^{(n_1)}\ldots\mathcal{D}_{m_{l-1}}^{(n_{l-1})}\mathcal{S}_{\rho(\cdot|\lambda^{l}),X}(u_{1,1},\ldots,u_{\lfloor \frac{n_1}{2}\rfloor,1};\ldots;u_{1,{l-1}},\ldots,u_{\lfloor \frac{n_{l-1}}{2}\rfloor,{l-1}})\right|_{(u_{1,s},\ldots,u_{\lfloor \frac{n_s}{2}\rfloor,s})=\ol{X}_s,\ \forall 1\leq s\leq l-1}\\
&=&\tilde{\mathcal{D}}_{m_1,l-1}^{(n_1)}\tilde{\mathcal{D}}_{m_2,l-1}^{(n_2)}\ldots\tilde{\mathcal{D}}_{m_{l-1},l-1}^{(n_{l-1})}\left\{\left.\mathcal{S}_{\rho_{\lfloor\frac{n_{l-1}}{2} \rfloor}(\cdot|\lambda^l),X}(u_{N-\lfloor\frac{n_{l-1}}{2} \rfloor+1},\ldots,u_N)\right\}\right|_{(u_{1},\ldots,u_N)=(x_1,\ldots,x_N)}\\
&=&\tilde{\mathcal{D}}_{m_1,l}^{(n_1)}\tilde{\mathcal{D}}_{m_2,l}^{(n_2)}\ldots\tilde{\mathcal{D}}_{m_{l-1},l-1}^{(n_{l-1})}\left\{\left[\prod_{i\in \{t_{l},t_{l}+1,\ldots,t_{l-1}\}\cap I_2}\prod_{j\in t_{l-1}+1}^{N}\left(\frac{1+y_{\ol{i}}u_j}{1+y_{\ol{i}}x_{\ol{j}}}\right)\right]\right.\\
&&\left.\left.\frac{s_{\lambda^l}(x_{N-\lfloor\frac{n_l}{2} \rfloor+1},\ldots,x_{N-\lfloor\frac{n_{l-1}}{2}\rfloor},u_{N-\lfloor\frac{n_{l-1}}{2} \rfloor+1},\ldots,u_N)}{s_{\lambda^l}(x_{N-\lfloor\frac{n_l}{2} \rfloor+1},\ldots,x_N)}\right\}\right|_{(u_{1},\ldots,u_N)=(x_1,\ldots,x_N)}\\
&=&\tilde{\mathcal{D}}_{m_1,l}^{(n_1)}\tilde{\mathcal{D}}_{m_2,l}^{(n_2)}\ldots\tilde{\mathcal{D}}_{m_{l-1},l}^{(n_{l-1})}\left\{\left.\frac{s_{\lambda^l}(x_{N-\lfloor\frac{n_l}{2} \rfloor+1},\ldots,x_{N-\lfloor\frac{n_{l-1}}{2}\rfloor},u_{N-\lfloor\frac{n_{l-1}}{2} \rfloor+1},\ldots,u_N)}{s_{\lambda^l}(x_{N-\lfloor\frac{n_l}{2} \rfloor+1},\ldots,x_N)}\right\}\right|_{(u_{1},\ldots,u_N)=(x_1,\ldots,x_N)}
\end{eqnarray*}
Note also that
\begin{eqnarray*}
&&\left.\mathcal{D}_{m_l}^{(n_l)}\left(\frac{s_{\lambda^l}(u_{1,l},\ldots,u_{\lfloor \frac{n_l}{2}\rfloor,l})}{s_{\lambda^l}(\ol{X}_l)}\right)\right|_{(u_{1,l},\ldots,u_{\lfloor \frac{n_l}{2}\rfloor,l})=\ol{X}_l}=\sum_{j=1}^{\lfloor\frac{n_l}{2} \rfloor}\left(\lambda_j^{l}+\lfloor \frac{n_l}{2}\rfloor-j\right)^{m_l}\\
&=&\tilde{\mathcal{D}}_{m_l,l}^{(n_l)}\left.\left\{\frac{s_{\lambda^l}(u_{N-\lfloor\frac{n_l}{2} \rfloor+1},\ldots,\ldots,u_N)}{s_{\lambda^l}(x_{N-\lfloor\frac{n_l}{2} \rfloor+1},\ldots,x_N)}\right\}\right|_{(u_{1},\ldots,u_N)=(x_1,\ldots,x_N)}
\end{eqnarray*}
Then the lemma follows.
\end{proof}

Let $m_1,\ldots,m_k$ and $n_1,\ldots,n_k$ be as in Lemma \ref{l59}.
For $1\leq s\leq k$, we introduce the notation
\begin{eqnarray*}
\left.E_{l,s}=\mathcal{F}_{(l,\frac{n_s}{2N}) }(u_{N-\lfloor\frac{n_s}{2} \rfloor}+1,\ldots,u_N)\right|_{(u_1,\ldots,u_N)=(x_1,\ldots,x_N)}.\\
\end{eqnarray*}
where $\mathcal{F}$ is defined by (\ref{flu}) and can be expressed as in (\ref{exf}).

Let $s_1<s_2$ be positive integers between $1$ and $k$. Define
\begin{eqnarray}
&&G_{s_1,s_2}(u_{N-\lfloor\frac{n_{s_2}}{2}\rfloor+1},\ldots,u_N)\label{g12}\\
&=&m_{s_1}\sum_{r=0}^{m_{s_1}-1}\left(\begin{array}{c}m_{s_1}-1\\r\end{array}\right)\sum_{\{b_1,\ldots,b_{r+1}\}\subset\{N-\lfloor\frac{n_{s_1}}{2} \rfloor+1,\ldots,N\}}(r+1)!\notag\\
&&\times\mathrm{Sym}_{b_1,\ldots,b_{r+1}}\frac{u_{b_1}^{m_{s_1}}\frac{\partial}{\partial u_{b_1}}\left[\mathcal{F}_{(m_{s_2},\frac{n_{s_2}}{2N})}\right]\left(\frac{\partial}{\partial u_{b_1}}\left[\log \mathcal{S}_{\rho_{\lfloor \frac{n_{s_1}}{2} \rfloor},X}\right]\right)^{m_{s_1}-1-r}}{(u_{b_1}-u_{b_2})\ldots(u_{b_1}-u_{b_{r+1}})}.\notag
\end{eqnarray}

\begin{lemma}\label{ll66}
\begin{enumerate}
\item the degree of $N$ in $\left.G_{s_1,s_2}\right|_{(u_1,\ldots,u_N)=(x_1,\ldots,x_N)}$ is at most $m_{s_1}+m_{s_2}$. Moreover, for any index $i$ the degree of $N$ in $\left.\frac{\partial}{\partial u_i}G_{s_1,s_2}\right|_{(u_1,\ldots,u_N)=(x_1,\ldots,x_N)}$ is less than $m_{s_1}+m_{s_2}$.
\item
\begin{eqnarray*}
&&\frac{1}{\hat{V}_{\lfloor\frac{n_{s_1}}{2}\rfloor} \mathcal{S}_{\rho_{\lfloor \frac{n_{s_1}}{2}\rfloor},X} }m_{s_1}\sum_{i\in\{N-\lfloor \frac{n_{s_1}}{2}\rfloor+1,\ldots,N\}}\left(u_i\frac{\partial}{\partial u_i}\left[\mathcal{F}_{(m_{s_2},\frac{n_{s_2}}{2N})}\right]\right)\\
&&\left.\left(u_i\frac{\partial}{\partial u_i}\right)^{m_{s_1}-1}\left[\hat{V}_{\lfloor\frac{n_{s_1}}{2}\rfloor}\mathcal{S}_{\rho_{\lfloor \frac{n_{s_1}}{2}\rfloor},X} \right]\right|_{(u_1,\ldots,u_N)=(x_1,\ldots,x_N)}\\
&=&\left.G_{s_1,s_2}\right|_{(u_1,\ldots,u_N)=(x_1,\ldots,x_N)}+R
\end{eqnarray*}
where the degree of $N$ in $R$ is less than $l+k$.
\end{enumerate}
\end{lemma}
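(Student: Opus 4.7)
The statement has the same structural form as the single-level decomposition lemma following Proposition~\ref{p36} in the excerpt, where $\mathcal{G}_{(l_1,l_2,t)}$ plays the role that $G_{s_1,s_2}$ plays here. Comparing definitions, $G_{s_1,s_2}$ is the multi-level analogue of $\mathcal{G}_{(m_{s_1},m_{s_2},n_{s_1}/(2N))}$, with the inner indices running over the level-$s_1$ range $\{N-\lfloor n_{s_1}/2\rfloor+1,\ldots,N\}$ and $\mathcal{F}_{(m_{s_2},n_{s_2}/(2N))}$ drawn from the larger level-$s_2$ range. My plan is to transcribe the arguments of Proposition~\ref{p36} and of the subsequent $\mathcal{G}$-decomposition lemma into this setting, appealing to Lemma~\ref{l33} for $N$-degree bounds on $\partial^j\log\mathcal{S}_{\rho_{\lfloor n_{s_1}/2\rfloor},X}$, to Proposition~\ref{p36}(2)--(3) for the corresponding bounds on $\partial\mathcal{F}_{(m_{s_2},n_{s_2}/(2N))}$ and its mixed second partials, and to Lemma~\ref{l552} to resolve the apparent singularities arising from coincidences among the evaluation points $(x_1,\ldots,x_N)$ (which repeat modulo the period $n$).

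For Part (1), I would fix $r\in\{0,\ldots,m_{s_1}-1\}$ and bound a generic summand of $G_{s_1,s_2}$ at $U=X$. On $\{b_1,\ldots,b_{r+1}\}$ I impose the equivalence relation $b_i\sim b_j$ iff $b_i\equiv b_j\pmod n$, and split the $(r+1)!\,\mathrm{Sym}$ into a symmetrization within the class of $b_1$ and across the remaining classes; Lemma~\ref{l552} then transforms the within-class symmetrization into a finite-order derivative at $U=X$, removing all apparent poles. Multiplying the three $N$-degree contributions, namely $N^{m_{s_2}}$ from $\partial_{b_1}\mathcal{F}_{(m_{s_2},n_{s_2}/(2N))}$ by Proposition~\ref{p36}(2), $N^{m_{s_1}-1-r}$ from $(\partial_{b_1}\log\mathcal{S})^{m_{s_1}-1-r}$ by Lemma~\ref{l33}(1), and $O(N^{r+1})$ from the number of equivalence-class patterns, yields the claimed total bound $m_{s_1}+m_{s_2}$. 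The sharper bound on $\partial_i G_{s_1,s_2}$ then follows because an additional $\partial_i$ either strictly lowers the $N$-degree (by Lemma~\ref{l33}(2) when it hits $\log\mathcal{S}$, or Proposition~\ref{p36}(3) when it hits a mixed partial of $\mathcal{F}$) or hits one of the rational factors $u_{b_1}^{m_{s_1}}$ or $1/(u_{b_1}-u_{b_s})$, none of which carries a growing factor of $N$.

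For Part (2), I would write $(u_i\partial_i)^{m_{s_1}-1}[\hat V\mathcal{S}]/(\hat V\mathcal{S})$ as a universal polynomial in the logarithmic derivatives $(u_i\partial_i)^j\log(\hat V\mathcal{S})$ for $j=1,\ldots,m_{s_1}-1$ (valid because $u_i\partial_i$ is a derivation), whose highest-$N$-degree piece is the power $[(u_i\partial_i)\log(\hat V\mathcal{S})]^{m_{s_1}-1}$. Expanding $(u_i\partial_i)\log\hat V=\sum_{j\neq i}u_i/(u_i-u_j)$ and $(u_i\partial_i)\log\mathcal{S}$ via the binomial theorem and multiplying by $m_{s_1}\,(u_i\partial_i\mathcal{F}_{(m_{s_2},n_{s_2}/(2N))})$, the coefficient of $[(u_i\partial_i)\log\mathcal{S}]^{m_{s_1}-1-r}$, after summing over $i$ and symmetrizing in the $(r+1)$-tuple $\{i,j_1,\ldots,j_r\}$, reproduces exactly the $r$-th summand in $G_{s_1,s_2}$, with the combinatorial prefactor $m_{s_1}\binom{m_{s_1}-1}{r}(r+1)!$ accounting for the derivation binomial, the choice of $r$ positions occupied by $(u_i\partial_i)\log\hat V$, and the symmetrization. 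The remaining contributions, arising either from derivatives $(u_i\partial_i)^j\log\mathcal{S}$ with $j\geq 2$ (each of which costs a factor of $N$ relative to the leading piece by Lemma~\ref{l33}(2)) or from sub-leading Leibniz terms where $(u_i\partial_i)^k$ acts with $k<m_{s_1}-1$ on $\hat V\mathcal{S}$, collapse into $R$ with $N$-degree strictly less than $m_{s_1}+m_{s_2}$.

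The main obstacle is the combinatorial and residue bookkeeping when the indices in $\partial_{b_1}\mathcal{F}_{(m_{s_2},n_{s_2}/(2N))}$, itself an $(r'+1)$-fold symmetrized sum by the expansion \eqref{exf}, overlap the outer set $\{b_1,\ldots,b_{r+1}\}$; exactly the case split $|\{a_1,\ldots,a_{q+1}\}\cap\{b_1,\ldots,b_{r+1}\}|=0,1,\geq 2$ carried out earlier in the excerpt for the one-level case must be redone here, now with the index set restricted to $\{N-\lfloor n_{s_1}/2\rfloor+1,\ldots,N\}$ and the evaluation truncation $X_{1-\kappa}$ replaced by its level-$s_1$ analogue. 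No new analytic input is required beyond the ingredients cited above, but the overlap-$\geq 2$ case in particular must be handled by verifying, as in the unlabeled lemma following the definition of $\mathcal{G}_{l_1,l_2,t}$, that such terms always fall strictly below the target $N$-degree.
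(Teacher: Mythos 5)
Your proposal follows essentially the same route as the paper: for Part (1) you bound the $N$-degree of each factor via Lemma~\ref{l33} (for $\partial_{b_1}\log\mathcal{S}$) and Proposition~\ref{p36}(2) (for $\partial_{b_1}\mathcal{F}$), count $O(N^{r+1})$ summands, and invoke Lemma~\ref{l552} to kill the apparent poles at the repeated evaluation point; for Part (2) you rewrite $(u_i\partial_i)^{m_{s_1}-1}[\hat V\mathcal S]/(\hat V\mathcal S)$ in logarithmic derivatives and pick off the maximal-degree piece. This is exactly what the paper does, and your explicit appeal to Lemma~\ref{l552} fills in a step the paper leaves tacit.

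There is, however, one small gap in your argument for the strict inequality on $\partial_i G_{s_1,s_2}|_{U=X}$. You dichotomize by whether $\partial_i$ lands on $\log\mathcal S$ or $\partial\mathcal F$ (which strictly lowers the degree, correctly cited to Lemma~\ref{l33}(2) and Proposition~\ref{p36}(3)) or on a rational factor $u_{b_1}^{m_{s_1}}$ or $1/(u_{b_1}-u_{b_s})$. But in the latter case the remaining factors keep their full $N$-degree, so ``the rational factors carry no growing $N$'' alone does not produce a strict drop. What actually makes the degree drop is that $\partial_i$ applied to one of those rational factors is nonzero only when $i\in\{b_1,\ldots,b_{r+1}\}$, which pins one of the summation indices to $i$ and reduces the count of summands from $O(N^{r+1})$ to $O(N^{r})$. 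The paper signals this by ``discussing the cases when $i\in\{b_1,\ldots,b_{r+1}\}$ and $i\notin\{b_1,\ldots,b_{r+1}\}$ separately''; you should add the same case split and the count-reduction observation to close the argument.
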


\begin{proof}We first prove Part (1). By Lemma \ref{l33}, the degree of $N$ in $\left(\frac{\partial}{\partial u_{b_1}}\left[\log \mathcal{S}_{\rho_{\lfloor \frac{n_{s_1}}{2} \rfloor},X}\right]\right)^{m_{s_1}-1-r}$ is at most $m_{s_1}-1-r$. By Lemma \ref{p36}, the degree of $N$ in $\frac{\partial}{\partial u_{b_1}}\left[\mathcal{F}_{(m_{s_2},\frac{n_{s_2}}{2N})}\right]$ is at most $m_{s_2}$. The summation gives $O(N^{r+1})$ terms. Therefore, the degree of $N$ in $\left.G_{s_1,s_2}\right|_{(u_1,\ldots,u_N)=(x_1,\ldots,x_N)}$ is at most $m_{s_1}+m_{s_2}$. The fact that the degree of $N$ in $\frac{\partial}{\partial u_i}\left.G_{s_1,s_2}\right|_{(u_1,\ldots,u_N)=(x_1,\ldots,x_N)}$ is at most $m_{s_1}+m_{s_2}$ also follows from Lemmas \ref{l33} and \ref{p36}, and by discussing the cases when $i\in\{b_1,\ldots,b_{r+1}\}$ and $i\notin\{b_1,\ldots,b_{r+1}\}$ separately.

Now we prove Part (2). We have
\begin{eqnarray*}
&&\frac{1}{\hat{V}_{\lfloor\frac{n_{s_1}}{2}\rfloor} \mathcal{S}_{\rho_{\lfloor \frac{n_{s_1}}{2}\rfloor},X} }m_{s_1}\sum_{i\in\{N-\lfloor \frac{n_{s_1}}{2}\rfloor+1,\ldots,N\}}\left(u_i\frac{\partial}{\partial u_i}\left[\mathcal{F}_{(m_{s_2},\frac{n_{s_2}}{2N})}\right]\right)\\
&&\left.\left(u_i\frac{\partial}{\partial u_i}\right)^{m_{s_1}-1}\left[\hat{V}_{\lfloor\frac{n_{s_1}}{2}\rfloor}\mathcal{S}_{\rho_{\lfloor \frac{n_{s_1}}{2}\rfloor},X} \right]\right|_{(u_1,\ldots,u_N)=(x_1,\ldots,x_N)}\\
&=&m_{s_1}\sum_{t_0+t_1d_1+\ldots t_q d_q+r=m_{s_1}-1, t_1<t_2<\ldots<t_q}\left(\begin{array}{c}m_{s_1}-1\\r\end{array}\right)\sum_{\{b_1,\ldots,b_{r+1}\}\subset\{N-\lfloor\frac{n_{s_1}}{2} \rfloor+1,\ldots,N\}}(r+1)!\notag\\
&&\times\mathrm{Sym}_{b_1,\ldots,b_{r+1}}\frac{u_{b_1}^{m_{s_1}-t_0}\frac{\partial}{\partial u_{b_1}}\left[\mathcal{F}_{(m_{s_2},\frac{n_{s_2}}{2N})}\right]\left(\frac{\partial^{t_1}}{\partial u_{b_1}}\left[\log \mathcal{S}_{\rho_{\lfloor \frac{n_{s_1}}{2} \rfloor},X}\right]\right)^{d_1}\ldots\left(\frac{\partial^{t_q}}{\partial u_{b_1}}\left[\log \mathcal{S}_{\rho_{\lfloor \frac{n_{s_1}}{2} \rfloor},X}\right]\right)^{d_q}}{(u_{b_1}-u_{b_2})\ldots(u_{b_1}-u_{b_{r+1}})}
\end{eqnarray*}
By Lemmas \ref{l33} and \ref{p36}, the degree of $N$ in the expression above is at most
\begin{eqnarray}
d_1+\ldots+d_q+m_{s_2}+r+1.\label{dne}
\end{eqnarray}
Given that $t_0+t_1d_1+\ldots t_q d_q+r=m_{s_1}-1, t_1<t_2<\ldots<t_q$, the maximal of (\ref{dne}) achieves when $t_0=d_2=\ldots=d_q=0$, $t_1=1$, $d_1=m_{s_1}-r-1$; with maximal value $m_{s_1}+m_{s_2}$. Then Part (2) follows.
\end{proof}

\begin{lemma}\label{p510}Let $m_1,\ldots,m_k$ and $n_1,\ldots,n_k$ be as in Lemma \ref{l59}. Then
\begin{eqnarray*}
&&\lim_{N\rightarrow\infty}\frac{1}{N^{m_1+m_2+\ldots+m_k}}\left(\mathcal{D}_{m_1}^{(n_1)}-E_{m_1,s_1}\right)
 \left(\mathcal{D}_{m_2}^{(n_2)}-E_{m_2,s_{2}}\right)\ldots\left(\mathcal{D}_{m_k}^{(n_k)}-E_{m_k,s_k}\right)\\
&& \left.\mathcal{S}_{\rho,X}(u_{1,1},\ldots,u_{\lfloor \frac{n_1}{2}\rfloor,1};\ldots;u_{1,k},\ldots,u_{\lfloor \frac{n_k}{2}\rfloor,k})\right|_{(u_{1,s},\ldots,u_{\lfloor \frac{n_s}{2}\rfloor,s})=\ol{X}_s,\ \forall 1\leq s\leq m}\\
&=&\lim_{N\rightarrow\infty}\frac{1}{N^{m_1+m_2+\ldots+m_k}}\sum_{P\in \mathcal{P}_{\emptyset}^{k}}\left.\prod_{(s_1,s_2)\in P}G_{s_1,s_2}\right|_{(u_1,\ldots,u_N)=(x_1,\ldots,x_N)}.
\end{eqnarray*}
\end{lemma}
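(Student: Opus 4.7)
The plan is to prove Lemma \ref{p510} by reducing the multi-level expression to a single-level problem via Lemma \ref{ll511}, and then adapting the Wick-type expansion used in Lemma \ref{lc} to this setting. By Lemma \ref{ll511}, the left hand side equals
\[
\tilde{\mathcal{D}}_{m_1,k}^{(n_1)}\cdots \tilde{\mathcal{D}}_{m_k,k}^{(n_k)}\,\mathcal{S}_{\rho_{\lfloor n_k/2\rfloor},X}(u_{N-\lfloor n_k/2\rfloor+1},\ldots,u_N)\Big|_{(u_1,\ldots,u_N)=(x_1,\ldots,x_N)},
\]
after the centering constants are absorbed. Each operator $\tilde{\mathcal{D}}_{m_s,k}^{(n_s)}$ acts by $\frac{1}{\hat{V}_{\lfloor n_s/2\rfloor}}\sum_i(u_i\partial_i)^{m_s}\hat{V}_{\lfloor n_s/2\rfloor}$ together with multiplication by the product $\prod_{i\in\{t_{s+1}+1,\ldots,t_s\}\cap I_2}\prod_{j=t_s+1}^N(1+y_{\bar i}u_j)/(1+y_{\bar i}x_{\bar j})$; this extra factor equals $1$ at the evaluation point, and its logarithmic derivatives merely contribute to the function $H(z)$ appearing in Lemma \ref{l33} for the corresponding Schur generating function. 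Consequently Lemmas \ref{l33} and \ref{p36} apply uniformly to each $\tilde{\mathcal{D}}_{m_s,k}^{(n_s)}$.

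I would then proceed by induction on $k$, with the base case $k=1$ being Lemma \ref{lc}. Write each centered operator $\tilde{\mathcal{D}}_{m_s,k}^{(n_s)}-E_{m_s,s}$ in the symmetrized form of (\ref{exf}). Applying them successively from right to left, at each step a derivative $u_i\partial_i$ appearing in $\tilde{\mathcal{D}}_{m_{s_1},k}^{(n_{s_1})}$ may hit (i) the Vandermonde factor or the one-dimensional Schur generating function, producing a single-level contribution of the type controlled by $\mathcal{F}_{(m_{s_1},n_{s_1}/(2N))}$, or (ii) an $\mathcal{F}_{(m_{s_2},n_{s_2}/(2N))}$-factor already produced by an operator at a later level $s_2>s_1$, producing precisely the cross-level contribution $G_{s_1,s_2}$ defined in (\ref{g12}). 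The centering $-E_{m_s,s}$ removes exactly the diagonal leading term of type (i), so that only the pairing contributions survive.

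The $N$-degree bookkeeping is what closes the argument. By Proposition \ref{p36}(1)--(3) and Lemma \ref{ll66}(1), each factor $G_{s_1,s_2}|_{U=X}$ has $N$-degree exactly $m_{s_1}+m_{s_2}$, while a derivative of $G_{s_1,s_2}$ or of a centered $\mathcal{F}_{(m_s,\cdot)}-E_{m_s,s}$ drops the degree. Hence, among all ways to distribute the derivatives of the outer operators over the inner ones, only configurations corresponding to a perfect pairing of $\{1,\ldots,k\}$ attain the maximal $N$-degree $m_1+\cdots+m_k$; any unpaired index forces a strictly smaller degree and drops out in the $N\to\infty$ limit. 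In particular the right hand side vanishes (and so does the left) when $k$ is odd, since $\mathcal{P}_\emptyset^k=\emptyset$. Summing the pairing contributions gives the stated formula.

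The main obstacle will be rigorously tracking the combinatorics of which operator's derivative hits which factor, together with the additional $(1+y_{\bar i}u_j)/(1+y_{\bar i}x_{\bar j})$ couplings appearing in $\tilde{\mathcal{D}}_{m_s,k}^{(n_s)}$ for $s<k$. One must verify that these extra couplings do not generate any additional leading-order cross terms beyond those captured by $G_{s_1,s_2}$: this follows because the derivatives of the extra factors contribute only to the $\log$-derivatives of the Schur generating function whose $N$-degrees are pinned down by Lemma \ref{l33}, so the argument of Proposition \ref{p36} and Lemma \ref{ll66} extends verbatim. Once this bookkeeping is in place, the induction concludes exactly as in the proof of Lemma \ref{lc} and Proposition 5.10 of \cite{bg}.
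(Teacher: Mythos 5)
Your proposal is correct and follows essentially the same route as the paper's proof: reduce the multi-level expression to one-dimensional Schur generating functions via Lemma \ref{ll511} (with the coupling factors $\prod(1+y_{\ol{i}}u_j)/(1+y_{\ol{i}}x_{\ol{j}})$ absorbed through Lemma \ref{lmm212}), then induct on $k$, splitting each application of the outer operator according to whether its derivatives hit the Vandermonde--Schur part or a previously produced $\mathcal{F}$-factor, so that the centering kills the product terms and the $N$-degree estimates of Lemmas \ref{l33}, \ref{p36} and \ref{ll66} leave only the pairing contributions $G_{s_1,s_2}$ at leading order. The only cosmetic differences are that the paper treats $k=2$ explicitly as the base case (with $k=1$ trivially zero) rather than invoking Lemma \ref{lc}, and it only needs (and only proves) that $G_{s_1,s_2}$ has $N$-degree \emph{at most} $m_{s_1}+m_{s_2}$, not exactly that value.
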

\begin{proof}The proposition follows from explicit computations and by analyzing the degree of $N$ of each term in the expansion. We sketch the proof here.

The lemma obviously holds when $k=1$, for which both the left hand side and the right hand side are 0. When $k=2$, by Lemma \ref{ll511}, we have
\begin{eqnarray*}
\mathcal{E}_2:&=&\mathcal{D}_{m_1}^{(n_1)}\mathcal{D}_{m_2}^{(n_2)} \left.\mathcal{S}_{\rho,X}(u_{1,1},\ldots,u_{\lfloor \frac{n_1}{2}\rfloor,1};u_{1,2},\ldots,u_{\lfloor \frac{n_2}{2}\rfloor,2})\right|_{(u_{1,s},\ldots,u_{\lfloor \frac{n_s}{2}\rfloor,s})=\ol{X}_s,\ \mathrm{for}\ s=1,2}\\
&=&\tilde{\mathcal{D}}_{m_{1,2}}^{(n_1)}\tilde{\mathcal{D}}_{m_2,2}^{(n_2)} \left.\mathcal{S}_{\rho_{\lfloor\frac{n_2}{2}\rfloor},X}(u_{N-\lfloor\frac{n_2}{2} \rfloor+1},\ldots,u_N)\right|_{(u_1,\ldots,u_N)=(x_1,\ldots,x_N)}\\
&=&\frac{1}{\mathcal{S}_{\rho_{\lfloor\frac{n_1}{2} \rfloor},X}(u_{N-\lfloor\frac{n_2}{2} \rfloor+1},\ldots,u_N)}\tilde{\mathcal{D}}_{m_1}^{(n_1)}\tilde{\mathcal{D}}_{m_2}^{(n_2)}\\
&&\left.\left[ \exp\left(\log\left[\mathcal{S}_{\rho_{\lfloor\frac{n_2}{2} \rfloor},X}(u_{N-\lfloor\frac{n_2}{2} \rfloor+1},\ldots,u_N)\right]\right)\right]\right|_{(u_1,\ldots,u_N)=(x_1,\ldots,x_N)}\\
&=&\frac{1}{\mathcal{S}_{\rho_{\lfloor\frac{n_1}{2} \rfloor},X}(u_{N-\lfloor\frac{n_2}{2} \rfloor+1},\ldots,u_N)V_{\lfloor\frac{n_1}{2} \rfloor}(u_{N-\lfloor\frac{n_1}{2} \rfloor+1},\ldots,u_N)}\sum_{i\in\{N-\lfloor\frac{n_1}{2} \rfloor+1,\ldots,N\}}\left(u_{i}\frac{\partial}{\partial u_{i}}\right)^{m_1}\\
&&\frac{V_{\lfloor\frac{n_1}{2} \rfloor}(u_{N-\lfloor\frac{n_1}{2} \rfloor+1},\ldots,u_N)}{V_{\lfloor\frac{n_2}{2} \rfloor}(u_{N-\lfloor\frac{n_2}{2} \rfloor+1},\ldots,u_N)}\prod_{l\in\{N-\lfloor\frac{n_2}{2} \rfloor+1,\ldots,N-\lfloor\frac{n_1}{2} \rfloor\}\cap I_2}\prod_{s=N-\lfloor\frac{n_1}{2}\rfloor+1}^{N}\left(\frac{1+y_{\ol{l}}u_s}{1+y_{\ol{l}}x_{\ol{s}}}\right)\\
&&\sum_{j\in\{N-\lfloor\frac{n_2}{2} \rfloor+1,\ldots,N\}}\left(u_{j}\frac{\partial}{\partial u_{j}}\right)^{m_2}V_{\lfloor\frac{n_2}{2} \rfloor}(u_{N-\lfloor\frac{n_2}{2} \rfloor+1},\ldots,u_{N})\\
&&\left.\left[ \exp\left(\log\left[\mathcal{S}_{\rho_{\lfloor \frac{n_2}{2}\rfloor},X}(u_{N-\lfloor\frac{n_2}{2}\rfloor+1},\ldots,u_N)\right]\right)\right]\right|_{(u_1,\ldots,u_N)=(x_1,\ldots,x_N)}
\end{eqnarray*}
By Lemma \ref{lmm212}, we have
\begin{eqnarray*}
\prod_{l\in\{N-\lfloor\frac{n_2}{2} \rfloor+1,\ldots,N-\lfloor\frac{n_1}{2} \rfloor\}\cap I_2}\prod_{s=N-\lfloor\frac{n_1}{2}\rfloor+1}^{N}\left(\frac{1+y_{\ol{l}}u_s}{1+y_{\ol{l}}x_{\ol{s}}}\right)=\frac{\mathcal{S}_{\rho_{\lfloor \frac{n_1}{2}\rfloor},X}(u_{N-\lfloor \frac{n_1}{2}\rfloor}+1,\ldots,u_N)}{\mathcal{S}_{\rho_{\lfloor \frac{n_2}{2}\rfloor},X}(u_{N-\lfloor \frac{n_2}{2}\rfloor}+1,\ldots,u_N)}
\end{eqnarray*}
Then
\begin{eqnarray*}
\mathcal{E}_2&=&\frac{1}{\mathcal{S}_{\rho_{\lfloor\frac{n_1}{2} \rfloor},X}(u_{N-\lfloor\frac{n_2}{2} \rfloor+1},\ldots,u_N)V_{\lfloor\frac{n_1}{2} \rfloor}(u_{N-\lfloor\frac{n_1}{2} \rfloor+1},\ldots,u_N)}\\
&&\sum_{i\in\{N-\lfloor\frac{n_1}{2}\rfloor+1,\ldots,N\}}\left(u_{i}\frac{\partial}{\partial u_{i}}\right)^{m_1}V_{\lfloor\frac{n_1}{2} \rfloor}(u_{N-\lfloor\frac{n_1}{2} \rfloor+1},\ldots,u_N)\mathcal{S}_{\rho_{\lfloor \frac{n_1}{2}\rfloor},X}(u_{N-\lfloor\frac{n_2}{2}\rfloor+1},\ldots,u_N)\\
&&\left.\mathcal{F}_{m_2,\lfloor\frac{n_2}{2N} \rfloor}(u_{N-\lfloor\frac{n_2}{2}\rfloor+1},\ldots,u_N)\right|_{(u_1,\ldots,u_N)=(x_1,\ldots,x_N)}
\end{eqnarray*}
Hence $\mathcal{E}_2$ is a sum of terms of the following form
\begin{eqnarray*}
&&\mathrm{Sym}_{a_1,\ldots,a_{r+1}}\\
&&\left.\left[\frac{c_0u_{a_1}^{m_1-q_0}\frac{\partial^{q_1}}{\partial u_{a_1}^{q_1}}[\mathcal{F}_{m_2,\frac{n_2}{2N}}]\left[\frac{\partial^{q_2}}{\partial u_{a_1}^{q_2}}(\log \mathcal{S}_{\rho_{\lfloor\frac{n_1}{2}\rfloor},X})\right]^{d_2}\ldots \left[\frac{\partial^{q_t}}{\partial u_{a_1}^{q_t}}(\log \mathcal{S}_{\rho_{\lfloor\frac{n_1}{2} \rfloor},X})\right]^{d_t}}{(u_{a_1}-u_{a_2})\ldots (u_{a_1}-u_{a_{r+1}})}\right]\right|_{(u_1,\ldots,u_N)=(x_1,\ldots,x_N)}.
\end{eqnarray*}
such that 
\begin{itemize}
\item  $r$, $q_0,q_1,\ldots,q_t$, $d_2,\ldots,d_t$ are nonnegative integers; and
\item  $q_2<q_3<\ldots<q_t$; and
\item 
\begin{eqnarray}
q_0+q_1+q_2d_2+\ldots+q_td_t+r=m_1; \label{ssdk}
\end{eqnarray}
and
\item $\{a_1,\ldots,a_{r+1}\}\subset\{N-\lfloor \frac{n_1}{2}\rfloor+1,\ldots,N\}$
\end{itemize}
When $q_1=0$, we obtain $E_{m_1,s_1}E_{m_2,s_2}$.

Now we consider the terms corresponding to $q_1\geq 1$. By Lemma \ref{p36}, the degree of $N$ in $\partial_{a_1}^{q_1}\left[\mathcal{F}_{m_2,\frac{n_2}{2N}}\right]$ is at most $m_2$. By Lemma \ref{l33}, the total degree of $N$ in these terms is at most $m_2+d_2+\ldots+d_t+r+1$. By (\ref{ssdk}) and the assumption that $s_1\geq 1$, we have
\begin{eqnarray*}
m_2+d_2+\ldots+d_t+r+1\leq m_2+m_1；
\end{eqnarray*}
and the equality holds when $q_0=d_3=\ldots=d_t=0$, $q_1=q_2=1$; $d_2=m_1-1-r$; this corresponds to $G_{1,2}$, in which the degree of $N$ is at most $m_1+m_2$ . The degree of $N$ is less than $m_1+m_2$ in all the other terms. This completes the proof when $k=2$.

We shall finish the rest of the proof by induction. For $1\leq l\leq k-1$, let
\begin{eqnarray*}
A_l=\prod_{[i\in\{t_{l+1}+1,\ldots,t_l\}]}\prod_{j=t_l+1}^{N}\left(\frac{1+y_{\ol{i}}u_j}{1+y_{\ol{i}}x_{\ol{j}}}\right)
\end{eqnarray*}
By Lemma \ref{lmm212}, we have
\begin{eqnarray*}
A_l=\frac{\mathcal{S}_{\rho_{\lfloor\frac{n_l}{2} \rfloor},X}(u_{N-\lfloor\frac{n_l}{2} \rfloor+1,},\ldots,u_N)}{\mathcal{S}_{\rho_{\lfloor\frac{n_{l+1}}{2} \rfloor},X}(u_{N-\lfloor\frac{n_{l+1}}{2} \rfloor+1,},\ldots,u_N)}
\end{eqnarray*}

 Assume that the lemma holds for $k=r-1$, where $r\geq 2$ is a positive integer. When $k=r$, by induction hypothesis, we have 
\begin{eqnarray*}
&&\frac{1}{\hat{V}_{\lfloor \frac{n_1}{2}\rfloor}\mathcal{S}_{\rho_{\lfloor\frac{n_1}{2} \rfloor},X}}\sum_{[i_1\in\{N-\lfloor\frac{n_1}{2}\rfloor+1,\ldots,N\}]}\left(u_{i_1}\frac{\partial}{\partial u_{i_1}}\right)^{m_1}A_1\frac{\hat{V}_{\lfloor\frac{n_1}{2} \rfloor}}{\hat{V}_{\lfloor\frac{n_2}{2} \rfloor}}\sum_{[i_2\in\{N-\lfloor\frac{n_2}{2} \rfloor\rfloor+1,\ldots,N\}]}\left(u_{i_2}\frac{\partial}{\partial u_{i_2}}\right)^{m_2}
\\&&\frac{\hat{V}_{\lfloor\frac{n_2}{2}\rfloor}}{\hat{V}_{\lfloor\frac{n_3}{2}\rfloor}}A_{2}\cdots\left.\sum_{[i_r\in\{N-\lfloor\frac{n_r}{2} \rfloor+1,\ldots,N\}}\left(u_{i_r}\frac{\partial}{\partial u_{i_r}}\right)^{m_r}\left[\hat{V}_{\lfloor\frac{n_r}{2} \rfloor}\mathcal{S}_{\rho_{\lfloor \frac{n_t}{2}\rfloor},X}\right]\right|_{(u_1,\ldots,u_N)=(x_1,\ldots,x_N)}\\
&=&\frac{1}{\hat{V}_{\lfloor \frac{n_1}{2}\rfloor}\mathcal{S}_{\rho_{\lfloor\frac{n_1}{2} \rfloor},X}}\sum_{[i_1\in\{N-\lfloor\frac{n_1}{2}\rfloor+1,\ldots,N\}]}\left(u_{i_1}\frac{\partial}{\partial u_{i_1}}\right)^{m_1}\left[\hat{V}_{\lfloor \frac{n_1}{2}\rfloor}\mathcal{S}_{\rho_{\lfloor\frac{n_1}{2} \rfloor},X}\right]\\
&&\left(\sum_{p=0}^{r-1}\sum_{[w_1,\ldots,w_p\in\{2,\ldots,r\}]}\mathcal{F}_{(m_{w_1},\frac{n_{w_1}}{2N})}\mathcal{F}_{(m_{w_2},\frac{n_{w_2}}{2N})}\ldots \mathcal{F}_{(m_{w_p},\frac{n_{w_p}}{2N})}\right.\\
&&\left.\left.\left(\sum_{P\in \mathcal{P}_{1,w_1,\ldots,w_p}^{r}}\prod_{(a,b)\in P}G_{a,b}+R_{1,w_1,\ldots,w_p}\right)\right)\right|_{(u_1,\ldots,u_N)=(x_1,\ldots,x_N)}.\\
&=& S_1+S_2+S_3;
\end{eqnarray*}
where by induction hypothesis the degree of $N$ in $R_{1,w_1,\ldots,w_p}$ is less than $\sum_{i=2}^{s}l_i-\sum_{i=1}^{p}l_{w_i}$; and 
\begin{eqnarray*}
S_1&=&\left\{\frac{1}{\hat{V}_{\lfloor \frac{n_1}{2}\rfloor}\mathcal{S}_{\rho_{\lfloor \frac{n_1}{2}\rfloor},X}}\sum_{[i_1\in\{N-\lfloor\frac{n_1}{2} \rfloor+1,\ldots,N\}]}\left(u_{i_1}\frac{\partial}{\partial u_{i_1}}\right)^{l_1}\left[\hat{V}_{\lfloor \frac{n_1}{2}\rfloor}\mathcal{S}_{\rho_{\lfloor\frac{n_1}{2} \rfloor},X}\right]\right\}\\
&&\left(\sum_{p=0}^{r-1}\sum_{[w_1,\ldots,w_p\in\{2,\ldots,r\}]}E_{m_{w_1},w_1}E_{m_{w_2},w_2}\ldots E_{m_{w_p},w_p}\right.\\
&&\left.\left.\left(\sum_{P\in \mathcal{P}_{1,w_1,\ldots,w_p}^{r}}\prod_{(a,b)\in P}G_{a,b}+R_{1,w_1,\ldots,w_p}\right)\right)\right|_{(u_1,\ldots,u_N)=(x_1,\ldots,x_N)}.
\end{eqnarray*}
and
\begin{eqnarray*}
S_2&=&\frac{\ell_1}{\hat{V}_{\lfloor \frac{n_1}{2}\rfloor}\mathcal{S}_{\rho_{\lfloor\frac{n_1}{2} \rfloor},X}}\sum_{[i_1\in\{N-\lfloor\frac{n_1}{2} \rfloor+1,\ldots,N\}]}\left\{\left(u_{i_1}\frac{\partial}{\partial u_{i_1}}\right)^{l_1-1}\left[\hat{V}_{\frac{n_1}{2}}\mathcal{S}_{\rho_{\lfloor\frac{n_1}{2} \rfloor},X}\right]\right\}\\
&&\times\left\{\left(u_{i_1}\frac{\partial}{\partial u_{i_1}}\right)\left(\sum_{p=0}^{r-1}\sum_{[w_1,\ldots,w_p\in\{2,\ldots,t\}]}\mathcal{F}_{(m_{w_1},\frac{n_{w_1}}{2N})}\mathcal{F}_{(m_{w_2},\frac{n_{w_2}}{2N})}\ldots \mathcal{F}_{(m_{w_p},\frac{w_p}{2N})}\right.\right.\\
&&\left.\left.\left(\sum_{P\in \mathcal{P}_{1,w_1,\ldots,w_p}^{r}}\prod_{(a,b)\in P}G_{a,b}+R_{1,w_1,\ldots,w_p}\right)\right)\right|_{(u_1,\ldots,u_N)=(x_1,\ldots,x_N)};
\end{eqnarray*}
and $S_3$ consists of all the other terms.

By the definition of $E_{m_i,i}$ we have
\begin{eqnarray*}
S_1&=&E_{m_1,1}\left(\sum_{p=0}^{r-1}\sum_{[w_1,\ldots,w_p\in\{2,\ldots,r\}]}E_{m_{w_1},w_1}E_{m_{w_2},w_2}\ldots E_{m_{w_p},w_p}\right.\\
&&\left.\left.\left(\sum_{P\in \mathcal{P}_{1,w_1,\ldots,w_p}^{r}}\prod_{(a,b)\in P}G_{a,b}+R_{1,w_1,\ldots,w_p}\right)\right)\right|_{(u_1,\ldots,u_N)=(x_1,\ldots,x_N)}.
\end{eqnarray*}

By Lemma \ref{ll66}, we have
\begin{eqnarray*}
S_2&=&\left\{\left(\sum_{p=0}^{r-1}\sum_{[w_1,\ldots,w_p\in\{2,\ldots,r\}]}\sum_{x=1}^{p}E_{m_{w_1,w_1}}\ldots E_{m_{w_{x-1}},w_{x-1}} E_{m_{w_{x+1}},w_{x+1}}\ldots E_{m_{w_p},w_p}\right.\right.\\
&&\left.\left.\left.\left[\left(G_{1,x}+R_{1,x}\right)\left(\sum_{P\in \mathcal{P}_{1,w_1,\ldots,w_p}^{r}}\prod_{(a,b)\in P}G_{a,b}+R_{1,w_1,\ldots,w_p}\right)\right]\right)\right\}\right|_{(u_1,\ldots,u_N)=(x_1,\ldots,x_N)},
\end{eqnarray*}
where the degree of $N$ in $R_{1,x}$ is less than $l_1+l_x$.

Hence we have
\begin{eqnarray*}
S_2&=&\left\{\left(\sum_{p=0}^{r-1}\sum_{[w_1,\ldots,w_p\in\{2,\ldots,r\}]}\sum_{x=1}^{p}E_{m_{w_1,w_1}}\ldots E_{m_{w_{x-1}},w_{x-1}} E_{m_{w_{x+1}},w_{x+1}}\ldots E_{m_{w_p},w_p}\right.\right.\\
&&\left.\left.\left.\left[G_{1,x}\sum_{P\in \mathcal{P}_{1,w_1,\ldots,w_p}^{r}}\prod_{(a,b)\in P}G_{a,b}+R_{w_1,\ldots,\hat{w}_x,\ldots,w_p}\right)\right]\right\}\right|_{(u_1,\ldots,u_N)=(x_1,\ldots,x_N)}
\end{eqnarray*}
where the degree of $N$ in $R_{w_1,\ldots,\hat{w}_x,\ldots,w_p}$ is less than $\sum_{i=2}^{t}l_i-\sum_{i=1}^{p}l_{w_i}$.

Note that 
\begin{eqnarray*}
S_1+S_2&=&\sum_{p=0}^{r}\sum_{[w_1,\ldots,w_p\in\{1,2,\ldots,r\}]}E_{m_{w_1},w_1}E_{m_{w_2},w_2}\ldots E_{m_{w_p},w_p}\\
&&\left.\left(\sum_{P\in \mathcal{P}_{w_1,\ldots,w_p}^{r}}\prod_{(a,b)\in P} G_{a,b}+R_{w_1,\ldots,w_p}\right)\right|_{(u_1,\ldots,u_N)=(x_1,\ldots,x_N)}
\end{eqnarray*}
where the degree of $N$ in $R_{w_1,\ldots,w_p}$ is less than $\sum_{i=1}^{t}l_i-\sum_{i=1}^{p}l_{w_i}$. 

It remains to show that $S_3$ does not contribute to the leading terms. Define
\begin{eqnarray*}
\mathcal{H}_{j_1,\ldots,j_p}=\left(\sum_{P\in \mathcal{P}_{1,w_1,\ldots,w_p}^{r}}\prod_{(a,b)\in P}G_{a,b}+R_{1,w_1,\ldots,w_p}\right);
\end{eqnarray*}
By Lemma \ref{ll66}, the degree of $N$ in $\left.\mathcal{H}_{j_1,\ldots,j_p}\right|_{U_{N,\kappa}=(1,\ldots,1)}$ is at most $\sum_{i=2}^r l_i-\sum_{j=1}^p l_{w_j}$. Moreover, by Lemma \ref{ll66}, for any index $i$, the degree of $N$ in $\left.\frac{\partial}{\partial u_i}\mathcal{H}_{j_1,\ldots,j_p}\right|_{(u_1,\ldots,u_N)=(x_1,\ldots,x_N)}$ is less than $\sum_{i=2}^t l_i-\sum_{j=1}^p l_{w_j}$.

We write
\begin{eqnarray*}
&&\frac{1}{\hat{V}_{\lfloor \frac{n_1}{2}\rfloor}\mathcal{S}_{\rho_{\lfloor\frac{n_1}{2} \rfloor},X}}\sum_{[i_1\in\{N-\lfloor\frac{n_1}{2}\rfloor+1,\ldots,N\}]}\left(u_{i_1}\frac{\partial}{\partial u_{i_1}}\right)^{m_1}\left[\hat{V}_{\lfloor \frac{n_1}{2}\rfloor}\mathcal{S}_{\rho_{\lfloor\frac{n_1}{2} \rfloor},X}\right]\\
&&\left.\left(\sum_{p=0}^{r-1}\sum_{[w_1,\ldots,w_p\in\{2,\ldots,r\}]}\mathcal{F}_{(m_{w_1},\frac{n_{w_1}}{2N})}\mathcal{F}_{(m_{w_2},\frac{n_{w_2}}{2N})}\ldots \mathcal{F}_{(m_{w_p},\frac{n_{w_p}}{2N})}\mathcal{H}_{j_1,\ldots,j_p}\right)\right|_{(u_1,\ldots,u_N)=(x_1,\ldots,x_N)}
\end{eqnarray*}
as a sum of terms of the following form
\begin{eqnarray}
\mathrm{Sym}_{a_1,\ldots,a_{q+1}}\left[\frac{u_{a_1}^{m_1-s_0}(\partial_{a_1}^{s_1}[\log\mathcal{S}_{\rho_{\lfloor \frac{n_1}{2}\rfloor},X}])^{d_1}\ldots (\partial_{a_t}^{s_t}[\log\mathcal{S}_{\rho_{\lfloor \frac{n_1}{2}\rfloor},X}])^{d_t} }{(u_{a_1}-u_{a_2})\ldots(u_{a_1}-u_{a_{q+1}})}\right.\label{ssa1}\\
\left.\frac{\partial_{a_1}^{f_1}\mathcal{F}_{(m_{w_1},\frac{n_{w_1}}{2N})} \ldots\partial_{a_1}^{f_p}\mathcal{F}_{(m_{w_p},\frac{n_{w_p}}{2N})}\partial_{a_1}^{h_0}\mathcal{H}_{j_1,\ldots,j_p} }{}\right]\notag
\end{eqnarray}
where
\begin{itemize}
\item $\{a_1,\ldots,a_{q+1}\}\subset\{N-\lfloor\frac{n_1}{2} \rfloor+1,\ldots,N\}$;
\item $s_1<s_2<\ldots<s_t$ are positive integers;
\item $f_1,\ldots,f_p,h_0$ are nonnegative integers;
\item 
\begin{eqnarray}
q+s_0+s_1d_1+\ldots+s_td_t+f_1+\ldots+f_p+h_0=m_1\label{rsdm}
\end{eqnarray}
\end{itemize}
By Lemma \ref{l33}, the degree of $N$ in $[\log\mathcal{S}_{\rho_{\lfloor \frac{n_1}{2}\rfloor},X}])^{d_1}\ldots (\partial_{a_t}^{s_t}[\log\mathcal{S}_{\rho_{\lfloor \frac{n_1}{2}\rfloor},X}])^{d_t}$ is at most $d_1+\ldots+d_t$; therefore, the terms in (\ref{ssa1}) with highest degree of $N$ has the form
\begin{eqnarray}
\mathrm{Sym}_{a_1,\ldots,a_{q+1}}\left[\frac{u_{a_1}^{m_1}(\partial_{a_1}[\log\mathcal{S}_{\rho_{\lfloor \frac{n_1}{2}\rfloor},X}])^{d_1} }{(u_{a_1}-u_{a_2})\ldots(u_{a_1}-u_{a_{q+1}})}\right.\\
\left.\frac{\partial_{a_1}^{f_1}\mathcal{F}_{(m_{w_1},\frac{n_{w_1}}{2N})} \ldots\partial_{a_1}^{f_p}\mathcal{F}_{(m_{w_p},\frac{n_{w_p}}{2N})}\partial_{a_1}^{h_0}\mathcal{H}_{j_1,\ldots,j_p} }{}\right]
\label{ssa2}
\end{eqnarray}
where
\begin{eqnarray}
s_0=d_2=\ldots=d_t=0;\ s_1=1.\label{sd0}
\end{eqnarray}
Let
\begin{eqnarray*}
B=\{i\in\{1,2,\ldots,p\}:f_i=0\}.
\end{eqnarray*}
Then 
\begin{eqnarray*}
(\ref{ssa2})=\left[\prod_{i\in B}\mathcal{F}_{(m_{w_i},\frac{n_{w_i}}{2N})}\right] S(u_1,\ldots,u_{N})
\end{eqnarray*}
It suffices to show that the degree of $N$ in $S$, except for $S_1$ and $S_2$, is less than $\sum_{i=1}^{s}l_i-\sum_{i\in B}l_i$. Note that the degree of $N$ in $\partial_{a_1}[\log\mathcal{S}_{\rho_{\lfloor \frac{n_1}{2}\rfloor},X}])^{d_1}$ is at most $d_1$ by Lemma \ref{l33}. The summation over $\{a_1,\ldots,a_{q+1}\}\subset\{N-\lfloor\frac{n_1}{2} \rfloor+1,\ldots,N\}$ gives $O(N^{q+1})$ terms.  By Lemma \ref{p36}, when $i\notin B$, the degree of $N$ in $\partial_{a_1}^{f_i}\mathcal{F}_{(m_{w_i},\frac{n_{w_i}}{2N})} $ is at most $m_{w_i}$. Therefore the degree of $N$ in $S(u_1,\ldots,u_N)$ is at most
\begin{eqnarray*}
\sum_{i=2}^{r}m_i-\sum_{i=1}^{p}m_{w_i}+d_1+\sum_{i\in \{1,2,\ldots,p\}\setminus B}m_{w_i}+q+1
\end{eqnarray*}

 By (\ref{rsd}) and (\ref{sd0}), if $|B|\leq p-2$, $q+d_1+1\leq m_1-1$, then the degree of $N$ in $S(u_1,\ldots,u_N)$ is at most
 \begin{eqnarray*}
 \sum_{i=1}^{r}m_i-\sum_{i\in B}m_{w_i}-1
 \end{eqnarray*}
 Therefore only the terms where at most one $f_i$ is nonzero contribute to the leading order. In these terms if $h_0>0$,  then by Lemma \ref{l67}, the degree of $N$ is less than $\sum_{i=1}^{r}m_i-\sum_{i\in B}m_{w_i}$. So only the terms where $h_0=0$ and at most one $f_i$ is nonzero contribute to the leading order. These terms are in $S_1$ and $S_2$. Then the proof is complete.

\end{proof}

\begin{theorem}\label{gff1}The collection of random variables 
\begin{eqnarray*}
\{N^{-l}\left[p_{l}^{((1-\kappa) N)}-\mathbb{E}p_{l}^{((1-\kappa) N)}\right]\}_{l\in \NN;\kappa=a_1,\ldots,a_m}
\end{eqnarray*}
converges, as $N\rightarrow\infty$, in the sense of moments, to the Gaussian vector with zero mean and covariance
\begin{eqnarray*}
&&\lim_{N\rightarrow\infty}\frac{\mathrm{cov}\left(p_{l_1}^{\lfloor(1-\kappa_1) N\rfloor},p_{l_2}^{\lfloor(1-\kappa_2)N\rfloor}\right)}{N^{l_1+l_2}}\\
&=&\frac{(1-\kappa_1)^{l_1}(1-\kappa_2)^{l_2}}{(2\pi\mathbf{i})^2}\sum_{i=1}^{n}\sum_{j=1}^{n}\oint_{|z-x_i|=\epsilon}\oint_{|w-x_j|=\epsilon}\left(\sum_{i=1}^{n}\frac{z}{n(z-x_i)}+\frac{z H(z)}{1-\kappa_1}\right)^{l_1}\\
&&\times\left(\sum_{i=1}^{n}\frac{w}{n(w-x_j)}+\frac{w H(w)}{1-\kappa_2}\right)^{l_2}Q(z,w)dzdw,
\end{eqnarray*}
where 
\begin{itemize}
\item $\epsilon>0$ is sufficiently small such that the disk centered at $x_i$ with radius $\epsilon$ contains exactly one singularity $x_i$ of the integrand.
\item the $z$- and $w$-contours of integration are counter-clockwise.
\end{itemize}
\end{theorem}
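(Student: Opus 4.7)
The plan is the method of moments. By Lemma~\ref{l59}, if we set $n_j=2(N-\lfloor\kappa_j N\rfloor)+1$ and $m_j=l_j$, then expanding the product of operators term by term and applying Lemma~\ref{l59} to each resulting factor yields
\[
\left.\prod_{j=1}^{k}\bigl(\mathcal{D}_{m_j}^{(n_j)}-E_{m_j,j}\bigr)\mathcal{S}_{\rho,X}\right|_{(u_{1,s},\ldots,u_{\lfloor n_s/2\rfloor,s})=\ol{X}_s,\ 1\leq s\leq k}
=\EE\prod_{j=1}^{k}\Bigl(p_{m_j}^{((1-\kappa_j)N)}-\EE\,p_{m_j}^{((1-\kappa_j)N)}\Bigr),
\]
since the constants $E_{m_j,j}$ are precisely the one-operator expectations $\EE\,p_{m_j}^{((1-\kappa_j)N)}$. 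By Lemma~\ref{p510}, dividing by $N^{m_1+\cdots+m_k}$ and letting $N\to\infty$ gives $\sum_{P\in\mathcal{P}_{\emptyset}^{k}}\prod_{(a,b)\in P}\lim_N N^{-m_a-m_b}G_{a,b}$; in particular odd centered mixed moments vanish in the limit and even ones obey Wick's formula. Since this is exactly the moment structure of a centered Gaussian vector with covariances $\lim_N N^{-l_a-l_b}G_{s_a,s_b}$, the theorem reduces to identifying these pairwise limits with the claimed double contour integral.

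That identification follows the same scheme as the single-level covariance calculation already carried out in detail in the excerpt (the proposition immediately preceding Subsection~4.1). Starting from the definition~(\ref{g12}) of $G_{s_1,s_2}$, I would expand $\partial_{b_1}\mathcal{F}_{(m_{s_2},n_{s_2}/(2N))}$ using Proposition~\ref{p36}(2), insert the asymptotic expressions $\partial_i[\log\mathcal{S}]\sim N\,H_\kappa(x_i)$ and $\partial_i\partial_j[\log\mathcal{S}]\sim G(x_i,x_j)$ from Lemma~\ref{l33}, and split the resulting double sum according to whether the index-set overlap $|\{a_1,\ldots,a_{q+1}\}\cap\{b_1,\ldots,b_{r+1}\}|$ equals $0$, $1$, or at least $2$. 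In each case the symmetrized sums are converted to derivatives via Lemma~\ref{l552} and then to contour integrals via the residue theorem: the disjoint case yields the $G(z,w)$ part of the kernel $Q(z,w)$, the one-overlap case yields the $1/(z-w)^2$ part, and the remaining case is subleading by Lemma~\ref{l33}.

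The one genuinely new feature relative to the single-level calculation is that the two factors of the integrand must carry \emph{different} values of $\kappa$. This reflects the asymmetric roles of the two levels inside $G_{s_1,s_2}$: the derivative $\partial_{b_1}[\log \mathcal{S}_{\rho_{\lfloor n_{s_1}/2\rfloor},X}]$ is governed by Lemma~\ref{l33} at level $\kappa_{s_1}$ and therefore produces $H_{\kappa_{s_1}}$ in the $z$-factor, whereas $\partial_{b_1}\mathcal{F}_{(m_{s_2},n_{s_2}/(2N))}$, once unfolded via Proposition~\ref{p36}(2), ultimately involves derivatives of $\log\mathcal{S}_{\rho_{\lfloor n_{s_2}/2\rfloor},X}$ and hence produces $H_{\kappa_{s_2}}$ in the $w$-factor. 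The main obstacle is careful bookkeeping of which level and which evaluation point each derivative refers to; once this is settled, the residue extraction is a direct analog of the single-level computation, and combined with the Wick factorization supplied by Lemma~\ref{p510} it yields the stated Gaussian limit in the sense of moments.
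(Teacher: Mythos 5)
Your proposal is correct and follows essentially the same route as the paper: the paper's proof of Theorem \ref{gff1} likewise invokes Lemma \ref{p510} for the Wick-type factorization of the centered joint moments and then identifies each pairwise limit $\lim_N N^{-l_a-l_b}G_{a,b}$ by repeating the single-level covariance computation, noting (as you do) that the only change is that the two index sets $\{a_1,\ldots,a_{q+1}\}$ and $\{b_1,\ldots,b_{r+1}\}$ now range over sets of different sizes, so the two factors of the integrand carry the two different values of $\kappa$. Your write-up simply makes explicit the steps the paper leaves implicit (the reduction via Lemma \ref{l59} and the identification $E_{m_j,j}=\mathbb{E}p_{m_j}$), and is consistent with the paper's argument.
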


\begin{proof}The theorem follows from Lemma \ref{p510} similar arguments as in the single-level case. The only difference is in the expansion $\{b_1,\ldots,b_{r+1}\}\subset\{1,2,\ldots,\lfloor (1-t_1)N\rfloor\}$, while $\{a_1,\ldots,a_{q+1}\}\subset\{1,2,\ldots,\lfloor(1-t_2)N \rfloor\}$.
\end{proof}

\section{Piecewise Boundary Conditions}\label{pbr}

In this section, we introduce the piecewise boundary conditions on the bottom boundary of a contracting square-hexagon lattice, and review the limit shape results for perfect matchings on such a graph.

For $N\geq 1$, let $\lambda(N)\in \GT_N^+$. We consider the following special asymptotical case of $\lambda(N)$ as $N\rightarrow\infty$. Let
\begin{eqnarray*}
\Omega=(\Omega_1<\Omega_2<\ldots<\Omega_N)=(\lambda_N(N)+1,\lambda_{N-1}(N)+2,\ldots,\lambda_1(N)+N)
\end{eqnarray*}
Indeed, $\Omega_1,\ldots,\Omega_N$ are the locations of the $N$ remaining vertices on the bottom boundary of the contracting square-hexagon lattice.
Assume
\begin{eqnarray}
\Omega&=&(A_1,A_1+1,\ldots.B_1-1,B_1,\label{abt}\\
&&A_2,A_2+1,\ldots,B_2-1,B_2,\ldots,A_s,A_s+1,\ldots,B_s-1,B_s).\notag
\end{eqnarray}
where 
\begin{eqnarray*}
\sum_{i=1}^{s}(B_i-A_i+1)=N.
\end{eqnarray*}
and $s$ is a fixed positive integer independent of $N$. Suppose as $N\rightarrow\infty$,
\begin{eqnarray*}
A_i(N)=a_iN+o(N),\qquad B_i(N)=b_iN+o(N),\qquad \mathrm{for}\ 1\leq i\leq s,
\end{eqnarray*}
and $a_1<b_1<\ldots<a_s<b_s$ are fixed parameters independent of $N$ and satisfy $\sum_{i=1}^{s}(b_i-a_i)=1$. Assume the edge weights  $\{x_i\}_{i=1}^{N}$ $\{y_j\}_{j\in I_2\cap\{1,2,\ldots,N\}}$ satisfy (\ref{px}) and (\ref{py}).

Let $\Sigma_N$ be the permutation group of $N$ elements and let $\sigma\in \Sigma_N$. Let 
\begin{eqnarray*}
X=(x_1,\ldots,x_N).
\end{eqnarray*}
Let $x_1,\ldots, x_n$ be all the distinct elements in $\{x_1,\ldots,x_N\}$. Let $\Sigma_N^{X}$ be the subgroup of $\Sigma_N$ that preserves the value of $X$; more precisely
\begin{eqnarray*}
\Sigma_N^{X}=\{\sigma\in \Sigma_N: x_{\sigma(i)}=x_i,\ \mathrm{for}\ 1\leq i\leq N\}
\end{eqnarray*}
Let $[\Sigma_N/\Sigma_N^X]^r$ be the collection of all the right cosets of $\Sigma_N^X$ in $\Sigma_N$. More precisely,
\begin{eqnarray*}
[\Sigma_N/\Sigma_N^X]^r=\{\Sigma_N^X\sigma:\sigma\in \Sigma_N\},
\end{eqnarray*}
where for each $\sigma\in \Sigma_N$
\begin{eqnarray*}
\Sigma_N^X\sigma=\{\xi\sigma:\xi\in \Sigma_N^X\}
\end{eqnarray*}
and $\xi\sigma\in \Sigma_N$ is defined by
\begin{eqnarray*}
\xi\sigma(k)=\xi(\sigma(k)),\ \mathrm{for}\ 1\leq k\leq N.
\end{eqnarray*}

 For $1\leq j\leq N$, let
\begin{eqnarray}
\eta_j^{\sigma}(N)=|\{k:k>j,x_{\sigma(k)}\neq x_{\sigma(j)}\}|.\label{et}
\end{eqnarray}
For $1\leq i\leq n$, let
\begin{eqnarray}
\Phi^{(i,\sigma)}(N)=\{\lambda_j(N)+\eta_j^{\sigma}(N):x_{\sigma(j)}=x_i\}\label{pis}
\end{eqnarray}
and let $\phi^{(i,\sigma)}(N)$ be the partition obtained by decreasingly ordering all the elements in $\Phi^{(i,\sigma)}(N)$. Let $\Sigma_N^{X}$ be the subgroup $\Sigma_N$ that preserves the value of $X$; more precisely
\begin{eqnarray*}
\Sigma_N^{X}=\{\sigma\in \Sigma_N: x_{\sigma(i)}=x_i,\ \mathrm{for}\ 1\leq i\leq N\}
\end{eqnarray*}
Let $[\Sigma/\Sigma_N^X]^r$ be the collection of all the right cosets of $\Sigma_N^X$ in $\Sigma_N$. More precisely,
\begin{eqnarray*}
[\Sigma/\Sigma_N^X]^r=\{\Sigma_N^X\sigma:\sigma\in \Sigma_N\},
\end{eqnarray*}
where for each $\sigma\in \Sigma_N$
\begin{eqnarray*}
\Sigma_N^X\sigma=\{\xi\sigma:\xi\in \Sigma_N^X\}
\end{eqnarray*}
and $\xi\sigma\in \Sigma_N$ is defined by
\begin{eqnarray*}
\xi\sigma(k)=\xi(\sigma(k)),\ \mathrm{for}\ 1\leq k\leq N.
\end{eqnarray*}

For simplicity, we make the following assumptions.

\begin{assumption}\label{ap423}Let $(x_1,\ldots,x_N)$ be an $N$-tuple of real numbers at which we evaluate the Schur polynomial.
\begin{itemize}
\item $N$ is an integral multiple of $n$; and.
\item $\{x_i\}_{i=1}^{N}$ are periodic with period $n$, i.e., $x_{i}=x_{j}$ for $1\leq i,j\leq N$ and $[i\mod n]=[j\mod n]$; 
\item $x_1>x_2>\ldots>x_n>0$.
\end{itemize}
\end{assumption}

We may further make the assumptions below

\begin{assumption}\label{ap32}Assume $x_{1,N}=x_1>0$ and $(x_{2,N},\ldots,x_{n,N})$ changes with $N$. Assume that for each fixed $N$,  $(x_{1,N},\ldots,x_{n,N})$ satisfies Assumption \ref{ap423}. Moreover, assume that
\begin{eqnarray*}
\liminf_{N\rightarrow\infty} \frac{\log\left(\min_{1\leq i<j\leq n}\frac{x_{i,N}}{x_{j,N}}\right)}{\log N}\geq \alpha>0,
\end{eqnarray*}
where $\alpha$ is a sufficiently large positive constant independent of $N$.
\end{assumption}

Let $\ol{\sigma}_0\in [\Si_N/\Si_N^X]^r$ be the unique element in $[\Si_N/\Si_N^X]^r$ satisfying the condition that for any representative $\sigma_0\in\ol{\sigma}_0$, we have
\begin{eqnarray}
x_{\si_0(1)}\geq x_{\si_0(2)}\geq\ldots\geq x_{\si_0(N)}.\label{sz}
\end{eqnarray}
Let $\bm_{i}$ be the limit of the counting measures for $\phi^{(i,\si_0)}(N)$ as $N\rightarrow\infty$.

\begin{assumption}\label{ap428}Assume $x_1,\ldots,x_N$ satisfy Assumption \ref{ap423}.

Let $A_i$, $B_i$ be given as in (\ref{abt}). For $1\leq i\leq s$, let
\begin{eqnarray*}
B_i-A_i+1=K_i.
\end{eqnarray*}

By (\ref{abt}), we may assume
\begin{eqnarray*}
&&\lambda_1=\lambda_2=\ldots=\lambda_{K_s}=\mu_1;\\
&&\lambda_{K_s+1}=\lambda_{K_s+2}=\ldots=\lambda_{K_s+K_{s-1}}=\mu_2;\\
&&\ldots\\
&&\lambda_{\sum_{t=2}^{s}K_t}=\lambda_{1+\sum_{t=2}^{s}K_t}=\ldots=\lambda_{\sum_{t=1}^{s}K_t}=\mu_s;
\end{eqnarray*}
and note that
\begin{eqnarray}
\mu_1>\ldots>\mu_s\label{mi}
\end{eqnarray}
are all the distinct elements in $\{\lambda_1,\lambda_2,\ldots,\lambda_N\}$. Let 
\begin{eqnarray}
J_i=\{t:1\leq p\leq N, 1\leq t\leq s, [\si_0(p)\mod n]=i,\lambda_p=\mu_t\}\label{ji}.
\end{eqnarray}
Suppose that all the following conditions hold
\begin{itemize}
\item If $1\leq i<j\leq n$, $\ell\in J_i$, and $t\in J_j$, then $\ell<t$; and
\item For any $p,q$ satisfying $1\leq p\leq s$ and $1\leq q\leq s$, and $q>p$.
\begin{eqnarray*}
C_1N \leq \mu_p-\mu_q\leq C_2N
\end{eqnarray*}
where $C_1$, $C_2$ are constants independent of $N$.
\end{itemize}
\end{assumption}

Let
\begin{eqnarray}
H_{\mathbf{m}_i}(u)=\int_{0}^{\ln(u)}R_{\mathbf{m}_i}(t)dt+\ln\left(\frac{\ln(u)}{u-1}\right)\label{hmi}
\end{eqnarray}
and $\mathbf{R}_{\mathbf{m}_i}$ is the Voiculescu R-transform of $\mathbf{m}_i$ given by
\begin{eqnarray*}
R_{\bm_i}=\frac{1}{S_{\bm_i}^{(-1)}(z)}-\frac{1}{z};
\end{eqnarray*}
Where $S_{\bm_i}$ is the moment generating function for $\bm_i$ given by
\begin{eqnarray*}
S_{\bm_i}(z)=z+M_1(\bm_i)z^2+M_2(\bm_i)z^3+\ldots;
\end{eqnarray*}
$M_k(\bm_i)=\int_{\RR}x^k\bm_i(dx)$; and $S_{\bm_i}^{-1}(z)$ is the inverse series of $S_{\bm_i}(z)$.
See also Section 2.2 of \cite{bg} for details.

\begin{proposition}\label{tm1}Suppose Assumptions \ref{ap32} and \ref{ap428} hold. Let $\kappa\in(0,1)$ be a positive number. Let $\rho_{\lfloor (1-\kappa)N\rfloor}$ be a probability measure on $\GT_{\lfloor (1-\kappa)N\rfloor}^+$ as defined in Lemma \ref{l33} or Remark \ref{rm25}. Let $\bm[\rho_{\lfloor (1-\kappa)N\rfloor}]$ be the corresponding random counting measure. Then as $N\rightarrow\infty$, $\bm[\rho_{\lfloor (1-\kappa)N\rfloor}]$ converge in probability, in the sense of moments to a deterministic measure $\bm^{\kappa}$, whose moments are given by
\begin{eqnarray*}
\int_{\RR}x^{p}\textbf{m}^{\kappa}(dx)=
\frac{1}{2n(p+1)\pi \mathbf{i}}\sum_{i=1}^{n}\oint_{C_{1}}\frac{dz}{z}\left(zQ_{i,\kappa}'(z)+\frac{n-i}{n}+\frac{z}{n(z-1)}\right)^{p+1}
\end{eqnarray*}
where for $1\leq i\leq n$
\begin{eqnarray*}
Q_{i,\kappa}(z)=\left\{\begin{array}{cc}\frac{1}{(1-\kappa)n}\left[
H_{\bm_i}(z)-(n-i)\log z+\kappa\sum_{r\in\{1,2,\ldots,n\}\cap I_2}\log\frac{1+y_rzx_1}{1+y_rx_1}\right]&\mathrm{if}\ i=1\\\frac{1}{(1-\kappa)n}\left[H_{\bm_i}(z)-(n-i)\log z\right]
&\mathrm{otherwise}\end{array}\right.
\end{eqnarray*}
and for $i\geq n+1$,
\begin{eqnarray*}
Q_{i,\kappa}(z)=\left\{\begin{array}{cc}Q_{(i\mod n),\kappa}(z),&\mathrm{if}\ (i\mod n)\neq 0\\Q_{n,\kappa}(z),&\mathrm{if}\ (i\mod n)= 0 \end{array}\right\}.
\end{eqnarray*}
\end{proposition}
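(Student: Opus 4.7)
The plan is to follow the blueprint of Proposition \ref{plm}: compute the limiting moments of $\bm^{\kappa}$ via the Schur generating function and the differential-operator identity analogous to Proposition \ref{pn41}, then reduce the resulting expression to a sum of residues using contour-integral manipulations. First I would apply Lemma \ref{lmm212} to express $\mathcal{S}_{\rho_{\lfloor(1-\kappa)N\rfloor},X^{(N-t)}}(u_1,\ldots,u_{N-t})$ as a ratio of Schur polynomials of $\lambda(N)$ in two sets of variables multiplied by an explicit $y$-dependent factor; this reduces the problem to understanding the large-$N$ behaviour of $\partial_{u_j}\log s_{\lambda(N)}(u_1,\ldots,u_{N-t},Y^{(t)})\big|_{U=X}$ at a base point $X$ whose coordinates are periodic and hugely separated in the sense of Assumption \ref{ap32}.

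Next I would exploit the ultra-separation $x_{i+1}=O(x_i/N^{\alpha})$ together with the block structure of $\lambda(N)$ from Assumption \ref{ap428} to argue that at $X$ the Schur polynomial $s_{\lambda(N)}$ effectively factorises into $n$ independent blocks. The dominant permutation in the Weyl character formula is the permutation $\sigma_0$ from (\ref{sz}), which pairs each $x_i$ with the sub-partition $\phi^{(i,\sigma_0)}(N)$ whose rescaled counting measure converges to $\bm_i$. Because $\alpha$ is taken sufficiently large, every competing permutation is suppressed by at least a factor $N^{-c\alpha}$ and contributes negligibly to the logarithmic derivative. The leading contribution of the $i$-th block to $\partial_{u_j}\log s_{\lambda(N)}$ at $X$ (when $x_j=x_i$) is then controlled by $\bm_i$ through the R-transform identity underlying \cite{bg}, giving asymptotics of the form
\begin{eqnarray*}
\lim_{N\to\infty}\frac{1}{N}\,\partial_{u_j}\log s_{\lambda(N)}(u_1,\ldots,u_{N-t},Y^{(t)})\Big|_{U=X,\,x_j=x_i}\;=\;\frac{1}{x_i}\,H_{\bm_i}'\!\left(\tfrac{u_j}{x_i}\right),
\end{eqnarray*}
in agreement with the function $H_{\bm_i}$ of (\ref{hmi}). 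The $-(n-i)\log z$ terms inside $Q_{i,\kappa}$ arise from the combinatorial shift $\eta_j^{\sigma_0}$ of (\ref{et}) that records the relative position of parts of the same $x$-colour, while the $y$-term present only in $Q_{1,\kappa}$ is the surviving piece of the weight factor in Lemma \ref{lmm212}: under the separation $x_1\gg x_2\gg\cdots$ the integration contour localises near $z=x_1$ and only the $i=1$ block supports the $y$-correction.

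With these asymptotic identities in hand, I would substitute into the large-$N$ expansion of $\mathcal{F}_{(p+1,1-\kappa)}(X_{1-\kappa})$ given by Proposition \ref{p36}(2), convert symmetrisations to residues via Lemma \ref{l552}, and apply the residue theorem exactly as in the derivation leading to (\ref{mtl}). This produces a sum over $i=1,\ldots,n$ of contour integrals of $\big(zQ_{i,\kappa}'(z)+\tfrac{n-i}{n}+\tfrac{z}{n(z-1)}\big)^{p+1}\,\tfrac{dz}{z}$, matching the claimed formula after the change of variables that rescales each local coordinate by $x_i$. Convergence in probability (not merely in expectation) follows from the vanishing of higher centred cumulants as $N\to\infty$, which is proved by the very same Schur-generating-function machinery applied to products of differential operators (the single-level specialisation of what Section \ref{pbc} establishes). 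Remark \ref{rm25} handles uniformly the two parities $k=2t+1$ and $k=2t+2$ of the row index.

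The main obstacle is the asymptotic factorisation step: rigorously establishing that $\log s_{\lambda(N)}$ at $X$ decomposes, up to $o(N)$ errors, into $n$ independent block contributions each governed by $H_{\bm_i}$. Quantitatively, one must show that the non-dominant permutations in the determinant at the numerator and the collisions of poles in the Vandermonde denominator together contribute only $o(N)$ to each logarithmic derivative, uniformly in a neighbourhood of $X$ large enough to contain the contours used in the residue computation. This is where the precise assumption that $\alpha$ is sufficiently large (Assumption \ref{ap32}) enters in an essential way; once the bound is established, the residue calculation, the bookkeeping of the shift $(n-i)/n$ and the correct placement of the $y$-contribution in $Q_{1,\kappa}$ are routine.
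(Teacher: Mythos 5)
Your plan follows essentially the route the paper itself relies on: the paper disposes of Proposition \ref{tm1} by citing Theorem 2.18 of \cite{Li18}, and the toolkit you assemble — Lemma \ref{lmm212} to reduce the Schur generating function to Schur-polynomial asymptotics, the coset expansion of Lemma \ref{p437}, dominance of the ordered coset $\ol{\sigma}_0$, the block asymptotics of Theorem \ref{tm2} producing $H_{\bm_i}$ and the $-(n-i)\log z$ shift, then the differential-operator/moment machinery of Proposition \ref{pn41}, the expansion as in Proposition \ref{p36}, Lemma \ref{l552} and the residue theorem, with concentration from the $O(N^{k+l})$ covariance bound — is exactly the quoted apparatus, and it does land on the stated integrand, since $zQ_{i,\kappa}'(z)+\frac{n-i}{n}+\frac{z}{n(z-1)}$ is the same function $F_{i,\kappa}$ that reappears in Sections \ref{pbc} and \ref{gffp}.

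Two points, however, would not survive as written. First, your suppression estimate for the non-dominant permutations — ``every competing permutation is suppressed by at least a factor $N^{-c\alpha}$'' — is quantitatively insufficient: the number of right cosets in $[\Sigma_N/\Sigma_N^X]^r$ is of order $N!/((N/n)!)^n$, i.e.\ exponential in $N$, so a polynomial-in-$N$ gain per coset cannot make the sum over cosets negligible against the dominant term. What is actually needed, and what Lemma \ref{l440} (Lemma 4.5 of \cite{Li18}) supplies, is a uniform bound $|L_{\sigma_0}/L_{\sigma}|\geq e^{CN}$ with $C\rightarrow\infty$ as $\alpha\rightarrow\infty$, so that for $\alpha$ large the exponential suppression beats the entropy of the coset sum; your ``main obstacle'' paragraph asks for the right uniform $o(N)$ conclusion, but the bound you propose to reach it with is too weak. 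Second, the reason the $y$-correction survives only in $Q_{1,\kappa}$ is not a contour-localisation effect: under Assumption \ref{ap32} the weights $x_{i,N}$ with $i\geq 2$ tend to $0$ as $N\rightarrow\infty$, so the factors $\frac{1+y_{r}x_{i,N}u}{1+y_{r}x_{i,N}}$ coming from Lemma \ref{lmm212} degenerate to $1$ in every block except $i=1$ (compare Lemma \ref{l51} and the definition of $A_j$ in (\ref{aj})); the contours in the residue computation sit near $z=1$ in each rescaled block and play no role in this selection. (Minor: your displayed derivative asymptotics omits the $1/n$ prefactor carried by Theorem \ref{tm2}, though this washes out in the final bookkeeping.)
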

\begin{proof}See Theorem 2.18 of \cite{Li18}.
\end{proof}

\begin{lemma}\label{p437}Let $k$ be a positive integer such that $1\leq k\leq N$. Let 
\begin{eqnarray*}
w_{i}=\left\{\begin{array}{cc}u_i&\mathrm{if}\ 1\leq i\leq k\\x_i&\mathrm{if}\ k+1\leq i\leq N\end{array}\right.
\end{eqnarray*}
Assume 
\begin{eqnarray*}
k=qn+r,\qquad \mathrm{where}\ r<n,
\end{eqnarray*}
and $q,r$ are positive integers.
Then the Schur function can be computed by the following formula
\begin{eqnarray}
&&\label{s0s}s_{\lambda}(w_1,\ldots,w_N)\\
&=&\sum_{\ol{\sigma}\in[\Sigma_N/\Sigma_N^X]^r} \left(\prod_{i=1}^{n}x_i^{|\phi^{(i,\sigma)}(N)|}\right)\left(\prod_{i=1}^{r}s_{\phi^{(i,\sigma)}(N)}\left(\frac{u_i}{x_i},\frac{u_{n+i}}{x_i}\ldots,\frac{u_{qn+i}}{x_i},1,\ldots,1\right)\right)\notag\\
&&\times\left(\prod_{i=r+1}^{n}s_{\phi^{(i,\sigma)}(N)}\left(\frac{u_i}{x_i},\frac{u_{n+i}}{x_i}\ldots,\frac{u_{(q-1)n+i}}{x_i},1,\ldots,1\right)\right)\notag\\
&&\times\left(\prod_{i<j,x_{\sigma(i)}\neq x_{\sigma(j)}}\frac{1}{w_{\sigma(i)}-w_{\sigma(j)}}\right)\notag
\end{eqnarray}
where $\sigma\in \ol{\sigma}\cap \Sigma_N$ is a representative. 
\end{lemma}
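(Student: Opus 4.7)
The plan is to derive (\ref{s0s}) by combining the Weyl bialternant formula with a Laplace (block) expansion of the numerator determinant, organized by the ``colors'' $x_1,\ldots,x_n$ carried by the indices $1,\ldots,N$. Starting from
\[
s_\lambda(w_1,\ldots,w_N)=\frac{\det(w_i^{\lambda_j+N-j})_{i,j=1}^{N}}{\prod_{1\le i<j\le N}(w_i-w_j)},
\]
each index $i\in\{1,\ldots,N\}$ carries the color $x_i$ (using the periodic extension $x_{i+n}=x_i$), and the cosets $\ol{\sigma}\in[\Sigma_N/\Sigma_N^X]^r$ are in bijection with the rearrangements of the color sequence $(x_{\sigma(1)},\ldots,x_{\sigma(N)})$---equivalently, with ordered partitions of the row set into $n$ color blocks of size $N/n$ each. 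For a fixed representative $\sigma$, I would apply the Laplace expansion of the numerator along the row blocks $R_i^\sigma=\{j:x_{\sigma(j)}=x_i\}$: this yields a signed sum over ordered partitions of the column set $\{1,\ldots,N\}$ into blocks of matching sizes, each summand a product of $n$ color-specific sub-determinants. The outer sum over cosets in (\ref{s0s}) emerges when this Laplace sum is combined with the symmetry of the Schur polynomial under permutations of its arguments.

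The color-$i$ sub-determinant carries column indices $\{\lambda_j+N-j:x_{\sigma(j)}=x_i\}$; stripping off the uniform offset $(|R_i^\sigma|-1,\ldots,1,0)$ rewrites these indices as $\lambda_j+\eta_j^\sigma(N)$, where $\eta_j^\sigma(N)$ defined in (\ref{et}) counts precisely the columns lying to the right of column $j$ that get absorbed into other color blocks during the block separation. Combining this sub-determinant with the within-color Vandermonde factor extracted from $\prod_{i<j}(w_i-w_j)$ reconstructs the numerator of the bialternant for $s_{\phi^{(i,\sigma)}}$ evaluated at the color-$i$ variables $(u_i,u_{n+i},\ldots,u_{qn+i},x_i,\ldots,x_i)$ (with one fewer $u$ when $i>r$). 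The homogeneity identity $x_i^{|\mu|}s_\mu(y_1/x_i,\ldots,y_p/x_i,1,\ldots,1)=s_\mu(y_1,\ldots,y_p,x_i,\ldots,x_i)$ absorbs the copies of $x_i$ into the $x_i^{|\phi^{(i,\sigma)}|}$ prefactor, producing the Schur factors in the form written in (\ref{s0s}). What remains of the denominator is the cross-color Vandermonde $\prod_{i<j,\ x_{\sigma(i)}\ne x_{\sigma(j)}}(w_{\sigma(i)}-w_{\sigma(j)})$.

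Two subtleties need care. First, the within-color Vandermondes vanish because of the repeated $x_i$'s, so I would prove the identity first with the $x_i$'s perturbed to distinct indeterminates---where every step above is an honest identity of rational functions---and then specialize back by taking the limit, justified because the left-hand side is polynomial and the right-hand side is a finite sum of rational functions whose apparent poles cancel in aggregate. Second, the summation is over cosets rather than over all of $\Sigma_N$ because each summand is invariant under left multiplication of $\sigma$ by $\Sigma_N^X$: the partitions $\phi^{(i,\sigma)}$ and the lists of $u$-variables going into each Schur factor depend only on the induced coloring, and the cross-color Vandermonde product is invariant under swaps within color classes because each such swap flips two factors whose signs cancel. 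The main obstacle I expect is the careful bookkeeping of signs and shifts in the iterated Laplace expansion---ensuring that the signs arising from block reordering cancel those hidden inside each bialternant so as to produce the unsigned formula (\ref{s0s}), and verifying that the shift accrued to each $\lambda_j$ is exactly $\eta_j^\sigma(N)$ as defined in (\ref{et}) and not some nearby variant counting of right-hand indices.
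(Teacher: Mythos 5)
Your route is viable, and note that the paper itself gives no argument for Lemma \ref{p437}: it simply cites Corollary 3.4 of \cite{Li18}, so what you propose is a self-contained derivation rather than a paraphrase of anything in the text. The skeleton you describe does produce (\ref{s0s}): bialternant, generalized Laplace expansion by color blocks, identification of each block minor as the bialternant of $s_{\phi^{(i,\sigma)}}$ in the color-$i$ variables, homogeneity to extract $x_i^{|\phi^{(i,\sigma)}(N)|}$, and a perturb-then-specialize step to deal with the repeated $x_i$'s. The two points you flag as delicate indeed close up as you expect: if $j$ is the $a$-th smallest column index in the color-$i$ class and $m$ is the class size, then $\lambda_j+N-j=(\lambda_j+\eta_j^{\sigma}(N))+(m-a)$, which is exactly the bialternant exponent for $\phi^{(i,\sigma)}(N)$; the Laplace sign is absorbed precisely by writing the cross-color differences in the $\sigma$-ordered form $w_{\sigma(i)}-w_{\sigma(j)}$ with $i<j$; and the summand is constant on right cosets because a within-color transposition reverses the sign of an even number of cross-color factors, as you say.

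Two corrections of detail. First, the blocks along which you expand must be the \emph{row} blocks $B_i=\{p:\ p\equiv i\ (\mathrm{mod}\ n)\}$, i.e.\ the positions carrying the variables of color $i$; these are fixed and do not depend on $\sigma$. Your $R_i^{\sigma}=\{j:\ x_{\sigma(j)}=x_i\}$ is the set of \emph{column} indices (parts of $\lambda$) assigned color $i$ by the coset, which is the summation variable of the Laplace expansion: with the row blocks fixed, the ordered partitions of the column set are in bijection with the cosets in $[\Sigma_N/\Sigma_N^X]^r$, and no additional appeal to the symmetry of the Schur polynomial is needed. As literally written (blocks depending on $\sigma$, then a further sum over column partitions, then a sum over cosets) the bookkeeping would double count. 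Second, the limiting step is easier than you fear: after dividing by the full Vandermonde, each individual summand is a product of Schur polynomials in the color-class variables times cross-color factors only, hence is regular at the specialization where the within-color variables collide; you may evaluate term by term, with no need for poles to cancel ``in aggregate'' (aggregate cancellation is only relevant at cross-color collisions, which the hypothesis $x_1>x_2>\cdots>x_n>0$ and generic $u_i$ exclude).
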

\begin{proof}See Corollary 3.4 of \cite{Li18}.
\end{proof}

\begin{theorem}\label{tm2}Under Assumptions \ref{ap32} and \ref{ap428}, for each given $\{a_i,b_i\}_{i=1}^{n}$, when $\alpha$ in Assumption \ref{ap32} is sufficiently large and $k\leq n$ we have
\begin{eqnarray}
&&\lim_{N\rightarrow\infty}\frac{1}{N}\log \frac{s_{\lambda(N)}(u_1x_{1,N},\ldots,u_kx_{k,N},x_{k+1,N},\ldots,x_{N,N})}{s_{\lambda(N)}(x_{1,N},\ldots,x_{N,N})}=\sum_{i=1}^{k}[Q_i(u_i)]\label{fc}
\end{eqnarray}
where for $1\leq i\leq k$, 
\begin{enumerate}
\item if $[i\mod n]\neq 0$,
\begin{eqnarray*}
Q_i(u)=\frac{H_{\mathbf{m}_{i\mod n}}(u)}{n}-\frac{(n-[i\mod n])\log(u)}{n}.
\end{eqnarray*}
and the convergence of (\ref{fc}) is uniform when $u_1,\ldots,u_k$ are in an open complex neighborhood of $1$.
\item if $[i\mod n]=0$,
\begin{eqnarray*}
Q_i(u)=\frac{H_{\mathbf{m}_n}(u)}{n}.
\end{eqnarray*}
\end{enumerate}
\end{theorem}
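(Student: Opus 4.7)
The plan is to expand both the numerator and the denominator of the ratio using Lemma~\ref{p437}. Because $k\le n$ we have $q=0$ and $r=k$, so each Schur function becomes a sum over right cosets $\ol{\sigma}\in[\Sigma_N/\Sigma_N^X]^r$ of the product of three factors: the $u$-independent prefactor $\prod_{i=1}^{n} x_i^{|\phi^{(i,\sigma)}(N)|}$; the Schur product
\[
\prod_{i=1}^{k} s_{\phi^{(i,\sigma)}(N)}\!\Bigl(\tfrac{w_i}{x_i},1,\ldots,1\Bigr)\cdot\prod_{i=k+1}^{n} s_{\phi^{(i,\sigma)}(N)}(1,\ldots,1),
\]
whose arguments differ between numerator and denominator only through $w_i$; and the Vandermonde factor $\prod_{p<q,\,x_{\sigma(p)}\ne x_{\sigma(q)}}(w_{\sigma(p)}-w_{\sigma(q)})^{-1}$. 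In the numerator $w_i=u_ix_{i,N}$ for $1\le i\le k$ and $w_i=x_{i,N}$ otherwise, while in the denominator $w_i=x_{i,N}$ throughout.

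The principal step, which I expect to be the main obstacle, is to show that under Assumption~\ref{ap32} both sums are dominated by the single coset $\ol{\sigma}_0$ realizing $x_{\sigma_0(1)}\ge\cdots\ge x_{\sigma_0(N)}$. A rearrangement-inequality argument identifies $\ol{\sigma}_0$ as the unique maximizer of $\prod x_i^{|\phi^{(i,\sigma)}(N)|}$: any other coset differs by swapping mass across a block boundary enforced by Assumption~\ref{ap428}, which costs a factor $(x_{i,N}/x_{j,N})^{\Theta(N)}\ge N^{\Theta(\alpha N)}$ for some $i<j$. The nontrivial part is to complement this with uniform estimates on $s_{\phi^{(i,\sigma)}(N)}(\cdot)$ and on the residual Vandermonde factors, showing they grow at most subexponentially in $N$; standard tools (Weyl-type bounds for Schur values at bounded arguments, and trivial lower bounds $|w_p-w_q|\ge x_{\sigma(p),N}/2$ for $x$'s separated by the $\alpha$-gap) give such control once $\alpha$ is large enough to beat all subexponential contributions. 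It is here that the quantitative separation hypothesis on the $x_{i,N}$ is essential.

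Granted this reduction to $\ol{\sigma}_0$, the prefactor $\prod x_i^{|\phi^{(i,\sigma_0)}|}$, the Schur factors $s_{\phi^{(i,\sigma_0)}(N)}(1,\ldots,1)$ for $k<i\le n$, and the Vandermonde pairs $(p,q)$ with both $\sigma_0(p),\sigma_0(q)>k$ cancel between numerator and denominator, leaving
\[
\prod_{i=1}^{k}\frac{s_{\phi^{(i,\sigma_0)}(N)}(u_i,1,\ldots,1)}{s_{\phi^{(i,\sigma_0)}(N)}(1,\ldots,1)}\ \times\ \prod_{(p,q)\in\mathcal{I}}\frac{x_{\sigma_0(p),N}-x_{\sigma_0(q),N}}{w_{\sigma_0(p)}^{\mathrm{num}}-w_{\sigma_0(q)}^{\mathrm{num}}},
\]
where $\mathcal{I}$ runs over $p<q$ with $x_{\sigma_0(p)}\ne x_{\sigma_0(q)}$ and at least one of $\sigma_0(p),\sigma_0(q)$ in $\{1,\dots,k\}$. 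For the Schur factors, apply the standard single-variable asymptotic (cf.~\cite{bg16}, compare \cite{bg}): for any sequence of signatures with $N/n$ rows whose normalized empirical measures converge to $\mathbf{m}_i$, $\tfrac{n}{N}\log\tfrac{s_\phi(u,1,\ldots,1)}{s_\phi(1,\ldots,1)}\to H_{\mathbf{m}_i}(u)$ uniformly on a complex neighborhood of $u=1$. By the definition of $\mathbf{m}_i$ stated just before Proposition~\ref{tm1}, this applies directly to $\phi^{(i,\sigma_0)}(N)$ and contributes $H_{\mathbf{m}_i}(u_i)/n$ per factor.

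It remains to analyze the Vandermonde ratio. For each $1\le i\le k$ there is one distinguished position in the $i$-th $\sigma_0$-block (the one whose original index is $i$, carrying $u_ix_{i,N}$), and it pairs with each of the $(n-i)N/n$ positions lying in the strictly smaller-$x$ blocks. For each such pair,
\[
\frac{x_{i,N}-x_{j,N}}{u_ix_{i,N}-w_{\sigma_0(q)}^{\mathrm{num}}}\ \xrightarrow[N\to\infty]{}\ \frac{1}{u_i},
\]
because $x_{j,N}/x_{i,N}\to 0$ by Assumption~\ref{ap32}; the $O(k^2)$ pairs where both endpoints carry $u$-factors give only $O(1)$ after log. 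Thus this factor contributes $-\tfrac{(n-i)}{n}\log u_i$ to $\tfrac{1}{N}\log$, which vanishes when $i=n$. Adding the Schur contribution gives $\sum_{i=1}^{k}Q_i(u_i)$ exactly as defined in cases~(1) and~(2) of the theorem, and uniformity on an open complex neighborhood of $(1,\ldots,1)$ is inherited from the uniform Schur asymptotic used above.
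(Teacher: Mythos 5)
The paper itself does not prove this theorem: the proof given is the citation ``See Theorem 2.8 of \cite{Li18}.'' Your sketch is a plausible reconstruction of that proof, and it uses exactly the two supporting lemmas that the paper imports from the same source for this purpose: Lemma~\ref{p437} (the coset decomposition of the Schur function) and, implicitly, Lemma~\ref{l440} (dominance of $\ol{\sigma}_0$). The overall structure --- expand both Schur values via Lemma~\ref{p437}, kill all cosets except $\ol{\sigma}_0$ using the $\alpha$-gap, cancel the $u$-independent factors, apply the one-variable Schur asymptotic of \cite{bg,bg16} to $s_{\phi^{(i,\sigma_0)}(N)}(u_i,1,\ldots,1)$, and then handle the residual Vandermonde ratio by a direct computation --- is the right route, and your bookkeeping of the Vandermonde contribution $-\tfrac{n-i}{n}\log u_i$ (including the observation that the case $i=n$ gives zero extra term, matching case~(2)) is correct.

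Two points you should tighten. First, the claim that the residual Schur and Vandermonde factors in a sub-dominant coset ``grow at most subexponentially in $N$'' is false as stated: $s_{\phi^{(i,\sigma)}(N)}(1,\ldots,1)$ and the Vandermonde product are each genuinely exponential (indeed their logarithms are of order $N^2$ or $N^2\log N$). What actually saves the argument is the quantitative form of Lemma~\ref{l440}: the ratio $L_{\si_0}/L_\si$ is at least $e^{CN}$ with a constant $C$ that tends to $+\infty$ as $\alpha\to\infty$, so the power-of-$x$ prefactor, boosted by the $\alpha$-gap, outpaces \emph{whatever} exponential rate the other factors contribute once $\alpha$ is large enough. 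You should invoke that lemma directly rather than a heuristic subexponential bound, since a correct version of your heuristic is precisely the content of Lemma~\ref{l440}. Second, the assertion ``$q=0$ and $r=k$'' fails when $k=n$ (there $q=1,\ r=0$); the structure of the resulting product is nonetheless the same (one $u$-variable in each of the $n$ Schur factors), so this does not affect the conclusion, but the statement should be corrected. With those two points repaired, your argument aligns with what the paper relies on from~\cite{Li18}.
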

\begin{proof}See Theorem 2.8 of \cite{Li18}.
\end{proof}

\begin{lemma}\label{l440}Let $\si_0$ satisfy (\ref{sz}), and let $\ol{\si}_0\in [\Si_N/ \Si_N^X]^r$. For $1\leq i\leq k$, assume $\frac{u_i}{x_i}$ is in an open complex neighborhood of $1$.  For any $\si\in \Si_N$, let
\begin{eqnarray*}
&&L_{\si}\left(\frac{u_1}{x_1},\ldots,\frac{u_k}{x_k}\right)\\
&=& \left(\prod_{i=1}^{n}x_i^{|\phi^{(i,\sigma)}(N)|}\right)\left(\prod_{i=1}^{r}s_{\phi^{(i,\sigma)}(N)}\left(\frac{u_i}{x_i},\frac{u_{n+i}}{x_i}\ldots,\frac{u_{qn+i}}{x_i},1,\ldots,1\right)\right)\notag\\
&&\times\left(\prod_{i=r+1}^{n}s_{\phi^{(i,\sigma)}(N)}\left(\frac{u_i}{x_i},\frac{u_{n+i}}{x_i}\ldots,\frac{u_{(q-1)n+i}}{x_i},1,\ldots,1\right)\right)\notag\\
&&\times\left(\prod_{i<j,x_{\sigma(i)}\neq x_{\sigma(j)}}\frac{1}{w_{\sigma(i)}-w_{\sigma(j)}}\right)
\end{eqnarray*}
Suppose that Assumption \ref{ap32} holds. When $\alpha$ in Assumption \ref{ap32} is sufficiently large, we have
\begin{eqnarray*}
\left|\frac{L_{\si_0}}{L_{\si}}\right|\geq e^{CN}
\end{eqnarray*}
where $C>0$ is a constant independent of $\si$, $N$ and $(u_1,\ldots,u_k)$. Moreover,
\begin{eqnarray*}
\lim_{\alpha\rightarrow+\infty} C=+\infty.
\end{eqnarray*}
\end{lemma}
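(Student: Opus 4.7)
The plan is to split
\[
L_\si \;=\; M_\si\,\cdot\, S_\si\,\cdot\, V_\si,
\]
with $M_\si=\prod_{i=1}^n x_i^{|\phi^{(i,\si)}(N)|}$ the monomial factor, $S_\si=\prod_{i=1}^n s_{\phi^{(i,\si)}(N)}(u_i/x_i,u_{n+i}/x_i,\ldots,1,\ldots,1)$ the product of Schur evaluations, and $V_\si=\prod_{i<j,\,x_{\si(i)}\neq x_{\si(j)}}(w_{\si(i)}-w_{\si(j)})^{-1}$ the inverse-Vandermonde factor. The aim is to show that the ratio $M_{\si_0}/M_\si$ is so large that it overwhelms the worst-case cancellation from the other two pieces.

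\textbf{Step 1 (monomial dominance).} Writing $|\phi^{(i,\si)}(N)|=\sum_{j:\,x_{\si(j)}=x_i}(\lambda_j+\eta_j^\si(N))$, one obtains
\[
\log M_\si\;=\;\sum_{j=1}^N \lambda_j\,\log x_{\si(j)}\;+\;\sum_{j=1}^N \eta_j^\si(N)\,\log x_{\si(j)}.
\]
The first sum is a classical rearrangement functional in $\si$. By Assumption~\ref{ap428}, $\si_0$ pairs the blocks of largest $\mu_t$ with the residue classes of smallest index (equivalently, largest $x_i$), so $\si_0$ is the unique maximizer of $\sum_j \lambda_j \log x_{\si(j)}$ within each coset; using the quantitative gaps $\mu_t-\mu_{t'}\ge C_1 N$ from Assumption~\ref{ap428} together with $\log(x_i/x_j)\ge\alpha\log N$ from Assumption~\ref{ap32}, any $\si\notin\ol\si_0$ contributes at least $C_1\,\alpha\, N\log N$ to the gap. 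A single inversion of $\si$ changes $\sum_j \eta_j^\si$ by only $O(N)$, so the $\eta$-correction is $O(N\cdot\alpha\log N)$, and the lower bound
\[
\log\frac{M_{\si_0}}{M_\si}\;\ge\;c_1\,\alpha\,N\,\log N,\qquad \si\notin\ol\si_0,
\]
holds with a constant $c_1>0$ independent of $N,\alpha,\si$, and of the $u_i$'s.

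\textbf{Step 2 (auxiliary factors and conclusion).} The inverse-Vandermonde factor is essentially coset-invariant: since each residue $1,\dots,n$ appears at exactly $N/n$ positions for every coset, the multiset of pairs of distinct residues $\{r(\si(i)),r(\si(j))\}$ is identical across all $\si$; combined with the estimate $|w_p-w_q|=(1+O(N^{-\alpha}))x_{\min(r(p),r(q))}$ valid on the prescribed neighborhood of $u_i/x_i=1$, this yields $|V_{\si_0}/V_\si|=1+O(N^{1-\alpha})$, which is negligible. For each Schur factor, both $\phi^{(i,\si_0)}(N)$ and $\phi^{(i,\si)}(N)$ have exactly $N/n$ parts obtained by redistributing the same multiset $\{\lambda_j+\eta_j^\si(N)\}$ among the $n$ residue classes, and all entries lie in $[0,C'N]$; a uniform Jacobi--Trudi bound (or the branching argument underlying Theorem~\ref{tm2}) then yields
\[
\Bigl|\log(S_{\si_0}/S_\si)\Bigr|\;\le\;C_3\, N\log N,
\]
uniformly in $\si$ and in $(u_1,\dots,u_k)$ within a fixed complex neighborhood of $(x_1,\dots,x_k)$. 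Combining the three estimates,
\[
\log|L_{\si_0}/L_\si|\;\ge\;(c_1\alpha - C_3)\,N\log N\;\ge\;(c_1\alpha-C_3)(\log 2)\,N\qquad (N\ge 2),
\]
so the desired bound holds with $C:=(c_1\alpha-C_3)\log 2$, a constant independent of $\si,N$, and $(u_1,\dots,u_k)$, and $C\to+\infty$ as $\alpha\to+\infty$.

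\textbf{Main obstacle.} The delicate part is the uniform Schur-factor bound in Step 2. Since $|\phi^{(i,\si)}(N)|=O(N^2)$, a crude dimension-type estimate for each $\log s_{\phi^{(i,\si)}(N)}$ is of order $N^2\log N$, which would swallow the $\alpha N\log N$ margin produced by Step 1. What saves the argument is that the partitions $\phi^{(i,\si_0)}$ and $\phi^{(i,\si)}$ differ only by redistributing $\{\lambda_j+\eta_j^\si\}$ among residue classes---not by introducing new scales---so, after factoring out the common leading $s_\phi(1,\dots,1)$-type behavior, the residual ratio $S_{\si_0}/S_\si$ is uniformly bounded by $e^{O(N\log N)}$. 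Making this cancellation precise, likely via a careful inspection of the Jacobi--Trudi determinant restricted to the finitely many non-unit arguments together with the structural Theorem~\ref{tm2}, constitutes the principal technical input of the proof.
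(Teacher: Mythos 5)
Your decomposition $L_\si=M_\si S_\si V_\si$ is a sensible starting point, and the conclusion of your Step 1 is essentially right under Assumption \ref{ap428} (since the sets $J_i$ are disjoint, every coset other than $\ol\si_0$ must move at least one position between blocks with $\mu_p-\mu_q\ge C_1N$ across classes with $\log(x_i/x_{i'})\ge \alpha\log N$). But your handling of the $\eta$-correction is too loose to support the stated inequality: for a coset one swap away from $\si_0$ the $\lambda$-gain is only of order $\alpha N\log N$, i.e.\ the same order as your unsigned bound "$O(N\cdot\alpha\log N)$" for the correction, so the claimed lower bound on $\log(M_{\si_0}/M_\si)$ does not follow as written. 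The clean fix is that $\si_0$ simultaneously maximizes $\sum_j\eta_j^{\si}\log x_{\si(j)}$ over cosets: this sum equals $\sum\log x_{\si(j)}$ taken over all cross-class pairs $j<k$ with $j$ the earlier position, and under $\si_0$ the earlier member of every cross-class pair carries the larger $x$, so the correction is nonnegative. (Also, $|V_{\si_0}|=|V_\si|$ exactly, since the product runs over the same unordered index pairs for every $\si$; no $1+O(N^{1-\alpha})$ estimate is needed.)

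The genuine gap is the uniform Schur bound in Step 2, which is not only unproven (as you concede) but false as stated. Take $n=2$ and two blocks of size $N/2$ with $\mu_1-\mu_2\asymp N$. Under $\si_0$ each $\phi^{(i,\si_0)}(N)$ is a rectangle, so $s_{\phi^{(i,\si_0)}}$ evaluated with all but $O(1)$ arguments equal to $1$ has $\log$ of size $O(N)$; for a coset $\si$ that interleaves the two blocks between the two classes, each $\phi^{(i,\si)}(N)$ consists of two clusters of about $N/4$ parts separated by $\Theta(N)$, and the cross-cluster factors in $s_\phi(1,\ldots,1)=\prod_{i<j}\frac{(\phi_i-i)-(\phi_j-j)}{j-i}$ give $\log s_{\phi^{(i,\si)}}(1,\ldots,1)=\Theta(N^2)$. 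Hence $\log(S_{\si_0}/S_\si)=-\Theta(N^2)$, which swamps the margin $c_1\alpha N\log N$ from your Step 1 for any fixed $\alpha$; your final display $(c_1\alpha-C_3)N\log N$ therefore does not follow. The lemma is saved only because for such heavily mixed cosets the monomial gain is itself $\Theta(\alpha N^2\log N)$: the gain and the Schur penalty must be compared per unit of $\lambda$-mass transported across residue classes (equivalently, along an exchange path of elementary cross-class swaps from $\si_0$ to $\si$, each swap gaining at least $C_1\alpha N\log N$ and costing at most $O(N\log N)$ in the Schur and Vandermonde factors), not through two bounds that are uniform in $\si$ at the scale $N\log N$. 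This per-rearrangement balancing is exactly the content of the proof the paper invokes (Lemma 4.5 of \cite{Li18}); without it your argument does not establish the statement, even though the overall strategy (monomial dominance beating the combinatorial factors for $\alpha$ large) is the right one.
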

\begin{proof}See Lemma 4.5 of \cite{Li18}.
\end{proof}

\section{Central Limit Theorem for Piecewise Boundary Conditions}\label{pbc}

In this section, we construct certain statistics from the (random) dimer configuration on a contracting square hexagon lattice with piecewise boundary conditions, and show that they converge in distribution to sum of $n$ independent Gaussian random variables in the scaling limit, where $1\times n$ is the size of a fundamental domain.  The main theorem proved in this section is Theorem \ref{clt2}.

\subsection{First order moments}
For the piecewise boundary conditions, the proof of Proposition \ref{pn41} still holds. For $\kappa\in(0,1)$, let 
\begin{eqnarray*}
X_N&=&(x_{1,N},\ldots,x_{N,N});\\
X_{N,\kappa}&=&(x_{1+N-\lfloor(1-\kappa) N\rfloor,N},\ldots,x_{N,N})\\
U_{N,\kappa}&=&(u_1,u_2,\ldots,u_{\lfloor (1-\kappa)N\rfloor})\\
U_{N,\kappa,X}&=&(u_1x_{1+N-\lfloor(1-\kappa) N\rfloor,N},u_2x_{2+N-\lfloor(1-\kappa) N\rfloor,N},\ldots,u_{\lfloor (1-\kappa)N\rfloor}x_{N,N})
\end{eqnarray*}

Let $\lambda\in\GT_{\lfloor (1-\kappa)N \rfloor}$, and $\rho_{\lfloor(1-\kappa)N \rfloor}$ be a probability measure on $\GT_{\lfloor(1-\kappa)N}\rfloor$ as defined in Proposition \ref{tm1}. Then we have
\begin{eqnarray*}
&&E_{k,\kappa,N}:=\mathbf{E}\sum_{i=1}^{\lfloor (1-\kappa)N\rfloor}(\lambda_i+\lfloor(1-\kappa)N\rfloor-i)^k\\
&=&\sum_{\lambda\in\GT_{\lfloor(1-\kappa) N\rfloor}}\rho_{\lfloor(1-\kappa)N \rfloor}(\lambda)\sum_{i=1}^{\lfloor(1-\kappa)N \rfloor}(\lambda+\lfloor(1-\kappa)N\rfloor-i)^k\\
&=&\left.\frac{1}{V_{\lfloor (1-\kappa)N\rfloor}(U_{N,\kappa,X})}\sum_{i=1}^{\lfloor(1-\kappa)N\rfloor}\left(u_i\frac{\partial}{\partial u_i}\right)^kV_{\lfloor(1-\kappa)N \rfloor}(U_{N,\kappa,X})\mathcal{S}_{\rho_{\lfloor(1-\kappa)N \rfloor},X_{N,\kappa}}(U_{N,\kappa,X})\right|_{U_{N,\kappa}=(1,\ldots,1)}\\
\end{eqnarray*}
For $1\leq i\leq n$, let
\begin{eqnarray*}
v_i=\begin{cases}x_{i,N},\ \mathrm{if}\ 1\leq i\leq N-\lfloor (1-\kappa)N\rfloor \\ x_{i,N}  u_{i-N+\lfloor (1-\kappa)N\rfloor},\ \mathrm{if}\ N-\lfloor (1-\kappa)N\rfloor+1\leq i\leq N \end{cases}
\end{eqnarray*}
Let $\lambda(N)$ be the partition corresponding to the boundary condition. For $1\leq i\leq n$, let 
\begin{eqnarray}
R(i)=\{1\leq j\leq \lfloor(1-\kappa)N \rfloor: [(j+N-\lfloor(1-\kappa)N \rfloor)\mod n]=[i\mod n]\}
\end{eqnarray}
 and for $1\leq i\leq \lfloor (1-\kappa)N\rfloor$ let 
\begin{eqnarray*}
j(i)=\begin{cases}[i+N-\lfloor (1-\kappa)N \rfloor]\mod n,\ \mathrm{if}\ \left([i+N-\lfloor (1-\kappa)N \rfloor]\mod n\right)\neq 0\\
n,\ \mathrm{if}\ \left([i+N-\lfloor (1-\kappa)N \rfloor]\mod n\right)=0
\end{cases}
\end{eqnarray*}
Assume 
\begin{eqnarray*}
\lfloor(1-\kappa)N\rfloor=q_{N,\kappa}n+r_{N,\kappa}
\end{eqnarray*}
where $q_{N,\kappa}$ and $r_{N,\kappa}$ are nonnegative integers satisfying $r_{N,\kappa}<n$.
 By Lemmas \ref{lmm212}, \ref{p437}, \ref{l440}, we obtain
\begin{eqnarray*}
&&E_{k,\kappa,N}\\
&=&\frac{1}{V_{\lfloor (1-\kappa)N\rfloor}(X_{N,\kappa})}\sum_{i=1}^{\lfloor(1-\kappa)N\rfloor}\left(u_i\frac{\partial}{\partial u_i}\right)^kV_{\lfloor(1-\kappa)N \rfloor}(U_{N,\kappa,X})
\left[\frac{s_{\lambda(N)}\left(U_{N,\kappa,X},x_{1,N},\ldots, x_{N-\lfloor (1-\kappa)N \rfloor,N}\right)}{s_{\lambda(N)}(X_N)}\right.\\
&&\left.\left.\prod_{l\in\{1,\ldots,N-\lfloor(1-\kappa)N \rfloor\}\cap I_2}\prod_{j=1}^{\lfloor(1-\kappa)N \rfloor}\left(\frac{1+y_{\ol{l}}u_jx_{\ol{N-\lfloor (1-\kappa)N\rfloor+j}}}{1+y_{\ol{l}}x_{\ol{N-\lfloor (1-\kappa)N\rfloor+j}}}\right)\right]\right|_{U_{N,\kappa}=(1,\ldots,1)}\\
&=&\left.\frac{1}{V_{\lfloor (1-\kappa)N\rfloor}(X_{N,\kappa})}\sum_{i=1}^{\lfloor(1-\kappa)N\rfloor}\left(u_i\frac{\partial}{\partial u_i}\right)^kV_{\lfloor(1-\kappa)N \rfloor}(U_{N,\kappa,X})\frac{T_N}{P_N}\right|_{U_{N,\kappa}=(1,\ldots,1)},
\end{eqnarray*}
where 
\begin{eqnarray*}
&&T_N=\prod_{l\in\{1,\ldots,N-\lfloor(1-\kappa)N \rfloor\}\cap I_2}\prod_{j=1}^{\lfloor(1-\kappa)N \rfloor}\left(\frac{1+y_{\ol{l}}x_{\ol{N-\lfloor (1-\kappa)N\rfloor+j}}u_j}{1+y_{\ol{l}}x_{\ol{N-\lfloor (1-\kappa)N\rfloor+j}}}\right)\\
&\times&\left(\prod_{i=1}^{r}s_{\phi^{(j(i),\sigma_0)}(N)}\left(u_{i},u_{n+i}\ldots,u_{q_{N,\kappa}n+i},1,\ldots,1\right)\right)\\
&&\times\left(\prod_{i=r+1}^{n}s_{\phi^{(j(i),\sigma_0)}(N)}\left(u_{i},u_{n+i}\ldots,u_{(q_{N,\kappa}-1)n+i},1,\ldots,1\right)\right)\\
&&\times\left(\prod_{i<j,x_{\sigma_0(i)}\neq x_{\sigma_0(j)}}\frac{1}{v_{\sigma_0(i)}-v_{\sigma_0(j)}}\right)\left(1+o(1)\right)
\end{eqnarray*}
and
\begin{eqnarray*}
P_N&=&\left(\prod_{i=1}^{n}s_{\phi^{(i,\sigma_0)}(N)}\left(1,\ldots,1\right)\right)\\
&&\left(\prod_{i<j,x_{\sigma_0(i)}\neq x_{\sigma_0(j)}}\frac{1}{x_{\sigma_0(i)}-x_{\sigma_0(j)}}\right)\left(1+o(1)\right)
\end{eqnarray*}
Then we have
\begin{eqnarray*}
E_{k,\kappa,N}=\sum_{j=1}^{n}E_{k,\kappa,N}^{(j)}.
\end{eqnarray*}
where
\begin{eqnarray*}
E_{k,\kappa,N}^{(j)}:&=&\lim_{[x_k\rightarrow (x_{k\mod n}),\mathrm{for}\ 1\leq k\leq N]}
\frac{1}{V_{\lfloor (1-\kappa)N\rfloor}(X_{N,\kappa})}\\
&&\left.\sum_{i\in\{1,\ldots,\lfloor(1-\kappa)N\rfloor\}\cap R(j)}\left(u_i\frac{\partial}{\partial u_i}\right)^kV_{\lfloor(1-\kappa)N \rfloor}(U_{N,\kappa,X})\frac{T_N}{P_N}\right|_{U_{N,\kappa}=(1,\ldots,1)}\\
&=&\lim_{[x_k\rightarrow (x_{k\mod n}),\mathrm{for}\ 1\leq k\leq N]}\left.\sum_{i\in\{1,\ldots,\lfloor(1-\kappa)N\rfloor\}\cap R(j)}\left(u_i\frac{\partial}{\partial u_i}\right)^k\frac{T_{N,i}}{P_{N,i}}\right|_{U_{N,\kappa}=(1,\ldots,1)};
\end{eqnarray*}

\begin{eqnarray*}
T_{N,i}&=&\left[\prod_{l\in\{1,\ldots,N-\lfloor(1-\kappa)N \rfloor\}\cap I_2}\left(\frac{1+y_{\ol{l}}x_{j(i)}u_i}{1+y_{\ol{l}}x_{j(i)}}\right)\right]s_{\phi^{(j(i),\sigma_0)}(N)}\left(u_{i},1,\ldots,1\right)\\
&&\times\left(\prod_{j(i)<k\leq n }\left(\frac{1}{u_ix_{j(i)}-x_k}\right)^{\frac{\kappa N}{n}}\right)\left(\prod_{k<j(i)\leq n}\left(\frac{1}{x_k-u_ix_{j(i)}}\right)^{\frac{\kappa N}{n}}\right)\\
&&\left(\prod_{N- \lfloor(1-\kappa)N\rfloor+1\leq k\leq N,\ k\mod n=j(i),\ k-N+\lfloor(1-\kappa)N \rfloor\neq i}\left[u_ix_{j(i)}-x_{k}\right]\right)e^{N o(1)}
\end{eqnarray*}
and
\begin{eqnarray*}
P_{N,i}&=&s_{\phi^{(j(i),\sigma_0)}(N)}\left(1,\ldots,1\right)\\
&&\times\left(\prod_{j(i)<k\leq n }\left(\frac{1}{x_{j(i)}-x_k}\right)^{\frac{\kappa N}{n}}\right)\left(\prod_{k<j(i)\leq n}\left(\frac{1}{x_k-x_{j(i)}}\right)^{\frac{\kappa N}{n}}\right)\\
&&\left(\prod_{N- \lfloor(1-\kappa)N\rfloor+1\leq k\leq N,\ k\mod n=j(i),\ k-N+\lfloor(1-\kappa)N \rfloor\neq i}\left[x_{j(i)}-x_{k}\right]\right)e^{No(1)}.
\end{eqnarray*}
In the expressions above, the $o(1)$ terms converge to 0 uniformly when $u_i$ is in a neighborhood of $1$.

If the edge weights satisfy Assumption \ref{ap32}, we obtain
\begin{eqnarray*}
E_{k,\kappa,N}^{(j)}:=\lim_{[x_k\rightarrow (x_{k\mod n}),\mathrm{for}\ 1\leq k\leq N]}\left.\sum_{i\in\{1,\ldots,\lfloor(1-\kappa)N\rfloor\}\cap R(j)}\left(u_i\frac{\partial}{\partial u_i}\right)^k\frac{\tilde{V}_{N,i}\tilde{T}_{N,i}}{\tilde{W}_{N,i}\tilde{P}_{N,i}}\right|_{U_{N,\kappa}=(1,\ldots,1)}
\end{eqnarray*}
where
\begin{eqnarray*}
\tilde{T}_{N,i}&=&\left[\prod_{l\in\{1,\ldots,N-\lfloor(1-\kappa)N \rfloor\}\cap I_2}\left(\frac{1+y_{\ol{l}}x_{j(i)}u_i}{1+y_{\ol{l}}x_{j(i)}}\right)\right]s_{\phi^{(j(i),\sigma_0)}(N)}\left(u_{i},1,\ldots,1\right)\\
&&\times\left(\prod_{j(i)<k\leq n }\left(\frac{1}{u_ix_{j(i)}-x_k}\right)^{\frac{\kappa N}{n}}\right)e^{No(1)};
\end{eqnarray*}
\begin{eqnarray*}
\tilde{P}_{N,i}&=&s_{\phi^{(j(i),\sigma_0)}(N)}\left(1,\ldots,1\right)\left(\prod_{j(i)<k \leq n}\left(\frac{1}{x_{j(i)}-x_k}\right)^{\frac{\kappa N}{n}}\right)e^{No(1)};
\end{eqnarray*}
\begin{eqnarray*}
\tilde{V}_{N,i}&=&\prod_{[N- \lfloor(1-\kappa)N\rfloor+1\leq k\leq N,\ (k\mod n)=(j(i)\mod n)],\ k-N+\lfloor(1-\kappa)N \rfloor\neq i}\left[u_ix_{j(i)}-x_{k}\right]\\
\tilde{W}_{N,i}&=&\prod_{[N- \lfloor(1-\kappa)N\rfloor+1\leq k\leq N,\ (k\mod n)=(j(i)\mod n)],\ k-N+\lfloor(1-\kappa)N \rfloor\neq i}\left[x_{j(i)}-x_{k}\right]
\end{eqnarray*}
Note that
\begin{eqnarray}
&&E_{k,\kappa,N}^{(j)}\label{ekj}\\
&=&\frac{1}{\tilde{W}_{N,i}\tilde{P}_{N,i}}\left.\sum_{i\in\{1,\ldots,\lfloor(1-\kappa)N\rfloor\}\cap R(j)}\left(u_i\frac{\partial}{\partial u_i}\right)^k\tilde{V}_{N,i}\exp\left[\log\left(\tilde{T}_{N,i}\right)\right]\right|_{U_{N,\kappa}=(1,\ldots,1)}\notag
\end{eqnarray}
and
\begin{eqnarray*}
\frac{\partial \tilde{T}_{N,i}}{\partial u_i}=\frac{\partial}{\partial u_i}\exp\left[\log\left(\tilde{T}_{N,i}\right)\right]=\exp\left[\log\left(\tilde{T}_{N,i}\right)\right]\frac{\partial}{\partial u_i}\left[\log\left(\tilde{T}_{N,i}\right)\right]
\end{eqnarray*}

\begin{lemma}\label{l51}Assume $\kappa \in (0,1)$ and the edge weights satisfy Assumption \ref{ap32}, then
\item For $1\leq i\leq \lfloor (1-\kappa)N\rfloor$ and $i\in R(j)$
\begin{eqnarray*}
&&\lim_{N\rightarrow\infty}\frac{1}{N}\frac{\partial [\log\tilde{T}_{N,i}]}{\partial u_i}=\frac{\kappa}{n}\sum_{l\in I_2\cap\{1,2,\ldots,n\}}\frac{y_lx_{j(i)}}{1+y_l x_{j(i)}u_i}-\frac{\kappa}{n}\frac{n-j(i)}{u_i}+\frac{1}{n} H_{\bm_{j(i)}}'(u_i);
\end{eqnarray*}
and the convergence is uniform when $u_i$ is in a neighborhood of $1$.
\end{lemma}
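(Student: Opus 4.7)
The plan is to take the logarithmic derivative of $\tilde{T}_{N,i}$ term by term using its explicit product form and to identify each piece in the limit with the corresponding summand on the right hand side. Writing
\begin{eqnarray*}
\log \tilde{T}_{N,i}&=&\sum_{l\in\{1,\ldots,N-\lfloor(1-\kappa)N\rfloor\}\cap I_2}\log\!\left(\frac{1+y_{\ol l}x_{j(i)}u_i}{1+y_{\ol l}x_{j(i)}}\right)\\
&&+\;\log s_{\phi^{(j(i),\sigma_0)}(N)}(u_i,1,\ldots,1)\\
&&-\;\frac{\kappa N}{n}\sum_{j(i)<k\leq n}\log(u_i x_{j(i)}-x_k)\;+\;N\,o(1),
\end{eqnarray*}
we differentiate in $u_i$ and treat the three substantive sums separately.

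First I would handle the product over $l\in I_2$. By Assumption~\ref{pw}, the weights $y_l$ repeat periodically with period $n$, and $|\{1,\ldots,N-\lfloor(1-\kappa)N\rfloor\}\cap I_2|/N\to\kappa|I_2\cap\{1,\ldots,n\}|/n$; grouping indices by residue mod $n$ and dividing by $N$ gives
\begin{eqnarray*}
\frac{1}{N}\sum_{l\in\{1,\ldots,N-\lfloor(1-\kappa)N\rfloor\}\cap I_2}\frac{y_{\ol l}x_{j(i)}}{1+y_{\ol l}x_{j(i)}u_i}\;\longrightarrow\;\frac{\kappa}{n}\sum_{l\in I_2\cap\{1,\ldots,n\}}\frac{y_l x_{j(i)}}{1+y_l x_{j(i)}u_i}.
\end{eqnarray*}

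Next I would treat the product $\prod_{j(i)<k\leq n}(u_i x_{j(i)}-x_k)^{-\kappa N/n}$. Under Assumption~\ref{ap32}, $x_k/x_{j(i)}=O(N^{-\alpha})\to 0$ for every $k>j(i)$, so $u_ix_{j(i)}-x_k=u_ix_{j(i)}(1+o(1))$ uniformly for $u_i$ in a complex neighborhood of $1$. Differentiating $-\frac{\kappa N}{n}\sum_{j(i)<k\leq n}\log(u_ix_{j(i)}-x_k)$ in $u_i$, dividing by $N$ and passing to the limit therefore produces $-\frac{\kappa}{n}\frac{n-j(i)}{u_i}$.

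The main step is the asymptotic of $\frac{1}{N}\partial_{u_i}\log s_{\phi^{(j(i),\sigma_0)}(N)}(u_i,1,\ldots,1)$. By Proposition~\ref{tm1}, the counting measure of the partition $\phi^{(j(i),\sigma_0)}(N)$ (which has length $\sim N/n$) converges to $\bm_{j(i)}$, and the one-variable specialization of the Schur function is governed by the function $H_{\bm}$ defined in~(\ref{hmi}); concretely, the Bufetov--Gorin-type result underlying Theorem~\ref{tm2} (applied to a single variable in place of one entry equal to $1$ in a length-$\sim N/n$ specialization) yields
\begin{eqnarray*}
\lim_{N\rightarrow\infty}\frac{1}{N}\log\frac{s_{\phi^{(j(i),\sigma_0)}(N)}(u_i,1,\ldots,1)}{s_{\phi^{(j(i),\sigma_0)}(N)}(1,1,\ldots,1)}=\frac{1}{n}H_{\bm_{j(i)}}(u_i),
\end{eqnarray*}
uniformly on a complex neighborhood of $u_i=1$. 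Since the denominator does not depend on $u_i$, Cauchy's differentiation formula (using the uniformity) lets me differentiate both sides in $u_i$, giving the contribution $\frac{1}{n}H'_{\bm_{j(i)}}(u_i)$.

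Summing the three limits and absorbing the $N\,o(1)$ remainder (whose derivative is also $N\,o(1)$ by uniform holomorphicity) gives the identity in the statement. The only delicate point is justifying the termwise differentiation and the uniform convergence used for the Schur-function term; this is where I would invoke the uniform-on-compacts version of Theorem~\ref{tm2} together with Lemma~\ref{l440} to dominate the other cosets $\ol\sigma\neq\ol\sigma_0$, which are suppressed by a factor $e^{-CN}$ with $C$ arbitrarily large once $\alpha$ is large enough.
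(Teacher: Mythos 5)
Your proposal is correct and follows essentially the same route as the paper: the paper's proof is precisely "explicit computations plus the one-variable Schur asymptotics (Theorem 3.6 of \cite{bk}, cf.\ \cite{bg,GP15,GM05})", and you have simply written out those computations — the periodic $I_2$-product giving the $\frac{\kappa}{n}\sum_{l}\frac{y_lx_{j(i)}}{1+y_lx_{j(i)}u_i}$ term, the cross-factor $\prod_{j(i)<k\le n}(u_ix_{j(i)}-x_k)^{-\kappa N/n}$ giving $-\frac{\kappa}{n}\frac{n-j(i)}{u_i}$ via $x_k/x_{j(i)}\to 0$ under Assumption \ref{ap32}, and the length-$N/n$ Schur factor giving $\frac{1}{n}H'_{\bm_{j(i)}}(u_i)$ after differentiating the uniform limit. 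The handling of the $e^{No(1)}$ factor and the appeal to Lemma \ref{l440} to isolate the $\ol{\sigma}_0$ coset are consistent with how the paper sets up $\tilde{T}_{N,i}$, so no gap.
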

\begin{proof}The lemma follows from explicit computations and Theorem 3.6 of \cite{bk}; see also \cite{bg,GP15,GM05}.
\end{proof}

 \subsection{Second order moments}
 
Let
\begin{eqnarray*}
&&E_{k,\ell,\kappa,N}:=\mathbf{E}\left(\sum_{i=1}^{\lfloor (1-\kappa)N\rfloor}(\lambda_i+\lfloor(1-\kappa)N\rfloor-i)^k\sum_{j=1}^{\lfloor (1-\kappa)N\rfloor}(\lambda_j+\lfloor(1-\kappa)N\rfloor-j)^l\right)\\
&=&\sum_{\lambda\in\GT_{\lfloor(1-\kappa) N\rfloor}}\rho_{\lfloor(1-\kappa)N \rfloor}(\lambda)\sum_{i=1}^{\lfloor(1-\kappa)N \rfloor}(\lambda_i+\lfloor(1-\kappa)N\rfloor-i)^k\sum_{j=1}^{\lfloor (1-\kappa)N\rfloor}(\lambda_j+\lfloor(1-\kappa)N\rfloor-j)^l\\
&=&\frac{1}{V_{\lfloor (1-\kappa)N\rfloor}(U_{N,\kappa,X})}\sum_{i=1}^{\lfloor(1-\kappa)N\rfloor}\left(u_i\frac{\partial}{\partial u_i}\right)^k\sum_{j=1}^{\lfloor(1-\kappa)N\rfloor}\left(u_j\frac{\partial}{\partial u_j}\right)^l\\
&&\left.V_{\lfloor(1-\kappa)N \rfloor}(U_{N,\kappa,X})\mathcal{S}_{\rho_{\lfloor(1-\kappa)N \rfloor},X_{N,\kappa}}(U_{N,\kappa,X})\right|_{U_{N,\kappa}=(1,\ldots,1)}\\
\end{eqnarray*}
Again by Lemmas \ref{lmm212}, \ref{p437}, \ref{l440}, we obtain
\begin{eqnarray*}
&&E_{k,\ell,\kappa,N}\\
&=&\frac{1}{V_{\lfloor (1-\kappa)N\rfloor}(X_{N,\kappa})}\sum_{i=1}^{\lfloor(1-\kappa)N\rfloor}\left(u_i\frac{\partial}{\partial u_i}\right)^k\sum_{j=1}^{\lfloor(1-\kappa)N\rfloor}\left(u_j\frac{\partial}{\partial u_j}\right)^l V_{\lfloor(1-\kappa)N \rfloor}(U_{N,\kappa,X})\\
&&\left[\frac{s_{\lambda(N)}\left(U_{N,\kappa,X},x_{1,N},\ldots, x_{N-\lfloor (1-\kappa)N \rfloor,N}\right)}{s_{\lambda(N)}(X_N)}\right.\\
&&\left.\left.\prod_{l\in\{1,\ldots,N-\lfloor(1-\kappa)N \rfloor\}\cap I_2}\prod_{j=1}^{\lfloor(1-\kappa)N \rfloor}\left(\frac{1+y_{\ol{l}}u_jx_{\ol{N-\lfloor (1-\kappa)N\rfloor+j}}}{1+y_{\ol{l}}x_{\ol{N-\lfloor (1-\kappa)N\rfloor+j}}}\right)\right]\right|_{U_{N,\kappa}=(1,\ldots,1)}\\
&=&\left.\frac{1}{V_{\lfloor (1-\kappa)N\rfloor}(X_{N,\kappa})}\sum_{i=1}^{\lfloor(1-\kappa)N\rfloor}\left(u_i\frac{\partial}{\partial u_i}\right)^k\sum_{j=1}^{\lfloor(1-\kappa)N\rfloor}\left(u_j\frac{\partial}{\partial u_j}\right)^lV_{\lfloor(1-\kappa)N \rfloor}(U_{N,\kappa,X})\frac{T_N}{P_N}\right|_{U_{N,\kappa}=(1,\ldots,1)},
\end{eqnarray*}
Then we have
\begin{eqnarray*}
E_{k,l,\kappa,N}=\sum_{s=1}^{n}\sum_{t=1}^{n}E_{k,l,\kappa,N}^{(s,t)};
\end{eqnarray*}
where
\begin{eqnarray}
E_{k,l,\kappa,N}^{(s,t)}:&=&\lim_{[x_k\rightarrow (x_{k\mod n}),\mathrm{for}\ 1\leq k\leq N]}
\frac{1}{V_{\lfloor (1-\kappa)N\rfloor}(X_{N,\kappa})}\label{e2}\\
&&\sum_{[i\in\{1,\ldots,\lfloor(1-\kappa)N\rfloor\}\cap R(s)]}\sum_{[r\in\{1,\ldots,\lfloor(1-\kappa)N\rfloor\}\cap R(t)]}\left(u_i\frac{\partial}{\partial u_i}\right)^k\left(u_r\frac{\partial}{\partial u_r}\right)^l
\notag\\&&\left.V_{\lfloor(1-\kappa)N \rfloor}(U_{N,\kappa,X})\frac{T_N}{P_N}\right|_{U_{N,\kappa}=(1,\ldots,1)}\notag\\
&=&\lim_{[x_k\rightarrow (x_{k\mod n}),\mathrm{for}\ 1\leq k\leq N]}\sum_{i\in\{1,\ldots,\lfloor(1-\kappa)N\rfloor\}\cap R(s)}\left(u_i\frac{\partial}{\partial u_i}\right)^k\notag\\
&&\sum_{[r\in\{1,\ldots,\lfloor(1-\kappa)N\rfloor\}\cap R(t)]}\left(u_r\frac{\partial}{\partial u_r}\right)^l\left.\frac{V_{N,(i,r)}^{(s,t)}T_{N,(i,r)}^{(s,t)}}{W_{N,(i,r)}^{(s,t)}P_{N,(i,r)}^{(s,t)}}\right|_{U_{N,\kappa}=(1,\ldots,1)}\notag
\end{eqnarray}
and 
where (assume the edge weights satisfy Assumption \ref{ap32})
\begin{enumerate}
\item if $s=t$,
\begin{eqnarray*}
T_{N,(i,r)}^{(s,s)}&=&
\left[\prod_{l\in\{1,\ldots,N-\lfloor(1-\kappa)N \rfloor\}\cap I_2}\left(\frac{1+y_{\ol{l}}x_{s}u_i}{1+y_{\ol{l}}x_{s}}\right)\right]
\left[\prod_{l\in\{1,\ldots,N-\lfloor(1-\kappa)N \rfloor\}\cap I_2}\left(\frac{1+y_{\ol{l}}x_{s}u_r}{1+y_{\ol{l}}x_{s}}\right)\right]\\
&&s_{\phi^{(s,\sigma_0)}(N)}\left(u_{i},u_j,1,\ldots,1\right)\\
&&\times\left(\prod_{s<k\leq n }\left(\frac{1}{u_ix_{s}-x_k}\right)^{\frac{\kappa N}{n}}\right)\left(\prod_{s<k\leq n }\left(\frac{1}{u_rx_{s}-x_k}\right)^{\frac{\kappa N}{n}}\right)e^{No(1)};\\
P_{N,(i,r)}^{(s,s)}&=&s_{\phi^{(s,\sigma_0)}(N)}\left(1,\ldots,1\right)\left(\prod_{s<k \leq n}\left(\frac{1}{x_{s}-x_k}\right)^{\frac{2\kappa N}{n}}\right)e^{No(1)}\\
V_{N,(i,r)}^{(s,s)}&=&\left(\prod_{[N- \lfloor(1-\kappa)N\rfloor+1\leq k\leq N,\ (k\mod n)=(s\mod n)],\ k-N+\lfloor(1-\kappa)N \rfloor\notin \{i,r\}}\left[u_ix_{s}-x_{k}\right]\right)\\
&&\times  \left(\prod_{[N- \lfloor(1-\kappa)N\rfloor+1\leq k\leq N,\ (k\mod n)=(s\mod n)],\ k-N+\lfloor(1-\kappa)N \rfloor\notin \{i,r\}}\left[u_rx_{s}-x_{k}\right]\right)\\
&&\times (u_ix_s-u_rx_s)\\
W_{N,(i,r)}^{(s,s)}&=&\prod_{[N- \lfloor(1-\kappa)N\rfloor+1\leq k\leq N,\ (k\mod n)=(s\mod n)],\ k-N+\lfloor(1-\kappa)N \rfloor\notin \{i,r\}}\left[x_{s}-x_{k}\right]^2\\
&&\times \left(x_{i+N-\lfloor(1-\kappa)N\rfloor}-x_{r+N-\lfloor (1-\kappa)N\rfloor}\right)
\end{eqnarray*}
In the expressions above, the $o(1)$ terms converge to 0 uniformly when $u_i$, $u_r$ are in a neighborhood of $1$ as $N\rightarrow\infty$; moreover, the operator $\frac{\partial^2}{\partial u_r\partial u_s}$ acting on $\log o(1)$ is identically 0 (instead of converging to $0$ as $N\rightarrow\infty$).
\item if $s\neq t$,
\begin{eqnarray*}
T_{N,(i,r)}^{(s,t)}&=&
\left[\prod_{l\in\{1,\ldots,N-\lfloor(1-\kappa)N \rfloor\}\cap I_2}\left(\frac{1+y_{\ol{l}}x_{s}u_i}{1+y_{\ol{l}}x_{s}}\right)\right]
\left[\prod_{l\in\{1,\ldots,N-\lfloor(1-\kappa)N \rfloor\}\cap I_2}\left(\frac{1+y_{\ol{l}}x_{s}u_r}{1+y_{\ol{l}}x_{s}}\right)\right]\\
&&s_{\phi^{(s,\sigma_0)}(N)}\left(u_{i},1,\ldots,1\right)s_{\phi^{(t,\sigma_0)}(N)}\left(u_r,1,\ldots,1\right)\\
&&\times\left(\prod_{s<k\leq n}\left(\frac{1}{u_ix_{s}-x_k}\right)^{\frac{\kappa N}{n}}\right)\left(\prod_{t<k\leq n}\left(\frac{1}{u_rx_{t}-x_k}\right)^{\frac{\kappa N}{n}}\right) \left(\frac{1}{u_ix_{s}-u_rx_t}\right)e^{No(1)};\\
P_{N,(i,r)}^{(s,t)}&=&\tilde{P}_{N,i}\tilde{P}_{N,r}e^{No(1)}\\
V_{N,(i,r)}^{(s,t)}&=&\tilde{V}_{N,i}\tilde{V}_{N,r}\\
W_{N,(i,r)}^{(s,t)}&=&\tilde{W}_{N,i}\tilde{W}_{N,r}
\end{eqnarray*}
In the expressions above, the $o(1)$ terms converge to 0 uniformly when $u_i$, $u_r$ are in a neighborhood of $1$ as $N\rightarrow\infty$; moreover, the operator $\frac{\partial^2}{\partial u_r\partial u_s}$ acting on $\log o(1)$ is identically 0 (instead of converging to $0$ as $N\rightarrow\infty$).
\end{enumerate}

\begin{lemma}\label{l52}Assume $\kappa \in (0,1)$ and the edge weights satisfy Assumption \ref{ap32}, then
\begin{enumerate}
\item For $1\leq i<j\leq \lfloor (1-\kappa)N\rfloor$ and $i,j\in R(s)$
\begin{eqnarray*}
&&\lim_{N\rightarrow\infty}\frac{\partial^2 [\log T^{(s,s)}_{N,(i,r)}]}{\partial u_i\partial u_r}\\
&=&\frac{\partial^2}{\partial u_i u_r}\left[\log\left(1-(u_i-1)(u_r-1)\frac{u_i H_{\bm_s}'(u_i)-u_r H'_{\bm_s}(u_r)}{u_i-u_r}\right)\right]
\end{eqnarray*}
and the convergence is uniform when $u_i$ is in a neighborhood of $1$.
\item For $1\leq i<j\leq \lfloor (1-\kappa)N\rfloor$ and $i\in R(s)$, $j\in R(t)$ with $s\neq t$
\begin{eqnarray*}
&&\lim_{N\rightarrow\infty}\frac{\partial^2 [\log T^{(s,t)}_{N,(i,r)}]}{\partial u_i\partial u_s}=0.
\end{eqnarray*}
and the convergence is uniform when $u_i$ is in a neighborhood of $1$.
\end{enumerate}
\end{lemma}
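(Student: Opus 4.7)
The strategy is to exploit the tensor structure of $T^{(s,t)}_{N,(i,r)}$ under the mixed operator $\partial^2/(\partial u_i\,\partial u_r)$: any factor that depends on only one of $u_i, u_r$ (or on neither) contributes zero to the mixed partial of its logarithm, and the $e^{No(1)}$ factor is declared to have $\log$-mixed-partial identically $0$ in the definition. Only the genuinely two-variable factor survives, and its asymptotics gives the claimed limit.

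For Part (1), the only factor in $T^{(s,s)}_{N,(i,r)}$ that depends on both $u_i$ and $u_r$ is the two-variable Schur polynomial $s_{\phi^{(s,\si_0)}(N)}(u_i,u_r,1,\dots,1)$; the four products over $l\in I_2$ and over $s<k\le n$ split into a $u_i$-only piece and a $u_r$-only piece. Hence
\[
\frac{\partial^2\log T^{(s,s)}_{N,(i,r)}}{\partial u_i\,\partial u_r}
=\frac{\partial^2}{\partial u_i\,\partial u_r}\log s_{\phi^{(s,\si_0)}(N)}(u_i,u_r,1,\dots,1).
\]
I would then invoke the two-variable Schur asymptotic at a generic point (Theorem~3.6 of \cite{bk}, in the extended form already used to establish Lemma~\ref{l51}) applied to the partition $\phi^{(s,\si_0)}(N)$, whose normalized counting measure converges to $\bm_s$ by Proposition~\ref{tm1}. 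This produces the uniform-on-compacts expansion
\[
\log\frac{s_{\phi^{(s,\si_0)}(N)}(u_i,u_r,1,\dots,1)}{s_{\phi^{(s,\si_0)}(N)}(1,\dots,1)}
= N\bigl(H_{\bm_s}(u_i)+H_{\bm_s}(u_r)\bigr)+L(u_i,u_r)+o(1),
\]
where
\[
L(u_i,u_r)=\log\left(1-(u_i-1)(u_r-1)\,\frac{u_iH'_{\bm_s}(u_i)-u_rH'_{\bm_s}(u_r)}{u_i-u_r}\right).
\]
The additive $N$-term is annihilated by $\partial^2/(\partial u_i\,\partial u_r)$, uniform convergence justifies exchanging limit with the two derivatives, and one reads off exactly the formula claimed in Part~(1).

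For Part (2) with $s\ne t$, inspecting $T^{(s,t)}_{N,(i,r)}$ shows every factor except $(u_ix_s-u_rx_t)^{-1}$ depends on only one of the two variables (or carries the $e^{No(1)}$ stipulation), so
\[
\frac{\partial^2\log T^{(s,t)}_{N,(i,r)}}{\partial u_i\,\partial u_r}
=\frac{\partial^2}{\partial u_i\,\partial u_r}\bigl[-\log(u_ix_s-u_rx_t)\bigr]
=-\frac{x_sx_t}{(u_ix_s-u_rx_t)^2}.
\]
Under Assumption~\ref{ap32} the ratio $\min(x_s,x_t)/\max(x_s,x_t)\le N^{-\a}\to0$, so for $(u_i,u_r)$ in any fixed compact neighborhood of $(1,1)$ one has $|u_ix_s-u_rx_t|\ge \tfrac12\max(x_s,x_t)$ for $N$ large, giving
\[
\left|\frac{x_sx_t}{(u_ix_s-u_rx_t)^2}\right|\le \frac{4\min(x_s,x_t)}{\max(x_s,x_t)}\le 4N^{-\a}\longrightarrow 0
\]
uniformly, which is Part~(2).

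The main obstacle is justifying the $O(1)$ correction $L(u_i,u_r)$ in the two-variable Schur asymptotic for the particular partitions $\phi^{(s,\si_0)}(N)$ arising from the piecewise boundary: one must check that these sequences satisfy the hypotheses of the Borodin--Kuan / Bufetov--Gorin formula (regularity of $\bm_s$ and the appropriate analyticity of $H_{\bm_s}$ on a complex neighborhood of $1$), so that not just the leading $N$-term but also the $O(1)$ correction of the asymptotic expansion is valid and convergence is uniform in $(u_i,u_r)$ near $(1,1)$. The rest of the argument is essentially bookkeeping of which factors depend on which variable and a one-line estimate using Assumption~\ref{ap32}.
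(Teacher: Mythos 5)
Your proposal takes essentially the same route as the paper: in Part (1) you isolate $s_{\phi^{(s,\sigma_0)}(N)}(u_i,u_r,1,\dots,1)$ as the only factor of $T^{(s,s)}_{N,(i,r)}$ genuinely coupling $u_i$ and $u_r$ and invoke the two-variable Schur asymptotics near $(1,1)$ from \cite{bk} (the paper cites Theorem~6.8 there; you mention Theorem~3.6, which is the one-variable version used for Lemma~\ref{l51}, but the reference you mean is clearly the two-variable form), and in Part (2) you isolate $(u_ix_s-u_rx_t)^{-1}$ and use Assumption~\ref{ap32} to kill the resulting $x_sx_t/(u_ix_s-u_rx_t)^2$. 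The only differences are cosmetic: you spell out the explicit $4N^{-\alpha}$ estimate in Part (2) where the paper just says ``the limit is 0 by Assumption~\ref{ap32}'', and your sign $-x_sx_t/(u_ix_s-u_rx_t)^2$ for the mixed partial is actually the correct one (the paper drops the minus sign, harmlessly since the limit is $0$).
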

\begin{proof}First we consider the case that $i,j\in R(s)$, we have
\begin{eqnarray*}
&&\lim_{N\rightarrow\infty}\frac{\partial^2 [\log T^{(s,s)}_{N,(i,r)}]}{\partial u_i\partial u_r}
=\lim_{N\rightarrow\infty}\frac{\partial^2 [\log s_{\phi^{(s,\sigma_0)}(N)}\left(u_{i},u_j,1,\ldots,1\right)]}{\partial u_i\partial u_r}
\end{eqnarray*}
Then Part (1) of the lemma follows from Theorem 6.8 of \cite{bk}; see also \cite{GP15,bg16}.

Now we consider the case that $i\in R(s)$, $j\in R(t)$ and $s\neq t$. In this case
\begin{eqnarray*}
\lim_{N\rightarrow\infty}\frac{\partial^2 [\log T^{(s,s)}_{N,(i,r)}]}{\partial u_i\partial u_r}
&=&\lim_{N\rightarrow\infty}\frac{\partial^2 [-\log(u_ix_{s,N}-u_rx_{t,N})]}{\partial u_i\partial u_r}\\
&=&\lim_{N\rightarrow\infty}\frac{x_{s,N}x_{t,N}}{(u_ix_{s,N}-u_rx_{t,N})^2};
\end{eqnarray*}
and the limit is 0 by Assumption \ref{ap32}.
\end{proof}

\subsection{Asymptotical analysis}

Let
\begin{eqnarray}
F_{k,\kappa,N}^{(s)}&=&\lim_{[x_k\rightarrow (x_{k\mod n}),\mathrm{for}\ 1\leq k\leq N]}
\frac{1}{V_{\lfloor (1-\kappa)N\rfloor}(U_{N,\kappa,X})T_N}\label{fkn}\\
&&\sum_{[i\in\{1,\ldots,\lfloor(1-\kappa)N\rfloor\}\cap R(s)]}\left(u_i\frac{\partial}{\partial u_i}\right)^kV_{\lfloor(1-\kappa)N \rfloor}(U_{N,\kappa,X})T_N\notag
\end{eqnarray}

For simplicity, we use the notation $\partial_i$ to denote $\frac{\partial}{\partial u_i}$.
Expanding the right hand side of (\ref{ekj}), we can write $E_{k,\kappa,N}^{(j)}$ as a sum of terms with the following form
\begin{eqnarray}
\frac{c_0 x_{j(i)}^r (\partial_i^{s_1}[\log\tilde{T}_{N,i}])^{d_1}\ldots (\partial_i^{s_t}[\log\tilde{T}_{N,i}])^{d_t}}{(x_{j(i)}-x_{a_1})\ldots(x_{j(i)}-x_{a_r})}\label{rt}
\end{eqnarray}
Similarly, we can write the right hand side of (\ref{fkn}) as a large sum of the following form
\begin{eqnarray*}
\frac{c_0 x_{j(i)}^r u_i^{k-s_0}(\partial_i^{s_1}[\log T_N])^{d_1}\ldots (\partial_i^{s_t}[\log T_N])^{d_t}}{(x_{j(i)}u_i-x_{a_1}u_{a_1-N+\lfloor (1-\kappa)N\rfloor})\ldots(x_{j(i)}u_i-x_{a_r}u_{a_r-N+\lfloor (1-\kappa)N\rfloor})}
\end{eqnarray*}
such that
\begin{itemize}
\item for $1\leq s\leq r$, $a_s$ is a positive integer satisfying $N-\lfloor (1-\kappa)N\rfloor+1\leq a_s\leq N$; and
\item In (\ref{rt}), we have $(a_s\mod n)=(j(i)\mod n)$ for all $1\leq s\leq r$; and
\item $N-\lfloor (1-\kappa)N\rfloor+i, a_1,\ldots,a_r$ are distinct; and
\item $\{s_j\}_{j=0}^{t}$ and $\{d_j\}_{j=1}^{t}$ are nonnegative integers satisfying $s_1<s_2<\ldots <s_t$; and
\item 
\begin{eqnarray}
r+s_0+s_1d_1+\ldots+s_td_t=k; \label{rsd}
\end{eqnarray}
and
\item $c_0$ is a constant independent of $N$ and $a_1,\ldots,a_r$.
 \end{itemize}
 From the expression (\ref{ekj}) we see that any term obtained by permuting $i,a_1-N+\lfloor(1-\kappa)N\rfloor,\ldots,a_r-N+\lfloor (1-\kappa)N\rfloor$ of (\ref{rt}) with in $\{1,2,\ldots,\lfloor(1-\kappa)N \rfloor\}\cap R(j)$ are still present in the sum. Let
 \begin{eqnarray*}
 \tilde{a}_1=a_1-N+\lfloor (1-\kappa)N\rfloor
\end{eqnarray*} 
 Hence we have
 \begin{eqnarray*}
E_{k,\kappa,N}^{(j)}&=&\sum_{r,\{s_j\}_{j=0}^{t},\{d_j\}_{j=1}^{t}\ \mathrm{satisfy}\ (\ref{rsd})}(r+1)!
 \sum_{[\{a_1-N+\lfloor(1-\kappa)N \rfloor,\ldots,a_{r+1}-N+\lfloor(1-\kappa)N\rfloor\}\in \{1,2,\ldots,\lfloor(1-\kappa)N \rfloor\}\cap R(j)]}\\
 &&\lim_{[x_{a_1},\ldots,x_{a_{r+1}}\longrightarrow x_j]}\mathrm{Sym}_{a_1,\ldots,a_{r+1}}\\
 &&\left.\left[\frac{c_0 x_{a_1}^r u_{\tilde{a}_1}^{k-s_0}(\partial_{\tilde{a}_1}^{s_1}[\log\tilde{T}_{N,\tilde{a}_1}])^{d_1}\ldots (\partial_{\tilde{a}_1}^{s_t}[\log\tilde{T}_{N,\tilde{a}_1}])^{d_t}}{(x_{a_1}-x_{a_2})\ldots(x_{a_1}-x_{a_{r+1}})}\right]\right|_{U_{N,\kappa}=\{1,\ldots,1\}}.
 \end{eqnarray*}
 where the constant $c_0$ may depend on $r$, $\{s_j\}_{j=0}^t$ and $\{d_j\}_{j=1}^{t}$.

\begin{lemma}\label{le53} Let
\begin{eqnarray*}
&&\tilde{T}_N=\prod_{l\in\{1,\ldots,N-\lfloor(1-\kappa)N \rfloor\}\cap I_2}\prod_{j=1}^{\lfloor(1-\kappa)N \rfloor}\left(\frac{1+y_{\ol{l}}x_{\ol{N-\lfloor (1-\kappa)N\rfloor+j}}u_j}{1+y_{\ol{l}}x_{\ol{N-\lfloor (1-\kappa)N\rfloor+j}}}\right)\\
&\times&\left(\prod_{i=1}^{r}s_{\phi^{(j(i),\sigma_0)}(N)}\left(u_{i},u_{n+i}\ldots,u_{q_{N,\kappa}n+i},1,\ldots,1\right)\right)\\
&&\times\left(\prod_{i=r+1}^{n}s_{\phi^{(j(i),\sigma_0)}(N)}\left(u_{i},u_{n+i}\ldots,u_{(q_{N,\kappa}-1)n+i},1,\ldots,1\right)\right)\\
&&\times\left(\prod_{N-\lfloor(1-\kappa)N\rfloor+1\leq i\leq N,\ 1\leq j\leq N-\lfloor(1-\kappa)N\rfloor,\ \tilde{i}\in R(p),\tilde{j}\in R(q),p<q}\frac{1}{x_iu_{\tilde{i}}-x_j}\right)\left(1+o(1)\right)
\end{eqnarray*}
and
\begin{eqnarray*}
\tilde{V}_{\lfloor (1-\kappa)N\rfloor}=\prod_{k=1}^{n}\prod_{[1+N-\lfloor(1-\kappa)N \rfloor\leq i<j\leq N, (i\mod n)=(j\mod n)=(k\mod n)]}(x_iu_{\tilde{i}}-x_ju_{\tilde{j}})
\end{eqnarray*}
Assume $a,b,c\in R(j)$, and $a,b,c$ are distinct positive integers. Assume that the $o(1)$ in the definition of $\tilde{T}_N$ converges to $0$ uniformly when $u_a,u_b,u_c$ are in a neighborhood of 1. Then
\begin{eqnarray*}
\left.\lim_{N\rightarrow\infty}\frac{\partial^3\log[\tilde{T}_N]}{\partial u_a\partial u_b\partial u_c}\right|_{U_{N,\kappa}=(1,\ldots,1)}=0.
\end{eqnarray*}
\end{lemma}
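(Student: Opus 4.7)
The plan is to exploit the multiplicative structure of $\tilde{T}_N$: upon taking the logarithm, most factors split into sums of functions each depending on a single $u$-variable and are therefore annihilated identically by $\partial_a\partial_b\partial_c$ whenever $a,b,c$ are distinct. First I would decompose $\log\tilde{T}_N$ according to the three types of factors in its definition: (i) the boundary-weight products $\prod_{l,j}(1+y_{\ol{l}}x_{\ol{N-\lfloor(1-\kappa)N\rfloor+j}}u_j)/(1+y_{\ol{l}}x_{\ol{N-\lfloor(1-\kappa)N\rfloor+j}})$; (ii) the $n$ Schur factors $s_{\phi^{(i,\sigma_0)}(N)}(u_i,u_{n+i},\ldots,1,\ldots,1)$; and (iii) the two-index rational factors $1/(x_iu_{\tilde{i}}-x_j)$. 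Each factor in family (i) depends on a single $u_j$, and each factor in family (iii) depends on a single $u_{\tilde{i}}$ (the second index $j$ only enters through the constant $x_j$), so after logarithm these contribute only sums of one-variable functions; consequently $\partial_a\partial_b\partial_c$ annihilates them identically for distinct $a,b,c$. The trailing $(1+o(1))$ factor in $\tilde{T}_N$ is, by assumption, uniform in a complex neighborhood of the all-ones point, so by Cauchy's integral formula its third mixed derivative is also $o(1)$.

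Next, among the Schur factors in family (ii), only the factor indexed by $i=j$ can depend on $u_a,u_b,u_c$, because the $i$-th Schur factor involves only the variables $u_\ell$ with $\ell\in R(i)$ and by hypothesis $a,b,c\in R(j)$. The problem is thereby reduced to the single claim
\begin{eqnarray*}
\lim_{N\to\infty}\left.\frac{\partial^3\,\log s_{\phi^{(j,\sigma_0)}(N)}(u_j,u_{n+j},\ldots,1,\ldots,1)}{\partial u_a\,\partial u_b\,\partial u_c}\right|_{U_{N,\kappa}=(1,\ldots,1)}=0.
\end{eqnarray*}

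I would then invoke the asymptotic factorization of the normalized Schur function at the all-ones point established in Theorem 3.6 and Theorem 6.8 of \cite{bk} (and already used to derive Lemmas \ref{l51} and \ref{l52}): under the convergence of the counting measure of $\phi^{(j,\sigma_0)}(N)$ to the deterministic measure $\bm_j$, one has, for every fixed $k$, an expansion
\begin{eqnarray*}
\log\frac{s_{\phi^{(j,\sigma_0)}(N)}(u_1,\ldots,u_k,1,\ldots,1)}{s_{\phi^{(j,\sigma_0)}(N)}(1,\ldots,1)}=N\sum_{p=1}^{k}A_j(u_p)+\sum_{1\leq p<q\leq k}B_j(u_p,u_q)+o(1),
\end{eqnarray*}
uniformly on a complex neighborhood of $(1,\ldots,1)$. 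Since this expansion contains no three-variable terms beyond the $o(1)$ remainder, the third mixed derivative at $(1,\ldots,1)$ is $o(1)$, with the uniformity passing through the derivative by Cauchy's integral formula. Combining this with the fact that families (i) and (iii) contribute zero identically yields the lemma.

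The main obstacle I anticipate is verifying that the asymptotic expansion of \cite{bk} and \cite{bg16} applies verbatim to the signatures $\phi^{(j,\sigma_0)}(N)$ produced by the piecewise boundary construction under Assumption \ref{ap32}, and in particular that the remainder is uniformly $o(1)$ on a full complex neighborhood of $(1,\ldots,1)$ (so that three complex differentiations preserve the $o(1)$ bound). Once this regularity prerequisite is in hand --- which is essentially the same step used to derive Lemma \ref{l52} --- the vanishing follows mechanically from the structural absence of three-variable terms in the expansion.
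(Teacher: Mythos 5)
Your proposal is correct and follows essentially the same route as the paper: reduce $\log\tilde{T}_N$ to the single Schur factor indexed by $j$ (all other factors depend on at most one of $u_a,u_b,u_c$), then kill the third mixed derivative using the Schur-function asymptotics of \cite{bk}/\cite{bg16} together with uniformity near $(1,\ldots,1)$. The paper in fact needs slightly less than the full two-term expansion you invoke: it only uses the uniform convergence of the second mixed derivative $\partial_{u_a}\partial_{u_b}\log s_{\phi^{(j,\sigma_0)}(N)}$ (Theorem 6.8 of \cite{bk}) and then differentiates once more in $u_c$, observing that the limiting expression is independent of $u_c$, which sidesteps the regularity concern you raise about the remainder in the full expansion.
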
 

\begin{proof}We have
\begin{eqnarray*}
\left.\lim_{N\rightarrow\infty}\frac{\partial^3\log[\tilde{T}_N]}{\partial u_a\partial u_b\partial u_c}\right|_{U_{N,\kappa}=(1,\ldots,1)}=\lim_{N\rightarrow\infty}\frac{\partial^3\log[s_{\phi^{(j,\sigma_0)}(N)}\left(u_a,u_b,u_c,1,\ldots,1\right)]}{\partial u_a\partial u_b\partial u_c}
\end{eqnarray*}
To compute the derivative on the right hand side, note that 
\begin{eqnarray*}
\lim_{N\rightarrow \infty}\bm[\phi^{(j,\sigma_0)}(N)]=\bm_j
\end{eqnarray*}
By Theorem 6.8 of \cite{bk} (see also \cite{GP15}, \cite{bg16})，
\begin{eqnarray*}
&&\lim_{N\rightarrow\infty}\frac{\partial^2\log[s_{\phi^{(j,\sigma_0)}(N)}\left(u_a,u_b,u_c,1,\ldots,1\right)]}{\partial u_a\partial u_b}\\
&=&\frac{\partial^2}{\partial u_a\partial u_b}\log\left(1-(u_a-1)(u_b-1)\frac{u_a H_{\bm_j}'(u_a)-u_bH_{\bm_j}'(u_b)}{u_a-u_b}\right);
\end{eqnarray*}
and the convergence is uniform when $(u_a,u_b,u_c)$ is in a neighborhood of $(1,1,1)$. Therefore we can take the derivative with respect to $u_c$ on both sides. The right hand side is independent of $u_c$, and the derivative with respect to $u_c$ is 0. Then the lemma follows.
\end{proof}

 \begin{lemma}\label{l53}Let $\kappa\in (0,1)$ and the edge weights satisfy Assumption \ref{ap32}. Then 
  \begin{enumerate}
  \item  the degree of $N$ in $E_{k,\kappa,N}^{(j)}$ is at most $k+1$.
 \item For any integer $i$ satisfying $1\leq i\leq \lfloor (1-\kappa)N\rfloor$ and $i\in R(j)$, the degree of $N$ in $\left.\frac{\partial}{\partial u_i} F_{k,\kappa,N}^{(j)}\right|_{U_{N,\kappa}=(1,\ldots,1)}$ is at most $k$;
   \item For any integer $i$ satisfying $1\leq i\leq \lfloor (1-\kappa)N\rfloor$ and $i\notin R(j)$, the degree of $N$ in $\left.\frac{\partial}{\partial u_i} F_{k,\kappa,N}^{(j)}\right|_{U_{N,\kappa}=(1,\ldots,1)}$ is less than $k$;
 \item For any integers $i_1,i_2$ satisfying $1\leq i_1<i_2\leq \lfloor (1-\kappa)N\rfloor$ and $i_1,i_2\in R(j)$, the degree of $N$ in $\left.\frac{\partial^2}{\partial u_{i_1}\partial u_{i_2}} F_{k,\kappa,N}^{(j)}\right|_{U_{N,\kappa}=(1,\ldots,1)}$ is less than $k$.
 \end{enumerate}

 \end{lemma}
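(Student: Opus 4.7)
The plan is to follow the template of Proposition \ref{p36}, adapting the argument to the piecewise setting by using Lemmas \ref{l51}, \ref{l52}, and \ref{le53} in place of Lemma \ref{l33}. First I would expand $E_{k,\kappa,N}^{(j)}$ (and $F_{k,\kappa,N}^{(j)}$) as a large sum of symmetrized terms of the shape
\begin{equation*}
\mathrm{Sym}_{a_1,\ldots,a_{r+1}}\,\frac{c_0\,x_{a_1}^{\,r}\,u_{\tilde a_1}^{\,k-s_0}\bigl(\partial_{\tilde a_1}^{s_1}[\log\tilde T_{N,\tilde a_1}]\bigr)^{d_1}\cdots\bigl(\partial_{\tilde a_1}^{s_t}[\log\tilde T_{N,\tilde a_1}]\bigr)^{d_t}}{(x_{a_1}-x_{a_2})\cdots(x_{a_1}-x_{a_{r+1}})},
\end{equation*}
with $\{a_1,\ldots,a_{r+1}\}\subset R(j)\cap\{1,\ldots,\lfloor(1-\kappa)N\rfloor\}$ and the constraint $r+s_0+s_1d_1+\cdots+s_td_t=k$, exactly as in equation \eqref{exf}. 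Since all $x_{a_s}$ collapse to $x_j$ inside $R(j)$, Lemma \ref{l552} converts each symmetrized singular expression into a single $r$-th order derivative of the numerator in one variable evaluated at $x_j$, which preserves the $N$-degree bookkeeping.

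For Part (1), I would bound the $N$-degree of each summand. Lemma \ref{l51} gives $\partial_{\tilde a_1}\log\tilde T_{N,\tilde a_1}=O(N)$ uniformly near $u=1$; differentiating this single-variable function further in $u_{\tilde a_1}$ produces quantities whose $N$-degree is still at most one, so $\bigl(\partial_{\tilde a_1}^{s_j}[\log\tilde T_{N,\tilde a_1}]\bigr)^{d_j}$ carries $N$-degree at most $d_j$. The combinatorial sum over $(r+1)$-element subsets of $R(j)\cap\{1,\ldots,\lfloor(1-\kappa)N\rfloor\}$ contributes $O(N^{r+1})$. Using the constraint and $s_1\ge 1$, the total $N$-degree is at most
\begin{equation*}
r+1+d_1+\cdots+d_t\;\le\;r+1+s_1d_1+\cdots+s_td_t\;=\;k+1-s_0\;\le\;k+1.
\end{equation*}

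For Part (2), the derivative $\partial/\partial u_i$ with $i\in R(j)$ either acts on an existing $u_{\tilde a_1}$ (forcing $i$ to coincide with one summation index and reducing the combinatorial sum to $O(N^{r})$, so the degree drops by one) or it acts on the factors in $\log\tilde T_{N,\tilde a_1}$ via the chain rule, producing a further single $u_i$-derivative of $\log\tilde T$ that is still $O(N)$ but with the summation restricted to tuples containing $i$; the resulting degree is at most $k$ in every case. Part (3) follows from the same bookkeeping combined with Lemma \ref{l52}(2) and Assumption \ref{ap32}: if $i\notin R(j)$, then $u_i$ only enters through the cross-class couplings of $\log T_N$ between $R(j)$ and other $R(j')$, and the mixed derivative of such couplings is $o(1)$ by Assumption \ref{ap32} and Lemma \ref{l52}(2), so at least one $N$ is lost compared with Part (2). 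For Part (4), when $i_1,i_2\in R(j)$ are distinct, I split into the subcases where the two derivatives (a) both pin down summation indices (losing two powers of $N$), (b) pin one index and act once on a $\log\tilde T$ factor (losing one power of $N$, while the extra single derivative of $\log\tilde T$ yields only $O(N)$), or (c) both act on $\log\tilde T$ factors; in case (c) Lemma \ref{l52}(1) shows the mixed second derivative is $O(1)$ in $N$, and Lemma \ref{le53} prevents any nontrivial triple mixed derivative of $\log\tilde T_N$ from contributing. Tallying across all cases gives $N$-degree strictly less than $k$.

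The main obstacle is the precise case analysis of Part (4): keeping track of how the two derivatives can distribute across summation indices, Vandermonde factors, and $\log\tilde T$ factors, and invoking the correct bound (Lemma \ref{l51}, \ref{l52}(1), \ref{l52}(2), or \ref{le53}) in each subcase to verify that every arrangement loses at least two powers of $N$ relative to Part (1). The rest of the argument is an adaptation of the proof of Proposition \ref{p36}, with Lemma \ref{l552} playing the same symmetrization role as before.
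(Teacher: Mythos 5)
Your strategy is the same as the paper's: expand $E_{k,\kappa,N}^{(j)}$ and $F_{k,\kappa,N}^{(j)}$ into symmetrized terms subject to $r+s_0+s_1d_1+\cdots+s_td_t=k$, count the $O(N^{r+1})$ (resp.\ $O(N^{r})$, $O(N^{r-1})$) choices of summation indices, and bound the $N$-degree of each factor via Lemmas \ref{l51}, \ref{l52} and \ref{le53}; Parts (1) and (3) are handled correctly this way. In Part (2), however, your two alternatives both presuppose that $i$ coincides with one of the summation indices (``summation restricted to tuples containing $i$''). You omit the case $i\in R(j)$ but $i\notin\{\tilde{a}_1,\ldots,\tilde{a}_{r+1}\}$, in which the sum still runs over $O(N^{r+1})$ tuples and $\partial_i$ enters only through the mixed second derivative $\partial_i\partial_{\tilde{a}_1}\log\tilde{T}_N$; there one must invoke Part (1) of Lemma \ref{l52} (same-class mixed derivative is $O(1)$, i.e.\ degree $0$) so that one log-factor drops from degree $d_q$ to $d_q-1$ and the total stays at most $d_1+\cdots+d_t-1+r+1\le k$. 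This is in fact the case the paper treats first, and it is the dominant one.

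More seriously, in Part (4), subcase (b), the estimate ``the extra single derivative of $\log\tilde{T}$ yields only $O(N)$'' is not good enough: with that bound the subcase contributes degree at most $d_1+\cdots+d_t+r\le k-s_0\le k$, which does not give the strict inequality that Part (4) asserts. The correct accounting is that when $i_2\notin\{\tilde{a}_1,\ldots,\tilde{a}_{r+1}\}$ the derivative $\partial_{i_2}$ can only hit $\log\tilde{T}_N$ through a mixed second derivative, which is $O(1)$ by Part (1) of Lemma \ref{l52} (exactly the estimate you already use in your subcase (c)); then the log-factor product has degree at most $d_1+\cdots+d_t-1$, the restricted sum gives $O(N^{r})$ terms, and the subcase contributes at most $d_1+\cdots+d_t-1+r\le k-1<k$. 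With these two corrections your degree bookkeeping coincides with the paper's proof, including the treatment of the remaining subcases of Part (4) via Lemma \ref{le53}.
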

 
 \begin{proof}We first consider the asymptotics of 
 \begin{eqnarray}
 &&\lim_{[x_{a_1},\ldots,x_{a_{r+1}}\longrightarrow x_j]}\mathrm{Sym}_{a_1,\ldots,a_{r+1}}\label{sls}\\
&& \left.\left[\frac{c_0 x_{a_1}^r u_{\tilde{a}_1}^{k-s_0}(\partial_{\tilde{a}_1}^{s_1}[\log\tilde{T}_{N,\tilde{a}_1}])^{d_1}\ldots (\partial_{\tilde{a}_1}^{s_t}[\log\tilde{T}_{N,\tilde{a}_1}])^{d_t}}{(x_{a_1}-x_{a_2})\ldots(x_{a_1}-x_{a_{r+1}})}\right]\right|_{U_{N,\kappa}=\{1,\ldots,1\}}\notag
 \end{eqnarray}
 By Lemma \ref{l51}, the degree of $N$ in each factor $(\partial_{\tilde{a}_1}^{s_l}[\log\tilde{T}_{N,\tilde{a}_1}])^{d_1}$ is at most $d_1$. By Lemma \ref{l52} and identity (\ref{rsd}), the degree of $N$ in (\ref{sls}) is at most $k-r$. Summing over the permutations, we obtain that
 \begin{eqnarray*}
 &&\sum_{[\{a_1-N+\lfloor(1-\kappa)N \rfloor,\ldots,a_{r+1}-N+\lfloor(1-\kappa)N\rfloor\}\in \{1,2,\ldots,\lfloor(1-\kappa)N \rfloor\}\cap R(j)]}\\
 &&\lim_{[x_{a_1},\ldots,x_{a_{r+1}}\longrightarrow x_j]}\mathrm{Sym}_{a_1,\ldots,a_{r+1}}\frac{c_0 x_{a_1}^r u_{\tilde{a}_1}^{k-s_0}(\partial_{\tilde{a}_1}^{s_1}[\log\tilde{T}_{N,\tilde{a}_1}])^{d_1}\ldots (\partial_{\tilde{a}_1}^{s_t}[\log\tilde{T}_{N,\tilde{a}_1}])^{d_t}}{(x_{a_1}-x_{a_2})\ldots(x_{a_1}-x_{a_{r+1}})}
 \end{eqnarray*}
 is the sum of $O(N^{r+1})$ terms, the degree of $N$ in each of which is at most $k-r$. Therefore, the degree of $N$ in $E_{k,\kappa,N}^{(j)}$ is at most $k+1$, and we complete the proof of Part (1).
 
Now we prove Parts (2) and (3). 
 We consider the following two cases.
 \begin{itemize}
\item Consider
\begin{eqnarray*}
\mathcal{D}_i:&=&\left.\frac{\partial F_{k,\kappa,N}^{(j)}}{\partial u_i}\right|_{U_{N,\kappa}=(1,\ldots,1)}\\
&=&\frac{\partial}{\partial u_i}\left[\sum_{[r\geq 0,\{s_i\geq0\}_{i=0}^{t},\{d_i\geq 0\}_{i=1}^{t}:s_0+s_1d_1+\ldots s_tdt+r=k]}\right.\\&&\sum_{[\{a_1-N+\lfloor(1-\kappa)N \rfloor,\ldots,a_{r+1}-N+\lfloor(1-\kappa)N\rfloor\}\subset \{1,2,\ldots,\lfloor(1-\kappa)N \rfloor\}\cap R(j)]}(r+1)!\\
 &&\lim_{[x_{a_w}\longrightarrow x_{j}],\ 1\leq w\leq r+1}\mathrm{Sym}_{a_1,\ldots,a_{r+1}}\\
 &&\left.\left.\left(\frac{c_0 x_{a_1}^r u_{\tilde{a}_1}^{k-s_0}(\partial_{\tilde{a}_1}^{s_1}[\log T_{N,(\tilde{a}_1,i)}^{(j,r)}])^{d_1}\ldots (\partial_{\tilde{a}_1}^{s_t}[\log T_{N,(\tilde{a}_1,i)}^{(j,r)}])^{d_t}}{(x_{a_1}u_{\tilde{a}_1}-x_{a_2}u_{\tilde{a}_2})\ldots(x_{a_1}u_{\tilde{a}_1}-x_{a_{r+1}}u_{\tilde{a}_{r+1}})}\right)\right]\right|_{U_{N,\kappa}=(1,\ldots,1)}
\end{eqnarray*}
with 
\begin{eqnarray*}
i\notin \{a_1-N+\lfloor(1-\kappa)N \rfloor,\ldots,a_{r+1}-N+\lfloor(1-\kappa)N\rfloor\}\subset \{1,2,\ldots,\lfloor(1-\kappa)N \rfloor\}
\end{eqnarray*}

When $\kappa\in(0,1)$, there are $O(N^{r+1})$ such terms. Notice that if $i\in R(r)$
\begin{eqnarray*}
&&\left.\frac{\partial}{\partial u_i}\left[\frac{\partial^{s_q}(\log T_{N,(\tilde{a}_1,i)}^{(j,r)})}{\partial \tilde{a}_1^{s_q}}\right]^{d_q}\right|_{U_{N,\kappa}=(1,\ldots,1)}\\
&=&\left.d_q\left[\frac{\partial^{s_q}(\log T_{N,(\tilde{a}_1,i)}^{(j,r)}}{\partial \tilde{a}_1^{s_q}}\right]^{d_q-1}\frac{\partial}{\partial u_i}\left[\frac{\partial^{s_q}(\log T_{N,(\tilde{a}_1,i)}^{(j,r)})}{\partial \tilde{a}_1^{s_q}}\right]\right|_{U_{N,\kappa}=(1,\ldots,1)}\\
&=&\left.d_q\left[\frac{\partial^{s_q}\left[\log \tilde{T}_{N,\tilde{a}_1}\right]}{\partial \tilde{a}_1^{s_q}}\right]^{d_q-1}\frac{\partial}{\partial u_i}\left[\frac{\partial^{s_q}\left[\log T_{N,(\tilde{a}_1,i)}^{(j,r)}\right]}{\partial \tilde{a}_1^{s_q}}\right]\right|_{U_{N,\kappa}=(1,\ldots,1)}
\end{eqnarray*}
By Lemmas \ref{l51} and \ref{l52}, the degree of $N$ in the expressions above is at most $d_q-1$ if $r=j$, and is strictly less than $d_q-1$ when $r\neq j$. Note that $\mathcal{D}_i$ can be written as a sum of terms of the form
\begin{eqnarray*}
&&\lim_{[x_{a_1},\ldots,x_{a_{r+1}}\longrightarrow x_j]}\mathrm{Sym}_{a_1,\ldots,a_{r+1}}\\
&&\left.\left[\frac{c_0 x_{a_1}^r u_{\tilde{a}_1}^{k-s_0}\left(\partial_i\left[\left(\partial_{\tilde{a}_1}^{s_1}\left[\log T_{N,(\tilde{a}_1,i)}^{(j,r)}\right]\right)^{d_1}\right]\right)\ldots \left(\partial_{\tilde{a}_1}^{s_t}\left[\log \tilde{T}_{N,\tilde{a}_1}\right]\right)^{d_t}}{(x_{a_1}-x_{a_2})\ldots(x_{a_1}-x_{a_{r+1}})}\right]\right|_{U_{N,\kappa}=(1,\ldots,1)}
\end{eqnarray*}
By Lemmas \ref{l51} and \ref{l52}, the degree of $N$ in the expressions above is at most $d_1+\ldots+d_t-1$ when $r=j$; and is less than $d_1+\ldots+d_t-1$ when $r\neq j$. Since $r+s_0+s_1d_1+\ldots+s_td_t=k$, we have
\begin{eqnarray*}
d_1+\ldots+d_t-1\leq k-r-1;
\end{eqnarray*}
and in the sum over $\sum_{[\{a_1-N+\lfloor(1-\kappa)N \rfloor,\ldots,a_{r+1}-N+\lfloor(1-\kappa)N\rfloor\}\in \{1,2,\ldots,\lfloor(1-\kappa)N \rfloor\}\cap R(j)]}$, it is the sum of  $O(N^{r+1})$ of such terms, we obtain that the degree of $N$ in this sum is at most $k$ when $r=j$; and is less than $k$ when $r\neq j$. This completes the proof of Part (3).

\item Consider the case when
\begin{eqnarray*}
i\in \{a_1-N+\lfloor(1-\kappa)N \rfloor,\ldots,a_{r+1}-N+\lfloor(1-\kappa)N\rfloor\}\subset \{1,2,\ldots,\lfloor(1-\kappa)N \rfloor\}
\end{eqnarray*}
Note that there are $O(N^r)$ such terms in total, since $i$ is fixed. By Lemmas \ref{l51} and \ref{l52}, the degree of $N$ in
\begin{eqnarray*}
\left.\mathrm{Sym}_{a_1,\ldots,a_{r+1}}\left[\frac{c_0 x_{a_1}^r u_{\tilde{a}_1}^{k-s_0}\left(\partial_i\left[\left(\partial_{\tilde{a}_1}^{s_1}\left[\log T_{N,(\tilde{a}_1,i)}^{(j,r)}\right]\right)^{d_1}\right]\right)\ldots \left(\partial_{\tilde{a}_1}^{s_t}\left[\log \tilde{T}_{N,\tilde{a}_1}\right]\right)^{d_t}}{(x_{a_1}-x_{a_2})\ldots(x_{a_1}-x_{a_{r+1}})}\right]\right|_{U_{N,\kappa}=(1,\ldots,1)}
\end{eqnarray*}
is at most $l-r$.  This completes the proof of Part (2).
\end{itemize} 

Now we prove Part (4).
Note that
\begin{eqnarray*}
&&\left.\frac{\partial^2 F_{k,\kappa,N}^{(j)}}{\partial u_{i_1}\partial u_{i_2}}\right|_{U_{N,\kappa}=(1,\ldots,1)}\\
&=&\frac{\partial^2 }{\partial u_{i_1}\partial u_{i_2}}\left[\lim_{[x_k\rightarrow (x_{k\mod n}),\mathrm{for}\ 1\leq k\leq N]}
\frac{1}{\tilde{V}_{\lfloor (1-\kappa)N\rfloor}(U_{N,\kappa,X})\tilde{T}_N}\right.\\
&&\left.\left.\sum_{[i\in\{1,\ldots,\lfloor(1-\kappa)N\rfloor\}\cap R(j)]}\left(u_i\frac{\partial}{\partial u_i}\right)^k\tilde{V}_{\lfloor(1-\kappa)N \rfloor}(U_{N,\kappa,X})\tilde{T}_N\right]\right|_{U_{N,\kappa}=(1,\ldots,1)}\\
&=&\frac{\partial^2}{\partial u_{i_1}\partial u_{i_2}}\left[\sum_{[r\geq 0,\{s_i\geq0\}_{i=0}^{t},\{d_i\geq 0\}_{i=1}^{t}:s_0+s_1d_1+\ldots s_td_t+r=k]}\right.\\
&&\sum_{[\{a_1-N+\lfloor(1-\kappa)N \rfloor,\ldots,a_{r+1}-N+\lfloor(1-\kappa)N\rfloor\}\subset \{1,2,\ldots,\lfloor(1-\kappa)N \rfloor\}\cap R(j)]}(r+1)!\\
 &&\left.\left.\lim_{[x_{a_w}\longrightarrow x_{j}],\ 1\leq w\leq r+1}\mathrm{Sym}_{a_1,\ldots,a_{r+1}}\frac{c_0 x_{a_1}^r u_{\tilde{a}_1}^{k-s_0}(\partial_{\tilde{a}_1}^{s_1}[\log \tilde{T}_{N}])^{d_1}\ldots (\partial_{\tilde{a}_1}^{s_t}[\log \tilde{T}_{N}])^{d_t}}{(x_{a_1}u_{\tilde{a}_1}-x_{a_2}u_{\tilde{a}_2})\ldots(x_{a_1}u_{\tilde{a}_1}-x_{a_{r+1}}u_{\tilde{a}_{r+1}})}\right]\right|_{U_{N,\kappa}=(1,\ldots,1)}
\end{eqnarray*}
The following cases might occur
\begin{itemize}
\item $\{i_1,i_2\}\cap\{\tilde{a}_1,\ldots,\tilde{a}_{r+1}\}=\emptyset$, and both $\partial_{i_1}$ and $\partial_{i_2}$ are applied to the same $\log\tilde{T}_N$. We have
\begin{eqnarray*}
&&\frac{\partial^2}{\partial u_{i_1}\partial u_{i_2}}\left[\frac{\partial^{s_w}[\log \tilde{T}_N]}{[\partial u_{\tilde{a}_1}]^{s_w}}\right]^{d_w}\\
&=&d_w(d_w-1)\left[\frac{\partial^{s_w}[\log \tilde{T}_N]}{\partial u_{\tilde{a}_1}^{s_w}}\right]^{d_w-2}\frac{\partial^{s_w+1}[\log \tilde{T}_N]}{\partial u_{i_1}\partial u_{\tilde{a}_1}^{s_w}}\frac{\partial^{s_w+1}[\log \tilde{T}_N]}{\partial u_{i_2}\partial u_{\tilde{a}_1}^{s_w}}\\
&&+d_w \left[\frac{\partial^{s_w}[\log \tilde{T}_N]}{\partial u_{\tilde{a}_1}^{s_w}}\right]^{d_w-1}\frac{\partial^{s_w+2}[\log \tilde{T}_N]}{\partial u_{i_1}\partial u_{i_2}\partial u_{\tilde{a}_1}^{s_w}}
\end{eqnarray*}
 By Lemma \ref{le53}, the degree of $N$ in the expression above is less than $d_{w}-1$. Taking into account all the other factors, as well as the sum of $O(N^{r+1})$ terms, in this case the degree of $N$ in $\left.\frac{\partial^2 F_{k,\kappa,N}^{(j)}}{\partial u_{i_1}\partial u_{i_2}}\right|_{U_{N,\kappa}=(1,\ldots,1)}$ is less than
 \begin{eqnarray*}
 d_1+d_2+\ldots+d_t-1+r+1\leq k.
 \end{eqnarray*}
 \item $\{i_1,i_2\}\cap\{\tilde{a}_1,\ldots,\tilde{a}_{r+1}\}=\emptyset$, and $\partial_{i_1}$ and $\partial_{i_2}$ are applied to different $\log\tilde{T}_N$. In this case, the degree of $N$ is at most
  \begin{eqnarray*}
 d_1+d_2+\ldots+d_t-2+r+1\leq k-1.
 \end{eqnarray*}
 \item $i_1\in\{a_1,\ldots,a_{r+1}\}$ and $i_2\notin\{a_1,\ldots,a_{r+1}\}$. In this case, we take the sum over $O(N^r)$ terms, since one element in $\{a_1,\ldots,a_{r+1}\}$ is fixed to be $i_1$. Then the degree of $N$ in $\left.\frac{\partial^2 F_{k,\kappa,N}^{(j)}}{\partial u_{i_1}\partial u_{i_2}}\right|_{U_{N,\kappa}=(1,\ldots,1)}$ is at most 
 \begin{eqnarray*}
  d_1+d_2+\ldots+d_t-1+r\leq k-1.
 \end{eqnarray*}
 \item $\{i_1,i_2\}\subset \{a_1,\ldots,a_{r+1}\}$. In this case, we take the sum over $O(N^{r-1})$ terms, since two elements in $\{a_1,\ldots,a_{r+1}\}$ are fixed to be $i_1$ and $i_2$. Then the degree of $N$ in $\left.\frac{\partial^2 F_{k,\kappa,N}^{(j)}}{\partial u_{i_1}\partial u_{i_2}}\right|_{U_{N,\kappa}=(1,\ldots,1)}$ is at most 
 \begin{eqnarray*}
  d_1+d_2+\ldots+d_t+r-1\leq k-1.
 \end{eqnarray*}
\end{itemize}
\end{proof}

\subsection{Covariance} 

Let
\begin{eqnarray*}
\tilde{G}_{\kappa,N,(k,l)}^{(j,s)}&=&k\sum_{r=0}^{k-1}{{k-1}\choose{r}}\sum_{[\{a_1-N+\lfloor(1-\kappa)N \rfloor,\ldots,a_{r+1}-N+\lfloor(1-\kappa)N\rfloor\}\in \{1,2,\ldots,\lfloor(1-\kappa)N \rfloor\}\cap R(j)]}(r+1)!\\
&&\lim_{[x_{a_1},\ldots,x_{a_{r+1}}\longrightarrow x_j]}\mathrm{Sym}_{a_1,\ldots,a_{r+1}}\\
&&\left[\frac{ x_{a_1}^r u_{\tilde{a}_1}^{k-s_0} \partial_{\tilde{a}_1}[F_{l,\kappa,N}^{(s)}](\partial_{\tilde{a}_1}[\log \tilde{T}_{N}])^{k-1-r}}{(x_{a_1}u_{\tilde{a}_1}-x_{a_2}u_{\tilde{a}_2})\ldots(x_{a_1}u_{\tilde{a}_1}-x_{a_{r+1}}u_{\tilde{a}_{r+1}}))}\right]
\end{eqnarray*}

\begin{lemma}\label{l55}Let $l,k$ be arbitrary positive integers. Then
\begin{eqnarray*}
E_{k,l,\kappa,N}^{(j,s)}:=E_{k,\kappa,N}^{(j)}E_{l,\kappa,N}^{(s)}+\left.\tilde{G}_{\kappa,N,(l,k)}^{(j,s)}\right|_{U_{N,\kappa}=(1,\ldots,1)}+R
\end{eqnarray*}
\end{lemma}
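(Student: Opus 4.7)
The overall strategy is to recognize $E_{k,l,\kappa,N}^{(j,s)}$ as a single iterated differentiation on a product in which the inner operator has already been absorbed into $F_{l,\kappa,N}^{(s)}$. Using (\ref{fkn}) and (\ref{e2}) I would first rewrite
\begin{eqnarray*}
E_{k,l,\kappa,N}^{(j,s)}=\lim_{[x_k\to x_{k\bmod n}]}\frac{1}{V_{\lfloor(1-\kappa)N\rfloor}\tilde{T}_N}\sum_{i_1\in\{1,\ldots,\lfloor(1-\kappa)N\rfloor\}\cap R(j)}(u_{i_1}\partial_{i_1})^k\bigl[V_{\lfloor(1-\kappa)N\rfloor}\tilde{T}_N\cdot F_{l,\kappa,N}^{(s)}\bigr]\bigg|_{U_{N,\kappa}=\mathbf{1}},
\end{eqnarray*}
and then apply the Leibniz rule for the first-order Euler operator $\theta_{i_1}:=u_{i_1}\partial_{i_1}$, namely $\theta_{i_1}^k(fg)=\sum_{p=0}^{k}\binom{k}{p}\theta_{i_1}^{p}(f)\,\theta_{i_1}^{k-p}(g)$, with $f=V_{\lfloor(1-\kappa)N\rfloor}\tilde{T}_N$ and $g=F_{l,\kappa,N}^{(s)}$. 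The term $p=k$ detaches the factor $g$, and the remaining sum over $i_1$ reproduces $F_{k,\kappa,N}^{(j)}(U)\cdot F_{l,\kappa,N}^{(s)}(U)$; evaluating at $U=\mathbf{1}$ yields exactly $E_{k,\kappa,N}^{(j)}E_{l,\kappa,N}^{(s)}$.

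For $p<k$ the leading $N$-asymptotics come from $p=k-1$, i.e.\ exactly one $\partial_{i_1}$ hitting $F_{l,\kappa,N}^{(s)}$, with the remaining $\theta_{i_1}^{k-1}$ acting on $V_{\lfloor(1-\kappa)N\rfloor}\tilde{T}_N$. I would then expand $\theta_{i_1}^{k-1}[V\tilde T_N]/(V\tilde T_N)$ via logarithmic differentiation, in the style leading to (\ref{exf}): writing each Euler derivative as a combination of $\partial_{i_1}\log V$ (Vandermonde poles) and $\partial_{i_1}\log\tilde T_N$ (Schur/weight contribution) and grouping by the size $r+1$ of the index set $\{a_1,\dots,a_{r+1}\}\subset R(j)$ participating in the Vandermonde factors yields a symmetrized sum. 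Retaining only the leading power $(\partial_{\tilde a_1}\log\tilde T_N)^{k-1-r}$—justified because higher-order log-derivatives $\partial_{\tilde a_1}^{s_q}\log\tilde T_N$ with $s_q\ge 2$ are lower order by Lemma \ref{l51}—reproduces precisely $\tilde G_{\kappa,N,(l,k)}^{(j,s)}|_{U=\mathbf{1}}$.

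All remaining contributions I would collect into the remainder $R$ and estimate by showing its $N$-degree is strictly below $k+l$. This estimation rests on Lemma \ref{l53}: part (2) bounds $\partial_{\tilde a_1}F_{l,\kappa,N}^{(s)}$ at the correct order when $\tilde a_1\in R(s)$, part (3) delivers the strict drop when $j\ne s$, and part (4) gives a degree saving of at least two whenever two or more $\partial_{i_1}$'s fall on $F_{l,\kappa,N}^{(s)}$, whereas the Leibniz side only compensates by a single power of $N$ per dropped derivative. Combined with the uniform bounds from Lemma \ref{l51} on $\partial_{\tilde a_1}\log\tilde T_N=O(N)$ and the vanishing cross-derivatives in Lemma \ref{l52}(2) and Lemma \ref{le53}, every such leftover term is $o(N^{k+l})$.

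The main obstacle is the careful identification of the leading cross-term with $\tilde G$: after symmetrization over $\{a_1,\dots,a_{r+1}\}\subset R(j)$, the Vandermonde poles $\prod(u_{\tilde a_1}x_{a_1}-u_{\tilde a_i}x_{a_i})^{-1}$ must be collapsed via Lemma \ref{l552} to the residue-type structure appearing in the definition of $\tilde G_{\kappa,N,(l,k)}^{(j,s)}$, while simultaneously one must verify that the potentially leading contribution from $j\ne s$ actually vanishes. The latter is exactly where the periodic separation of Assumption \ref{ap32} enters: the factors $(u_{\tilde a_1}x_j-u_{i_2}x_s)^{-1}$ arising from the cross part of $\tilde T_N$ have mixed derivatives of order $x_jx_s/(u_{\tilde a_1}x_j-u_{i_2}x_s)^2\to 0$ as $N\to\infty$ under $x_j/x_s\ge N^\alpha$. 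This parallels the argument of Lemma 5.7 of \cite{bg16} and Lemma \ref{lc} above, adapted to the piecewise boundary setting through Lemmas \ref{l51}--\ref{le53}.
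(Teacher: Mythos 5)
Your proposal follows essentially the same route as the paper's own proof: absorbing the inner operator into $F_{l,\kappa,N}^{(s)}$, identifying the term with no derivatives on $F_{l,\kappa,N}^{(s)}$ as $E_{k,\kappa,N}^{(j)}E_{l,\kappa,N}^{(s)}$, extracting the single-cross-derivative leading term as $\tilde{G}_{\kappa,N,(l,k)}^{(j,s)}$, and controlling the remainder through the degree bounds of Lemmas \ref{l51}--\ref{l53} (with Assumption \ref{ap32} killing the $j\neq s$ contributions). The only slight imprecision is your appeal to Lemma \ref{l53}(4) for two or more derivatives falling on $F_{l,\kappa,N}^{(s)}$ in the \emph{same} variable (that part concerns distinct indices), but this is harmless since, exactly as in the paper, the constraint $r+s_0+s_1d_1+\cdots+s_td_t=k$ already forces such terms below degree $k+l$.
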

where
\begin{itemize}
\item if $j=s$, the degree of $N$ in $\left.\tilde{G}_{\kappa,N,(l,k)}^{(j,s)}\right|_{U_{N,\kappa}=(1,\ldots,1)}$ is at most $l+k$;
 \item if $j\neq s$, the degree of $N$ in $\left.\tilde{G}_{\kappa,N,(l,k)}^{(j,s)}\right|_{U_{N,\kappa}=(1,\ldots,1)}$ is less than $l+k$;
\item the degree of $N$ in $R$ is less than $l+k$. 
\end{itemize}

\begin{proof}By (\ref{e2}) and (\ref{fkn}), we obtain
\begin{eqnarray*}
E_{k,l,\kappa,N}^{(j,s)}&=&\lim_{[x_k\rightarrow (x_{k\mod n}),\mathrm{for}\ 1\leq k\leq N]}\sum_{i\in\{1,\ldots,\lfloor(1-\kappa)N\rfloor\}\cap R(j)}\left(u_i\frac{\partial}{\partial u_i}\right)^k\\
&&\sum_{[r\in\{1,\ldots,\lfloor(1-\kappa)N\rfloor\}\cap R(s)]}\left(u_r\frac{\partial}{\partial u_r}\right)^l\left.\frac{V_{N,(i,r)}^{(j,s)}T_{N,(i,r)}^{(j,s)}}{W_{N,(i,r)}^{(j,s)}P_{N,(i,r)}^{(j,s)}}\right|_{U_{N,\kappa}=(1,\ldots,1)}\\
&=&\frac{1}{{V_{N,(i,r)}^{(j,s)}T_{N,(i,r)}^{(j,s)}}}\lim_{[x_k\rightarrow (x_{k\mod n}),\mathrm{for}\ 1\leq k\leq N]}\sum_{i\in\{1,\ldots,\lfloor(1-\kappa)N\rfloor\}\cap R(j)}\left(u_i\frac{\partial}{\partial u_i}\right)^k{V_{N,(i,r)}^{(j,s)}T_{N,(i,r)}^{(j,s)}}\\
&&\frac{1}{{V_{N,(i,r)}^{(j,s)}T_{N,(i,r)}^{(j,s)}}}\sum_{[r\in\{1,\ldots,\lfloor(1-\kappa)N\rfloor\}\cap R(s)]}\left(u_r\frac{\partial}{\partial u_r}\right)^l\left.{V_{N,(i,r)}^{(j,s)}T_{N,(i,r)}^{(j,s)}}\right|_{U_{N,\kappa}=(1,\ldots,1)}.
\end{eqnarray*}
Note that for any integer $w$, we have
\begin{eqnarray*}
&&\left.\frac{\partial^w}{\partial u_i^w}\left[\frac{1}{{V_{N,(i,r)}^{(j,s)}T_{N,(i,r)}^{(j,s)}}}\sum_{[r\in\{1,\ldots,\lfloor(1-\kappa)N\rfloor\}\cap R(s)]}\left(u_r\frac{\partial}{\partial u_r}\right)^l{V_{N,(i,r)}^{(j,s)}T_{N,(i,r)}^{(j,s)}}\right]\right|_{U_{N,\kappa}=(1,\ldots,1)}\\
&=&\left.\frac{\partial^w}{\partial u_i^w} F_{l,\kappa,N}^{(s)}\right|_{U_{N,\kappa}=(1,\ldots,1)}.\\
\end{eqnarray*}
Then $E_{k,l,\kappa,N}^{(j,s)}$ can be written as a sum of following terms
\begin{eqnarray*}
&&\left.\mathrm{Sym}_{a_1,\ldots,a_{r+1}}\left[\frac{c_0x_{a_1}^ru_{\tilde{a}_1}^{k-s_0}\partial_{\tilde{a}_1}^{s_1}[F_{l,\kappa,N}^{(s)}][\partial_{\tilde{a}_1}^{s_2}(\log T_{N,(\tilde{a}_1,r)}^{(j,s)})]^{d_2}\ldots [\partial_{\tilde{a}_1}^{s_t}(\log T_{N,(\tilde{a}_1,r)}^{(j,s)})]^{d_t}}{(x_{a_1}u_{\tilde{a}_1}-x_{a_2}u_{\tilde{a}_2})\ldots (x_{a_1}u_{\tilde{a}_1}-x_{a_{r+1}}u_{\tilde{a}_{r+1}})}\right]\right|_{U_{N,\kappa}=(1,\ldots,1)}\\
&=&\left.\mathrm{Sym}_{a_1,\ldots,a_{r+1}}\left[\frac{c_0x_{a_1}^ru_{\tilde{a}_1}^{k-s_0}\partial_{\tilde{a}_1}^{s_1}[F_{l,\kappa,N}^{(s)}][\partial_{\tilde{a}_1}^{s_2}(\log \tilde{T}_N]^{d_2}\ldots [\partial_{\tilde{a}_1}^{s_t}(\log \tilde{T}_N)^{(j,s)})]^{d_t}}{(x_{a_1}u_{\tilde{a}_1}-x_{a_2}u_{\tilde{a}_2})\ldots (x_{a_1}u_{\tilde{a}_1}-x_{a_{r+1}}u_{\tilde{a}_{r+1}})}\right]\right|_{U_{N,\kappa}=(1,\ldots,1)}.
\end{eqnarray*}
such that 
\begin{itemize}
\item  $r$, $s_0,s_1,\ldots,s_t$, $d_2,\ldots,d_t$ are nonnegative integers; and
\item  $s_2<s_3<\ldots<s_t$; and
\item 
\begin{eqnarray}
s_0+s_1+s_2d_2+\ldots+s_td_t+r=k; \label{sdk}
\end{eqnarray}
and
\item $\{a_1,\ldots,a_{r+1}\}\subset\{N-\lfloor(1-\kappa)N \rfloor+1,N-\lfloor(1-\kappa)N \rfloor+2,\ldots,N\}\cap R(j)$
\end{itemize}
When $s_1=0$, we obtain $\left.F_{l,\kappa,N}^{(s)}F_{k,\kappa,N}^{(j)}\right|_{U_{N,\kappa}=(1,\ldots,1)}$.

Now we consider the terms corresponding to $s_1\geq 1$. By Lemma \ref{l53}, the degree of $N$ in $\partial_{\tilde{a}_1}^{s_1}[F_{l,\kappa,N}^{(s)}]$ is at most $l$ when $j=s$; and is less than $l$ when $j\neq s$. Therefore, the total degree of $N$ in these terms is at most $l+d_2+\ldots+d_t+r+1$ when $j=s$; and is less than $l+d_2+\ldots+d_t+r+1$ when $j\neq s$. By (\ref{sdk}) and the assumption that $s_1\geq 1$, we have
\begin{eqnarray*}
l+d_2+\ldots+d_t+r+1\leq l+k；
\end{eqnarray*}
and the equality holds when $s_0=d_3=\ldots=d_t=0$, $s_1=s_2=1$; $d_2=k-1-r$; this corresponds to $G_{\kappa,N,(k,l)}^{(j,s)}$, in which the degree of $N$ is $l+k$ when $j=s$, and the degree of $N$ is less than $l+k$ when $j\neq s$. The degree of $N$ is less than $l+k$ in all the other terms. This completes the proof.
\end{proof}

\begin{lemma}\label{l56}
\begin{eqnarray*}
&&\left.\frac{1}{\tilde{V}_{\lfloor(1-\kappa)N\rfloor} \tilde{T}_N }k\sum_{i\in\{1,2,\ldots,\lfloor(1-\kappa)N \rfloor\cap R(j)}\left(u_i\frac{\partial}{\partial u_i}\left[F_{l,\kappa,N}^{(s)}\right]\right)\left(u_i\frac{\partial}{\partial u_i}\right)^{k-1}\left[\tilde{V}_{\lfloor(1-\kappa)N\rfloor} \tilde{T}_N \right]\right|_{U_{N,\kappa}=(1,\ldots,1)}\\
&=&\left.\tilde{G}_{\kappa,N,(k,l)}^{(j,s)}\right|_{U_{N,\kappa}=(1,\ldots,1)}+R
\end{eqnarray*}
where the degree of $N$ in $R$ is less than $l+k$.
\end{lemma}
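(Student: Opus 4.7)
The plan is to mirror the strategy used for Lemma \ref{ll66}(2) in the uniform setting, but now working with the piecewise-boundary asymptotic tools (Lemmas \ref{l51}, \ref{l52}, \ref{l53}, \ref{le53}). First I would rewrite the left-hand side by moving the factor $\tilde V_{\lfloor(1-\kappa)N\rfloor}\tilde T_N$ outside the operator through the identity
\begin{equation*}
\frac{(u_i\partial_i)^{k-1}\bigl[\tilde V_{\lfloor(1-\kappa)N\rfloor}\tilde T_N\bigr]}{\tilde V_{\lfloor(1-\kappa)N\rfloor}\tilde T_N}
=\frac{1}{\tilde V_{\lfloor(1-\kappa)N\rfloor}}(u_i\partial_i)^{k-1}\tilde V_{\lfloor(1-\kappa)N\rfloor}\,\exp\bigl(\log\tilde T_N\bigr)\bigl/\exp\bigl(\log\tilde T_N\bigr),
\end{equation*}
so that, expanding by the Leibniz rule, the left-hand side becomes the symmetrization over $\{a_1,\ldots,a_{r+1}\}\subset\{1,\ldots,\lfloor(1-\kappa)N\rfloor\}\cap R(j)$ (with $\tilde a_1=a_1-N+\lfloor(1-\kappa)N\rfloor$, etc.) of terms of the schematic form
\begin{equation*}
k\,\mathrm{Sym}_{a_1,\ldots,a_{r+1}}\!\left[\frac{c_0\,x_{a_1}^{r}u_{\tilde a_1}^{k-s_0}\,\partial_{\tilde a_1}\!\bigl[F_{l,\kappa,N}^{(s)}\bigr]\,(\partial_{\tilde a_1}^{s_1}[\log\tilde T_N])^{d_1}\cdots(\partial_{\tilde a_1}^{s_t}[\log\tilde T_N])^{d_t}}{(x_{a_1}u_{\tilde a_1}-x_{a_2}u_{\tilde a_2})\cdots(x_{a_1}u_{\tilde a_1}-x_{a_{r+1}}u_{\tilde a_{r+1}})}\right],
\end{equation*}
with $s_1<s_2<\cdots<s_t$ nonnegative integers and $r+s_0+s_1d_1+\cdots+s_td_t=k-1$.

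Next I would identify which admissible tuple $(r,s_0,\{s_q\},\{d_q\})$ produces the prescribed expression $\tilde G_{\kappa,N,(k,l)}^{(j,s)}$. Comparing with the definition of $\tilde G$, this is exactly the case $s_0=0$, $s_1=1$, $d_1=k-1-r$, $t=1$ (so no $s_q$ with $q\ge 2$). All remaining $(r,s_0,\{s_q\},\{d_q\})$ will be collected into the remainder $R$, and the task is to show that each such contribution carries $N$-degree strictly less than $k+l$.

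For the degree counting I will use the following inputs: (i) Lemma \ref{l51} gives that $\partial_{\tilde a_1}[\log\tilde T_N]$ evaluated at $U_{N,\kappa}=(1,\ldots,1)$ has $N$-degree at most $1$; (ii) Lemma \ref{l52} combined with Lemma \ref{le53} shows that any mixed derivative $\partial_{\tilde a_1}^{s_q}[\log\tilde T_N]$ with $s_q\ge 2$ contributes $N$-degree at most $1$ and in fact produces no additional factor of $N$ per extra derivative; (iii) Lemma \ref{l53}(2) bounds the $N$-degree of $\partial_{\tilde a_1}[F_{l,\kappa,N}^{(s)}]$ by $l$ (and by strictly less than $l$ when the index is outside $R(s)$, which in combination with the summation over $a_1\in R(j)$ handles the case $j\ne s$). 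The sum $\sum_{\{a_1,\ldots,a_{r+1}\}\subset R(j)}$ plus the Vandermonde factors give the familiar symmetrization estimate (via Lemma \ref{l552}), yielding a net combinatorial factor of order $N^{r+1-\sharp\{\text{coincidences of }(a_i\bmod n)\}}$, which is at most $N^{r+1}$. Consequently each generic term is bounded by
\begin{equation*}
N^{\,l+d_1+d_2+\cdots+d_t+r+1}.
\end{equation*}
Since $s_1\ge 1$ (the $s_1=0$ case is absent because one derivative is spent on $F_{l,\kappa,N}^{(s)}$) and $s_q\ge 2$ for $q\ge 2$, the relation $r+s_0+s_1d_1+\cdots+s_td_t=k-1$ forces $d_1+d_2+\cdots+d_t\le k-1-r$, with equality only in the distinguished configuration $(s_0,s_1,d_1,t)=(0,1,k-1-r,1)$. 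Hence the maximum degree $k+l$ is attained only on the contributions that assemble precisely into $\tilde G_{\kappa,N,(k,l)}^{(j,s)}$, and every other term is $o(N^{k+l})$.

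The main obstacle I anticipate is the bookkeeping under the symmetrization when the indices $a_1,\ldots,a_{r+1}$ fall in distinct equivalence classes modulo $n$. In the uniform case this is handled by partitioning $\{a_1,\ldots,a_{r+1}\}$ according to $(\cdot\bmod n)$ and applying Lemma \ref{l552} within each class, and the same idea works here provided one carefully tracks the factors $\bigl(x_{a_1}u_{\tilde a_1}-x_{a_j}u_{\tilde a_j}\bigr)$ that do \emph{not} vanish in the limit $x_{a_p}\to x_j$ (namely those coming from pairs with $a_1\not\equiv a_j\pmod n$); these contribute finite multiplicative constants and do not affect the degree count, but they must be isolated from the vanishing factors which, after symmetrization, produce higher derivatives of $\log \tilde T_N$ through Lemma \ref{l552}. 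Once this combinatorial reduction is carried out cleanly, the degree bounds above give the desired identity with the stated remainder estimate.
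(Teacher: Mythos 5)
Your proposal is correct and follows essentially the same route as the paper: the paper's proof of this lemma simply refers back to the proof of Lemma \ref{l55}, which is exactly the expansion you carry out — rewrite the left-hand side as a sum of symmetrized terms with one derivative on $F_{l,\kappa,N}^{(s)}$ and powers of derivatives of $\log\tilde{T}_N$, identify the tuple $s_0=0$, $s_1=1$, $d_1=k-1-r$ as the contribution assembling into $\tilde{G}_{\kappa,N,(k,l)}^{(j,s)}$, and bound all remaining terms by the degree counts from Lemmas \ref{l51}, \ref{l52}, \ref{l53} together with the $O(N^{r+1})$ summation over index sets.
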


\begin{proof}This follows from the proof of Lemma \ref{l55}.
\end{proof}

\begin{lemma}\label{l67}Let $i\in\{1,2,\ldots,\lfloor(1-\kappa)N\rfloor\}$. Then the degree of $N$ in 
\begin{eqnarray*}
\left.\frac{\partial}{\partial u_i}\tilde{G}_{\kappa,N,(k,l)}^{(j,s)}\right|_{U_{N,\kappa}=(1,\ldots,1)}
\end{eqnarray*}
 is less than $k+l$.
\end{lemma}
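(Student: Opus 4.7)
The plan is to expand $\partial_i \tilde{G}_{\kappa,N,(k,l)}^{(j,s)}$ by the product rule and then track $N$-degrees by splitting into cases based on whether the differentiation variable $u_i$ coincides with one of the summation indices $\tilde{a}_1,\ldots,\tilde{a}_{r+1}$ or not. The key technical tools are already in hand: Lemma~\ref{l53} controls the $N$-degree of $F_{l,\kappa,N}^{(s)}$ and its first and second derivatives, Lemma~\ref{l51} gives first-order derivatives of $\log\tilde{T}_N$ of $N$-degree at most $1$ (after accounting for $N$-scaling), and Lemmas~\ref{l52} and \ref{le53} give second- and third-order mixed derivatives of $\log\tilde{T}_N$ of $N$-degree strictly less than the naive count.

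First I would observe that each summand in $\tilde{G}_{\kappa,N,(k,l)}^{(j,s)}$ has baseline $N$-degree at most $k+l$: there are $O(N^{r+1})$ index choices, the factor $\partial_{\tilde{a}_1}[F_{l,\kappa,N}^{(s)}]$ contributes at most $N^l$ by Lemma~\ref{l53}(2)--(3), the factor $(\partial_{\tilde{a}_1}[\log\tilde{T}_N])^{k-1-r}$ contributes at most $N^{k-1-r}$ by Lemma~\ref{l51}, and the symmetrized denominator is $O(1)$ in $N$ after the limit $x_{a_v}\to x_j$ (this is exactly why the $N$-degree bound in Lemma~\ref{l55} comes out to $k+l$). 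Then, in Case 1, when $i\notin\{\tilde{a}_1,\ldots,\tilde{a}_{r+1}\}$, the derivative $\partial_i$ must land on either $\partial_{\tilde{a}_1}[F_{l,\kappa,N}^{(s)}]$, producing $\partial_i\partial_{\tilde{a}_1}F_{l,\kappa,N}^{(s)}$ of $N$-degree strictly less than $l$ by Lemma~\ref{l53}(4), or on some $\partial_{\tilde{a}_1}[\log\tilde{T}_N]$, producing $\partial_i\partial_{\tilde{a}_1}[\log\tilde{T}_N]$ of $N$-degree $O(1)$ by Lemma~\ref{l52} (one less than the $N^1$ bound for the un-differentiated factor). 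Either way the total $N$-degree strictly drops below $k+l$.

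In Case 2, $i\in\{\tilde{a}_1,\ldots,\tilde{a}_{r+1}\}$, the number of summation choices is only $O(N^r)$ rather than $O(N^{r+1})$ because $i$ is fixed. The remaining factors are bounded as before, so the product is at most
\[
O(N^r)\cdot O(N^l)\cdot O(N^{k-1-r})=O(N^{k+l-1}),
\]
which is strictly less than $N^{k+l}$. Even if the differentiation creates a third-order mixed derivative of $\log \tilde T_N$, Lemma~\ref{le53} ensures it is $O(1)$ and so does not compensate. Combining both cases yields the stated estimate.

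The main obstacle is bookkeeping rather than any conceptual difficulty: one must carefully handle the $\mathrm{Sym}$ operation in the limit $x_{a_v}\to x_j$ so that no hidden singularity creates an unexpected power of $N$, and one must verify that when $\partial_i$ is applied to the denominator factors $1/(x_{a_1}u_{\tilde{a}_1}-x_{a_v}u_{\tilde{a}_v})$ the resulting poles are absorbed by the symmetrization just as in the proof of Lemma~\ref{l53}. Once that is done, the argument reduces to combining the $N$-degree estimates from Lemmas~\ref{l51}--\ref{l53} and Lemma~\ref{le53}, term by term, as sketched above.
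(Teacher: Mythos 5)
Your proposal is correct and follows essentially the same route as the paper's own proof: the same split into the cases $i\in\{\tilde a_1,\ldots,\tilde a_{r+1}\}$ (only $O(N^r)$ summation choices, giving degree at most $k+l-1$) and $i\notin\{\tilde a_1,\ldots,\tilde a_{r+1}\}$ (the derivative must hit $\partial_{\tilde a_1}[F_{l,\kappa,N}^{(s)}]$ or a factor $\partial_{\tilde a_1}[\log\tilde T_N]$, lowering the degree via Lemmas \ref{l52}, \ref{l53} and \ref{le53}). The degree bookkeeping via Lemmas \ref{l51}--\ref{l53} and \ref{le53} is exactly the argument used in the paper.
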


\begin{proof}Note that $\tilde{G}_{\kappa,N,(k,l)}^{(j,s)}$ is the sum of terms 
\begin{eqnarray*}
\mathrm{Sym}_{a_1,\ldots,a_{r+1}}
\left[\frac{ x_{a_1}^r u_{\tilde{a}_1}^{k-s_0} \partial_{\tilde{a}_1}[F_{l,\kappa,N}^{(s)}](\partial_{\tilde{a}_1}[\log \tilde{T}_{N}])^{k-1-r}}{(x_{a_1}u_{\tilde{a}_1}-x_{a_2}u_{\tilde{a}_2})\ldots(x_{a_1}u_{\tilde{a}_1}-x_{a_{r+1}}u_{\tilde{a}_{r+1}}))}\right]
\end{eqnarray*}
If we take derivatives $\frac{\partial}{\partial u_i}$, the following cases might occur
\begin{enumerate}
\item $i\in \{a_1,\ldots,a_{r+1}\}$. Since one element in $\{a_1,\ldots,a_{r+1}\}$ is fixed to be $i$, we take the sum over $O(N^r)$ terms. By Lemma \ref{le53}, and Lemma \ref{l53}, the degree of $N$ is at most
\begin{eqnarray*}
l+(k-1-r)+r=l+k-1
\end{eqnarray*}
\item $i\notin\{a_1,\ldots,a_{r+1}\}$. In this case, we take the sum over $O(N^r)$ terms. Again by Lemmas \ref{le53} and \ref{l53}, the degree of $N$ is less than
\begin{eqnarray*}
l+k-1-r+r+1=l+k.
\end{eqnarray*}
Then the lemma follows.
\end{enumerate}
\end{proof}

\subsection{Products of Moments}

Recall that $\mathcal{P}_{w_1,\ldots,w_p}^{s}$ is the set of all pairings of the set $\{1,2,\ldots,s\}\setminus \{w_1,\ldots,w_p\}$.
We have the following lemma concerning the products of moments.

\begin{lemma}\label{l58}Let $s,l_1,\ldots,l_s$ be positive integers, and let $j_1,\ldots,j_s\in\{1,2,\ldots,n\}$. Then
\begin{eqnarray*}
&&\frac{1}{\tilde{V}_{\lfloor (1-\kappa)N\rfloor}\tilde{T}_N}\sum_{[i_1\in\{1,2,\ldots,\lfloor(1-\kappa)N\rfloor\}\cap R(j_1)]}\left(u_{i_1}\frac{\partial}{\partial u_{i_1}}\right)^{l_1}\sum_{[i_2\in\{1,2,\ldots,\lfloor(1-\kappa)N\rfloor\}\cap R(j_2)]}\left(u_{i_2}\frac{\partial}{\partial u_{i_2}}\right)^{l_2}\cdots\\&&\left.\sum_{[i_s\in\{1,2,\ldots,\lfloor(1-\kappa)N\rfloor\}\cap R(j_s)]}\left(u_{i_s}\frac{\partial}{\partial u_{i_s}}\right)^{l_s}\left[\tilde{V}_{\lfloor (1-\kappa)N\rfloor}\tilde{T}_N\right]\right|_{U_{N,\kappa}=(1,\ldots,1)}\\
&=&\left.\sum_{p=0}^{s}\sum_{[w_1,\ldots,w_p\in\{1,2,\ldots,s\}]}F_{l_{w_1},\kappa,N}^{(j_{w_1})}F_{l_{w_2},\kappa,N}^{(j_{w_2})}\ldots F_{l_{w_s},\kappa,N}^{(j_{w_p})}\left(\sum_{P\in \mathcal{P}_{w_1,\ldots,w_p}^{s}}\prod_{(a,b)\in P}\tilde{G}_{\kappa,N,(l_a,l_b)}^{(j_a,j_b)}+R\right)\right|_{U_{N,\kappa}=(1,\ldots,1)},
\end{eqnarray*}
where the degree of $N$ in $R$ is less than $\sum_{i=1}^{s}l_i-\sum_{i=1}^{p}l_{w_i}$.
\end{lemma}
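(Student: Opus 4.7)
The plan is to prove the identity by induction on $s$, with the base case $s=1$ being precisely the definition of $F_{l_1,\kappa,N}^{(j_1)}$ in (\ref{fkn}). For the inductive step, assume the identity holds for $s-1$ and rewrite the left-hand side as
\begin{eqnarray*}
\frac{1}{\tilde V_{\lfloor(1-\kappa)N\rfloor}\tilde T_N}\!\!\sum_{i_1\in R(j_1)}\!\!\left(u_{i_1}\partial_{i_1}\right)^{l_1}\!\!\Bigl[\tilde V_{\lfloor(1-\kappa)N\rfloor}\tilde T_N\cdot \mathcal{I}_{s-1}(u_1,\ldots)\Bigr]\Big|_{U_{N,\kappa}=(1,\ldots,1)},
\end{eqnarray*}
where $\mathcal{I}_{s-1}$ denotes the inner $(s-1)$-fold expression already put by the induction hypothesis into the form $\sum F\cdots F(\sum\prod \tilde G+R)$. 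The idea is then to expand $\left(u_{i_1}\partial_{i_1}\right)^{l_1}$ via the generalized Leibniz rule, writing each resulting summand as a product of a derivative pattern on $\tilde V\tilde T$ and a derivative pattern on the terms of $\mathcal{I}_{s-1}$.

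Next I would classify the summands by where the derivatives acting on $\mathcal I_{s-1}$ land. When no derivative of $u_{i_1}$ hits $\mathcal I_{s-1}$, the factor $\frac{1}{\tilde V\tilde T}\sum_{i_1}(u_{i_1}\partial_{i_1})^{l_1}[\tilde V\tilde T]$ produces exactly $F_{l_1,\kappa,N}^{(j_1)}$, and one extends the outer product $F_{l_{w_1}}\cdots F_{l_{w_p}}$ by the index $1$; this yields the contributions in which the index $1$ appears among the $w$'s of the sum indexed by $\{w_1,\ldots,w_p\}\subset\{1,\ldots,s\}$. When exactly one derivative is transferred onto one of the existing factors $F_{l_{w_q},\kappa,N}^{(j_{w_q})}$, Lemma \ref{l55} (together with the alternative presentation in Lemma \ref{l56}) converts the resulting expression, modulo a remainder of strictly lower $N$-degree, into a factor $\tilde G_{\kappa,N,(l_1,l_{w_q})}^{(j_1,j_{w_q})}$ replacing $F_{l_{w_q}}$ and thus creating a new pair $(1,w_q)$ in the pairing; summing over $q$ produces every pairing that contains the index $1$.

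The remaining cases have to be shown to enter only the remainder $R$. When two or more derivatives in $u_{i_1}$ are transferred to $\mathcal I_{s-1}$, each additional transfer reduces the $N$-degree by at least one by Parts (3)--(4) of Lemma \ref{l53} applied to derivatives of the $F$'s, and by Lemma \ref{l67} applied to derivatives of the $\tilde G$'s; this strictly drops the total degree below $\sum_{i=1}^s l_i-\sum_{i}l_{w_i}$. Similarly, derivatives hitting the previous remainder $R$ from the induction hypothesis stay inside the new remainder by the inductive degree estimate. Finally, matching the leading coefficients term by term recovers the prescribed factorials $(r+1)!$ inside each $\tilde G$ and the proper multinomial count of pairings $P\in\mathcal P_{w_1,\ldots,w_p}^s$ containing $1$.

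The main obstacle will be the bookkeeping in the intermediate step: one must keep track simultaneously of (i) which of the $s-1$ earlier factors become paired with the new index $1$ versus which remain as $F$'s, (ii) the transformation from a single application of $\partial_{i_1}$ on $F_{l_{w_q},\kappa,N}^{(j_{w_q})}$ into $\tilde G_{\kappa,N,(l_1,l_{w_q})}^{(j_1,j_{w_q})}$ via the identity in Lemma \ref{l56}, and (iii) the verification that none of the lower-order contributions — including those coming from derivatives on cross-type $\tilde G$'s with $j_a\ne j_b$ (which are already subleading by Lemma \ref{l55}) — accidentally reach the critical degree $\sum_{i=1}^s l_i-\sum_i l_{w_i}$. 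Given Lemmas \ref{l53}, \ref{l55}, \ref{l56}, and \ref{l67}, however, these degree comparisons are routine, and the argument parallels the proof of the uniform-boundary analogue Lemma \ref{p510}.
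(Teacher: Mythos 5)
Your proposal is correct and follows essentially the same route as the paper: induction on $s$ with base case given by the definition of $F_{l,\kappa,N}^{(j)}$ (and $s=2$ by Lemma \ref{l55}), splitting the inductive step according to whether zero, one, or more derivatives in $u_{i_1}$ transfer onto the inner expression, identifying the one-derivative terms as $\tilde{G}_{\kappa,N,(l_1,l_{w_q})}^{(j_1,j_{w_q})}$ via Lemma \ref{l56}, and relegating everything else to the remainder by the degree counts of Lemmas \ref{l53} and \ref{l67}. This matches the paper's $S_1+S_2+S_3$ decomposition, so no further comment is needed.
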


\begin{proof} The lemma can be proved by induction on $s$, similar to the proof of proposition 5.10 in \cite{bg16}. We shall now sketch the proof. When $s=1$ the lemma follows from the definition of $F_{l,\kappa,N}^{(j)}$. When $s=2$, the lemma follows from Lemma \ref{l55}. Assume that the lemma holds for $s=t-1$, where $t\geq 2$ is a positive integer. When $s=t$, by induction hypothesis, we have 
\begin{eqnarray*}
&&\frac{1}{\tilde{V}_{\lfloor (1-\kappa)N\rfloor}\tilde{T}_N}\sum_{[i_1\in\{1,2,\ldots,\lfloor(1-\kappa)N\rfloor\}\cap R(j_1)]}\left(u_{i_1}\frac{\partial}{\partial u_{i_1}}\right)^{l_1}\sum_{[i_2\in\{1,2,\ldots,\lfloor(1-\kappa)N\rfloor\}\cap R(j_2)]}\left(u_{i_2}\frac{\partial}{\partial u_{i_2}}\right)^{l_2}\cdots\\&&\left.\sum_{[i_t\in\{1,2,\ldots,\lfloor(1-\kappa)N\rfloor\}\cap R(j_t)]}\left(u_{i_t}\frac{\partial}{\partial u_{i_t}}\right)^{l_t}\left[\tilde{V}_{\lfloor (1-\kappa)N\rfloor}\tilde{T}_N\right]\right|_{U_{N,\kappa}=(1,\ldots,1)}\\
&=&\frac{1}{\tilde{V}_{\lfloor (1-\kappa)N\rfloor}\tilde{T}_N}\sum_{[i_1\in\{1,2,\ldots,\lfloor(1-\kappa)N\rfloor\}\cap R(j_1)]}\left(u_{i_1}\frac{\partial}{\partial u_{i_1}}\right)^{l_1}\left[\tilde{V}_{\lfloor (1-\kappa)N\rfloor}\tilde{T}_N\right]\\
&&\left(\sum_{p=0}^{t-1}\sum_{[w_1,\ldots,w_p\in\{2,\ldots,t\}]}F_{l_{w_1},\kappa,N}^{(j_{w_1})}F_{l_{w_2},\kappa,N}^{(j_{w_2})}\ldots F_{l_{w_p},\kappa,N}^{(j_{w_p})}\right.\\
&&\left.\left.\left(\sum_{P\in \mathcal{P}_{1,w_1,\ldots,w_p}^{t}}\prod_{(a,b)\in P}\tilde{G}_{\kappa,N,(l_a,l_b)}^{(j_a,j_b)}+R_{1,w_1,\ldots,w_p}\right)\right)\right|_{U_{N,\kappa}=(1,\ldots,1)}.\\
&=& S_1+S_2+S_3;
\end{eqnarray*}
where by induction hypothesis the degree of $N$ in $R_{1,w_1,\ldots,w_p}$ is less than $\sum_{i=2}^{s}l_i-\sum_{i=1}^{p}l_{w_i}$.
\begin{eqnarray*}
S_1&=&\left\{\frac{1}{\tilde{V}_{\lfloor (1-\kappa)N\rfloor}\tilde{T}_N}\sum_{[i_1\in\{1,2,\ldots,\lfloor(1-\kappa)N\rfloor\}\cap R(j_1)]}\left(u_{i_1}\frac{\partial}{\partial u_{i_1}}\right)^{l_1}\left[\tilde{V}_{\lfloor (1-\kappa)N\rfloor}\tilde{T}_N\right]\right\}\\
&&\times\left(\sum_{p=0}^{t-1}\sum_{[w_1,\ldots,w_p\in\{2,\ldots,t\}]}F_{l_{w_1},\kappa,N}^{(j_{w_1})}F_{l_{w_2},\kappa,N}^{(j_{w_2})}\ldots F_{l_{w_p},\kappa,N}^{(j_{w_p})}\right.\\
&&\left.\left.\left(\sum_{P\in \mathcal{P}_{1,w_1,\ldots,w_p}^{t}}\prod_{(a,b)\in P}\tilde{G}_{\kappa,N,(l_a,l_b)}^{(j_a,j_b)}+R_{1,w_1,\ldots,w_p}\right)\right)\right|_{U_{N,\kappa}=(1,\ldots,1)}.
\end{eqnarray*}
and
\begin{eqnarray*}
S_2&=&\frac{\ell_1}{\tilde{V}_{\lfloor (1-\kappa)N\rfloor}\tilde{T}_N}\sum_{[i_1\in\{1,2,\ldots,\lfloor(1-\kappa)N\rfloor\}\cap R(j_1)]}\left\{\left(u_{i_1}\frac{\partial}{\partial u_{i_1}}\right)^{l_1-1}\left[\tilde{V}_{\lfloor (1-\kappa)N\rfloor}\tilde{T}_N\right]\right\}\\
&&\times\left\{\left(u_{i_1}\frac{\partial}{\partial u_{i_1}}\right)\left(\sum_{p=0}^{t-1}\sum_{[w_1,\ldots,w_p\in\{2,\ldots,t\}]}F_{l_{w_1},\kappa,N}^{(j_{w_1})}F_{l_{w_2},\kappa,N}^{(j_{w_2})}\ldots F_{l_{w_p},\kappa,N}^{(j_{w_p})}\right.\right.\\
&&\left.\left.\left.\left(\sum_{P\in \mathcal{P}_{1,w_1,\ldots,w_p}^{t}}\prod_{(a,b)\in P}\tilde{G}_{\kappa,N,(l_a,l_b)}^{(j_a,j_b)}+R_{1,w_1,\ldots,w_p}\right)\right)\right\}\right|_{U_{N,\kappa}=(1,\ldots,1)}.
\end{eqnarray*}
Indeed, $S_1$ corresponds to the terms where all the differentiations $\frac{\partial}{\partial u_{i_1}}$ are applied to $\tilde{V}_{\lfloor (1-\kappa)N\rfloor}\tilde{T}_N$ or $u_{i_1}$; $S_2$ corresponds to the terms where all the differentiations $\frac{\partial}{\partial u_{i_1}}$ except one are applied to $\tilde{V}_{\lfloor (1-\kappa)N\rfloor}\tilde{T}_N$ or $u_{i_1}$, and $S_3$ are all the other terms.

By the definition of $F_{l,\kappa,N}^{(j)}$ we have
\begin{eqnarray*}
S_1&=&F_{l_1,\kappa,N}^{(j_1)}\left(\sum_{p=0}^{t-1}\sum_{[w_1,\ldots,w_p\in\{2,\ldots,t\}]}F_{l_{w_1},\kappa,N}^{(j_{w_1})}F_{l_{w_2},\kappa,N}^{(j_{w_2})}\ldots F_{l_{w_s},\kappa,N}^{(j_{w_s})}\right.\\
&&\left.\left.\left(\sum_{P\in \mathcal{P}_{1,w_1,\ldots,w_p}^{s}}\prod_{(a,b)\in P}\tilde{G}_{\kappa,N,(l_a,l_b)}^{(j_a,j_b)}+R_{1,w_1,\ldots,w_p}\right)\right)\right|_{U_{N,\kappa}=(1,\ldots,1)}.
\end{eqnarray*}

By Lemma \ref{l56}, we have
\begin{eqnarray*}
S_2&=&\left\{\left(\sum_{p=0}^{t-1}\sum_{[w_1,\ldots,w_p\in\{2,\ldots,t\}]}\sum_{x=1}^{p}F_{l_{w_1},\kappa,N}^{(j_{w_1})}\ldots F_{l_{w_{x-1}},\kappa,N}^{(j_{w_{x-1}})} F_{l_{w_{x+1}},\kappa,N}^{(j_{w_{x+1}})}\ldots F_{l_{w_s},\kappa,N}^{(j_{w_s})}\right.\right.\\
&&\left.\left.\left.\left[\left(\tilde{G}_{\kappa,N,(l_1,l_x)}^{(j_1,j_x)}+R_{1,x}\right)\left(\sum_{P\in \mathcal{P}_{1,w_1,\ldots,w_p}^{s}}\prod_{(a,b)\in P}\tilde{G}_{\kappa,N,(l_a,l_b)}^{(j_a,j_b)}+R_{1,w_1,\ldots,w_p}\right)\right)\right]\right\}\right|_{U_{N,\kappa}=(1,\ldots,1)},
\end{eqnarray*}
where the degree of $N$ in $R_{1,x}$ is less than $l_1+l_x$.

Hence we have
\begin{eqnarray*}
S_2&=&\left\{\left(\sum_{p=0}^{t-1}\sum_{[w_1,\ldots,w_p\in\{2,\ldots,t\}]}\sum_{x=1}^{p}F_{l_{w_1},\kappa,N}^{(j_{w_1})}\ldots F_{l_{w_{x-1}},\kappa,N}^{(j_{w_{x-1}})} F_{l_{w_{x+1}},\kappa,N}^{(j_{w_{x+1}})}\ldots F_{l_{w_s},\kappa,N}^{(j_{w_s})}\right.\right.\\
&&\left.\left.\left.\left[\tilde{G}_{\kappa,N,(l_1,l_x)}^{(j_1,j_x)}\sum_{P\in \mathcal{P}_{1,w_1,\ldots,w_p}^{t}}\prod_{(a,b)\in P}\tilde{G}_{\kappa,N,(l_a,l_b)}^{(j_a,j_b)}+R_{w_1,\ldots,\hat{w}_x,\ldots,w_p}\right)\right]\right\}\right|_{U_{N,\kappa}=(1,\ldots,1)}
\end{eqnarray*}
where the degree of $N$ in $R_{w_1,\ldots,\hat{w}_x,\ldots,w_p}$ is less than $\sum_{i=2}^{t}l_i-\sum_{i=1}^{p}l_{w_i}$.

Note that 
\begin{eqnarray*}
S_1+S_2&=&\sum_{p=0}^{s}\sum_{[w_1,\ldots,w_p\in\{1,2,\ldots,s\}]}F_{l_{w_1},\kappa,N}^{(j_{w_1})}F_{l_{w_2},\kappa,N}^{(j_{w_2})}\ldots F_{l_{w_t},\kappa,N}^{(j_{w_t})}\\
&&\left.\left(\sum_{P\in \mathcal{P}_{w_1,\ldots,w_p}^{t}}\prod_{(a,b)\in P}\tilde{G}_{\kappa,N,(l_a,l_b)}^{(j_a,j_b)}+R_{w_1,\ldots,w_p}\right)\right|_{U_{N,\kappa}=(1,\ldots,1)}
\end{eqnarray*}
where the degree of $N$ in $R_{w_1,\ldots,w_p}$ is less than $\sum_{i=1}^{t}l_i-\sum_{i=1}^{p}l_{w_i}$.

It remains to show that $S_3$ does not contribute to the leading terms. Define
\begin{eqnarray*}
\tilde{H}_{j_1,\ldots,j_p}=\left(\sum_{P\in \mathcal{P}_{1,w_1,\ldots,w_p}^{t}}\prod_{(a,b)\in P}\tilde{G}_{\kappa,N,(l_a,l_b)}^{(j_a,j_b)}+R_{1,w_1,\ldots,w_p}\right);
\end{eqnarray*}
By Lemma \ref{l55}, the degree of $N$ in $\left.\tilde{H}_{j_1,\ldots,j_p}\right|_{U_{N,\kappa}=(1,\ldots,1)}$ is at most $\sum_{i=2}^t l_i-\sum_{j=1}^p l_{w_j}$. Moreover, by Lemma \ref{l67}, for any index $i$, the degree of $N$ in $\left.\frac{\partial}{\partial u_i}\tilde{H}_{j_1,\ldots,j_p}\right|_{U_{N,\kappa}=(1,\ldots,1)}$ is less than $\sum_{i=2}^t l_i-\sum_{j=1}^p l_{w_j}$.

We write
\begin{eqnarray*}
&&\frac{1}{\tilde{V}_{\lfloor (1-\kappa)N\rfloor}\tilde{T}_N}\sum_{[i_1\in\{1,2,\ldots,\lfloor(1-\kappa)N\rfloor\}\cap R(j_1)]}\left(u_{i_1}\frac{\partial}{\partial u_{i_1}}\right)^{l_1}\left[\tilde{V}_{\lfloor (1-\kappa)N\rfloor}\tilde{T}_N\right]\\
&&\left.\left(\sum_{p=0}^{t-1}\sum_{[w_1,\ldots,w_p\in\{2,\ldots,t\}]}F_{l_{w_1},\kappa,N}^{(j_{w_1})}F_{l_{w_2},\kappa,N}^{(j_{w_2})}\ldots F_{l_{w_p},\kappa,N}^{(j_{w_p})}\tilde{H}_{j_1,\ldots,j_p}\right)\right|_{U_{N,\kappa}=(1,\ldots,1)}.
\end{eqnarray*}
as a sum of terms of the following form
\begin{eqnarray}
&&\lim_{x_{a_1},\ldots,x_{a_{r+1}}\rightarrow\ x_{j_1}}\mathrm{Sym}_{a_1,\ldots,a_{r+1}}\notag\\
&&\left[\frac{x_{a_1}^ru_{\tilde{a}_1}^{l_1-s_0}(\partial_{\tilde{a}_1}^{s_1}[\log\tilde{T}_N])^{d_1}\ldots (\partial_{\tilde{a}_t}^{s_t}[\log\tilde{T}_N])^{d_t}\partial_{\tilde{a}_1}^{f_1}F_{l_{w_1},\kappa,N}^{(j_{w_1})} \ldots \partial_{\tilde{a}_1}^{f_p}F_{l_{w_p},\kappa,N}^{(j_{w_p})}\partial_{\tilde{a}_1}^{h_0}\tilde{H}_{j_1,\ldots,j_p} }{(x_{a_1}u_{\tilde{a}_1}-x_{a_2}u_{\tilde{a}_2})\ldots(x_{a_1}u_{\tilde{a}_1}-x_{a_{r+1}}u_{\tilde{a}_{r+1}})}\right]\label{sa1}
\end{eqnarray}
where
\begin{itemize}
\item $\{\tilde{a}_1,\ldots,\tilde{a}_{r+1}\}\subset\{1,2,\ldots,\lfloor(1-\kappa)N\}\cap R(j_1)$;
\item $s_1<s_2<\ldots<s_t$ are positive integers;
\item $f_1,\ldots,f_p,h_0$ are nonnegative integers;
\item 
\begin{eqnarray}
r+s_0+s_1d_1+\ldots+s_td_t+f_1+\ldots+f_p+h_0=l_1\label{rsdl}
\end{eqnarray}
\end{itemize}
By Lemma \ref{l51}, the degree of $N$ in $(\partial_{\tilde{a}_1}^{s_1}[\log\tilde{T}_N])^{d_1}\ldots (\partial_{\tilde{a}_t}^{s_t}[\log\tilde{T}_N])^{d_t}$ is at most $d_1+\ldots+d_t$; therefore, the terms in (\ref{sa1}) with highest degree of $N$ has the form
\begin{eqnarray}
\mathrm{Sym}_{a_1,\ldots,a_{r+1}}\left[\frac{x_{a_1}^ru_{\tilde{a}_1}^{l_1}(\partial_{\tilde{a}_1}[\log\tilde{T}_N])^{d_1}\partial_{\tilde{a}_1}^{f_1}F_{l_{w_1},\kappa,N}^{(j_{w_1})} \ldots \partial_{\tilde{a}_1}^{f_p}F_{l_{w_p},\kappa,N}^{(j_{w_p})}\partial_{\tilde{a}_1}^{h_0}\tilde{H}_{j_1,\ldots,j_p} }{(x_{a_1}u_{\tilde{a}_1}-x_{a_2}u_{\tilde{a}_2})\ldots(x_{a_1}u_{\tilde{a}_1}-x_{a_{r+1}}u_{\tilde{a}_{r+1}})}\right]\label{sa2}
\end{eqnarray}
where
\begin{eqnarray}
s_0=d_2=\ldots=d_t=0;\ s_1=1.\label{sd0}
\end{eqnarray}
Let
\begin{eqnarray*}
B=\{i\in\{1,2,\ldots,p\}:f_i=0\}.
\end{eqnarray*}
Then 
\begin{eqnarray*}
(\ref{sa2})=\left[\prod_{i\in B}F_{l_{w_i},\kappa,N}^{(j_{w_i})}\right] S(u_1,\ldots,u_{\lfloor(1-\kappa) N\rfloor})
\end{eqnarray*}
where $S(u_1,\ldots,u_{\lfloor1-\kappa N\rfloor})$ is a symmetric function. It suffices to show that the degree of $N$ in $S$, except for $S_1$ and $S_2$, is less than $\sum_{i=1}^{s}l_i-\sum_{i\in B}l_i$. Note that the degree of $N$ in $\partial_{\tilde{a}_1}[\log\tilde{T}_N])^{d_1}$ is at most $d_1$ by Lemma \ref{l51}. The summation over $\{\tilde{a}_1,\ldots,\tilde{a}_{r+1}\}\subset\{1,2,\ldots,\lfloor(1-\kappa)N\}\cap R(j_1)$ gives $O(N^{r+1})$ terms.  By Lemma \ref{l53}, when $i\notin B$, the degree of $N$ in $\partial_{\tilde{a}_1}^{f_i}F_{l_{w_i},\kappa,N}^{(j_{w_i})} $ is at most $l_{w_i}$. Therefore the degree of $N$ in $S(u_1,\ldots,u_{\lfloor(1-\kappa) N\rfloor})$ is at most
\begin{eqnarray*}
\sum_{i=2}^{s}l_i-\sum_{i=1}^{p}l_{w_i}+d_1+\sum_{i\in \{1,2,\ldots,p\}\setminus B}l_{w_i}+r+1
\end{eqnarray*}

 By (\ref{rsd}) and (\ref{sd0}), if $|B|\leq p-2$, $r_1+d_1+1\leq l_1-1$, then the degree of $N$ in $S(u_1,\ldots,u_{\lfloor(1-\kappa) N\rfloor})$ is at most
 \begin{eqnarray*}
 \sum_{i=1}^{s}l_i-\sum_{i\in B}l_i-1
 \end{eqnarray*}
 Therefore only the terms where at most one $f_i$ is nonzero contribute to the leading order. In these terms if $h_0>0$,  then by Lemma \ref{l67}, the degree of $N$ is less than $\sum_{i=1}^{s}l_i-\sum_{i\in B}l_i$. So only the terms where $h_0=0$ and at most one $f_i$ is nonzero contribute to the leading order. These terms are in $S_1$ and $S_2$. Then the proof is complete.
\end{proof}

\begin{lemma}\label{le59}Let $s,l_1,\ldots,l_s$ be positive integers, and let $j_1,\ldots,j_s\in\{1,2,\ldots,n\}$. Then
\begin{eqnarray*}
&&\frac{1}{\tilde{V}_{\lfloor (1-\kappa)N\rfloor}\tilde{T}_N}\left[\sum_{[i_1\in\{1,2,\ldots,\lfloor(1-\kappa)N\rfloor\}\cap R(j_1)]}\left(u_{i_1}\frac{\partial}{\partial u_{i_1}}\right)^{l_1}-E_{l_1,\kappa,N}^{(j_1)}\right]\cdots\\&&\left.\left[\sum_{[i_s\in\{1,2,\ldots,\lfloor(1-\kappa)N\rfloor\}\cap R(j_s)]}\left(u_{i_s}\frac{\partial}{\partial u_{i_s}}\right)^{l_s}\left[\tilde{V}_{\lfloor (1-\kappa)N\rfloor}\tilde{T}_N\right]-E_{l_s,\kappa,N}^{(j_s)}\right]\right|_{U_{N,\kappa}=(1,\ldots,1)}\\
&=&\left.\sum_{P\in \mathcal{P}_{\emptyset}^{s}}\prod_{(a,b)\in P}\tilde{G}_{\kappa,N,(l_a,l_b)}^{(j_a,j_b)}+R\right|_{U_{N,\kappa}=(1,\ldots,1)},
\end{eqnarray*}
where the degree of $N$ in $R$ is less than $\sum_{i=1}^{s}l_i$.
\end{lemma}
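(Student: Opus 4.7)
The plan is to derive Lemma \ref{le59} from Lemma \ref{l58} via an inclusion-exclusion expansion, mirroring the passage from the uncentered moment identity to the cumulant identity in the uniform boundary case (Lemma \ref{lc}). First I would expand the product of centered operators algebraically, treating each $E_{l_k,\kappa,N}^{(j_k)}$ as a scalar multiple of the identity operator:
\begin{equation*}
\prod_{k=1}^{s}\!\left[\sum_{i_k\in\{1,\ldots,\lfloor(1-\kappa)N\rfloor\}\cap R(j_k)}\!(u_{i_k}\partial_{i_k})^{l_k}-E_{l_k,\kappa,N}^{(j_k)}\right]=\sum_{A\subset\{1,\ldots,s\}}(-1)^{s-|A|}\Bigg(\prod_{k\notin A}E_{l_k,\kappa,N}^{(j_k)}\Bigg)\prod_{k\in A}\sum_{i_k\in R(j_k)}(u_{i_k}\partial_{i_k})^{l_k}.
\end{equation*}
After dividing by $\tilde{V}_{\lfloor(1-\kappa)N\rfloor}\tilde{T}_N$, applying the operator to $\tilde{V}_{\lfloor(1-\kappa)N\rfloor}\tilde{T}_N$, and evaluating at $U_{N,\kappa}=(1,\ldots,1)$, I would apply Lemma \ref{l58} separately to each of the $2^s$ inner sub-products, obtaining for each $A$ a sum over $\{w_1,\ldots,w_p\}\subset A$ of products of $F$-factors against pairings of $A\setminus\{w_\ell\}$, plus a remainder whose $N$-degree is less than $\sum_{k\in A}l_k-\sum_\ell l_{w_\ell}$.

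The next step is to identify $F_{l,\kappa,N}^{(j)}|_{U_{N,\kappa}=(1,\ldots,1)}$ with $E_{l,\kappa,N}^{(j)}$ at leading order: direct comparison of definitions (\ref{ekj}) and (\ref{fkn}) shows that the two quantities agree up to an additive correction of strictly smaller $N$-degree (the same approximation used in Lemma \ref{l55}). Substituting $E$ for $F$ at leading order and reindexing the resulting double sum by $C:=(\{1,\ldots,s\}\setminus A)\cup B$, the combinatorial coefficient in front of $\prod_{k\in C}E_{l_k,\kappa,N}^{(j_k)}\cdot\sum_{P\in\mathcal{P}_\emptyset^{\{1,\ldots,s\}\setminus C}}\prod_{(a,b)\in P}\tilde{G}_{\kappa,N,(l_a,l_b)}^{(j_a,j_b)}$ becomes $\sum_{B\subset C}(-1)^{|C|-|B|}=(1-1)^{|C|}$, which vanishes unless $C=\emptyset$. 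The surviving $C=\emptyset$ contribution is exactly $\sum_{P\in\mathcal{P}_\emptyset^{s}}\prod_{(a,b)\in P}\tilde{G}_{\kappa,N,(l_a,l_b)}^{(j_a,j_b)}$, matching the right-hand side of Lemma \ref{le59}.

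The main obstacle I anticipate is the aggregate $N$-degree bookkeeping for the remainder $R$. Three sources of error interact: the internal remainders $R_{A,\{w_\ell\}}$ from each application of Lemma \ref{l58} (which carry the bound $\sum_{k\in A}l_k-\sum_\ell l_{w_\ell}-1$ in $N$-degree); the subleading correction when replacing $F_{l_k,\kappa,N}^{(j_k)}|_{U=(1,\ldots,1)}$ by $E_{l_k,\kappa,N}^{(j_k)}$; and the degree control of $\tilde{G}_{\kappa,N,(l_a,l_b)}^{(j_a,j_b)}$ and its derivatives provided by Lemmas \ref{l55} and \ref{l67}. The delicate accounting is to verify that, after the inclusion-exclusion has eliminated the leading $C\neq\emptyset$ cancellations, every surviving off-diagonal term (non-pairing, or containing a higher-order derivative $\partial_{\tilde{a}_1}^{h_0}$ applied to an $\tilde H$ or $R_{A,B}$) drops in $N$-degree by at least one, so that the total remainder has $N$-degree strictly less than $\sum_{i=1}^{s}l_i$. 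I expect this step to follow the same pattern as the concluding degree analysis in the proof of Lemma \ref{l58}, using (\ref{rsd}) and the bound from Lemma \ref{l53}(2)-(3) to absorb each additional derivative $\partial_{u_i}$ into a strict reduction of the $N$-degree bound.
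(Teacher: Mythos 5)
Your reduction of Lemma~\ref{le59} to Lemma~\ref{l58} by inclusion--exclusion is a genuinely different route from what the paper (and the proof of Lemma~5.11 of~\cite{bg16} it cites) does: the paper reruns the recursion underlying Lemma~\ref{l58} \emph{directly} on the centered operators, which makes the ``$S_1$''--type terms --- the ones that would produce a factor $F_{l_1,\kappa,N}^{(j_1)}$ --- vanish identically at $U_{N,\kappa}=(1,\ldots,1)$ since $(F_{l_1,\kappa,N}^{(j_1)}-E_{l_1,\kappa,N}^{(j_1)})|_{U_{N,\kappa}=(1,\ldots,1)}=0$, so no singleton factor ever appears and the degree analysis is unchanged from Lemma~\ref{l58}. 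Your combinatorial observation that $\sum_{B\subset C}(-1)^{|C|-|B|}=0$ for $C\neq\emptyset$ is correct, and the identity $F_{l,\kappa,N}^{(j)}|_{U_{N,\kappa}=(1,\ldots,1)}=E_{l,\kappa,N}^{(j)}$ is in fact exact (since $T_N|_{U_{N,\kappa}=(1,\ldots,1)}=P_N$), not merely leading order; so the leading (pairing) terms for $C\neq\emptyset$ do cancel exactly.

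However, the degree bookkeeping for the remainder does not go through as you describe, and this is a genuine gap. After reindexing, the surviving remainder from the data $(A,B)$ with $C:=(\{1,\ldots,s\}\setminus A)\cup B$ and $D:=\{1,\ldots,s\}\setminus C$ is $\bigl(\prod_{k\in C}E_{l_k,\kappa,N}^{(j_k)}\bigr)\cdot R_{A,B}$, and the bounds you invoke give $\deg_N\prod_{k\in C}E_{l_k,\kappa,N}^{(j_k)}\leq\sum_{k\in C}(l_k+1)=\sum_{k\in C}l_k+|C|$ (by Lemma~\ref{l53}(1)) and $\deg_N R_{A,B}<\sum_{k\in D}l_k$ (by Lemma~\ref{l58}), for a total $<\sum_{i=1}^{s}l_i+|C|$. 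When $C\neq\emptyset$ this exceeds $\sum_i l_i$, so the claimed remainder bound does not follow. The inclusion--exclusion kills the leading pairing terms, but the remainders $R_{A,B}$ produced by applying Lemma~\ref{l58} to different subsets $A$ are unrelated objects, and nothing in your argument forces them to cancel so as to absorb the extra $|C|$ powers of $N$ coming from the $E$--factors. (Already for $s=2$ one sees the issue: the $A=\{1,2\}$, $B=\{1,2\}$ term contributes $E_{l_1,\kappa,N}^{(j_1)}E_{l_2,\kappa,N}^{(j_2)}R_{\{1,2\},\{1,2\}}$, whose a priori bound is $<l_1+l_2+2+\deg R$, not $<l_1+l_2$; the identity works out only because Lemma~\ref{l55} controls $E_{l_1,l_2,\kappa,N}^{(j_1,j_2)}-E_{l_1,\kappa,N}^{(j_1)}E_{l_2,\kappa,N}^{(j_2)}$ directly, a cancellation your expansion does not see.) To close the gap you would need either a sharper remainder estimate in Lemma~\ref{l58} relating the $R_{A,B}$ across subsets, or --- more simply, and this is the paper's route --- run the $S_1/S_2/S_3$ splitting of the proof of Lemma~\ref{l58} with the centered operator $\sum_{i_1\in R(j_1)}(u_{i_1}\partial_{i_1})^{l_1}-E_{l_1,\kappa,N}^{(j_1)}$ at each level, so the $E$--factors never appear and the $N$-degree accounting of Lemma~\ref{l58} carries over verbatim.
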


\begin{proof}The lemma follows from Lemma \ref{l58} by explicit computations. See also the proof of Lemma 5.11 in \cite{bg16}.
\end{proof}

\begin{lemma}\label{le510}Let $s,l_1,\ldots,l_s$ be positive integers, and let $j_1,\ldots,j_s\in\{1,2,\ldots,n\}$. Let $\tilde{\mathcal{P}}_{\emptyset}^{s}\subset \mathcal{P}_{\emptyset}^{s}$ consisting of all the pairings of $\{1,2,\ldots,s\}$ such that in each pair $(a,b)$ in the pairing, $j_a=j_b$.
 Then
\begin{eqnarray*}
&&\lim_{N\rightarrow\infty}\frac{1}{N^{l_1+\ldots+l_s}}\frac{1}{\tilde{V}_{\lfloor (1-\kappa)N\rfloor}\tilde{T}_N}\left[\sum_{[i_1\in\{1,2,\ldots,\lfloor(1-\kappa)N\rfloor\}\cap R(j_1)]}\left(u_{i_1}\frac{\partial}{\partial u_{i_1}}\right)^{l_1}-E_{l_1,\kappa,N}^{(j_1)}\right]\cdots\\&&\left.\left[\sum_{[i_s\in\{1,2,\ldots,\lfloor(1-\kappa)N\rfloor\}\cap R(j_s)]}\left(u_{i_s}\frac{\partial}{\partial u_{i_s}}\right)^{l_s}\left[\tilde{V}_{\lfloor (1-\kappa)N\rfloor}\tilde{T}_N\right]-E_{l_s,\kappa,N}^{(j_s)}\right]\right|_{U_{N,\kappa}=(1,\ldots,1)}\\
&=&\lim_{N\rightarrow\infty}\frac{1}{N^{l_1+\ldots+l_s}}\left.\sum_{P\in \tilde{\mathcal{P}}_{\emptyset}^{s}}\prod_{(a,b)\in P}\tilde{G}_{\kappa,N,(l_a,l_b)}^{(j_a,j_a)}\right|_{U_{N,\kappa}=(1,\ldots,1)},
\end{eqnarray*}
where the degree of $N$ in $R$ is less than $\sum_{i=1}^{s}l_i$.
\end{lemma}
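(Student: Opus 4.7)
The plan is to invoke Lemma \ref{le59} and then classify the pairings $P\in\mathcal{P}_\emptyset^s$ by whether each pair respects the index labels $j_1,\dots,j_s$. By Lemma \ref{le59},
\begin{eqnarray*}
&&\frac{1}{\tilde{V}_{\lfloor (1-\kappa)N\rfloor}\tilde{T}_N}\left[\sum_{[i_1\in\{1,\ldots,\lfloor(1-\kappa)N\rfloor\}\cap R(j_1)]}\left(u_{i_1}\frac{\partial}{\partial u_{i_1}}\right)^{l_1}-E_{l_1,\kappa,N}^{(j_1)}\right]\cdots \\
&&\ \left.\left[\sum_{[i_s\in\{1,\ldots,\lfloor(1-\kappa)N\rfloor\}\cap R(j_s)]}\left(u_{i_s}\frac{\partial}{\partial u_{i_s}}\right)^{l_s}-E_{l_s,\kappa,N}^{(j_s)}\right]\left[\tilde{V}_{\lfloor(1-\kappa)N\rfloor}\tilde{T}_N\right]\right|_{U_{N,\kappa}=(1,\ldots,1)}\\
&=&\left.\sum_{P\in\mathcal{P}_\emptyset^s}\prod_{(a,b)\in P}\tilde{G}_{\kappa,N,(l_a,l_b)}^{(j_a,j_b)}\right|_{U_{N,\kappa}=(1,\ldots,1)}+R,
\end{eqnarray*}
where the $N$-degree of $R$ is strictly less than $\sum_{i=1}^s l_i$. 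After dividing by $N^{l_1+\cdots+l_s}$, the remainder $R$ contributes $0$ in the $N\to\infty$ limit, so it suffices to analyze each pairing contribution.

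The key input is the degree dichotomy already established in Lemma \ref{l55}: for any pair $(a,b)$, the factor $\tilde{G}_{\kappa,N,(l_a,l_b)}^{(j_a,j_b)}\big|_{U_{N,\kappa}=(1,\dots,1)}$ has $N$-degree at most $l_a+l_b$ when $j_a=j_b$, and has $N$-degree strictly less than $l_a+l_b$ when $j_a\neq j_b$. Hence for a given pairing $P\in\mathcal{P}_\emptyset^s$, the product
\[
\left.\prod_{(a,b)\in P}\tilde{G}_{\kappa,N,(l_a,l_b)}^{(j_a,j_b)}\right|_{U_{N,\kappa}=(1,\ldots,1)}
\]
has $N$-degree bounded by $\sum_{(a,b)\in P}(l_a+l_b)=\sum_{i=1}^s l_i$, with the bound strictly improved as soon as any pair $(a,b)\in P$ satisfies $j_a\neq j_b$. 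In particular, for every $P\in \mathcal{P}_\emptyset^s\setminus \tilde{\mathcal{P}}_\emptyset^s$, the corresponding product is $o(N^{l_1+\cdots+l_s})$.

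Therefore, after normalizing by $N^{l_1+\cdots+l_s}$ and taking $N\to\infty$, only the pairings in $\tilde{\mathcal{P}}_\emptyset^s$ survive, yielding the claimed identity. The main conceptual step is the degree reduction for $j_a\neq j_b$ in Lemma \ref{l55}, which traces back via Lemma \ref{l52}(2) to the cancellation $\frac{\partial^2}{\partial u_i\partial u_r}\log T_{N,(i,r)}^{(s,t)}\to 0$ coming from Assumption \ref{ap32}; once that is in place, the remainder of the argument is a straightforward degree count and a parity/matching observation on the pairings. No further estimate is needed; the only potential subtlety is bookkeeping the factor of $N$ that is absorbed by the constraint $i\in R(j)$ in the summations, but this is already built into $\tilde{G}_{\kappa,N,(l_a,l_b)}^{(j_a,j_b)}$.
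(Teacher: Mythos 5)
Your proposal is correct and follows essentially the same route as the paper: apply Lemma \ref{le59} to obtain the pairing expansion with remainder $R$, note $R$ has $N$-degree less than $\sum l_i$ and hence vanishes under the normalization, and then use the degree dichotomy from Lemma \ref{l55} (degree at most $l_a+l_b$ when $j_a=j_b$, strictly less when $j_a\neq j_b$) to discard all pairings outside $\tilde{\mathcal{P}}_\emptyset^s$. The paper's proof is terser but carries out exactly this argument, so no substantive differences to note.
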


\begin{proof}The lemma follows from Lemma \ref{le59} and the fact that the degree of $N$ in $R$, in Lemma \ref{le59}, is less than $l_1+\ldots+l_s$, and that the degree of $N$ in $\tilde{G}_{\kappa,N,(l_a,l_b)}^{(j_a,j_b)}$ is less than $l_a+l_b$ if $j_a\neq j_b$.

\end{proof}

\subsection{Integral formula for covariance}

Assume that $\kappa\in (0,1)$ and $k$ is a positive integer. Let 
\begin{eqnarray*} p_k^{(\lfloor (1-\kappa)N \rfloor)}=\sum_{i=1}^{\lfloor (1-\kappa)N\rfloor}(\lambda_i+\lfloor (1-\kappa)N\rfloor-i)^k
\end{eqnarray*}
where $\lambda=(\lambda_1,\ldots,\lambda_{\lfloor (1-\kappa)N \rfloor})\in \GT_{\lfloor (1-\kappa)N \rfloor}$ has the distribution $\rho_{\lfloor (1-\kappa)N\rfloor }$ as defined in Lemma \ref{l33}. Explicit computations show that
\begin{eqnarray}
&&\mathbf{E}\left(p_{l_1}^{(\lfloor (1-\kappa)N\rfloor)}-\mathbf{E}p_{l_1}^{(\lfloor (1-\kappa)N}\right)\left(p_{l_2}^{(\lfloor (1-\kappa)N\rfloor)}-\mathbf{E}p_{l_2}^{(\lfloor (1-\kappa)N\rfloor)}\right)\label{mts}\\
&&\cdots \left(p_{l_s}^{(\lfloor (1-\kappa)N\rfloor)}-\mathbf{E}p_{l_s}^{(\lfloor (1-\kappa)N\rfloor)}\right)\notag\\
&=&\sum_{j_1,\ldots,j_s\in \{1,2,\ldots,n\}}\frac{1}{\tilde{V}_{\lfloor (1-\kappa)N\rfloor}\tilde{T}_N}\left[\sum_{[i_1\in\{1,2,\ldots,\lfloor(1-\kappa)N\rfloor\}\cap R(j_1)]}\left(u_{i_1}\frac{\partial}{\partial u_{i_1}}\right)^{l_1}-E_{l_1,\kappa,N}^{(j_1)}\right]\cdots
\notag
\\&&\left.\left[\sum_{[i_s\in\{1,2,\ldots,\lfloor(1-\kappa)N\rfloor\}\cap R(j_s)]}\left(u_{i_s}\frac{\partial}{\partial u_{i_s}}\right)^{l_s}\left[\tilde{V}_{\lfloor (1-\kappa)N\rfloor}\tilde{T}_N\right]-E_{l_s,\kappa,N}^{(j_s)}\right]\right|_{U_{N,\kappa}=(1,\ldots,1)}\notag
\end{eqnarray}

\begin{lemma}
\begin{eqnarray*}
&&\lim_{N\rightarrow\infty}\frac{1}{N^{l_1+l_2}}\mathbf{E}\left(p_{l_1}^{(\lfloor (1-\kappa)N\rfloor)}-\mathbf{E}p_{l_1}^{(\lfloor (1-\kappa)N\rfloor)}\right) \left(p_{l_2}^{(\lfloor (1-\kappa)N\rfloor)}-\mathbf{E}p_{l_2}^{(\lfloor (1-\kappa)N\rfloor)}\right)\\
&=&\left.\sum_{j=1}^{n}\lim_{N\rightarrow\infty}\frac{1}{N^{l_1+l_2}}\tilde{G}_{\kappa,N,(l_1,l_2)}^{(j,j)}\right|_{U_{N,\kappa}=(1,\ldots,1)}
\end{eqnarray*}
\end{lemma}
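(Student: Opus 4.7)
The plan is to specialize the general multi-level moment identity (\ref{mts}) to the case $s=2$ and then invoke Lemma \ref{le510} to identify the leading-order contribution. Setting $s = 2$ in (\ref{mts}) yields
\begin{eqnarray*}
&&\mathbf{E}\left(p_{l_1}^{(\lfloor (1-\kappa)N\rfloor)}-\mathbf{E}p_{l_1}^{(\lfloor (1-\kappa)N\rfloor)}\right) \left(p_{l_2}^{(\lfloor (1-\kappa)N\rfloor)}-\mathbf{E}p_{l_2}^{(\lfloor (1-\kappa)N\rfloor)}\right) \\
&=& \sum_{j_1, j_2 \in \{1,\ldots,n\}}\frac{1}{\tilde{V}_{\lfloor (1-\kappa)N\rfloor}\tilde{T}_N}\left[\sum_{[i_1\in\{1,\ldots,\lfloor(1-\kappa)N\rfloor\}\cap R(j_1)]}\left(u_{i_1}\frac{\partial}{\partial u_{i_1}}\right)^{l_1}-E_{l_1,\kappa,N}^{(j_1)}\right] \\
&& \times \left.\left[\sum_{[i_2\in\{1,\ldots,\lfloor(1-\kappa)N\rfloor\}\cap R(j_2)]}\left(u_{i_2}\frac{\partial}{\partial u_{i_2}}\right)^{l_2}-E_{l_2,\kappa,N}^{(j_2)}\right]\left[\tilde{V}_{\lfloor (1-\kappa)N\rfloor}\tilde{T}_N\right]\right|_{U_{N,\kappa}=(1,\ldots,1)}.
\end{eqnarray*}

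Next, I would apply Lemma \ref{le510} term by term in the double sum over $(j_1, j_2)$. For each fixed pair $(j_1, j_2)$ the only pairing in $\tilde{\mathcal{P}}_{\emptyset}^{2}$ is the single pair $(1,2)$, which is included if and only if $j_1 = j_2$. Hence for $j_1 \neq j_2$ the contribution to the leading $N^{l_1+l_2}$ order vanishes (the set $\tilde{\mathcal{P}}_{\emptyset}^{2}$ is empty and only a remainder of strictly lower $N$-degree remains), while for $j_1 = j_2 = j$ the leading contribution is exactly $\tilde{G}_{\kappa,N,(l_1,l_2)}^{(j,j)}|_{U_{N,\kappa}=(1,\ldots,1)}$. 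Dividing by $N^{l_1+l_2}$ and passing to the limit leaves only the diagonal terms, yielding the desired identity
\begin{eqnarray*}
\lim_{N\rightarrow\infty}\frac{1}{N^{l_1+l_2}}\mathbf{E}\left(p_{l_1}^{(\lfloor(1-\kappa)N\rfloor)}-\mathbf{E}p_{l_1}^{(\lfloor(1-\kappa)N\rfloor)}\right)\left(p_{l_2}^{(\lfloor(1-\kappa)N\rfloor)}-\mathbf{E}p_{l_2}^{(\lfloor(1-\kappa)N\rfloor)}\right) = \sum_{j=1}^{n}\lim_{N\to\infty}\frac{\tilde{G}_{\kappa,N,(l_1,l_2)}^{(j,j)}}{N^{l_1+l_2}}\bigg|_{U_{N,\kappa}=(1,\ldots,1)}.
\end{eqnarray*}

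The work is essentially bookkeeping rather than producing new estimates: all the hard analytical content has already been established in Lemmas \ref{l53}, \ref{l55}, \ref{l67}, and \ref{le510}, which together guarantee that (i) all the $N$-degree bounds on $F$, $G$, and their derivatives hold, (ii) the off-diagonal $j_a \neq j_b$ contributions are of strictly lower order because $\tilde{G}_{\kappa,N,(l_a,l_b)}^{(j_a,j_b)}$ has $N$-degree less than $l_a + l_b$ in that case (owing to the decay of $\partial^2 \log T^{(s,t)}_{N,(i,r)}/\partial u_i \partial u_r$ proved in Lemma \ref{l52}(2) under Assumption \ref{ap32}), and (iii) the remainder $R$ in Lemma \ref{le510} is absorbed. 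The only mildly subtle point, which I would spell out carefully, is the justification that the finite sum over $(j_1, j_2) \in \{1,\ldots,n\}^2$ commutes with the $N \to \infty$ limit; this is immediate because $n$ is fixed and each summand has a well-defined finite limit after normalization by $N^{l_1 + l_2}$, as supplied by Lemma \ref{le510}. No genuine obstacle arises here — the statement is essentially the $s=2$ corollary of Lemma \ref{le510}.
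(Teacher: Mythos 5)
Your proof is correct and follows essentially the same route as the paper, which also derives the result by specializing the multi-level identity (\ref{mts}) to $s=2$ and invoking Lemma \ref{le510} (together with Lemma \ref{l55}) to see that only the diagonal $j_1=j_2$ terms survive at leading order $N^{l_1+l_2}$. The reasoning about $\tilde{\mathcal{P}}_{\emptyset}^{2}$ being empty for $j_1\neq j_2$ and consisting of the single pairing $\{(1,2)\}$ when $j_1=j_2$, plus the commutation of the finite $n^2$-term sum with the $N\to\infty$ limit, matches the paper's argument.
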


\begin{proof}The lemma follows from (\ref{mts}), Lemma \ref{le510}, Lemma \ref{l55}, and the fact that
\begin{eqnarray*}
\left.\tilde{G}_{\kappa,N,(l_1,l_2)}^{(j,j)}\right|_{U_{N,\kappa}=(1,\ldots,1)}=G_{\kappa,N,(l_1,l_2)}^{(j,j)}
\end{eqnarray*}
\end{proof}

Therefore, in order to obtain an explicit integral formula for the covariance 
\begin{eqnarray*}
\lim_{N\rightarrow\infty}\frac{1}{N^{l_1+l_2}}\mathbf{E}\left(p_{l_1}^{(\lfloor (1-\kappa)N\rfloor)}-\mathbf{E}p_{l_1}^{(\lfloor (1-\kappa)N\rfloor)}\right) \left(p_{l_2}^{(\lfloor (1-\kappa)N\rfloor)}-\mathbf{E}p_{l_2}^{(\lfloor (1-\kappa)N\rfloor)}\right);
\end{eqnarray*}
It suffices to obtain an explicit integral formula for 
\begin{eqnarray*}
\left.\lim_{N\rightarrow\infty}\frac{1}{N^{l_1+l_2}}\tilde{G}_{\kappa,N,(l_1,l_2)}^{(j,j)}\right|_{U_{N,\kappa}=(1,\ldots,1)},
\end{eqnarray*}
where $1\leq j\leq N$.

We have
\begin{eqnarray*}
&&\left.\lim_{N\rightarrow\infty}\frac{1}{N^{l_1+l_2}}\tilde{G}_{\kappa,N,(l_1,l_2)}^{(j,j)}\right|_{U_{N,\kappa}=(1,\ldots,1)}\\
&=&\lim_{N\rightarrow\infty}\frac{1}{N^{l_1+l_2}}l_1\sum_{r=0}^{l_1-1}{{l_1-1}\choose{r}}\sum_{[\{a_1-N+\lfloor(1-\kappa)N \rfloor,\ldots,a_{r+1}-N+\lfloor(1-\kappa)N\rfloor\}\in \{1,2,\ldots,\lfloor(1-\kappa)N \rfloor\}\cap R(j)]}(r+1)!\\
&&\lim_{[x_{a_1},\ldots,x_{a_{r+1}}\longrightarrow x_j]}\mathrm{Sym}_{a_1,\ldots,a_{r+1}}\left[\frac{ x_{a_1}^r u_{\tilde{a}_1}^{l_1} (\partial_{\tilde{a}_1}[\log \tilde{T}_{N}])^{l_1-1-r}}{(x_{a_1}u_{\tilde{a}_1}-x_{a_2}u_{\tilde{a}_2})\ldots(x_{a_1}u_{\tilde{a}_1}-x_{a_{r+1}}u_{\tilde{a}_{r+1}}))}\right]\\
&&\partial_{\tilde{a}_1}\left[\sum_{s=0}^{l_2}{{l_2}\choose{s}}\sum_{[\{b_1-N+\lfloor(1-\kappa)N \rfloor,\ldots,b_{s+1}-N+\lfloor(1-\kappa)N\rfloor\}\subset \{1,2,\ldots,\lfloor(1-\kappa)N \rfloor\}\cap R(j)]}\right.(s+1)!\\
 &&\left.\left.\lim_{[x_{b_w}\longrightarrow x_{j}],\ 1\leq w\leq s+1}\mathrm{Sym}_{b_1,\ldots,b_{s+1}}\frac{c_0 x_{b_1}^s u_{\tilde{b}_1}^{l_2}(\partial_{\tilde{b}_1}[\log \tilde{T}_N])^{l_2-s}}{(x_{b_1}u_{\tilde{b}_1}-x_{b_2}u_{\tilde{b}_2})\ldots(x_{b_1}u_{\tilde{b}_1}-x_{b_{s+1}}u_{\tilde{b}_{s+1}})}\right]\right|_{U_{N,\kappa}=(1,\ldots,1)}
\end{eqnarray*}

We consider the following cases
\begin{itemize}
\item If $\{a_1,\ldots,a_{r+1}\}\cap \{b_1,\ldots,b_{s+1}\}=\emptyset$, we have
\begin{eqnarray*}
\partial_{\tilde{a}_1}(\partial_{\tilde{b}_1}[\log \tilde{T}_N])^{l_2-s}=(l_2-s)(\partial_{\tilde{b}_1}[\log \tilde{T}_N])^{l_2-s-1}\partial_{\tilde{a}_1}(\partial_{\tilde{b}_1}[\log \tilde{T}_N])
\end{eqnarray*}
where the degree of $N$, when $U_{N,\kappa}=(1,\ldots,1)$, is at most $l_2-s-1$. By Lemma \ref{l51} and Lemma \ref{l52}, and note that
\begin{eqnarray*}
\end{eqnarray*}
we have
\begin{eqnarray*}
I_1:&=&\lim_{N\rightarrow\infty}\frac{1}{N^{l_1+l_2}}l_1\sum_{r=0}^{l_1-1}{{l_1-1}\choose{r}}\sum_{[\{a_1-N+\lfloor(1-\kappa)N \rfloor,\ldots,a_{r+1}-N+\lfloor(1-\kappa)N\rfloor\}\in \{1,2,\ldots,\lfloor(1-\kappa)N \rfloor\}\cap R(j)]}(r+1)!\\
&&\lim_{[x_{a_1},\ldots,x_{a_{r+1}}\longrightarrow x_j]}\mathrm{Sym}_{a_1,\ldots,a_{r+1}}\left[\frac{ x_{a_1}^r u_{\tilde{a}_1}^{l_1} (\partial_{\tilde{a}_1}[\log \tilde{T}_{N}])^{l_1-1-r}}{(x_{a_1}u_{\tilde{a}_1}-x_{a_2}u_{\tilde{a}_2})\ldots(x_{a_1}u_{\tilde{a}_1}-x_{a_{r+1}}u_{\tilde{a}_{r+1}}))}\right]\\
&&\partial_{\tilde{a}_1}\left[\sum_{s=0}^{l_2}{{l_2}\choose{s}}\sum_{[\{b_1-N+\lfloor(1-\kappa)N \rfloor,\ldots,b_{s+1}-N+\lfloor(1-\kappa)N\rfloor\}\subset \{1,2,\ldots,\lfloor(1-\kappa)N \rfloor\}\cap R(j),\{a_1,\ldots,a_{r+1}\}\cap\{b_1,\ldots,b_{s+1}\}=\emptyset]}\right.\\
 &&(s+1)!\left.\left.\lim_{[x_{b_w}\longrightarrow x_{j}],\ 1\leq w\leq s+1}\mathrm{Sym}_{b_1,\ldots,b_{s+1}}\frac{c_0 x_{b_1}^s u_{\tilde{b}_1}^{l_2}(\partial_{\tilde{b}_1}[\log \tilde{T}_N])^{l_2-s}}{(x_{b_1}u_{\tilde{b}_1}-x_{b_2}u_{\tilde{b}_2})\ldots(x_{b_1}u_{\tilde{b}_1}-x_{b_{s+1}}u_{\tilde{b}_{s+1}})}\right]\right|_{U_{N,\kappa}=(1,\ldots,1)}\\
 &=&\lim_{N\rightarrow\infty}\frac{1}{N^{l_1+l_2}}l_1\sum_{r=0}^{l_1-1}{{l_1-1}\choose{r}}\sum_{[\{a_1-N+\lfloor(1-\kappa)N \rfloor,\ldots,a_{r+1}-N+\lfloor(1-\kappa)N\rfloor\}\in \{1,2,\ldots,\lfloor(1-\kappa)N \rfloor\}\cap R(j)]}(r+1)!\\
&&\lim_{[x_{a_1},\ldots,x_{a_{r+1}}\longrightarrow x_j]}\mathrm{Sym}_{a_1,\ldots,a_{r+1}}\left[\frac{ x_{a_1}^r u_{\tilde{a}_1}^{l_1} [A_j(u_{\tilde{a}_1})N]^{l_1-1-r}}{(x_{a_1}u_{\tilde{a}_1}-x_{a_2}u_{\tilde{a}_2})\ldots(x_{a_1}u_{\tilde{a}_1}-x_{a_{r+1}}u_{\tilde{a}_{r+1}}))}\right]\\
&&\left[\sum_{s=0}^{l_2}{{l_2}\choose{s}}(l_2-s)\sum_{[\{b_1-N+\lfloor(1-\kappa)N \rfloor,\ldots,b_{s+1}-N+\lfloor(1-\kappa)N\rfloor\}\subset \{1,2,\ldots,\lfloor(1-\kappa)N \rfloor\}\cap R(j),\{a_1,\ldots,a_{r+1}\}\cap\{b_1,\ldots,b_{s+1}\}=\emptyset]}\right.\\
 &&(s+1)!\left.\left.\lim_{[x_{b_w}\longrightarrow x_{j}],\ 1\leq w\leq s+1}\mathrm{Sym}_{b_1,\ldots,b_{s+1}}\frac{ x_{b_1}^s u_{\tilde{b}_1}^{l_2}([A_j(u_{\tilde{b}_1})N])^{l_2-s-1}B_j(u_{\tilde{a}_1},u_{\tilde{b}_1})}{(x_{b_1}u_{\tilde{b}_1}-x_{b_2}u_{\tilde{b}_2})\ldots(x_{b_1}u_{\tilde{b}_1}-x_{b_{s+1}}u_{\tilde{b}_{s+1}})}\right]\right|_{U_{N,\kappa}=(1,\ldots,1)}
\end{eqnarray*}
where
\begin{eqnarray}
A_j(z)&=&\begin{cases}\frac{\kappa}{n}\sum_{l\in I_2\cap\{1,2,\ldots,n\}}\frac{y_l x_1}{1+y_lx_1 z}-\frac{\kappa}{n}\frac{n-1}{z}+\frac{1}{n}H'_{\bm_i}(z),\ \mathrm{if}\ j=1;\\-\frac{\kappa}{n}\frac{n-j}{z}+\frac{1}{n}H'_{\bm_j}(z),\ \mathrm{if}\ 2\leq j\leq n \end{cases}\label{aj}
\end{eqnarray}
and
\begin{eqnarray}
B_j(z,w)&=&\frac{\partial^2}{\partial z\partial w}\left[\log\left(1-(z-1)(w-1)\frac{z H_{\bm_j}'(z)-wH'_{\bm_j}(w)}{z-w}\right)\right]\label{bj}
\end{eqnarray}

By Lemma \ref{l552}, we obtain
\begin{eqnarray*}
I_1&\approx&\lim_{N\rightarrow\infty}\frac{1}{N^{l_1+l_2}}\sum_{r=0}^{l_1-1}\frac{l_1!}{(l_1-1-r)! r!}\sum_{[\{a_1-N+\lfloor(1-\kappa)N \rfloor,\ldots,a_{r+1}-N+\lfloor(1-\kappa)N\rfloor\}\in \{1,2,\ldots,\lfloor(1-\kappa)N \rfloor\}\cap R(j)]}\\
&&\frac{\partial^r}{\partial z^r}\left[z^{l_1} [NA_j(z)]^{l_1-1-r}\right]\left[\sum_{s=0}^{l_2}\frac{l_2!}{(l_2-s-1)! s!}\right.\\
&&\left.\sum_{[\{b_1-N+\lfloor(1-\kappa)N \rfloor,\ldots,b_{s+1}-N+\lfloor(1-\kappa)N\rfloor\}\subset \{1,2,\ldots,\lfloor(1-\kappa)N \rfloor\}\cap R(j),\{a_1,\ldots,a_{r+1}\}\cap\{b_1,\ldots,b_{s+1}\}=\emptyset]}\right.\\
 &&\frac{\partial^s}{\partial w^s}\left.\left[w^{l_2}[NA_j(w)]^{l_2-s-1}B_j(z,w)\right]\right|_{(z,w)=(1,1)}
\end{eqnarray*}
By the residue theorem, we deduce that
\begin{eqnarray*}
I_1&\approx&\lim_{N\rightarrow\infty}\frac{1}{N^{l_1+l_2}}\sum_{r=0}^{l_1-1}\frac{l_1!}{(l_1-1-r)!}\sum_{[\{a_1-N+\lfloor(1-\kappa)N \rfloor,\ldots,a_{r+1}-N+\lfloor(1-\kappa)N\rfloor\}\in \{1,2,\ldots,\lfloor(1-\kappa)N \rfloor\}\cap R(j)]}\\
&&\mathrm{Res}_{z=1}\left(\frac{z^{l_1} [NA_j(z)]^{l_1-1-r}}{(z-1)^{r+1}}\right.\\
&&\left[\sum_{s=0}^{l_2-1}\frac{l_2!}{(l_2-s-1)!}\sum_{[\{b_1-N+\lfloor(1-\kappa)N \rfloor,\ldots,b_{s+1}-N+\lfloor(1-\kappa)N\rfloor\}\subset \{1,2,\ldots,\lfloor(1-\kappa)N \rfloor\}\cap R(j),\{a_1,\ldots,a_{r+1}\}\cap\{b_1,\ldots,b_{s+1}\}=\emptyset]}\right.\\
 &&\mathrm{Res}_{w=1}\left.\left[\frac{w^{l_2}[NA_j(w)]^{l_2-s-1}B_j(z,w)}{(w-1)^{s+1}}\right]\right)\\
 &\approx&\frac{1}{(2\pi\mathbf{i})^2}\lim_{N\rightarrow\infty}\frac{1}{N^{l_1+l_2}}\oint_{|z-1|=\epsilon}
\left(\frac{\lfloor (1-\kappa)N\rfloor}{n}\frac{z}{z-1}+zNA_j(z)\right)^{l_1}\\
 &&\oint_{|w-1|=\epsilon}\left(\frac{\lfloor (1-\kappa)N\rfloor}{n}\frac{w}{w-1}+wNA_j(w)\right)B_j(z,w)dwdz\\
 &=&\frac{1}{(2\pi\mathbf{i})^2}\oint_{|z-1|=\epsilon}
\left(\frac{1-\kappa}{n}\frac{z}{z-1}+zA_j(z)\right)^{l_1}\oint_{|w-1|=\epsilon}\left(\frac{1-\kappa}{n}\frac{w}{w-1}+wA_j(w)\right)^{l_2}B_j(z,w)dwdz
\end{eqnarray*}

\item If $|\{a_1,\ldots,a_{r+1}\}\cap \{b_1,\ldots,b_{s+1}\}|\geq 2$, then the degree of $N$ in these terms is at most
\begin{eqnarray*}
l_2-s+l_1-1-r+r+1+s+1-2=l_1+l_2-1<l_1+l_2, 
\end{eqnarray*}
therefore the contribution of these terms to the limit is 0.

\item If $|\{a_1,\ldots,a_{r+1}\}\cap \{b_1,\ldots,b_{s+1}\}|=1$.  Then
\begin{eqnarray*}
I_2:&=&\lim_{N\rightarrow\infty}\frac{1}{N^{l_1+l_2}}l_1\sum_{r=0}^{l_1-1}{{l_1-1}\choose{r}}\sum_{[\{a_1-N+\lfloor(1-\kappa)N \rfloor,\ldots,a_{r+1}-N+\lfloor(1-\kappa)N\rfloor\}\in \{1,2,\ldots,\lfloor(1-\kappa)N \rfloor\}\cap R(j)]}(r+1)!\\
&&\lim_{[x_{a_1},\ldots,x_{a_{r+1}}\longrightarrow x_j]}\mathrm{Sym}_{a_1,\ldots,a_{r+1}}\left[\frac{ x_{a_1}^r u_{\tilde{a}_1}^{l_1} (\partial_{\tilde{a}_1}[\log \tilde{T}_{N}])^{l_1-1-r}}{(x_{a_1}u_{\tilde{a}_1}-x_{a_2}u_{\tilde{a}_2})\ldots(x_{a_1}u_{\tilde{a}_1}-x_{a_{r+1}}u_{\tilde{a}_{r+1}}))}\right]\\
&&\partial_{\tilde{a}_1}\left[\sum_{s=0}^{l_2}{{l_2}\choose{s}}\sum_{[\{b_1-N+\lfloor(1-\kappa)N \rfloor,\ldots,b_{s+1}-N+\lfloor(1-\kappa)N\rfloor\}\subset \{1,2,\ldots,\lfloor(1-\kappa)N \rfloor\}\cap R(j),|\{a_1,\ldots,a_{r+1}\}\cap\{b_1,\ldots,b_{s+1}\}|=1]}\right.\\
 &&(s+1)!\left.\left.\lim_{[x_{b_w}\longrightarrow x_{j}],\ 1\leq w\leq s+1}\mathrm{Sym}_{b_1,\ldots,b_{s+1}}\frac{c_0 x_{b_1}^s u_{\tilde{b}_1}^{l_2}(\partial_{\tilde{b}_1}[\log \tilde{T}_N])^{l_2-s}}{(x_{b_1}u_{\tilde{b}_1}-x_{b_2}u_{\tilde{b}_2})\ldots(x_{b_1}u_{\tilde{b}_1}-x_{b_{s+1}}u_{\tilde{b}_{s+1}})}\right]\right|_{U_{N,\kappa}=(1,\ldots,1)}\\
  &=&\lim_{N\rightarrow\infty}\frac{1}{N^{l_1+l_2}}l_1\sum_{r=0}^{l_1-1}{{l_1-1}\choose{r}}\sum_{[\{a_1-N+\lfloor(1-\kappa)N \rfloor,\ldots,a_{r+1}-N+\lfloor(1-\kappa)N\rfloor\}\in \{1,2,\ldots,\lfloor(1-\kappa)N \rfloor\}\cap R(j)]}(r+1)!\\
&&\lim_{[x_{a_1},\ldots,x_{a_{r+1}}\longrightarrow x_j]}\mathrm{Sym}_{a_1,\ldots,a_{r+1}}\left[\frac{ x_{a_1}^r u_{\tilde{a}_1}^{l_1} [A_j(u_{\tilde{a}_1})N]^{l_1-1-r}}{(x_{a_1}u_{\tilde{a}_1}-x_{a_2}u_{\tilde{a}_2})\ldots(x_{a_1}u_{\tilde{a}_1}-x_{a_{r+1}}u_{\tilde{a}_{r+1}}))}\right]\\
&&\partial_{\tilde{a}_1}\left[\sum_{s=0}^{l_2}{{l_2}\choose{s}}\sum_{[\{b_1-N+\lfloor(1-\kappa)N \rfloor,\ldots,b_{s+1}-N+\lfloor(1-\kappa)N\rfloor\}\subset \{1,2,\ldots,\lfloor(1-\kappa)N \rfloor\}\cap R(j),|\{a_1,\ldots,a_{r+1}\}\cap\{b_1,\ldots,b_{s+1}\}|=1]}\right.\\
 &&(s+1)!\left.\left.\lim_{[x_{b_w}\longrightarrow x_{j}],\ 1\leq w\leq s+1}\mathrm{Sym}_{b_1,\ldots,b_{s+1}}\frac{ x_{b_1}^s u_{\tilde{b}_1}^{l_2}([A_j(u_{\tilde{b}_1})N])^{l_2-s}}{(x_{b_1}u_{\tilde{b}_1}-x_{b_2}u_{\tilde{b}_2})\ldots(x_{b_1}u_{\tilde{b}_1}-x_{b_{s+1}}u_{\tilde{b}_{s+1}})}\right]\right|_{U_{N,\kappa}=(1,\ldots,1)}
\end{eqnarray*}
By Lemma \ref{l552}, we deduce
\begin{eqnarray*}
I_2:&=&I_3+I_4
\end{eqnarray*}
where
\begin{eqnarray*}
I_3:&=&
\lim_{N\rightarrow\infty}\frac{1}{N^{l_1+l_2}}l_1\sum_{r=0}^{l_1-1}{{l_1-1}\choose{r}}\sum_{[\{a_1-N+\lfloor(1-\kappa)N \rfloor,\ldots,a_{r+1}-N+\lfloor(1-\kappa)N\rfloor\}\in \{1,2,\ldots,\lfloor(1-\kappa)N \rfloor\}\cap R(j)]}\\
&&\frac{\partial^r}{\partial u_{\tilde{a}_1}^r}\left[ u_{\tilde{a}_1}^{l_1} [A_j(u_{\tilde{a}_1})N]^{l_1-1-r}\right]\\
&&\partial_{\tilde{a}_1}\left[\sum_{s=0}^{l_2}{{l_2}\choose{s}}\sum_{[\{b_1-N+\lfloor(1-\kappa)N \rfloor,\ldots,b_{s+1}-N+\lfloor(1-\kappa)N\rfloor\}\subset \{1,2,\ldots,\lfloor(1-\kappa)N \rfloor\}\cap R(j),\{a_1,\ldots,a_{r+1}\}\cap\{b_1,\ldots,b_{s+1}\}=\{b_1\}]}\right.\\
 &&\left.\left.\frac{\partial^s}{\partial u_{\tilde{b}_1}^s}\left( u_{\tilde{b}_1}^{l_2}[A_j(u_{\tilde{b}_1})N]^{l_2-s}\right)\right]\right|_{U_{N,\kappa}=(1,\ldots,1)};\\
 &=&
\lim_{N\rightarrow\infty}\frac{1}{N^{l_1+l_2}}l_1\sum_{r=0}^{l_1-1}{{l_1-1}\choose{r}}\sum_{[\{a_1-N+\lfloor(1-\kappa)N \rfloor,\ldots,a_{r+1}-N+\lfloor(1-\kappa)N\rfloor\}\in \{1,2,\ldots,\lfloor(1-\kappa)N \rfloor\}\cap R(j)]}\\
&&\frac{\partial^r}{\partial u_{\tilde{a}_1}^r}\left[u_{\tilde{a}_1}^{l_1} [A_j(u_{\tilde{a}_1})N]^{l_1-1-r}\right]\\
&&\left[\sum_{s=0}^{l_2}{{l_2}\choose{s}}\sum_{[\{b_1-N+\lfloor(1-\kappa)N \rfloor,\ldots,b_{s+1}-N+\lfloor(1-\kappa)N\rfloor\}\subset \{1,2,\ldots,\lfloor(1-\kappa)N \rfloor\}\cap R(j),\{a_1,\ldots,a_{r+1}\}\cap\{b_1,\ldots,b_{s+1}\}=\{b_1\}]}\right.\\
 &&\left.\left.\frac{\partial^{s+1}}{\partial u_{\tilde{b}_1}^{s+1}}\left( u_{\tilde{b}_1}^{l_2}[A_j(u_{\tilde{b}_1})N]^{l_2-s}\right)\right]\right|_{U_{N,\kappa}=(1,\ldots,1)};
\end{eqnarray*}
and 
\begin{eqnarray*}
I_4&=&
\lim_{N\rightarrow\infty}\frac{1}{N^{l_1+l_2}}l_1\sum_{r=0}^{l_1-1}{{l_1-1}\choose{r}}\sum_{[\{a_1-N+\lfloor(1-\kappa)N \rfloor,\ldots,a_{r+1}-N+\lfloor(1-\kappa)N\rfloor\}\in \{1,2,\ldots,\lfloor(1-\kappa)N \rfloor\}\cap R(j)]}\\
&&\frac{\partial^r}{\partial u_{\tilde{a}_1}^r}\left[u_{\tilde{a}_1}^{l_1}[A_j(u_{\tilde{a}_1})N]^{l_1-1-r}\right]\\
&&\partial_{\tilde{a}_1}\left[\sum_{s=0}^{l_2}{{l_2}\choose{s}}\sum_{[\{b_1-N+\lfloor(1-\kappa)N \rfloor,\ldots,b_{s+1}-N+\lfloor(1-\kappa)N\rfloor\}\subset \{1,2,\ldots,\lfloor(1-\kappa)N \rfloor\}\cap R(j),\{a_1,\ldots,a_{r+1}\}\cap\{b_1,\ldots,b_{s+1}\}=\{b_j\},j\neq 1]}\right.\\
 &&\left.\left.\frac{\partial^s}{\partial u_{\tilde{b}_1}^s}\left( u_{\tilde{b}_1}^{l_2}[A_j(u_{\tilde{b}_1})N]^{l_2-s}\right)\right]\right|_{U_{N,\kappa}=(1,\ldots,1)};\\
 &=&0.
\end{eqnarray*}
By the residue theorem, we infer
\begin{eqnarray*}
I_2&=&\lim_{N\rightarrow\infty}\frac{1}{N^{l_1+l_2}}\sum_{r=0}^{l_1-1}\frac{l_1!}{(l_1-1-r)!}\sum_{[\{a_1-N+\lfloor(1-\kappa)N \rfloor,\ldots,a_{r+1}-N+\lfloor(1-\kappa)N\rfloor\}\in \{1,2,\ldots,\lfloor(1-\kappa)N \rfloor\}\cap R(j)]}\\
&&\mathrm{Res}_{z=u_{\tilde{a}_1}}\left[ \frac{z^{l_1} [A_j(z)N]^{l_1-1-r}}{(z-1)^{r+1}}\right]\\
&&\left[\sum_{s=0}^{l_2}\frac{l_2!}{(l_2-s)!}\sum_{[\{b_1-N+\lfloor(1-\kappa)N \rfloor,\ldots,b_{s+1}-N+\lfloor(1-\kappa)N\rfloor\}\subset \{1,2,\ldots,\lfloor(1-\kappa)N \rfloor\}\cap R(j),|\{a_1,\ldots,a_{r+1}\}\cap\{b_1,\ldots,b_{s+1}\}|=1]}\right.\\
 &&\left.\left.(s+1)\mathrm{Res}_{w=u_{\tilde{b}_1}}\left( \frac{w^{l_2}[A_j(w)N]^{l_2-s}}{(w-1)^{s+2}}\right)\right]\right|_{U_{N,\kappa}=(1,\ldots,1)}\\
 &=&\lim_{N\rightarrow\infty}\frac{1}{N^{l_1+l_2}}\frac{1}{(2\pi\mathbf{i})^2}\oint_{|z-1|=\epsilon}\left(\frac{\lfloor (1-\kappa)N\rfloor}{n}\frac{z}{z-1}+NzA_j(z)\right)^{l_1}\\
&&\oint_{|w-1|=\epsilon}\left(\frac{\lfloor (1-\kappa)N\rfloor}{n}\frac{w}{w-1}+NzA_j(w)\right)^{l_2}\frac{1}{(z-w)^2}dwdz\\
 &=&\frac{1}{(2\pi\mathbf{i})^2}\oint_{|z-1|=\epsilon}\left(\frac{ (1-\kappa)}{n}\frac{z}{z-1}+zA_j(z)\right)^{l_1}\\
&&\oint_{|w-1|=\epsilon}\left(\frac{ (1-\kappa)}{n}\frac{w}{w-1}+wA_j(w)\right)^{l_2}\frac{1}{(z-w)^2}dwdz
\end{eqnarray*}
\end{itemize}

Then we have the following proposition
\begin{proposition}\label{p612}
\begin{eqnarray*}
&&\lim_{N\rightarrow\infty}\frac{1}{N^{l_1+l_2}}\mathbf{E}\left(p_{l_1}^{(\lfloor (1-\kappa)N\rfloor)}-\mathbf{E}p_{l_1}^{(\lfloor (1-\kappa)N\rfloor)}\right) \left(p_{l_2}^{(\lfloor (1-\kappa)N\rfloor)}-\mathbf{E}p_{l_2}^{(\lfloor (1-\kappa)N\rfloor)}\right)\\
&=&\sum_{j=1}^n\frac{1}{(2\pi\mathbf{i})^2}\oint_{|z-1|=\epsilon}\left(\frac{ (1-\kappa)}{n}\frac{z}{z-1}+zA_j(z)\right)^{l_1}\\
&&\oint_{|w-1|=\epsilon}\left(\frac{ (1-\kappa)}{n}\frac{w}{w-1}+wA_j(w)\right)^{l_2}\left[B_j(z,w)+\frac{1}{(z-w)^2}\right]dwdz
\end{eqnarray*}
where for $1\leq j\leq n$, $A_j(z)$ and $B_j(z,w)$ are given by (\ref{aj}), (\ref{bj}).
\end{proposition}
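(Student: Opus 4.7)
The starting point is the reduction already established in the lemma immediately preceding the proposition, which identifies
\[
\lim_{N\to\infty}\frac{1}{N^{l_1+l_2}}\mathrm{cov}\bigl(p_{l_1}^{(\lfloor(1-\kappa)N\rfloor)},p_{l_2}^{(\lfloor(1-\kappa)N\rfloor)}\bigr)=\sum_{j=1}^{n}\lim_{N\to\infty}\frac{1}{N^{l_1+l_2}}\left.\tilde G_{\kappa,N,(l_1,l_2)}^{(j,j)}\right|_{U_{N,\kappa}=(1,\ldots,1)}.
\]
The off-diagonal terms $j\neq s$ contribute at lower order by Lemma \ref{l53} and Lemma \ref{l52}(2), which is exactly why only $n$ pieces (indexed by a single $j$) survive. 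So the task reduces to computing each diagonal limit as an explicit contour integral.

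For a fixed $j$, the plan is to expand $\tilde G_{\kappa,N,(l_1,l_2)}^{(j,j)}$ by writing $\partial_{\tilde a_1}[F^{(j)}_{l_2,\kappa,N}]$ in its symmetrized form involving sums over $\{b_1,\ldots,b_{s+1}\}\subset\{1,\ldots,\lfloor(1-\kappa)N\rfloor\}\cap R(j)$ of the type already used throughout Section 6, and to split the resulting double sum over $\{a_1,\ldots,a_{r+1}\}$ and $\{b_1,\ldots,b_{s+1}\}$ into three regimes according to $|\{a_1,\ldots,a_{r+1}\}\cap\{b_1,\ldots,b_{s+1}\}|\in\{0,1,\ge 2\}$. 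Case (III), $|\cap|\geq 2$, is disposed of by a crude degree-of-$N$ count via Lemmas \ref{l51} and \ref{l52}; each lost pair of indices costs one factor of $N$ in the cardinality of the index set while freeing at most one factor in the derivative, giving total degree $\le l_1+l_2-1$, hence no contribution in the limit.

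Case (I), disjoint indices, produces the $B_j(z,w)$ kernel. Here the single cross-derivative $\partial_{\tilde a_1}\partial_{\tilde b_1}\log\tilde T_N$ survives as the leading term; by Lemma \ref{l52}(1) this converges to $B_j(u_{\tilde a_1},u_{\tilde b_1})$, while $(\partial_{\tilde b_1}\log\tilde T_N)^{l_2-s-1}$ and $(\partial_{\tilde a_1}\log\tilde T_N)^{l_1-1-r}$ are governed by the single-variable asymptotic $A_j$ of Lemma \ref{l51}. Applying the symmetrization identity Lemma \ref{l552} converts the symmetrized sums into $r$th and $s$th derivatives at $z=w=1$; the residue theorem then rewrites these as the double contour integral against $B_j(z,w)$, with the $\bigl(\frac{(1-\kappa)}{n}\frac{z}{z-1}+zA_j(z)\bigr)^{l_1}$ factor emerging from the generating identity $\sum_{r}\binom{l-1}{r}\frac{1}{(z-1)^{r+1}}\cdot z^l[NA_j(z)]^{l-1-r}$, and similarly in $w$. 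This is precisely the contribution $I_1$ computed in the excerpt.

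Case (II), $|\cap|=1$, yields the $\frac{1}{(z-w)^2}$ kernel. The only way to preserve the top $N$-degree $l_1+l_2$ is to avoid lowering exponents of $\log\tilde T_N$ factors, so $\partial_{\tilde a_1}$ must act on the denominator rather than on $\log\tilde T_N$; this forces the shared index to coincide with the distinguished element $b_1$ (the subcase $I_3$), while the subcase with shared index $b_s$, $s\neq 1$, vanishes ($I_4=0$) after symmetrization because the resulting symmetric functions, upon differentiation in $\tilde a_1$, produce no pole of sufficient order. Once again, Lemma \ref{l552} plus the residue theorem turns the double symmetrized sum into a double contour integral; the pole structure $(z-u_{\tilde a_1})^{-1}$ and $(w-u_{\tilde b_1})^{-2}$ produces the $\frac{1}{(z-w)^2}$ kernel. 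Combining Cases (I) and (II) and summing over $j=1,\ldots,n$ gives the stated formula with $Q_j(z,w)=B_j(z,w)+\frac{1}{(z-w)^2}$. The main obstacle in executing this plan cleanly is the bookkeeping in Case (II), particularly verifying that $I_4=0$ and that the combinatorial factors from $\binom{l_2}{s}(s+1)!$, summed over $s$, reassemble into the closed form $\bigl(\frac{1-\kappa}{n}\frac{w}{w-1}+wA_j(w)\bigr)^{l_2}$; everything else is a routine application of the residue calculus and the degree estimates already in hand.
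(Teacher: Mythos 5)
Your proposal is correct and follows essentially the same route as the paper: reduce to the diagonal terms $\tilde{G}^{(j,j)}_{\kappa,N,(l_1,l_2)}$ via the preceding lemma, split the double symmetrized sum according to the size of $\{a_1,\ldots,a_{r+1}\}\cap\{b_1,\ldots,b_{s+1}\}$, and convert the two surviving regimes into the $B_j(z,w)$ and $(z-w)^{-2}$ contour integrals using Lemmas \ref{l51}, \ref{l52}, \ref{l552} and the residue theorem. This mirrors the paper's computation of $I_1$ and $I_2=I_3+I_4$ (with $I_4=0$), so no further comment is needed.
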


\subsection{Central limit theorem in multiple levels}

Let 
\begin{eqnarray*}
&&1\geq\kappa_1\geq \kappa_2\geq\ldots\geq\kappa_k>0\\
&&1\leq n_1\leq n_2\ldots\leq n_k\leq 2N+1;
\end{eqnarray*}
such that for $1\leq i\leq k$,
\begin{eqnarray*}
\lfloor\frac{n_i}{2}\rfloor =\lfloor (1-\kappa_i)N\rfloor.
\end{eqnarray*}
and
\begin{eqnarray*}
U_{N,\kappa_1,\ldots,\kappa_k,X}=&&\left(u_{1,1}x_{1+N-\lfloor(1-\kappa_1) N\rfloor,N},u_{2,1}x_{2+N-\lfloor(1-\kappa_1) N\rfloor,N},\ldots,u_{\lfloor (1-\kappa_1)N\rfloor,1}x_{N,N}\right.;\\
&&u_{1,2}x_{1+N-\lfloor(1-\kappa_2) N\rfloor,N},u_{2,2}x_{2+N-\lfloor(1-\kappa_2) N\rfloor,N},\ldots,u_{\lfloor (1-\kappa_2)N\rfloor,2}x_{N,N}\\
&&\ldots\\
&&\left.u_{1,k}x_{1+N-\lfloor(1-\kappa_k) N\rfloor,N},u_{2,k}x_{2+N-\lfloor(1-\kappa_k) N\rfloor,N},\ldots,u_{\lfloor (1-\kappa_k)N\rfloor,k}x_{N,N}\right)
\end{eqnarray*}
Let $\mathcal{S}_{\rho,X}(U_{N,\kappa_1,\ldots,\kappa_s,X})$ be the multi-dimensional Schur generating function as defined in \ref{df59}, where $\rho$ is the joint distribution of partitions on the $n_1$th, $n_2$th, \ldots, $n_k$th row of the square-hexagon lattice, counting from the top. Then explicit computations show that
\begin{eqnarray*}
&&\mathbf{E}p_{l_1}^{(\lfloor (1-\kappa_1)N\rfloor)} p_{l_2}^{(\lfloor (1-\kappa_2)N\rfloor)}\cdots p_{l_k}^{(\lfloor (1-\kappa_k)N\rfloor)}\\
&=&\left.\mathcal{D}_{l_1}^{(n_1)}\mathcal{D}_{l_2}^{(n_2)}\ldots\mathcal{D}_{l_k}^{(n_k)}\mathcal{S}_{\rho,X}(U_{N,\kappa_1,\ldots,\kappa_k,X})\right|_{(u_{1,s},\ldots,u_{\lfloor \frac{n_s}{2}\rfloor,s})=(1,\ldots,1),\ \forall 1\leq s\leq k}
\end{eqnarray*}
where $\mathcal{D}_{l_i}^{(n_i)}$ is defined in (\ref{dd}).

\begin{lemma}\label{ll613}Suppose the assumptions in Lemma \ref{l59} hold. For $1\leq s\leq k$, let
\begin{eqnarray*}
t_s=N-\lfloor\frac{n_s}{2} \rfloor.
\end{eqnarray*}
Let
\begin{eqnarray*}
\mathbf{D}_{l_s}^{(n_s)}:=\frac{1}{\hat{V}_{\lfloor \frac{n_s}{2}\rfloor}}\left(\sum_{i=t_s+1}^{N}\left(u_{i}\frac{\partial}{\partial u_{i}}\right)^{l_s}\right)\hat{V}_{\lfloor \frac{n_s}{2}\rfloor},
\end{eqnarray*}
where $\hat{V}_{\lfloor \frac{n_s}{2}\rfloor}$ is the Vandermonde determinant on $\lfloor \frac{n_s}{2}\rfloor$ variables $u_1x_{t_s+1}, u_2x_{t_s+2},\ldots,u_{\lfloor\frac{n_s}{2} \rfloor}x_{N}$. Then
\begin{eqnarray*}
&&\left.\mathcal{D}_{l_1}^{(n_1)}\mathcal{D}_{l_2}^{(n_2)}\ldots\mathcal{D}_{l_k}^{(n_k)}\mathcal{S}_{\rho,X}(U_{N,\kappa_1,\ldots,\kappa_k,X})\right|_{(u_{1,s},\ldots,u_{\lfloor \frac{n_s}{2}\rfloor,s})=(1,\ldots,1),\ \forall 1\leq s\leq k}\\
&=&\frac{1}{\mathcal{S}{\rho_{\lfloor\frac{n_1}{2} \rfloor},X}(U_{N,\kappa_1,X})}\mathbf{D}_{l_1}^{(n_1)}\frac{\mathcal{S}_{\rho_{\lfloor\frac{n_1}{2} \rfloor},X}(U_{N,\kappa_1,X})}{\mathcal{S}_{\rho_{\lfloor\frac{n_2}{2} \rfloor},X}(U_{N,\kappa_2,X})}\mathbf{D}_{l_2}^{(n_2)}\ldots\\
&&\frac{\mathcal{S}_{\rho_{\lfloor\frac{n_{k-1}}{2} \rfloor},X}(U_{N,\kappa_{k-1},X})}{\mathcal{S}_{\rho_{\lfloor\frac{n_k}{2} \rfloor},X}(U_{N,\kappa_k,X})} \mathbf{D}_{l_k}^{(n_k)}\left\{
\left.\mathcal{S}_{\rho_{\lfloor\frac{n_k}{2} \rfloor},X}(U_{N,\kappa_k,X})\right\}\right|_{(u_{1},\ldots,u_N)=(1,\ldots,1)}
\end{eqnarray*}
where $\mathcal{S}_{\rho_{\lfloor\frac{n_k}{2} \rfloor},X}(U_{N,\kappa_k,X})$ is the one-dimensional Schur generating function defined as in Definition \ref{df33}, and $\rho_{\frac{n_k}{2}}$ is a probability measure on $\GT_{\lfloor \frac{n_k}{2}\rfloor}^+$ defined as in Lemma \ref{l33}.
\end{lemma}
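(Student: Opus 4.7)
The plan is to proceed by induction on $k$, parallel to the argument for Lemma \ref{ll511}, but exploiting the identity from Lemma \ref{lmm212} to absorb the products
$\prod_{i}\prod_{j}\tfrac{1+y_{\ol{i}}u_j}{1+y_{\ol{i}}x_{\ol{j}}}$
into ratios of one-dimensional Schur generating functions. The base case $k=1$ is immediate: with only one level, $\mathcal{S}_{\rho,X}$ coincides with $\mathcal{S}_{\rho_{\lfloor n_1/2\rfloor},X}$ and $\mathcal{D}_{l_1}^{(n_1)}$ is exactly $\mathbf{D}_{l_1}^{(n_1)}$ after rewriting in the $u$-variables.

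For the inductive step, I would first condition on the signature $\lambda^{(k)}$ on the $n_k$th row, writing
\begin{equation*}
\mathcal{S}_{\rho,X}(U_{N,\kappa_1,\dots,\kappa_k,X})
=\sum_{\lambda^{(k)}\in \GT_{\lfloor n_k/2\rfloor}}\rho_{\lfloor n_k/2\rfloor}(\lambda^{(k)})\,
\frac{s_{\lambda^{(k)}}(u_{\cdot,k}x_{\cdot})}{s_{\lambda^{(k)}}(\ol X_k)}\,
\mathcal{S}_{\rho(\cdot\,|\,\lambda^{(k)}),X}(U_{N,\kappa_1,\dots,\kappa_{k-1},X}),
\end{equation*}
where $\rho(\cdot\,|\,\lambda^{(k)})$ denotes the conditional joint distribution on the upper $k-1$ rows given $\lambda^{(k)}$. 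Applying $\mathcal{D}_{l_1}^{(n_1)}\cdots\mathcal{D}_{l_{k-1}}^{(n_{k-1})}$ to the inner factor and invoking the induction hypothesis, together with Lemma \ref{ll511} applied to the conditional measure (to rewrite the action on the first $k-1$ levels as a product of $\tilde{\mathcal D}$'s acting on a single-level Schur generating function), produces
\begin{equation*}
\tilde{\mathcal D}_{l_1,k}^{(n_1)}\cdots \tilde{\mathcal D}_{l_{k-1},k}^{(n_{k-1})}\Bigl\{
\mathcal{S}_{\rho_{\lfloor n_{k-1}/2\rfloor}(\cdot|\lambda^{(k)}),X}\Bigr\}.
\end{equation*}

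Next, I would recognize, via Lemma \ref{lmm212}, that
$\frac{\mathcal{S}_{\rho_{\lfloor n_{s-1}/2\rfloor},X}}{\mathcal{S}_{\rho_{\lfloor n_s/2\rfloor},X}}$
equals the product $\prod_{i\in\{t_s+1,\dots,t_{s-1}\}\cap I_2}\prod_j\tfrac{1+y_{\ol i}u_j}{1+y_{\ol i}x_{\ol j}}$ that appears in the definition of $\tilde{\mathcal D}_{l_s,k}^{(n_s)}$. Consequently, each operator $\tilde{\mathcal D}_{l_s,k}^{(n_s)}$ acting on a function $f$ can be rewritten as
$\tfrac{1}{\mathcal S_{\rho_{\lfloor n_{s-1}/2\rfloor},X}}\mathbf{D}_{l_s}^{(n_s)}\bigl[\mathcal S_{\rho_{\lfloor n_{s-1}/2\rfloor},X}\cdot\tfrac{\mathcal S_{\rho_{\lfloor n_{s-1}/2\rfloor},X}}{\mathcal S_{\rho_{\lfloor n_s/2\rfloor},X}}\,f\bigr]$, which is precisely the form needed to assemble the telescoping composition on the right-hand side. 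Finally, summing back over $\lambda^{(k)}$ collapses the conditional Schur generating function into the unconditional one-dimensional $\mathcal{S}_{\rho_{\lfloor n_k/2\rfloor},X}$, so that the full expression telescopes into the claimed composition.

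The main obstacle will be bookkeeping: verifying that at each level the Vandermonde $\hat V_{\lfloor n_s/2\rfloor}$ inside $\mathbf{D}_{l_s}^{(n_s)}$ matches the variables surviving at that level, and that the evaluation points $(1,\ldots,1)$ in the multi-dimensional version correspond, after the change of variables $u_{i,s}\leftrightarrow u_{i+t_s}x_{i+t_s}$, to the evaluation $(u_1,\ldots,u_N)=(1,\ldots,1)$ in the single-variable form. Once this matching is checked and the factor $\mathcal S_{\rho_{\lfloor n_{s-1}/2\rfloor},X}/\mathcal S_{\rho_{\lfloor n_s/2\rfloor},X}$ is consistently inserted between consecutive $\mathbf D_{l_s}^{(n_s)}$, the identity follows from the induction hypothesis and Lemma \ref{lmm212}.
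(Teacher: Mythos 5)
Your plan is correct and follows essentially the same route as the paper: the paper proves Lemma \ref{ll613} by the same induction used for Lemma \ref{ll511} (condition on the signature at the lowest level, apply the induction hypothesis to the conditional measure, and use Lemma \ref{lmm212} to identify the ratio of one-dimensional Schur generating functions with the product $\prod_i\prod_j\frac{1+y_{\ol i}u_j}{1+y_{\ol i}x_{\ol j}}$, which yields the telescoping composition). The remaining bookkeeping you flag (matching the Vandermonde variables and the evaluation at $(1,\ldots,1)$ after the change of variables $u_{i,s}\leftrightarrow u_{i+t_s}x_{i+t_s}$) is exactly the routine verification the paper leaves implicit.
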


\begin{proof}The lemma follows from same arguments as the proof of Lemma \ref{ll511}.
\end{proof}

For $1\leq s\leq N$, let $F_{k,\kappa,N}^{(s)}$ be defined as in (\ref{fkn}). Let
\begin{eqnarray*}
\tilde{G}_{\kappa_1,\kappa_2,N,(k,l)}^{(j,s)}&=&k\sum_{r=0}^{k-1}{{k-1}\choose{r}}\sum_{[\{a_1-N+\lfloor(1-\kappa_1)N \rfloor,\ldots,a_{r+1}-N+\lfloor(1-\kappa_1)N\rfloor\}\in \{1,2,\ldots,\lfloor(1-\kappa_1)N \rfloor\}\cap R(j)]}(r+1)!\\
&&\lim_{[x_{a_1},\ldots,x_{a_{r+1}}\longrightarrow x_j]}\mathrm{Sym}_{a_1,\ldots,a_{r+1}}\\
&&\left[\frac{ x_{a_1}^r u_{\tilde{a}_1}^{k-s_0} \partial_{\tilde{a}_1}[F_{l,\kappa_2,N}^{(s)}](\partial_{\tilde{a}_1}[\log \tilde{T}_{N,\kappa_1}])^{k-1-r}}{(x_{a_1}u_{\tilde{a}_1}-x_{a_2}u_{\tilde{a}_2})\ldots(x_{a_1}u_{\tilde{a}_1}-x_{a_{r+1}}u_{\tilde{a}_{r+1}}))}\right]
\end{eqnarray*}

where 
\begin{eqnarray*}
&&\tilde{T}_N=\prod_{l\in\{1,\ldots,N-\lfloor(1-\kappa_1)N \rfloor\}\cap I_2}\prod_{j=1}^{\lfloor(1-\kappa_1)N \rfloor}\left(\frac{1+y_{\ol{l}}x_{\ol{N-\lfloor (1-\kappa)N\rfloor+j}}u_j}{1+y_{\ol{l}}x_{\ol{N-\lfloor (1-\kappa)N\rfloor+j}}}\right)\\
&\times&\left(\prod_{i=1}^{r}s_{\phi^{(j(i),\sigma_0)}(N)}\left(u_{i},u_{n+i}\ldots,u_{q_{N,\kappa}n+i},1,\ldots,1\right)\right)\\
&&\times\left(\prod_{i=r+1}^{n}s_{\phi^{(j(i),\sigma_0)}(N)}\left(u_{i},u_{n+i}\ldots,u_{(q_{N,\kappa}-1)n+i},1,\ldots,1\right)\right)\\
&&\times\left(\prod_{N-\lfloor(1-\kappa)N\rfloor+1\leq i\leq N,\ 1\leq j\leq N-\lfloor(1-\kappa)N\rfloor,\ \tilde{i}\in R(p),\tilde{j}\in R(q),p<q}\frac{1}{x_iu_{\tilde{i}}-x_j}\right)\left(1+o(1)\right)
\end{eqnarray*}

\begin{lemma}
\begin{eqnarray*}
&&\lim_{N\rightarrow\infty}\frac{1}{N^{l_1+\ldots+l_s}}\mathbf{E}\left(p_{l_1}^{(\lfloor (1-\kappa_1)N\rfloor)}-\mathbf{E}p_{l_1}^{(\lfloor (1-\kappa_1)N}\right)\left(p_{l_2}^{(\lfloor (1-\kappa_2)N\rfloor)}-\mathbf{E}p_{l_2}^{(\lfloor (1-\kappa_2)N\rfloor)}\right)\\
&&\cdots \left(p_{l_s}^{(\lfloor (1-\kappa_k)N\rfloor)}-\mathbf{E}p_{l_s}^{(\lfloor (1-\kappa_k)N\rfloor)}\right)\notag\\
&=&\lim_{N\rightarrow\infty}\frac{1}{N^{l_1+\ldots+l_s}}\left.\sum_{P\in \tilde{\mathcal{P}}_{\emptyset}^{s}}\prod_{(a,b)\in P}\tilde{G}_{\kappa_a,\kappa_b,N,(l_a,l_b)}^{(j_a,j_a)}\right|_{U_{N,\kappa}=(1,\ldots,1)}
\end{eqnarray*}
\end{lemma}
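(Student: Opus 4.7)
The plan is to reduce the multi-level statement to the iterated application of the single-level analysis already developed in Lemmas \ref{l55}--\ref{le510}, combined with the level-by-level factorization provided by Lemma \ref{ll613}. First I would rewrite the joint moment
\begin{eqnarray*}
\mathbf{E}\,p_{l_1}^{(\lfloor (1-\kappa_1)N\rfloor)}\cdots p_{l_s}^{(\lfloor (1-\kappa_k)N\rfloor)}
=\left.\mathcal{D}_{l_1}^{(n_1)}\cdots\mathcal{D}_{l_k}^{(n_k)}\mathcal{S}_{\rho,X}(U_{N,\kappa_1,\ldots,\kappa_k,X})\right|_{(u_{j,s})=1}
\end{eqnarray*}
using Lemma \ref{ll613}, which expresses the multi-level operator as
$$
\frac{1}{\mathcal{S}_{\rho_{\lfloor n_1/2\rfloor},X}}\mathbf{D}_{l_1}^{(n_1)}\frac{\mathcal{S}_{\rho_{\lfloor n_1/2\rfloor},X}}{\mathcal{S}_{\rho_{\lfloor n_2/2\rfloor},X}}\mathbf{D}_{l_2}^{(n_2)}\cdots \frac{\mathcal{S}_{\rho_{\lfloor n_{k-1}/2\rfloor},X}}{\mathcal{S}_{\rho_{\lfloor n_k/2\rfloor},X}}\mathbf{D}_{l_k}^{(n_k)}\mathcal{S}_{\rho_{\lfloor n_k/2\rfloor},X},
$$
so that all objects are expressed through ratios of one-dimensional Schur generating functions. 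The ratios $\mathcal{S}_{\rho_{\lfloor n_i/2\rfloor},X}/\mathcal{S}_{\rho_{\lfloor n_{i+1}/2\rfloor},X}$ are exactly the factors whose logarithmic derivatives were controlled in Lemma \ref{l51} (via $\tilde T_{N,\kappa_i}/\tilde T_{N,\kappa_{i+1}}$), so their contributions to derivatives will be tractable.

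Next, following the pattern of the proof of Lemma \ref{l58}, I would decompose the centered multi-level moment as a sum, indexed by color tuples $(j_1,\dots,j_s)\in\{1,\ldots,n\}^s$, of expressions of the same shape as those handled in Lemma \ref{le59}, but now with each single-variable $\tilde T_N$ replaced by the appropriate product of transitional ratios. Then I would prove by induction on $k$ that, after centering each $\mathcal{D}_{l_i}^{(n_i)}$ by the corresponding mean $E^{(j_i)}_{l_i,\kappa_i,N}$, the leading-order $N^{l_1+\cdots+l_s}$ contribution is precisely
\begin{eqnarray*}
\left.\sum_{P\in\mathcal{P}_{\emptyset}^{s}}\prod_{(a,b)\in P}\tilde{G}_{\kappa_a,\kappa_b,N,(l_a,l_b)}^{(j_a,j_b)}\right|_{U=(1,\ldots,1)},
\end{eqnarray*}
plus a remainder of strictly smaller $N$-degree. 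The inductive step is identical in spirit to the single-level proof of Lemma \ref{l58}: split the outermost operator into the part that acts on the already-differentiated factors ($S_2$-type) and the part that is diagonal ($S_1$-type), applying Lemmas \ref{l55}, \ref{l56}, and \ref{l67} to control degrees. This yields the general Wick-like pairing formula of Lemma \ref{le59} adapted to multiple levels.

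Finally, to pass from $\mathcal{P}^s_\emptyset$ to the restricted set $\tilde{\mathcal{P}}^s_\emptyset$ (pairings with $j_a=j_b$ within each pair), I would invoke Part (2) of Lemma \ref{l52} together with Lemma \ref{l55}: whenever $j_a\neq j_b$, the mixed second derivative of $\log\tilde T_N$ across distinct colors is $O(1)$ rather than $O(N)$, so the $N$-degree of $\tilde G_{\kappa_a,\kappa_b,N,(l_a,l_b)}^{(j_a,j_b)}$ is strictly less than $l_a+l_b$, and such pairs drop out in the limit. Summing over all color tuples $(j_1,\ldots,j_s)$ and applying this vanishing cross-color cancellation yields the claimed identity. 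The main obstacle, as in the single-level argument, is the combinatorial bookkeeping: one must carefully verify that, after substituting the nested ratio expression from Lemma \ref{ll613} and distributing the outermost $\mathbf{D}_{l_1}^{(n_1)}$, the only terms of maximal $N$-degree either reproduce the induction hypothesis or contribute one additional pairing factor $\tilde G^{(j_1,j_x)}_{\kappa_1,\kappa_x,N,(l_1,l_x)}$; this is a direct multi-level analogue of the $S_1+S_2$ versus $S_3$ split in the proof of Lemma \ref{l58}, made more delicate by the presence of the transition factors $\mathcal{S}_{\rho_{\lfloor n_i/2\rfloor},X}/\mathcal{S}_{\rho_{\lfloor n_{i+1}/2\rfloor},X}$, whose derivatives must be absorbed either into the existing $F^{(s)}_{l,\kappa,N}$ or into sub-leading remainders.
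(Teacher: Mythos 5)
Your proposal is correct and follows essentially the same route as the paper, which proves this lemma by repeating the argument of Lemma \ref{le510} (the single-level pairing formula) in the multi-level setting via the factorization of Lemma \ref{ll613} and the degree counts of Lemmas \ref{l51}, \ref{l52}, \ref{l55}--\ref{l67}. Your additional detail on absorbing the transition factors $\mathcal{S}_{\rho_{\lfloor n_i/2\rfloor},X}/\mathcal{S}_{\rho_{\lfloor n_{i+1}/2\rfloor},X}$ and on discarding cross-color pairings matches the intended argument.
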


\begin{proof}The lemma follows from similar arguments as the proof of Lemma \ref{le510}.
\end{proof}

\begin{proposition}\label{p615} Assume $\kappa_1,\kappa_2\in (0,1)$
\begin{eqnarray*}
&&\lim_{N\rightarrow\infty}\frac{1}{N^{l_1+l_2}}\mathbf{E}\left(p_{l_1}^{(\lfloor (1-\kappa_1)N\rfloor)}-\mathbf{E}p_{l_1}^{(\lfloor (1-\kappa_1)N\rfloor)}\right) \left(p_{l_2}^{(\lfloor (1-\kappa_2)N\rfloor)}-\mathbf{E}p_{l_2}^{(\lfloor (1-\kappa_2)N\rfloor)}\right)\\
&=&\sum_{j=1}^n\frac{1}{(2\pi\mathbf{i})^2}\oint_{|z-1|=\epsilon}\left(\frac{ (1-\kappa_1)}{n}\frac{z}{z-1}+zA_j(z)\right)^{l_1}\\
&&\oint_{|w-1|=\epsilon}\left(\frac{ (1-\kappa_2)}{n}\frac{w}{w-1}+zA_j(w)\right)^{l_2}\left[B_j(z,w)+\frac{1}{(z-w)^2}\right]dwdz
\end{eqnarray*}
where for $1\leq j\leq n$, $A_j(z)$ and $B_j(z,w)$ are given by (\ref{aj}), (\ref{bj}).
\end{proposition}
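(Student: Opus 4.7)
The plan is to mirror the single-level argument from Proposition \ref{p612}, leveraging the multi-level reduction already provided by Lemma \ref{ll613}. First I would rewrite the expectation of the centered product using the multi-dimensional Schur generating function: the operator $\mathcal{D}_{l_1}^{(n_1)}$ acts on variables indexed by $\{1,\dots,\lfloor(1-\kappa_1)N\rfloor\}$ while $\mathcal{D}_{l_2}^{(n_2)}$ acts on $\{1,\dots,\lfloor(1-\kappa_2)N\rfloor\}$. Applying Lemma \ref{ll613} telescopes the intermediate Schur generating function ratios so that, after rewriting everything in terms of $\tilde T_{N,\kappa_1}$ and $F_{l_2,\kappa_2,N}^{(s)}$, the computation reduces to analyzing $\tilde G_{\kappa_1,\kappa_2,N,(l_1,l_2)}^{(j,s)}$ restricted to the appropriate two ranges of indices, exactly as in the one-level version $\tilde G_{\kappa,N,(l_1,l_2)}^{(j,s)}$.

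Next I would decompose the covariance as a sum over ordered pairs $(j,s)\in\{1,\dots,n\}^2$ indexed by the residue classes $R(j)$, $R(s)$ for the two levels. The central observation is that, by Lemma \ref{l52}(2) combined with Assumption \ref{ap32}, the mixed second derivative $\partial_{\tilde a_1}\partial_{\tilde b_1}[\log\tilde T_{N,\kappa_1}]$ between indices in different residue classes decays (the cross factor $1/(u_i x_{s,N}-u_r x_{t,N})^2$ is negligible when $s\neq t$). Therefore only the diagonal terms $j=s$ survive the $N^{-(l_1+l_2)}$ normalization, producing the sum $\sum_{j=1}^n$ in the statement. For each diagonal $j$ the formal structure of $\tilde G^{(j,j)}_{\kappa_1,\kappa_2,N,(l_1,l_2)}$ matches that in Proposition \ref{p612}, the only change being that the $a$-variables sum over $\{1,\dots,\lfloor(1-\kappa_1)N\rfloor\}\cap R(j)$ while the $b$-variables sum over $\{1,\dots,\lfloor(1-\kappa_2)N\rfloor\}\cap R(j)$.

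I would then carry out the same three-case residue computation as in Proposition \ref{p612}: (i) $\{a_i\}\cap\{b_j\}=\emptyset$ contributes the $B_j(z,w)$ kernel via Lemma \ref{l52}(1) and Lemma \ref{l552}; (ii) $|\{a_i\}\cap\{b_j\}|=1$ contributes the $1/(z-w)^2$ kernel, arising from the double pole picked up when one $a$-index coincides with a $b$-index; (iii) $|\{a_i\}\cap\{b_j\}|\geq 2$ has $N$-degree strictly less than $l_1+l_2$ and vanishes after rescaling. The change relative to Proposition \ref{p612} is purely in the leading constants in front of the geometric-series-like sums over the index sets: summing $\sum_{[a_i-N+\lfloor(1-\kappa_1)N\rfloor]\in R(j)} \frac{z}{z-u_{\tilde a_1}}$ produces $\frac{1-\kappa_1}{n}\cdot\frac{z}{z-1}$ at $U=(1,\dots,1)$, and likewise the $b$-sum yields $\frac{1-\kappa_2}{n}\cdot\frac{w}{w-1}$. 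Combining with the pole contributions from $A_j$ computed in Lemma \ref{l51} yields exactly the stated integrand.

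The main obstacle will be verifying that the overlap case (ii) still produces the universal $1/(z-w)^2$ kernel even though the two index sets have different cardinalities. This requires checking carefully that, when one index is fixed by the overlap, the remaining summations factor correctly and the residue extraction $\mathrm{Res}_{w=u_{\tilde a_1}}$ gives the same double pole as in the single-level case. A secondary technical point is to ensure the intermediate ratios of Schur generating functions appearing in Lemma \ref{ll613} do not produce additional contributions at the leading $N$-order; this is handled by the same $N$-degree bookkeeping as in Lemmas \ref{l53} and \ref{l67}, adapted so that the bound ``$N$-degree less than $l_1+l_2$'' holds uniformly as one passes from level $k_1$ to level $k_2$.
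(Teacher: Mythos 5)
Your proposal is correct and follows essentially the same route as the paper: the paper proves Proposition \ref{p615} by repeating the argument of Proposition \ref{p612}, using the multi-level reduction of Lemma \ref{ll613}, the two-level quantity $\tilde{G}_{\kappa_1,\kappa_2,N,(l_1,l_2)}^{(j,s)}$, the fact that off-diagonal residue classes $j\neq s$ are of lower $N$-degree (Lemma \ref{l52} with Assumption \ref{ap32}), and the same three-case residue analysis, with the two index ranges contributing the factors $\frac{1-\kappa_1}{n}\frac{z}{z-1}$ and $\frac{1-\kappa_2}{n}\frac{w}{w-1}$. No gaps beyond the bookkeeping you already flag, which is handled exactly as in the single-level case.
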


\begin{proof}The proposition follows from similar arguments as the proof of Proposition \ref{p612}.
\end{proof}

Then we have the following theorem:

\begin{theorem}\label{clt2}Assume $\kappa_1,\kappa_2\in (0,1)$. Then the random variables $\left\{\frac{1}{N^{l}}[p_{l}^{\lfloor(1-\kappa)N \rfloor}-\mathbf{E}p_{l}^{\lfloor(1-\kappa)N \rfloor}]\right\}_{l,\kappa}$  converge in distribution to a mean 0 Gaussian vector with covariance given by Proposition \ref{p615}. Moreover, each $\frac{1}{N^{l}}[p_{l}^{\lfloor(1-\kappa)N\rfloor}-\mathbf{E}p_{l}^{\lfloor(1-\kappa)N \rfloor}]$ converge in distribution to the sum of $n$ independent mean 0 Gaussian random variables.

\end{theorem}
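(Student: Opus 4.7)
The plan is to prove Theorem \ref{clt2} by the method of moments: I will show that for any finite collection of indices $\{(l_r,\kappa_r)\}_{r=1}^{s}$, the normalized mixed moments
\begin{equation*}
\frac{1}{N^{\sum_r l_r}}\mathbf{E}\prod_{r=1}^{s}\Bigl(p_{l_r}^{(\lfloor(1-\kappa_r)N\rfloor)}-\mathbf{E}\,p_{l_r}^{(\lfloor(1-\kappa_r)N\rfloor)}\Bigr)
\end{equation*}
converge to the Wick sum $\sum_{P\in\mathcal{P}_{\emptyset}^{s}}\prod_{(a,b)\in P}C_{a,b}$, where $C_{a,b}$ is the pairwise covariance produced by Proposition \ref{p615}. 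Since Gaussian vectors are determined by their moments, and Wick's formula characterizes joint Gaussianity via pair covariances, this suffices for the first claim.

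The main technical step is to extend Lemma \ref{le510} from a single level to levels $\kappa_1,\dots,\kappa_s$. Here I would invoke Lemma \ref{ll613} to rewrite the multi-dimensional differential operator as an iterated one-level operator, at the cost of inserting intermediate ratio factors $\mathcal{S}_{\rho_{\lfloor n_{r-1}/2\rfloor},X}/\mathcal{S}_{\rho_{\lfloor n_r/2\rfloor},X}$. By Lemma \ref{lmm212} these ratios are just the products of the form entering $\tilde T_N$, so the $N$-degree bookkeeping of Lemmas \ref{l53}, \ref{l67}, together with the derivative asymptotics in Lemmas \ref{l51} and \ref{l52}, goes through level by level with $\tilde T_N$ replaced by the appropriate $\tilde T_{N,\kappa_r}$. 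In the leading order only pairings $P\in\tilde{\mathcal{P}}_{\emptyset}^{s}$ (those forcing $j_a=j_b$ in every pair) survive, because the mixed derivative $\partial^2\log\tilde T_N/\partial u_i\partial u_r$ vanishes to leading order when $i\in R(s)$, $r\in R(t)$ with $s\neq t$ (Lemma \ref{l52}(2)). Once the multi-level Wick identity is in hand, Proposition \ref{p615} identifies each surviving pair's contribution with the claimed covariance, completing the convergence of finite-dimensional distributions.

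For the second assertion, I decompose
\begin{equation*}
p_l^{(\lfloor(1-\kappa)N\rfloor)}=\sum_{j=1}^{n}p_l^{(j,\lfloor(1-\kappa)N\rfloor)},\qquad p_l^{(j,\lfloor(1-\kappa)N\rfloor)}:=\sum_{i\in R(j)\cap\{1,\dots,\lfloor(1-\kappa)N\rfloor\}}(\lambda_i+\lfloor(1-\kappa)N\rfloor-i)^l,
\end{equation*}
and rerun the same moment analysis now indexed by triples $(l_r,\kappa_r,j_r)$. Lemma \ref{l55} and the $j\neq s$ clause of Lemma \ref{l52} show that any pair with $j_a\neq j_b$ has $N$-degree strictly less than $l_a+l_b$ and hence drops out of the limit. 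This identifies the collection $\{N^{-l}(p_l^{(j)}-\mathbf{E}\,p_l^{(j)})\}_{j=1}^{n}$ as jointly converging to $n$ independent mean-zero Gaussians, whose variances are precisely the $j$-th summands in the covariance of Proposition \ref{p615}. Summing over $j$ gives the decomposition into $n$ independent Gaussians claimed in the theorem.

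The main obstacle is the bookkeeping inherent in the multi-level extension: verifying that the intermediate ratio factors $\mathcal{S}_{\rho_{\lfloor n_{r-1}/2\rfloor},X}/\mathcal{S}_{\rho_{\lfloor n_r/2\rfloor},X}$ contribute only through their leading one-level asymptotics, that iterated derivatives across levels do not inflate the $N$-degree beyond the budget $\sum_r l_r$, and that the terms routed through $S_3$ in the inductive proof of Lemma \ref{l58} remain subleading after being carried across multiple levels. All of this is controlled by the same estimates used in the single-level proof, but the combinatorics must be set up so that the dominant contribution at each level interfaces correctly with the $\mathcal{F}_{(l_r,\kappa_r)}$ factors generated at previous levels.
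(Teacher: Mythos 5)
Your proposal follows essentially the same route as the paper: a method-of-moments argument in which Lemma \ref{ll613} reduces the multi-level operator to iterated single-level operators, Lemma \ref{lmm212} identifies the interpolating factors with the ratios $\mathcal{S}_{\rho_{\lfloor n_{r-1}/2\rfloor},X}/\mathcal{S}_{\rho_{\lfloor n_r/2\rfloor},X}$, the $N$-degree bookkeeping of Lemmas \ref{l51}--\ref{l53} and \ref{l67} forces the Wick structure, and Lemma \ref{l52}(2) kills all pairings with $j_a\neq j_b$ so that the surviving covariance is the sum over $j$ given by Proposition \ref{p615}. The decomposition $p_l=\sum_j p_l^{(j)}$ and the resulting identification of $n$ asymptotically independent Gaussians is likewise the paper's mechanism. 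This is the same argument the paper encodes in Lemmas \ref{le510}, \ref{ll613}, the unnumbered multi-level Wick lemma, and Proposition \ref{p615}.
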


\section{Gaussian free field in uniform boundary condition}\label{gffu}

\subsection{Height function}

Let $\mathcal{R}(\Omega(N),\check{c})$ be a contracting square-hexagon lattice. Assume that the edge weights of $\SH(\check{c})$ satisfy Assumption \ref{apew}.

The planar dual graph $\mathrm{SH}^*(\check{c})$ of $\mathrm{SH}(\check{c})$ is obtained by placing a vertex of $\mathrm{SH}^*(\check{c})$ inside each face of $\mathrm{SH}(\check{c})$; two vertices of $\SH^*(\check{c})$ are adjacent, or joined by an edge in $\SH^*(\check{c})$, if and only if the two corresponding faces of $\SH(\check{c})$ share an edge of $\SH(\check{c})$.

We place a vertex of $\SH^*(\check{c})$ at the center of each face of $\SH(\check{c})$, and obtain an embedding of $\SH^*(\check{c})$ into the plane. Each face of $\SH^*(\check{c})$ is either a triangle or a square, depending on whether the corresponding vertex of $\SH(\check{c})$ inside the dual face in $\SH^*(\check{c})$ is degree-3 or degree-4. 

For a contracting square-hexagon lattice $\mathcal{R}(\Omega,\check{c})$, let $\mathcal{R}^*(\Omega,\check{c})$ be a finite triangle-square lattice such that
\begin{itemize}
\item $\mathcal{R}^*(\Omega,\check{c})$ is a finite subgraph of $\SH^*(\check{c})$ as constructed above; and
\item $\mathcal{R}(\Omega,\check{c})$ is the interior dual graph of $\mathcal{R}^*(\Omega,\check{c})$. 
\end{itemize}
In other words, $\mathcal{R}^*(\Omega,\check{c})$ is the subgraph of $\SH^*(\check{c})$ consisting of all the faces of $\SH^*(\check{c})$ corresponding to vertices of $\mathcal{R}(\Omega,\check{c})$; see Figure \ref{fig:SHht}.

\begin{definition}\label{dfn35}Let $M\in\mathcal{M}(\Omega,\check{c})$ be a perfect matching of a contracting square-hexagon lattice $\mathcal{R}(\Omega,\check{c})$. We color the vertices of $\mathcal{R}(\Omega,\check{c})$ by black and white such that vertices of the same color cannot be adjacent and the boundary row of $\mathcal{R}(\Omega,\check{c})$ on the bottom consists of white vertices. A height function $h_M$ is an integer-valued function on vertices of $\mathcal{R}^*(\Omega,\check{c})$ that satisfies the following property. 

Let $f_1,f_2$ be a pair of adjacent vertices of $\mathcal{R}^*(\Omega,\check{c})$. Let $(f_1,f_2)$ denote the non-oriented edge of $\mathcal{R}^*(\Omega,\check{c})$ with endpoints $f_1$ and $f_2$; and let $[f_1,f_2\rangle$ (resp. $[f_2,f_1\rangle$) denote the oriented edge starting from $f_1$ (resp.\ $f_2$) and ending in $f_2$ (resp.\ $f_1$).
\begin{itemize}
\item if $(f_1,f_2)$ is a dual edge crossing a NW-SE edge or a NE-SW edge of $\SH(\check{c})$, 
\begin{itemize}
\item if the oriented dual edge $[f_1,f_2\rangle$ crosses an absent edge $e$ of $\SH(\check{c})$ in $M$ then $h_M(f_2)=h_M(f_1)+1$ if $[f_1,f_2\rangle$ has the white vertex or $e$ on the left, and $h_M(f_2)=h_M(f_1)-1$ otherwise.
\item if an oriented dual edge $[f_1,f_2\rangle$ crosses a present edge $e$ of $\SH(\check{c})$ in $M$ then $h_M(f_2)=h_M(f_1)-3$ if $[f_1,f_2\rangle$ has the white vertex of $e$ on the left, and $h_M(f_2)=h_M(f_1)+3$ otherwise.
\end{itemize}
\item if $(f_1,f_2)$ is a dual edge crossing a vertical edge of $\SH(\check{a})$.
\begin{itemize}
\item If an oriented dual edge $[f_1,f_2\rangle$ crosses an absent edge $e$ of $\SH(\check{c})$ in $M$, then $h_M(f_2)=h_M(f_1)+2$ if $[f_1,f_2\rangle$ has the white vertex of $e$ on the left, and $h_M(f_2)=h_M(f_1)-2$ otherwise.
\item If an oriented dual edge $[f_1,f_2\rangle$ crosses a present edge $e$ of $\SH(\check{c})$ in $M$ then $h_M(f_2)=h_M(f_1)-2$ if $[f_1,f_2\rangle$ has the white vertex of $e$ on the left, and $h_M(f_2)=h_M(f_1)+2$ otherwise.
\end{itemize}
\item $h_{M}(f_0)=0$, where $f_0$ is the lexicographic smallest vertex of $\mathcal{R}^*(\Omega,\check{c})$.
\end{itemize}
\end{definition}

It is straightforward to verify that the height function above is well-defined, by checking that around either a degree-3 vertex or a degree-4 vertex, the total height change is 0.
Moreover, since none of the boundary edges of $\mathcal{R}(\Omega,\check{c})$ (by boundary edges we mean edges of $\SH(\check{c})$ joining exactly one vertex of $\mathcal{R}(\Omega,\check{c})$ and one vertex outside $\mathcal{R}(\Omega,\check{c})$) are present in any perfect matching of $\mathcal{R}(\Omega,\check{c})$, the height function restricted on the boundary vertices of $\mathcal{R}^*(\Omega,\check{c})$ is fixed and independent of the random perfect matching; see Figure \ref{fig:SHht}.

\begin{figure}
\includegraphics{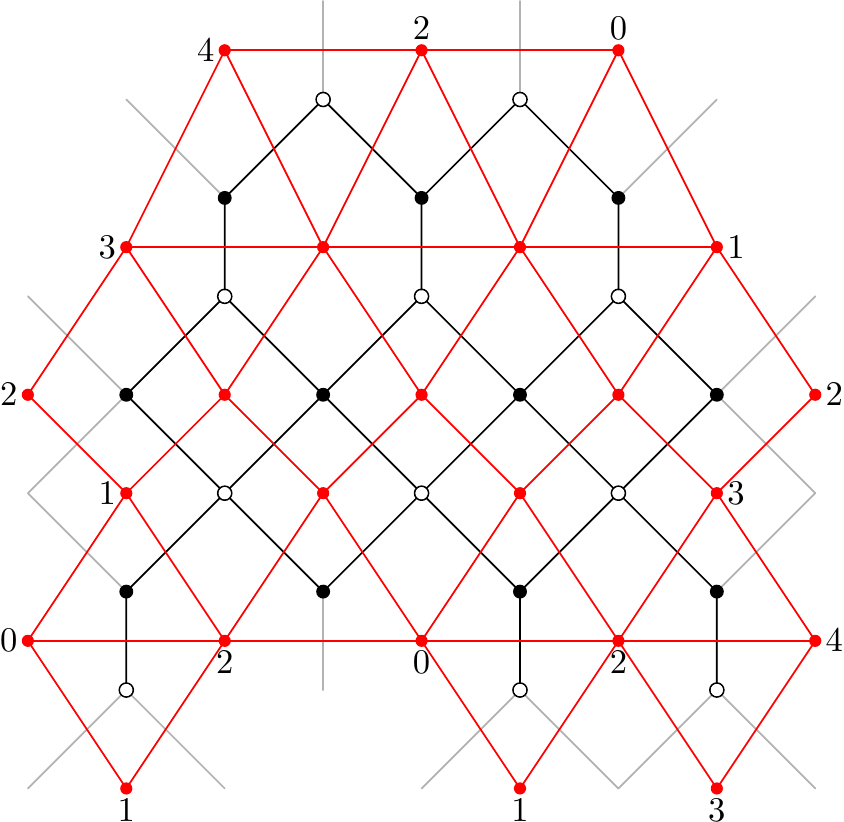}
\caption{Contracting square-hexagon lattice $\mathcal{R}(\Omega,\check{c})$, dual graph $\mathcal{R}^*(\Omega,\check{c})$ and height function on the boundary. The black lines represent the graph $\mathcal{R}(\Omega,\check{c})$, the gray lines represent boundary edges of $\mathcal{R}(\Omega,\check{c})$, the red lines represent the dual graph $\mathcal{R}^*(\Omega,\check{c})$, and the height function is defined on vertices of the dual graph. The values of the height function on the boundary vertices of $\mathcal{R}^*(\Omega,\check{c})$ are also shown in the figure.}
\label{fig:SHht}
\end{figure}

\begin{theorem}(Law of large numbers for the height function.) Assume that the assumptions of Proposition \ref{plm} hold.

Let $\rho_N^k$ be the measure on the configurations of the $k$th row, and let $\kappa\in (0,1)$, such that $k=[2\kappa N]$. Let  $\mathbf{m}^{\kappa}$ be the limit of the counting measures $m(\rho_N^k)$ in probability as $N\rightarrow\infty$ with moments given by (\ref{mtl})

Define
\begin{eqnarray}
\mathbf{h}(\chi,\kappa):=2\left(2(1-\kappa)\int_0^{\frac{\chi-\frac{\kappa r}{2n}}{1-\kappa}}d\mathbf{m}^{\kappa}-2\chi+2\kappa\right)\label{lh}
\end{eqnarray}
Then the random height function $h_M$ associate to a random perfect matching $M$, as defined by Definition \ref{dfn35}, has the following law of large numbers
\begin{eqnarray*}
\frac{h_{M}([\chi N],[\kappa N])}{N}\rightarrow\mathbf{h}(\chi,\kappa), \mathrm{when}\ N\rightarrow\infty
\end{eqnarray*}
where $\chi, \kappa$ are new continuous parameters of the domain.
\end{theorem}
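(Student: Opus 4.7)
The plan is to deduce the height-function limit shape from the convergence of counting measures established in Proposition \ref{plm}, using the bijection in Theorem \ref{myb} to translate between dimer configurations and particle systems, and using Theorem \ref{gff1} as a concentration input.

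First I would set up the correspondence between height function values and particle positions on a fixed horizontal row. By Construction \ref{ct} and Theorem \ref{myb}, a perfect matching $M$ on $\mathcal{R}(\Omega(N),\check c)$ is encoded by the interlacing sequence $(\mu^{(N)},\nu^{(N)},\dots,\mu^{(0)})$, and the $V$-vertices on the $k$th row (for $k=2t+1$ or $k=2t+2$) are in bijection with the entries of the signature $\mu^{(N-t)}$ (respectively $\nu^{(N-t)}$) via the locations $\lambda_i+L-i$, where $L$ is the length of the signature. Walking horizontally along the dual graph on a fixed row and applying the rules in Definition \ref{dfn35}, the height difference between two dual vertices sitting on either side of a horizontal segment of the $k$th row is an affine function of the number of $V$-vertices lying in that segment. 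Concretely, crossing one "slot" that contains a $V$-vertex (i.e.\ the dimer goes downward) versus one that does not contributes different fixed increments whose difference is $4$ across the slot, so along a horizontal stretch of $\ell$ slots with $p$ occupied ones the height increases by $4p+c\ell$ for an explicit constant $c$ depending only on the row type.

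Second I would turn this into an exact formula for $h_M(\lfloor\chi N\rfloor,\lfloor\kappa N\rfloor)/N$ in terms of the empirical measure $m(\rho_N^k)$. Starting from the lexicographically smallest boundary vertex, where $h_M=0$, one splits the path to $(\lfloor\chi N\rfloor,\lfloor\kappa N\rfloor)$ into a deterministic vertical segment (whose height change is fixed because the boundary edges of $\mathcal{R}(\Omega,\check c)$ are all absent in every perfect matching) followed by a horizontal segment along row $k$. The horizontal segment contributes $4/N$ times the empirical count of $V$-vertices with rescaled position in $\bigl[0,(\chi-\tfrac{\kappa r}{2n})/(1-\kappa)\bigr]$, plus a deterministic linear piece in $(\chi,\kappa)$; the offset $\tfrac{\kappa r}{2n}$ is simply the geometric shift between the leftmost $V$-vertex on row $k$ and the leftmost remaining vertex $\Omega_1=1$ on the boundary row, and is constant once the period $n$ and the phase of $k$ are fixed. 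Collecting terms and comparing with the definition \eqref{ml} of $m(\rho)$ produces exactly the right-hand side of \eqref{lh}, with the outer factor $2$ and the affine term $-2\chi+2\kappa$ coming from the height increments prescribed by Definition \ref{dfn35}.

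Third, for the convergence itself, Proposition \ref{plm} gives $m(\rho_N^k)\to\mathbf{m}^\kappa$ in probability in the sense of moments. Since the limit measure $\mathbf{m}^\kappa$ has compact support and a density bounded by the interlacing constraints, moment convergence upgrades to convergence of the cumulative distribution function at every continuity point $u=(\chi-\tfrac{\kappa r}{2n})/(1-\kappa)$. To promote pointwise-in-probability convergence of the distribution function to a statement about $h_M(\lfloor\chi N\rfloor,\lfloor\kappa N\rfloor)/N$, one uses Theorem \ref{gff1}: the second-moment bound it provides for $p_\ell^{(\lfloor(1-\kappa)N\rfloor)}$ translates, via Chebyshev's inequality, into a quantitative concentration statement for $\int f\,dm(\rho_N^k)$ for any polynomial test function $f$, and hence, by standard approximation, for bounded continuous $f$. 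This gives
\[
\frac{h_M(\lfloor\chi N\rfloor,\lfloor\kappa N\rfloor)}{N}\ \longrightarrow\ \mathbf{h}(\chi,\kappa)
\]
in probability, which is the claim.

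The main obstacle is purely bookkeeping: matching the conventions in Definition \ref{dfn35} (the $\pm1,\pm2,\pm3$ increments, the left/right white-vertex rule) to the particle encoding of Construction \ref{ct} so that the constants $2$, the shift $\tfrac{\kappa r}{2n}$, and the linear term $-2\chi+2\kappa$ all appear exactly as in \eqref{lh}. In particular, one must verify that the deterministic "boundary" contribution to the height, together with the row-dependent phase of the period, combines with the counting measure integral to give precisely the stated formula, and that the parity dependence between $\mu^{(N-t)}$ rows and $\nu^{(N-t)}$ rows (cf.\ Lemma \ref{l33} and Remark \ref{rm25}) produces the same limit $\mathbf{m}^\kappa$. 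The analytic content—convergence and concentration—is already packaged by Proposition \ref{plm} and Theorem \ref{gff1}.
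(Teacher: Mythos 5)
Your proposal is correct and follows essentially the same route as the paper, which proves this statement by invoking the argument of Theorem 3.24 of \cite{BL17}: express the height along a row as an affine functional of the counting measure of the signature encoding that row (via Construction \ref{ct} and Theorem \ref{myb}, with a height increment of $4$ per particle), and then apply the convergence in probability of $m(\rho_N^k)$ to $\mathbf{m}^{\kappa}$ from Proposition \ref{plm}. The only remark is that your appeal to Theorem \ref{gff1} for concentration is unnecessary, since Proposition \ref{plm} already asserts convergence of the counting measures in probability, which is all the argument requires.
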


\begin{proof}Same arguments as in the proof of Theorem 3.24 in \cite{BL17}.
\end{proof}

\subsection{Variational principle and complex Burger's equation}

Let $\mathrm{SH}(\check{c})$ be the whole plane square hexagon lattice with edge weights assigned as in Assumption \ref{apew} and periodically with period $n$ such that (\ref{px}) and (\ref{py}) hold. Then $\mathbb{Z}^2$ acts on $\mathrm{SH}(\check{c})$ by vertex-color-preserving and edge-weight-preserving isomorphisms of $\mathrm{SH}(\check{c})$. Let $\mathrm{SH}_1(\check{c})$ be the quotient graph of $\mathrm{SH}(\check{c})$ under the action of $\ZZ^2$. The graph $\mathrm{SH}_1(\check{c})$ is called a \textbf{fundamental domain} of $\mathrm{SH}(\check{c})$, which a finite graph that can be embedded into a torus.

Let $\gamma_x$ and $\gamma_y$ be two directed simple cycles winding once around the two homology generators of the torus where $\mathrm{SH}_1(\check{c})$ is embedded. Assume the edge weights of the square-hexagon lattice satisfy Assumption \ref{apew}. We shall modify the edge weights of the graph and construct a modified weighted adjacency matrix (Kasteleyn matrix) for $\mathrm{SH}_1(\check{c})$, which plays an essential role in the analysis of periodic dimer models, see \cite{Ka61,TF61,RK01}. 
\begin{itemize}
\item Multiply all the edge weights $x_i$ by $-1$. This way around each face of degree 4, there are an odd number of ``$-$'' signs multiplied by edge weights; while around each face of degree 6, there are an even number of ``$-$'' signs multiplied by edge weights.
\item Multiply the weight of each edge crossed by $\gamma_x$ with $w$ (resp.\ $w^{-1}$) if the black vertex of the edge is on the left (resp.\ right) of the path; then multiply the weight of each edge crossed by $\gamma_y$ with $z$ (resp.\ $z^{-1}$) if the black vertex of the edge is on the left (resp.\ right) of the path.
 \end{itemize}
  Let $K(z,w)$ be the weighted adjacency matrix of $\mathrm{SH}_1(\check{c})$ with respect to the modified edge weights after the multiplication above. More precisely, the rows of $K(z,w)$ are labeled by white vertices of $\mathrm{SH}_1(\check{c})$, while the columns of $K(z,w)$ are labeled by black vertices of $\mathrm{SH}_1(\check{c})$. For a black vertex $B$ and a white vertex $W$ of $G_1$; the entry $K_{BW}(z,w)=0$ if $B$ and $W$ are not adjacent; if $B$ and $W$ are joined by an edge $e_{BW}$ in $\mathrm{SH}_1(\check{c})$, then the entry $K_{BW}(z,w)$ is the modified weight of the edge $e_{BW}$. Let $P(z,w)=\det K(z,w)$ be the \textbf{characteristic polynomial}. See \cite{KOS} for more results about the characteristic polynomial and the phase transitions of the dimer model on a bipartite, periodic graph.
 
 \begin{example}Consider a fundamental domain of a square-hexagon lattice as illustrated in Figure \ref{fig:SHfd}. 
 \begin{figure}
\includegraphics{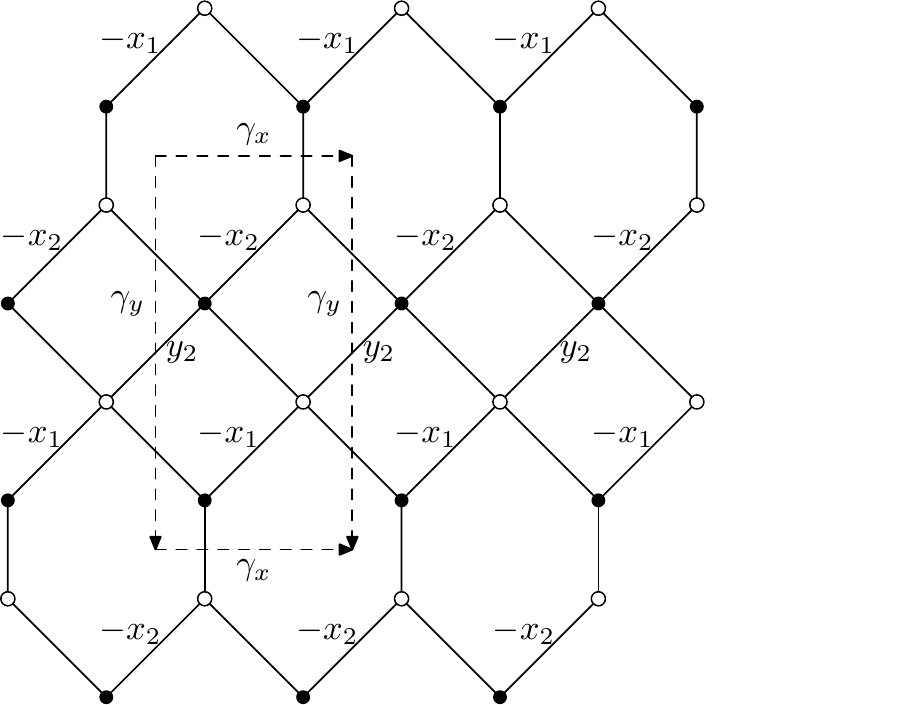}
\caption{A fundamental domain in a periodic square-hexagon lattice. The subgraph bounded by the dashed lines is a fundamental domain.}
\label{fig:SHfd}
\end{figure}
 We have
 \begin{eqnarray*}
 K(z,w)=\left(\begin{array}{cc}z-x_2&w\\1+y_2z& z-x_1\end{array}\right)
 \end{eqnarray*}
 and
 \begin{eqnarray*}
 P(z,w)=\det K(z,w)=(z-x_1)(z-x_2)-w(1+y_2z).
 \end{eqnarray*}
 \end{example}

\begin{proposition}Let $\mathbf{h}$ be the limit height function as given by (\ref{lh}).
In the liquid region, we have
\begin{eqnarray*}
\left(\frac{1}{4}\frac{\partial \mathbf{h}}{\partial x}+\frac{1}{2},\frac{1}{4}\frac{\partial \mathbf{h}}{\partial y}+\frac{n}{2}\right)=\frac{1}{\pi}(\mathrm{arg}z,-\mathrm{arg}w),
\end{eqnarray*}
where the functions $z$ and $w$ solve the differential equation
\begin{eqnarray}
\frac{z_y}{z}+\frac{w_x}{w}=0;\label{bge}
\end{eqnarray}
and the algebraic equation $P(z,w)=0$.
\end{proposition}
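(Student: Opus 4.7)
The plan is to identify the complex slope $(z,w)$ with solutions of the critical point equation for the Schur-function asymptotics from Section~\ref{unr}, and to read off the arguments via a Plemelj--Sokhotski inversion. First I would invert the Stieltjes transform of the limit measure $\mathbf{m}^\kappa$: the formula for $\sum_j \int y^j \mathbf{m}^\kappa(dy)/x^{j+1}$ displayed just before Definition~\ref{df41}, obtained from (\ref{mtl}) by integration by parts, shows that this Stieltjes transform is a branch of the logarithm of $1 - F_{\kappa,m}(z)/x$. Its jump across the real axis is therefore supported on the image under $F_{\kappa,m}$ of the arc where $F_{\kappa,m}$ takes real values, and Plemelj--Sokhotski identifies the density of $\mathbf{m}^\kappa$ at the rescaled point $\chi' = (\chi-\kappa r/(2n))/(1-\kappa)$ with $\tfrac{1}{\pi}\arg z$, where $z=z(\chi,\kappa)$ denotes the root of (\ref{fmz}) lying in the upper half plane. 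Since $\partial_\chi \mathbf{h} = 4\rho^\kappa(\chi') - 4$ by direct differentiation of (\ref{lh}), matching normalizations yields the first coordinate relation $\tfrac14\mathbf{h}_x + \tfrac12 = \tfrac1\pi \arg z$.

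For the second coordinate I would differentiate (\ref{lh}) in $\kappa$. Writing $(1-\kappa)\int_0^{\chi'} d\mathbf{m}^\kappa$ in terms of $\tfrac{1}{\pi}\int_0^{\chi'}\arg z(t,\kappa)\,dt$ using the identification of the density, the $\kappa$-derivative picks up both a boundary term and the change of the density in the bulk. The remaining task is to recognize the resulting expression as $-\tfrac{1}{\pi}\arg w + \text{const}$ for a function $w=w(\chi,\kappa)$ which I would define through the characteristic polynomial. Specifically, I would observe that the equation $F_{\kappa,m}(z)=\chi/(1-\kappa)$, after clearing denominators, is algebraically equivalent to the relation $P(z,w)=0$ for a $w$ expressed as a ratio of the factors $\prod_j(z-x_j)$, $\prod_j(z^m-x_j^m)$ and $\prod_{j\in I_2}(1+y_j z)$ appearing in $F_{\kappa,m}$. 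Verifying that this choice of $w$ coincides with the one read off from the $\kappa$-derivative of $\mathbf{h}$, and that the branch of $\arg w$ in the liquid region is consistent, is the crux of the argument.

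Once the two relations $\pi(\tfrac14\mathbf{h}_x+\tfrac12)=\arg z$ and $\pi(\tfrac14\mathbf{h}_y+\tfrac{n}{2})=-\arg w$ are established, the complex Burgers equation (\ref{bge}) follows from the equality of mixed partials $\partial_y\partial_x\mathbf{h}=\partial_x\partial_y\mathbf{h}$: it reads $\partial_y\arg z = -\partial_x\arg w$. Since $z$ and $w$ are holomorphic functions of $(\chi,\kappa)$ on the liquid region (being implicitly defined by the analytic equations $F_{\kappa,m}(z)=\chi/(1-\kappa)$ and $P(z,w)=0$), the identity upgrades via the Cauchy--Riemann equations to $\partial_y\log z = -\partial_x\log w$, which rearranges to $z_y/z + w_x/w = 0$.

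The main obstacle is the algebraic matching between the critical-point equation $F_{\kappa,m}(z)=\chi/(1-\kappa)$ from Proposition~\ref{plm} and the spectral curve $P(z,w)=0$ of the fundamental domain. The formula for $F_{\kappa,m}$ directly reflects the periodic edge-weight structure through the factors $(z^m-x_j^m)$ and $(1+y_j z)$, while $P(z,w)=\det K(z,w)$ is computed from the Kasteleyn matrix with its twist variables $z,w$ and a global sign change; reconciling the two requires a careful reorganization of products and a consistent choice of branch cuts. The Newton-polygon interpretation of the amoeba, together with the explicit fundamental-domain computation illustrated in the example following the definition of $P(z,w)$, should make the matching transparent, following the pattern of the analogous arguments in \cite{KOS,KO,BL17}.
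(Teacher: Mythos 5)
There is a genuine gap in your final step. From the two gradient identities $\frac14\mathbf{h}_x+\frac12=\frac1\pi\mathrm{arg}\,z$ and $\frac14\mathbf{h}_y+\frac n2=-\frac1\pi\mathrm{arg}\,w$, equality of mixed partials gives only $\partial_y\,\mathrm{arg}\,z=-\partial_x\,\mathrm{arg}\,w$, i.e.\ the \emph{imaginary} part of $z_y/z+w_x/w=0$; the real part, $\partial_y\log|z|+\partial_x\log|w|=0$, is additional content and does not follow from the height function at all. Your proposed repair --- that $z$ and $w$ are ``holomorphic functions of $(\chi,\kappa)$'' so the Cauchy--Riemann equations upgrade the imaginary part to the full complex identity --- is not valid: $z(\chi,\kappa)$ is defined implicitly by $F_{\kappa,m}(z)=\chi/(1-\kappa)$ as a function of two \emph{real} parameters, the resulting map from the liquid region to the upper half plane is a homeomorphism but in general not conformal, and solutions of the complex Burgers equation are not holomorphic in $\chi+\mathbf{i}\kappa$. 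The correct way to get (\ref{bge}) for this $z$ is the direct computation the paper carries out immediately afterwards in Lemma \ref{l43}: differentiate the algebraic relation $F_{\kappa,m}(z)=\chi/(1-\kappa)$ implicitly in $\chi$ and $\kappa$, form $z_\chi/z_\kappa$, and check (after the rescaling $\partial_x=\partial_\chi$, $\partial_y=n\,\partial_\kappa$) that it equals the ratio forced by $w=R(z)$ with $R(z)=\prod_{i=1}^n(z-x_i)/\prod_{j\in I_2\cap\{1,\dots,n\}}(1+y_jz)$ read off from $P(z,w)=0$. This yields the full complex equation with no appeal to mixed partials. Note in this connection that your description of $w$ as built from the factors $z^m-x_j^m$ conflates the boundary data with the spectral curve: those factors live in $F_{\kappa,m}$ (hence in the identification of $\mathrm{arg}\,z$ with the density), whereas $P(z,w)=0$ produces only the factors $z-x_i$ and $1+y_jz$.

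Two further remarks. The step you yourself call ``the crux'' --- recognizing the $\kappa$-derivative of (\ref{lh}) as $-\frac1\pi\mathrm{arg}\,w+\mathrm{const}$, with the $\kappa$-dependent upper limit and the $\kappa$-dependence of $\mathbf{m}^\kappa$ both contributing --- is left uncarried, and it is precisely where the real part of the Burgers equation would have to be smuggled in if one insists on deriving everything from $\mathbf{h}$; as it stands the plan cannot close. Finally, be aware that the paper's own proof is not a computation of this kind at all: it invokes the variational-principle argument of Theorem 1 in \cite{KO} (see also \cite{CKP}), where the real part of the complex Burgers equation is exactly the Euler--Lagrange equation of the surface tension functional --- the piece of information that a mixed-partials argument is structurally unable to see. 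Your Plemelj--Sokhotski identification of the density with $\frac1\pi\mathrm{arg}\,z$ is reasonable and consistent with how the liquid region is parametrized in Lemmas \ref{l45} and \ref{l46}, so the first coordinate relation can be salvaged; the Burgers equation cannot, by your route.
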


\begin{proof}Same arguments as the proof of Theorem 1 in \cite{KO}; see also \cite{CKP}.
\end{proof}

 When the edge weights satisfy Assumptions \ref{apew} and \ref{pw}, we can choose a fundamental domain such that $P(z,w)$ is linear in $w$. More precisely, when the period of the graph is $1\times n$, each row of the weighted adjacency matrix $K(z,w)$ has exactly two non-vanishing entries. Choose a fundamental domain consisting of $2n$ rows and $2$ columns such that the topmost row is a row of white vertices, and the rightmost column is a column of white vertices. Assume $\gamma_x$ is oriented from the left to the right and $\gamma_y$ is oriented from the top to the bottom.  Let $\mathrm{SH}_1(\check{c})$ be the toroidal graph constructed from the fundamental domain above by identifying the left and right boundary as well as the top and bottom boundary. Let $v_b$ be a white vertex of $\mathrm{SH}_1(\check{c})$. The following cases might occur
 \begin{itemize}
\item  If $v_b$ has degree 3, 
\begin{itemize}
\item when the vertex is not incident to an edge crossed by $\gamma_x$, the two non-vanishing entries of $K(z,w)$ on the row corresponding to $v_b$ are $z-x_i$ and $1$;
\item when the vertex is incident to an edge crossed by $\gamma_x$;, the two non-vanishing entries of $K(z,w)$ on the row corresponding to $v_b$ are $z-x_n$ and $w$;
\end{itemize}
\item If $v_b$ has degree 4
\begin{itemize}
\item when the vertex is not incident to an edge crossed by $\gamma_x$, the two non-vanishing entries of $K(z,w)$ on the row corresponding to $v_b$ are $z-x_i$ and $1+y_{i+1}z$;
\item when the vertex is incident to an edge crossed by $\gamma_x$;, the two non-vanishing entries of $K(z,w)$ on the row corresponding to $v_b$ are $z-x_{n-1}$ and $w(1+y_{n}z)$;
\end{itemize}
  \end{itemize}

In the toroidal graph $\mathrm{SH}_1(\check{c})$, each vertex is adjacent to exactly two vertices, with possible multiple edges joining two adjacent vertices. We may consider $\mathrm{SH}_1(\check{c})$ with vertices located on a circle, then $P(z,w)=\det K(z,w)$ counts the (signed) partition function of dimer configurations on the circle.
 
  Solving the equations $P(z,w)=0$ for $w$, we have
 \begin{eqnarray*}
 w=R(z),
 \end{eqnarray*}
 where $R(z)$ is a rational function of $z$ (quotient of two polynomials in $w$). By the explanations above, $R(z)$ can be written down explicitly as
 \begin{eqnarray*}
 R(z)=\frac{\prod_{i=1}^{n}(z-x_i)}{\prod_{j=\{1,2,\ldots,n\}\cap I_2}(1+y_jz)}.
 \end{eqnarray*}

Therefore given $P(z,w)=0$, (\ref{bge}) becomes
\begin{eqnarray}
z_x+\frac{R(z)}{zR'(z)}z_y=0.\label{ws}
\end{eqnarray}

\begin{lemma}\label{l43}Let $z$ be a solution of the following equation
\begin{eqnarray}
F_{\kappa,m}(z)=\frac{\chi}{1-\kappa},\label{fk}
\end{eqnarray}
in the upper half plane, then $z$ also satisfies the differential equation (\ref{ws}).
\end{lemma}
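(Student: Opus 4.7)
The plan is to prove the lemma by implicit differentiation of the defining equation $F_{\kappa,m}(z(\chi,\kappa)) = \chi/(1-\kappa)$, and to reduce the PDE~(\ref{ws}) to a clean algebraic identity relating $F_{\kappa,m}$ and the logarithmic derivative of $R(z)$.

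First, I would decompose $F_{\kappa,m}$ into pieces with explicit $\kappa$-dependence. Setting
\begin{equation*}
A(z) := \tfrac{z}{n}\!\!\sum_{i\in\{1,\ldots,n\}\cap I_2}\!\!\tfrac{y_i}{1+y_i z},\q B(z):=\tfrac{z}{n}\sum_{j=1}^n\tfrac{1}{z-x_j},\q C(z):=\tfrac{z}{n}\sum_{j=1}^n\!\left(\tfrac{mz^{m-1}}{z^m-x_j^m}-\tfrac{1}{z-x_j}\right),
\end{equation*}
we can write $F_{\kappa,m}(z) = B(z) + \tfrac{\kappa}{1-\kappa}A(z) + \tfrac{1}{1-\kappa}C(z)$, where $A$, $B$, $C$ depend only on $z$.

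Next, implicit differentiation of $F_{\kappa,m}(z)=\chi/(1-\kappa)$ with respect to $\chi$ and to $\kappa$ gives
\begin{equation*}
F'_{\kappa,m}(z)\,z_\chi=\frac{1}{1-\kappa},\qquad F'_{\kappa,m}(z)\,z_\kappa + (\partial_\kappa F_{\kappa,m})(z)=\frac{F_{\kappa,m}(z)}{1-\kappa},
\end{equation*}
where $\partial_\kappa F_{\kappa,m}$ denotes the partial derivative with $z$ held fixed. Eliminating $F'_{\kappa,m}(z)$ yields $z_\kappa/z_\chi = F_{\kappa,m}(z)-(1-\kappa)(\partial_\kappa F_{\kappa,m})(z)$, so the PDE~(\ref{ws}) reduces to verifying an identity of rational functions in $z$.

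The core of the argument is the following cancellation. Direct computation from the decomposition gives $(\partial_\kappa F_{\kappa,m})(z)=\bigl(A(z)+C(z)\bigr)/(1-\kappa)^2$, hence
\begin{equation*}
F_{\kappa,m}(z)-(1-\kappa)(\partial_\kappa F_{\kappa,m})(z) = B(z)-A(z),
\end{equation*}
i.e.\ the $m$-dependent contribution $C(z)$ drops out entirely. On the other hand, taking the logarithmic derivative of $R(z)=\prod_i(z-x_i)/\prod_{j\in I_2}(1+y_jz)$ produces
\begin{equation*}
\frac{zR'(z)}{R(z)} = z\sum_{i=1}^n\frac{1}{z-x_i}-z\!\sum_{j\in I_2}\!\frac{y_j}{1+y_jz}= n\bigl(B(z)-A(z)\bigr),
\end{equation*}
which identifies $z_\kappa/z_\chi$ with a scalar multiple of $zR'(z)/R(z)$. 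Substituting back into (\ref{ws}) then checks the PDE, after reconciling the normalization convention under which the $(x,y)$ coordinates in the complex Burgers equation are identified with the liquid-region coordinates $(\chi,\kappa)$ (the factor $n$ arising from averaging over one period).

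The main obstacle, and the one genuine computation, is the cancellation $F_{\kappa,m}-(1-\kappa)\partial_\kappa F_{\kappa,m}=B-A$; once this is in hand, the rest is routine manipulation of the explicit factorization of $R(z)$. Everything else is bookkeeping of the $\kappa$-dependent prefactors $\kappa/(1-\kappa)$ and $1/(1-\kappa)$.
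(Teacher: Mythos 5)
Your proposal is correct and follows essentially the same route as the paper's proof: implicit differentiation of $F_{\kappa,m}(z)=\chi/(1-\kappa)$ in $\chi$ and $\kappa$, reduction of $z_\kappa/z_\chi$ to $B(z)-A(z)$ (the $m$-dependent term cancelling), identification with $\frac{1}{n}\,zR'(z)/R(z)$, and the period-$n$ rescaling between $(x,y)$ and $(\chi,\kappa)$. The only difference is that you spell out the cancellation explicitly where the paper states the resulting ratio directly; both treatments handle the final coordinate-normalization bookkeeping at the same level of detail.
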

\begin{proof}Differentiate (\ref{fk}) with respect to $\chi$ and $\kappa$, we have
\begin{eqnarray}
\frac{\frac{\partial z}{\partial \chi}}{\frac{\partial z}{\partial \kappa}}=\frac{1}{-\frac{z}{n}\sum_{i\in I_2\cap\{1,2,\ldots,n\}}\frac{y_i}{1+y_iz}+\sum_{j=1}^n\frac{z}{n(z-x_j)}}\label{rd}
\end{eqnarray}
Given the different scalings of $(x,y)$ and $(\chi,\kappa)$, (more precisely, in the $(x,y)$-system, we assume each fundamental domain has height 1 and width 1; while in the $(\chi,\kappa)$ system, we assume each fundamental domain has width 1 and height $n$). Then we have
\begin{eqnarray*}
\frac{\partial}{\partial x}&=&\frac{\partial }{\partial \chi}\ \mathrm{and}\ \frac{\partial}{\partial y}=n\frac{\partial}{\partial \kappa }
\end{eqnarray*}
The right hand side of (\ref{rd}) divided by $n$ is exactly $\frac{R(z)}{zR'(z)}$.
\end{proof}

\begin{lemma}\label{l44}Assume all the edge weights $x_j$'s are distinct. Let $m\geq 1$ be a positive integer. Then 
\begin{enumerate}
\item For $\kappa=0$ and any $\chi\in(0,m)$, the equation
\begin{eqnarray}
F_{0,m}(z)=\chi\label{f0m}
\end{eqnarray}
has a unique root satisfying
\begin{eqnarray*}
\mathrm{Arg}(z)=\frac{\pi}{m}
\end{eqnarray*}
\item For each $z$ satisfying $\mathrm{Arg}(z)=\frac{\pi}{m}$, there exists $\chi\in(0,m)$ such that equation (\ref{f0m}) holds.
\end{enumerate}
\end{lemma}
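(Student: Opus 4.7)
The plan is to simplify $F_{0,m}(z)$ drastically: setting $\kappa=0$ in the definition, the two sums involving $\frac{1}{z-x_j}$ cancel exactly, leaving
\begin{equation*}
F_{0,m}(z) \;=\; \frac{1}{n}\sum_{j=1}^{n}\frac{mz^{m}}{z^{m}-x_j^{m}}.
\end{equation*}
The striking feature is that $F_{0,m}(z)$ depends on $z$ only through $w:=z^m$. This reduction is the crucial observation; everything else is elementary.

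Next I would substitute $w=z^m$ and exploit the fact that $\mathrm{Arg}(z)=\pi/m$ is equivalent to $w\in(-\infty,0)$. Writing $w=-t$ with $t>0$ and using $x_j^m>0$ (since the $x_j$ are positive edge weights), each term becomes
\begin{equation*}
\frac{mw}{w-x_j^{m}} \;=\; \frac{mt}{t+x_j^{m}} \;\in\;(0,m),
\end{equation*}
so on the ray $\mathrm{Arg}(z)=\pi/m$ the function $F_{0,m}$ takes the real form
\begin{equation*}
G(t) \;:=\; \frac{m}{n}\sum_{j=1}^{n}\frac{t}{t+x_j^{m}}, \qquad t>0.
\end{equation*}
Then I would check the three elementary properties of $G$: (i) $G(0^{+})=0$; (ii) $G(t)\to m$ as $t\to\infty$; (iii) $G'(t)=\frac{m}{n}\sum_j \frac{x_j^{m}}{(t+x_j^{m})^{2}}>0$, so $G$ is strictly increasing. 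Continuity and strict monotonicity imply that $G:(0,\infty)\to(0,m)$ is a bijection.

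This bijection gives both claims at once. For part (1), given $\chi\in(0,m)$ there is a unique $t>0$ with $G(t)=\chi$, hence a unique $z=t^{1/m}e^{i\pi/m}$ on the ray $\mathrm{Arg}(z)=\pi/m$ with $F_{0,m}(z)=\chi$; uniqueness among \emph{all} roots on that ray (not merely among principal-branch preimages of $w$) follows because the map $z\mapsto z^m$ is injective on the open ray. For part (2), any $z$ with $\mathrm{Arg}(z)=\pi/m$ has $z^m=-t$ for some $t>0$, and $\chi:=G(t)\in(0,m)$ is the required value. I expect no serious obstacle: the only thing to watch is that the initial cancellation of the $\frac{z}{z-x_j}$ terms is written out cleanly, and that the $x_j$'s being distinct plays no role beyond ensuring the original expression is well defined (the simplified $G(t)$ is manifestly well defined whether or not the $x_j^m$ coincide).
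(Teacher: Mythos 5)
Your proposal is correct, and it takes a genuinely cleaner route than the paper for Part~(1). Both you and the paper first observe the cancellation reducing $F_{0,m}(z)$ to $\frac{1}{n}\sum_j \frac{mz^m}{z^m-x_j^m}$ and then substitute $z^m=-t$, $t>0$. For Part~(2) the paper uses exactly your monotonicity argument: it introduces $g(t)=\frac{1}{n}\sum_j\frac{mt}{t+x_j^m}$ (your $G$), checks $g(0)=0$, $g(\infty)=m$, $g'>0$. But for Part~(1) the paper takes a detour: it clears denominators to form a degree-$n$ polynomial $f$ in the variable $z^m$, evaluates its sign at $0$, at each $x_i^m$, and at $-\infty$, and concludes from $n$ sign changes and the degree bound that $f$ has exactly one root in $(-\infty,0)$, hence exactly one $z$ on the ray. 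Your observation that the same strictly monotone bijection $G:(0,\infty)\to(0,m)$ delivers existence \emph{and} uniqueness simultaneously makes the polynomial argument unnecessary, unifies the two parts, and has the side benefit of making visible that distinctness of the $x_j$'s is not actually used (the paper's sign-change argument implicitly relies on the $x_i^m$ being distinct to count the $n$ intervals, whereas $G$ is well defined and strictly increasing regardless). Both proofs are sound; yours is the more economical one.
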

\begin{proof}We first prove Part (1). The equation (\ref{f0m}) has the following form
\begin{eqnarray*}
\frac{1}{n}\sum_{j=0}^{n}\frac{mz^m}{z^m-x_j^m}=\chi.
\end{eqnarray*}
Let $\sqrt[m]{a}$ be the nonnegative $m$th root of a nonnegative number $a$.
Assume $z=\sqrt[m]{R}e^{\frac{\mathbf{i}\pi}{m}}$, where $R=|z^m|\geq 0$, then we have
\begin{eqnarray}
\frac{1}{n}\sum_{j=1}^{n}\frac{-mR}{-R-x_j^m}=\chi.\label{vve}
\end{eqnarray}
It suffices to show that (\ref{vve}) has a unique solution in $R\in[0,\infty)$. Explicit computations show that (\ref{vve}) is equivalent to the following equation
\begin{eqnarray*}
f(-R):=\sum_{j=1}^{n}(-mR)\prod_{i\in\{1,2,\ldots,n\},i\neq j}(-R-x_i^m)-\chi n\prod_{i=1}^{n}(-R-x_i^m)=0.
\end{eqnarray*}
Without loss of generality, assume that
\begin{eqnarray*}
0<x_1<x_2<\ldots<x_n.
\end{eqnarray*}
We have
\begin{eqnarray*}
\mathrm{sgn}[f(x_i^m)]=(-1)^{n-i}.
\end{eqnarray*}
Moreover, when $\chi>0$,
\begin{eqnarray*}
\mathrm{sgn}[f(0)]=(-1)^{n+1}.
\end{eqnarray*}
Given $\chi<m$, we have
\begin{eqnarray*}
\mathrm{sgn}[f(-\infty)]=(-1)^n,
\end{eqnarray*}
Therefore, the equation $f(z)=0$ has a solution in each one of the following intervals 
\begin{eqnarray*}
(-\infty,0),(x_1^m,x_2^m),(x_2^m,x_3^m)\ldots,(x_{n-1}^m,x_n^m).
\end{eqnarray*}
Given $\chi<m$, $f(z)$ is a degree-$n$ polynomial and has at most $n$ distinct roots in $\CC$. Therefore, we have $f(z)$ has exactly one root each one of the above intervals, and Part (1) of the lemma follows.

Now we prove Part (2) of the lemma. Let
\begin{eqnarray*}
g(t)=\frac{1}{n}\sum_{j=1}^{n}\frac{mt}{t+x_j^m}
\end{eqnarray*}
Then
\begin{eqnarray*}
g'(t)=\frac{1}{n}\sum_{j=1}^{n}\frac{m x_j^m}{(t+x_j^m)^2}>0
\end{eqnarray*}
for any $t\in \RR$. Moreover
\begin{eqnarray*}
g(0)=0, \qquad \lim_{t\rightarrow\infty}g(t)=m.
\end{eqnarray*}
Then Part (2) of the Lemma follows.
\end{proof}

\begin{lemma}\label{l45}Assume all the edge weights $x_j$'s are distinct. Let $m\geq 1$ be a positive integer. Assume
\begin{eqnarray*}
|I_2\cap\{1,2,\ldots,n\}|=r.
\end{eqnarray*}
 For each $\kappa\in[0,1]$ and each $\chi\in[0,m]$, the equation
\begin{eqnarray}
F_{\kappa,m}(z)=\frac{\chi}{1-\kappa}\label{fkm}
\end{eqnarray}
has a unique root $z_0(\chi,\kappa)$ satisfying 
\begin{enumerate}
\item
$\mathrm{Arg}z_0(\chi,0)=\frac{\pi}{m}$.
\item $z_0(0,\kappa)=0.$
\item $\lim_{\chi\rightarrow m+\left(\frac{r}{n}-1\right)\kappa}z_0\left(\chi,\kappa\right)=\infty$.
\item $z_0(\chi,1)\in[0,+\infty)$.
\item $z_0(\chi,\kappa)$ is continuous in $(\chi,\kappa)$.
\end{enumerate}
\end{lemma}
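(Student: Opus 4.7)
The plan is to treat equation~(\ref{fkm}) as a polynomial equation in $z$ after clearing denominators, to initialize at $\kappa=0$ via Lemma~\ref{l44}, and then to continue the solution branch in $\kappa$ using the implicit function theorem, pinning down uniqueness by a real-root count combined with conjugation symmetry.

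First I would multiply (\ref{fkm}) through by the common denominator $\prod_{j=1}^{n}(z^m-x_j^m)\prod_{i\in I_2\cap\{1,\ldots,n\}}(1+y_iz)$ to obtain a polynomial $P(z;\chi,\kappa)$ of degree $mn+r$ in $z$, where $r=|I_2\cap\{1,\ldots,n\}|$, whose coefficients are continuous (in fact polynomial) in $(\chi,\kappa)$ and real. A short direct check gives $\lim_{|z|\to\infty}F_{\kappa,m}(z)=\frac{mn-(n-r)\kappa}{n(1-\kappa)}$, and equating this value with $\frac{\chi}{1-\kappa}$ yields $\chi=m+(r/n-1)\kappa$; this will account for property~(3). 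Property~(2) is immediate from $F_{\kappa,m}(0)=0$.

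Next I would use Lemma~\ref{l44} to produce the branch $z_0(\chi,0)$ for $\chi\in(0,m)$, which lies on the ray $\arg z=\pi/m$ and is a simple root of $P(\,\cdot\,;\chi,0)$. The implicit function theorem then extends $z_0(\chi,\kappa)$ analytically in a neighborhood of $\kappa=0$. To propagate this branch across the full range $\kappa\in(0,1)$, $\chi\in(0,m+(r/n-1)\kappa)$ without it leaving the closed upper half plane, I would argue by contradiction using the conjugation symmetry of $P$: if $z_0$ were to hit the real axis at some $(\chi_\ast,\kappa_\ast)$, it would collide with its complex conjugate, producing a real root of $P(\,\cdot\,;\chi_\ast,\kappa_\ast)$ of multiplicity $\ge 2$. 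This possibility is then ruled out by the real-root count below. For uniqueness, I would count real solutions of $F_{\kappa,m}(x)=\chi/(1-\kappa)$ by studying the graph of $F_{\kappa,m}$ on $\mathbb{R}$: its real poles are the $x_j$'s (each with positive residue $x_j/n$), the points $-x_j$ when $m$ is even, and the $-1/y_i$'s. On each open interval between consecutive real poles, an intermediate-value/monotonicity analysis using the common limit $\frac{mn-(n-r)\kappa}{n(1-\kappa)}$ at $\pm\infty$ produces exactly the real roots needed so that $P(\,\cdot\,;\chi,\kappa)$ has precisely $mn+r-2$ real roots (with multiplicity) throughout the admissible range of $\chi$. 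Since $P$ has real coefficients and total degree $mn+r$, the remaining two roots form a conjugate pair and contribute exactly one root in the open upper half plane, namely the continuation of $z_0$, yielding uniqueness. Continuity in~(5) then follows from the implicit function theorem on the interior together with the explicit endpoint descriptions.

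Finally, (4) is obtained by extracting the leading $\frac{1}{1-\kappa}$-part of (\ref{fkm}) as $\kappa\to 1^-$: the resulting limiting equation has real coefficients, and a sign analysis on $[0,\infty)$ shows that the continuation of $z_0$ meets the real axis nonnegatively. The main obstacle I anticipate is the real-root count in the previous paragraph: carefully tracking the interleaving of the poles $x_j$, $-x_j$ (when $m$ is even), and $-1/y_i$, and verifying monotonicity/range on each interval to land exactly on $mn+r-2$ (not more, not less) real roots, is the crux. A related subtlety is the uniform limiting behavior near the endpoint $\kappa=1$, where some singular terms merge or degenerate, so that the branch $z_0(\chi,\kappa)$ extends continuously to the boundary.
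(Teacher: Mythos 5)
Your strategy is genuinely different from the paper's. The paper establishes the two boundary branches ($\kappa=0$ via Lemma~\ref{l44}, $\kappa=1$ via an explicit function $p(z)$ whose strict monotonicity on $(0,\infty)$ is proved by an induction on $m$ for the quantity $T_m(z)=(zg_{m-1,j}')'g_{m-1,j}-z(g_{m-1,j}')^2$), and then argues uniqueness and continuity of the interior branch by appealing to the variational principle: Lemma~\ref{l43} converts roots of (\ref{fkm}) into solutions of the complex Burgers equation, and strict convexity of the surface tension makes the Burgers solution with the given boundary data unique (citing \cite{CKP,KO}). Your plan replaces the PDE step by a purely algebraic count of real roots of the cleared-denominator polynomial $P(z;\chi,\kappa)$, which — if it could be carried out — would be a more elementary and self-contained argument; that is a real potential advantage over the paper's reliance on the variational machinery.

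However, the real-root count that you acknowledge as ``the crux'' is not a gap that can be waved past; it is the entire content of uniqueness in your approach, and as stated it is both unproved and in places not even correct. The claim that $P(\,\cdot\,;\chi,\kappa)$ has exactly $mn+r-2$ real roots throughout the admissible range cannot hold: condition~(4) of the lemma asserts $z_0(\chi,1)\in[0,\infty)$, so as $\kappa\to1^-$ the distinguished branch \emph{does} hit the real axis, and near that boundary $P$ has $mn+r$ real roots rather than $mn+r-2$. More generally, for $(\chi,\kappa)$ near the frozen boundary of the rescaled domain the conjugate pair collapses to a real double root; your contradiction argument (``a collision would produce a multiplicity~$\ge2$ real root, which the count forbids'') would then rule out exactly the behavior the lemma requires. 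The intermediate-value sketch also does not deliver the exact count without a separate monotonicity argument on each interval between poles; the poles $x_j$, the $-1/y_i$'s, and (for $m$ even) the $-x_j$'s have residues of varying sign, and the limiting value of $F_{\kappa,m}$ at $\pm\infty$ equals $\frac{m+\kappa(r/n-1)}{1-\kappa}=\frac{\chi}{1-\kappa}$ only at the extreme of the admissible $\chi$-range, so IVT alone gives lower bounds on the root count, not exact counts. Finally, for item~(4) you assert ``a sign analysis on $[0,\infty)$'' but the substance of the paper's argument here is a genuinely nontrivial inequality $p'(z)>0$ proved by induction on $m$; this step needs to be supplied, not asserted.

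In short, the overall architecture (polynomial form, IFT continuation from $\kappa=0$, conjugate-pair counting) is a reasonable alternative route, but as written it stops precisely where the work begins: the exact real-root count is false near the frozen boundary, no mechanism replaces the Burgers-equation uniqueness in those degenerate regions, and the $\kappa=1$ monotonicity lemma is missing.
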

\begin{proof}Let
\begin{eqnarray*}
g_{m-1,j}(z)=\sum_{k=0}^{m-1}z^kx_j^{m-1-k}.
\end{eqnarray*}
Then when $\kappa=1$, the equation $(1-\kappa)F_{\kappa,m}=\chi$ has the following form
\begin{eqnarray*}
p(z):=\frac{z}{n}\sum_{i\in I_2\cap\{1,2,\ldots,n\}}\frac{y_i}{1+y_iz}+\frac{z}{n}\sum_{j=1}^n\frac{\frac{\partial g_{m-1,j}(z)}{\partial z}}{g_{m-1,j}(z)}-\chi=0.
\end{eqnarray*}
Note that $p(z)$ is well-defined on $(0,+\infty)$, since $g_{m-1,j}(z)>0$ whenever $z>0$.
When $\chi\in (0,m-1+\frac{r}{n})$, we have
\begin{eqnarray*}
p(0)<0;\ \mathrm{and}\ p(+\infty)>0.
\end{eqnarray*}
Moreover, we have 
\begin{lemma}
\begin{eqnarray*}
p'(z)>0
\end{eqnarray*}
when $z\in (0,+\infty)$ and $x_i,y_j>0$.
\end{lemma}
\begin{proof}Note that
\begin{eqnarray*}
p'(z)=\frac{1}{n}\sum_{i\in I_2\cap\{1,2,\ldots,n\}}\frac{y_i}{(1+y_iz)^2}+\frac{1}{n}\sum_{j=1}^{n}\frac{(zg_{m-1,j}')'g_{m-1,j}-z(g_{m-1,j}')^2}{g_{m-1,j}^2}
\end{eqnarray*}
It suffices to show that for $z>0$, $x_i>0$, we have
\begin{eqnarray}
T_m(z):=(zg_{m-1,j}')'g_{m-1,j}-z(g_{m-1,j}')^2\geq 0.\label{iie}
\end{eqnarray}
We prove (\ref{iie}) by induction on $m$. When $m=1$, we have $g_{m-1,j}=1$ and $T_m(z)=0$. Assume (\ref{iie}) holds when $m=l-1$, $l\geq 2$. When $m=l$, we have
\begin{eqnarray*}
g_{l-1,j}=z g_{l-2,j}+x_j^{m-1};
\end{eqnarray*}
and
\begin{eqnarray*}
T_{l}(z)&=&[zg_{l-2,j}+z^2g'_{l-2,j}]'(zg_{l-2,j}+x_j^{l-1})-z(g_{l-2,j}+zg_{l-2,j}')^2\\
&=&z^2 T_{l-1}(z)+x_j^{l-1}g_{l-2,j}+z x_j^{l-1}(3g_{l-2,j}'+g_{l-2,j}'')>0
\end{eqnarray*}
Since $T_{l-1}(z)>0$ by induction hypothesis, $g_{l-2,j}>0$, $g_{l-2,j}'>0$ and $g_{l-2,j}''>0$. Then the lemma follows.
\end{proof}

Hence $p(z)=0$ has exactly one root in $(0,\infty)$ when $\chi\in (0,m-1+\frac{r}{n})$. The root converges to 0 when $\chi$ goes to 0, and the root approaches $+\infty$ when $\chi$ goes to $m-1+\frac{r}{n}$. 

The fact that there is a unique root of (\ref{fkm}) satisfying Condition (1) follows from Lemma \ref{l44}. The root when $\kappa=0$ satisfying condition (1) and the root when $\kappa=1$ satisfying condition (4) can be considered as boundary conditions for the Burgers equation. More precisely, the slope of height is $\frac{1}{m}$ on the bottom boundary $\kappa=0$ of the rescaled square-hexagon lattice $\mathcal{R}$, while the slope of height is $0$ on the top boundary $\kappa=1$. Since the surface tension function is strictly convex in the liquid region,  the solution of the Burgers equations satisfying the given boundary conditions is unique; see \cite{CKP,KO}.  By Lemma \ref{l43}, a root satisfying (1)-(5) is also a solution of the Burgers equation satisfying given boundary conditions, then the lemma follows.

\end{proof}

\begin{lemma}\label{l46}Let $\mathcal{L}$ be liquid region of the limit shape, defined by\begin{enumerate}
\item the region is a subset of 
\begin{eqnarray*}
\mathcal{R}:=\left\{(\chi,\kappa): 0\leq \kappa <1;0<\chi<m+\left(\frac{r}{n}-1\right)\kappa\right\}
\end{eqnarray*}
\item the solution $z_0(\chi,\kappa)$ of $F_{\kappa,m}(z)=\frac{\chi}{1-\kappa}$ given as in Lemma \ref{l45} remains non-real in the region;
\item the region includes the bottom boundary $\kappa=0$, $0<\chi<m$ when $m\geq 2$. 
\end{enumerate} 
 Let $\mathcal{L}_o$ be the interior of $\mathcal{L}$ Let $T_{\mathcal{L}}$ be a mapping on $\mathcal{L}$ which maps each point $(\chi,\kappa)\in \mathcal{L}$ to $z_0(\chi,\kappa)$. Then restricted on $\mathcal{L}_o$, $T_{\mathcal{L}}$ is a homeomorphism from the interior of $\mathcal{L}_o$ to $T_{\mathcal{L}}(\mathcal{L}_o)$, where
\begin{eqnarray*}
T_{\mathcal{L}}(\mathcal{L}_o)=\left\{z\in \mathbb{C}:0< \mathrm{Arg}z<\frac{\pi}{m}\right\}
\end{eqnarray*}
and $\mathrm{Arg}z$ is the principal argument of $z$.
\end{lemma}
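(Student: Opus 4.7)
The plan is to reformulate the defining equation $F_{\kappa,m}(z_0)=\chi/(1-\kappa)$ as an affine relation in the real parameters $(\chi,\kappa)$ with $z$-dependent complex coefficients, and then read off the homeomorphism property from an explicit inverse formula. Multiplying through by $(1-\kappa)$ and grouping by powers of $\kappa$, the equation becomes $\chi=A(z)+\kappa B(z)$, where
\begin{eqnarray*}
A(z)&=&\sum_{j=1}^n \frac{z}{n(z-x_j)}+\frac{z}{n}\sum_{j=1}^n\left(\frac{mz^{m-1}}{z^m-x_j^m}-\frac{1}{z-x_j}\right),\\
B(z)&=&\frac{1}{n}\sum_{i\in I_2\cap\{1,\ldots,n\}}\frac{y_iz}{1+y_iz}-\frac{1}{n}\sum_{j=1}^n\frac{z}{z-x_j}.
\end{eqnarray*}
For $z$ with $\mathrm{Im}\,z>0$ and $x_j,y_i>0$, a short computation gives $\mathrm{Im}(y_iz/(1+y_iz))>0$ and $\mathrm{Im}(-z/(z-x_j))>0$, so $\mathrm{Im}\,B(z)>0$ throughout the upper half plane.

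Taking imaginary and then real parts of $\chi-A(z)=\kappa B(z)$, and using $\chi,\kappa\in\RR$, yields the explicit candidate inverse
\begin{eqnarray*}
\kappa=-\frac{\mathrm{Im}\,A(z)}{\mathrm{Im}\,B(z)},\qquad \chi=\mathrm{Re}\,A(z)+\kappa\,\mathrm{Re}\,B(z),
\end{eqnarray*}
which is real-analytic in $(\mathrm{Re}\,z,\mathrm{Im}\,z)$ on $\{\mathrm{Im}\,z>0\}$. This already gives injectivity of $T_{\mathcal L}$ on $\mathcal{L}_o$ together with continuity of any inverse that exists; continuity of $T_{\mathcal L}$ itself follows from the implicit function theorem applied to $F_{\kappa,m}(z)-\chi/(1-\kappa)=0$, whose $z$-derivative is nonzero at interior points of $\mathcal{L}_o$ (otherwise $z_0$ would be a multiple root, which forces $(\chi,\kappa)\in\partial\mathcal{L}$).

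Next I would verify the inclusion $T_{\mathcal L}(\mathcal{L}_o)\subset\{0<\mathrm{Arg}\,z<\pi/m\}$. Points of $\mathcal{L}_o$ lie off the frozen boundary $\partial\mathcal{L}$, so their images stay off the real axis and $\mathrm{Arg}\,z_0\in(0,\pi)$. By Lemma \ref{l45}(1), the boundary segment $\kappa=0$ of $\mathcal L$ is sent to the ray $\mathrm{Arg}\,z=\pi/m$; by a direct sign check on $\mathrm{Im}\,A(z)$ along this ray one sees that $\mathrm{Im}\,A(z)$ vanishes on $\mathrm{Arg}\,z=\pi/m$ and changes sign across it, so the inverse formula forces $\kappa>0$ precisely on the sector $0<\mathrm{Arg}\,z<\pi/m$. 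Combined with continuity of $z_0$ and connectedness of $\mathcal{L}_o$, this locates the image in the desired open sector. For surjectivity, the formulas above give a continuous candidate inverse $\Psi\colon\{0<\mathrm{Arg}\,z<\pi/m\}\to\RR^2$, and $T_{\mathcal L}\circ\Psi=\mathrm{id}$ holds by construction; once $\Psi$ is shown to land in $\mathcal{L}_o$, the two maps are mutually inverse continuous bijections.

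The main obstacle will be confirming that the image of $\Psi$ lies inside $\mathcal{L}_o$: that for every $z$ in the open sector the pair $(\chi,\kappa)$ produced by $\Psi$ satisfies $0<\kappa<1$ and $0<\chi<m+(r/n-1)\kappa$, and that the root of $F_{\kappa,m}(\cdot)=\chi/(1-\kappa)$ singled out in Lemma \ref{l45} is precisely the $z$ we started with rather than a different root. I expect to close this by a boundary-to-boundary argument: the limiting behavior in Lemma \ref{l45}(1)--(4) combined with properness of $\Psi$ as $z$ approaches $\partial\{0<\mathrm{Arg}\,z<\pi/m\}$ shows that $\Psi$'s image is closed in $\mathcal{L}_o$; it is open by invariance of domain and nonempty because the slice $\kappa=0^+$ enters the sector, so by connectedness of $\mathcal{L}_o$ the image equals $\mathcal{L}_o$, completing the proof.
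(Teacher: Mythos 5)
Your reduction $\chi=A(z)+\kappa B(z)$ (with $A=V+W$, $B=U-V$ in the paper's notation) and the resulting inverse $\kappa=-\mathrm{Im}\,A/\mathrm{Im}\,B$, $\chi=\mathrm{Re}\,A+\kappa\,\mathrm{Re}\,B$ is exactly the mechanism of the paper's proof, where the same linear system is solved using the conjugate pair of roots $z,\ol{z}$, giving formulas (\ref{clm})--(\ref{klm}); your remark that $\mathrm{Im}\,B>0$ on $\HH$ is a nice way to see the formulas are well defined. However, two of your steps have genuine gaps. First, the sign claim in the inclusion step is wrong as stated: since $A(z)=\frac{1}{n}\sum_{j}\frac{mz^m}{z^m-x_j^m}$, one has $\mathrm{Im}\,A(z)<0$ exactly when $\mathrm{Im}(z^m)>0$, so the inverse formula gives $\kappa>0$ on the whole union of sectors $\{2k\pi/m<\mathrm{Arg}\,z<(2k+1)\pi/m\}$, not ``precisely on'' $\{0<\mathrm{Arg}\,z<\pi/m\}$. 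The conclusion can be rescued along the lines you indicate --- the image of the connected set $\mathcal{L}_o$ avoids every ray $\mathrm{Arg}\,z=k\pi/m$ (where $\kappa_{\mathcal{L}}=0$) and is anchored in the first sector near $\kappa=0$ by Lemma \ref{l45}(1) --- but that repaired argument is essentially the paper's connectedness argument and has to be spelled out.

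Second, and more seriously, the surjectivity step that you yourself call ``the main obstacle'' is not closed. Invariance of domain needs $\Psi$ to be injective on the sector, and injectivity of $\Psi$ is essentially equivalent to the branch identification you are trying to prove (that the root singled out by Lemma \ref{l45} at $(\chi,\kappa)=\Psi(z)$ is $z$ itself, not another non-real root); your sketch supplies neither this nor the asserted properness of $\Psi$ near $\partial\{0<\mathrm{Arg}\,z<\pi/m\}$. The paper closes surjectivity differently: assuming a point $t$ of the open sector lies in $\partial T_{\mathcal{L}}(\mathcal{L}_o)\setminus T_{\mathcal{L}}(\mathcal{L}_o)$, it takes $t_n\to t$ with $t_n$ in the image, passes to the limit using continuity of the explicit $\chi_{\mathcal{L}},\kappa_{\mathcal{L}}$, and then excludes the degenerate values one by one ($\kappa(t)=1$ would force $\mathrm{Arg}\,t=0$, $\chi(t)=0$ would force $t=0$, $\chi(t)=m+(\frac{r}{n}-1)\kappa(t)$ would force $t=\infty$, and $\kappa(t)=0$ would force $\mathrm{Arg}\,t=\frac{\pi}{m}$), concluding $(\chi(t),\kappa(t))\in\mathcal{L}_o$ and hence $t\in T_{\mathcal{L}}(\mathcal{L}_o)$, a contradiction; some argument of this type (or an actual proof of injectivity of $\Psi$) is required. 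A minor further point: your continuity argument for $T_{\mathcal{L}}$ via the implicit function theorem rests on the unproved assertion that a multiple root forces $(\chi,\kappa)\in\partial\mathcal{L}$; it is cleaner to invoke the continuity of $z_0(\chi,\kappa)$ already recorded in Lemma \ref{l45}(5), or the Rouch\'e argument the paper uses to show $\mathcal{L}\setminus\{\kappa=0\}$ is open.
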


\begin{proof}First we show that $T_{\mathcal{L}}$ is one-to-one from $\mathcal{L}$ to $T_{\mathcal{L}}(\mathcal{L})$. Let 
\begin{eqnarray*}
U(z)&=&\frac{z}{n}\sum_{i\in\{1,2,\ldots,n\}\cap I_2}\frac{y_i}{1+y_iz};\\
V(z)&=&\frac{z}{n}\sum_{j=1}^n\frac{1}{z-x_j}\\
W(z)&=&\frac{z}{n}\sum_{j=1}^n\left(\frac{mz^{m-1}}{z^m-x_j^m}-\frac{1}{z-x_j}\right)
\end{eqnarray*}
Then if $z$ is a solution of $F_{\kappa,m}(z)=\frac{\chi}{1-\kappa}$, we have $F_{\kappa,m}(\ol{z})=\frac{\chi}{1-\kappa}$. Hence we can solve for $\chi$ and $\kappa$ in terms of $z$ as follows
\begin{eqnarray}
&&\chi_{\mathcal{L}}(z)\label{clm}\\
&=&\frac{W(\ol{z})U(z)+V(\ol{z})U(z)-U(\ol{z})V(z)-W(\ol{z})V(z)-W(z)U(\ol{z})+W(z)V(\ol{z})}{U(z)-U(\ol{z})-V(z)+V(\ol{z})}\notag\\
&&\kappa_{\mathcal{L}}(z)=\frac{W(\ol{z})-W(z)+V(\ol{z})-V(z)}{U(z)-U(\ol{z})-V(z)+V(\ol{z})}\label{klm}
\end{eqnarray}
Therefore $T_{\mathcal{L}}$ is one-to-one from $\mathcal{L}_o$ to $T_{\mathcal{L}}(\mathcal{L}_o)$. From the expression of $T_{\mathcal{L}}$, $\chi_{\mathcal{L}}$ and $\kappa_{\mathcal{L}}$ it is straightforward to see that both $T_{\mathcal{L}}$ and $\chi_{\mathcal{L}}$, $\kappa_{\mathcal{L}}$ are continuous. 

Then we claim that 
\begin{eqnarray}
\mathcal{L}_o=\mathcal{L}\setminus \{\kappa=0\};\label{lo}
\end{eqnarray}
and $\mathcal{L}_o$ is connected. To see why that is true,
let $(\chi_1,\kappa_1)\in\mathcal{L}\setminus\{\kappa=0\}$ and $z_1= T_{\mathcal{L}}(\chi_1,\kappa_1)$. From the definition of $\mathcal{L}$, we have $z_1\notin \RR$. Note that $z_1$ be the root of $F_{\kappa_1}(z)=\frac{\chi_1}{1-\kappa_1}$ as given by Lemma \ref{l45}. Given $0<\kappa_1<1$, we obtain
\begin{eqnarray*}
z_1\notin\{x_je^{\frac{2t\pi}{m}}:1\leq j\leq n,1\leq t\leq m\}.
\end{eqnarray*}

 We shall show that $(\chi_2,\kappa_2)\in \mathcal{L}\setminus \{\kappa=0\}$ whenever $|\chi_1-\chi_2|$ and $|\kappa_1-\kappa_2|$ are sufficiently small. Fix $\epsilon>0$ such that 
 \begin{itemize}
\item \begin{eqnarray} 
 B(z_1,\epsilon)\cap [\RR\cup\{x_je^{\frac{2t\pi}{m}}:1\leq j\leq n,1\leq t\leq m\}]=\emptyset \label{zeo}
\end{eqnarray}
\item 
\begin{eqnarray*}
\inf_{z\in\partial B(z_1,\epsilon)}\left|F_{\kappa_1}(z)-\frac{\chi_1}{1-\kappa_1}\right|>0;
\end{eqnarray*}
\item $F_{\kappa_1}(z)=\frac{\chi_1}{1-\kappa_1}$ has a unique zero in $B(z_1,\epsilon)$.
\item $B(z_1,\epsilon)\subset \mathcal{R}$.
\end{itemize}
By (\ref{zeo}) 
\begin{eqnarray*}
\left|F_{\kappa_1}(z)-\frac{\chi_1}{1-\kappa_1}-F_{\kappa_2}(z)+\frac{\chi_2}{1-\kappa_2}\right|<\epsilon,
\end{eqnarray*}
for any $\epsilon>0$, whenever $|\chi_1-\chi_2|$ and $|\kappa_1-\kappa_2|$ are sufficiently small, and $z\in B(z_1,\epsilon)$.

Therefore when $|\chi_1-\chi_2|$ and $|\kappa_1-\kappa_2|$ are sufficiently small, we have
\begin{eqnarray*}
\left|F_{\kappa_1}(z)-\frac{\chi_1}{1-\kappa_1}\right|>\left|F_{\kappa_1}(z)-\frac{\chi_1}{1-\kappa_1}-F_{\kappa_2}(z)+\frac{\chi_2}{1-\kappa_2}\right|.
\end{eqnarray*}
for any $z\in \partial B(z_1,\epsilon)$.
By Rouch\'e's theorem, $F_{\kappa_2}(z)-\frac{\chi_2}{1-\kappa_2}$ has a root in $B(z_1,\epsilon)$, which is as described in Lemma \ref{l45}. Hence $(\chi_2,\kappa_2)\in\mathcal{L}\setminus \{\kappa=0\}$, and we obtain (\ref{lo}). The statement that $\mathcal{L}_o$ is connected follows from the fact that $\mathcal{L}$ is connected and that for any point $(\chi,0)$ with $0<\chi<\mathcal{L}$ there is a neighborhood $B_{\delta}(\chi,0)$ such that
$B_{\delta}(\chi,0)\cap R\in \mathcal{L}$.

Now we claim that
\begin{eqnarray}
T_{\mathcal{L}}(\mathcal{L}_o)\subseteq \left\{z\in \mathbb{C}:0< \mathrm{Arg}z<\frac{\pi}{m}\right\}\label{tls}
\end{eqnarray}
To see why that is true, assume there exists a point $c\in T_{\mathcal{L}}(\mathcal{L}_o)\setminus \left\{z\in \mathbb{C}:0< \mathrm{Arg}z<\frac{\pi}{m}\right\}$.  Since $T_{\mathcal{L}}(\mathcal{L}_o)\cap \left\{z\in \mathbb{C}:0< \mathrm{Arg}z<\frac{\pi}{m}\right\}\neq \emptyset$, we can find $c_1\in T_{\mathcal{L}}(\mathcal{L}_o)\cap \left\{z\in \mathbb{C}:0< \mathrm{Arg}z<\frac{\pi}{m}\right\}$. Let $d,d_1\in \mathcal{L}_o$ such that
\begin{eqnarray*}
T_{\mathcal{L}}(d)=c;\qquad T_{\mathcal{L}}(d_1)=c_1
\end{eqnarray*}
 Since $\mathcal{R}$ is connected, we can find a path $p_{dd_1}$ in $\mathcal{R}$ joining $d$ and $d_1$ such that $p_{dd_1}\cap \{\kappa=0\}=\emptyset$. By continuity $T_{\mathcal{L}}(p_{dd_1})$ is a continuous curve in $\CC$ joining the point $c$ satisfying $\mathrm{Arg}c<\frac{\pi}{m}$ and the point $c_1$ satisfying $\mathrm{Arg}c_1>\frac{\pi}{m}$. Then there exists $c_2\in T_{\mathcal{L}}(d_2)$ such that $d_1\in p_{dd_1}$ and $\mathrm{Arg} c_2=\frac{\pi}{m}$. By (\ref{klm}), $d_2$ on the line $\kappa=0$, but this is a contraction to the fact that $p_{dd_1}\cap \{\kappa=0\}=\emptyset$. The contradiction implies (\ref{tls}).

We finally show that
\begin{eqnarray*}
\left\{z\in \mathbb{C}:0< \mathrm{Arg}z<\frac{\pi}{m}\right\}\subseteq T_{\mathcal{L}}(\mathcal{L}_o).
\end{eqnarray*}
Assume that there exists $t\in \partial T_{\mathcal{L}}(\mathcal{L}_o)$, such that $t\in \left\{z\in \mathbb{C}:0< \mathrm{Arg}z<\frac{\pi}{m}\right\}\setminus T_{\mathcal{L}}(\mathcal{L}_o)$. Then there exists a sequence $\{t_n\}_{n\in \NN}\subset T_{\mathcal{L}}(\mathcal{L}_o)$ such that
\begin{eqnarray*}
\lim_{n\rightarrow\infty}t_n=t.
\end{eqnarray*}
By continuity of (\ref{clm}), (\ref{klm}) we have
\begin{eqnarray*}
\lim_{n\rightarrow\infty}\chi(t_n)&=&\chi(t)\\
\lim_{n\rightarrow\infty}\kappa(t_n)&=&\kappa(t).
\end{eqnarray*}
Hence $(\chi(t)$, $\kappa(t))\in\mathcal{L}_o\cup \partial \mathcal{L}_o$ is such that $F_{\kappa(t),m}(z)=\frac{\chi(t)}{1-\kappa(t)}$ has a root $t$ in $\left\{z\in \mathbb{C}:0< \mathrm{Arg}z<\frac{\pi}{m}\right\}$ as described in Lemma \ref{l45}. 
Note that $\kappa(t)\neq 1$, because if $\kappa(t)=1$, then $\mathrm{Arg} t=0$. Also $\chi(t)\neq 0$, because if $\chi(t)=0$, then $t=0$. Moreover, $\chi(t)\neq m+\left(\frac{r}{n}-1\right)\kappa(t)$, because otherwise $t=\infty$.
Then $\chi(t),\kappa(t)\in \mathcal{L}$. Since $\mathrm{Arg} t\neq \frac{\pi}{m}$, $\kappa(t)\neq 0$, we have $(\chi(t),\kappa(t))\in \mathcal{L}_o$ and $t\in T_{\mathcal{L}}(\mathcal{L}_o)$. Then the proof is complete.
\end{proof}

 \subsection{Gaussian Free Field} 
 
 Let $C_0^{\infty}$ be the space of smooth real-valued functions with compact support in the upper half plane $\HH$. The \textbf{Gaussian free field} (GFF) $\Xi$ on $\HH$ with the zero boundary condition is a collection of Gaussian random variables $\{\xi_{f}\}_{f\in C_0^{\infty}}$ indexed by functions in $C_0^{\infty}$, such that the covariance of two Gaussian random variables $\xi_{f_1}$, $\xi_{f_2}$ is given by
 \begin{eqnarray*}
 \mathrm{Cov}(\xi_{f_1},\xi_{f_2})=\int_{\HH}\int_{\HH}f_1(z)f_2(w)G_{\HH}(z,w)dzd\ol{z}dwd\ol{w},
 \end{eqnarray*}
 where
 \begin{eqnarray*}
 G_{\HH}(z,w):=-\frac{1}{2\pi}\ln\left|\frac{z-w}{z-\ol{w}}\right|,\qquad z,w\in \HH
 \end{eqnarray*}
 is the Green's function of the Dirichlet Laplacian operator on $\HH$. The Gaussian free field $\Xi$ can also be considered as a random distribution on $C_0^{\infty}$ of $\HH$, such that for any $f\in C_0^{\infty}$, we have
 \begin{eqnarray*}
 \Xi(f)=\int_{\HH}f(z)\Xi(z)dz:=\xi_f.
 \end{eqnarray*}
 See \cite{SS07} for more about GFF.
 
 Consider a contracting square-hexagon lattice $\mathcal{R}(\Omega,\check{c})$. Let $\omega$ be a signature corresponding to the boundary row.
 
Let
 \begin{eqnarray*}
 \mathcal{S}^N=(\mu^{(N)},\nu^{(N)},\ldots,\mu^{(1)},\nu^{(1)}).
 \end{eqnarray*}
be the sequence of (random) partitions corresponding to the (random) dimer configuration on the contraction square-hexagon lattice $\mathcal{R}(\Omega,\check{c})$, as in Proposition \ref{p16}.
 
 Define a function $\Delta^N$ on $\RR_{\geq 0}\times \RR_{\geq 0}\times \mathcal{S}\rightarrow \NN$ as follows
 \begin{eqnarray*}
&& \Delta^{N}: (x,y,(\mu^{(N)},\nu^{(N)},\ldots,\mu^{(1)},\nu^{(1)}))\longrightarrow\\
 &&\sqrt{\pi}|\{1\leq s\leq N-\lfloor y\rfloor\}:\mu_s^{N-\lfloor y\rfloor}+(N-\lfloor y\rfloor)-s\geq x\}|
 \end{eqnarray*}

Let $\Delta_{\mathcal{M}}^N(x,y)$ be the pushforward of the measure $P_{\omega}^N$ on $\mathcal{S}^N$ with respect to $\Delta^N$. For $z\in \HH$, define
\begin{eqnarray*}
\mathbf{\Delta}_{\mathcal{M}}^N(z):=\Delta_{M}^N(N\chi_{\mathcal{L}}(z),N\kappa_{\mathcal{L}}(z)),
\end{eqnarray*}
where $\chi_{\mathcal{L}}(z)$, $\kappa_{\mathcal{L}}(z)$ are defined by (\ref{clm}), (\ref{klm}), respectively.

Here is the main theorem we shall prove in the section.

\begin{theorem}\label{t710}Let $\mathbf{\Delta}_{\mathcal{M}}^{N}(z)$ be a random function corresponding to the random perfect matching of the contracting square-hexagon lattice, as explained above. Then
\begin{eqnarray*}
\mathbf{\Delta}_{\mathcal{M}}^{N}(z)-\EE\mathbf{\Delta}_{\mathcal{M}}^{N}(z)\rightarrow \Xi(z),\ \mathrm{as}\ N\rightarrow\infty.
\end{eqnarray*}
Here $\Xi(z)$ is the Gaussian free field in $\mathbb{S}$ with zero boundary conditions as defined above. The convergence is in the sense that for $0<\kappa\leq 1$, $j\in \NN$,
\begin{eqnarray}
M_j^{\kappa}\rightarrow \mathbf{M}_j^{\kappa},\ \mathrm{as}\ N\rightarrow\infty,\label{mjk}
\end{eqnarray}
where
\begin{eqnarray*}
M_j^{\kappa}=\int_{-\infty}^{+\infty}\chi^j(\mathbf{\Delta}_{\mathcal{M}}^{N}(N\chi,N\kappa)-\mathbb{E}\mathbf{\Delta}_{\mathcal{M}}^{N}(N\chi,N\kappa))d\chi,
\end{eqnarray*}
and
\begin{eqnarray*}
\mathbf{M}_j^{\kappa}=\int_{z\in\HH;\kappa_{\mathcal{L}}(z)=\kappa}\chi_{\mathcal{L}}(z)^j\Xi(z)d\chi_{\mathcal{L}}(z).
\end{eqnarray*}
\end{theorem}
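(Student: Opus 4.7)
The plan is to reduce Theorem~\ref{t710} to the multi-level central limit theorem for power sums (Theorem~\ref{gff1}) via a layer-cake identity, and then to identify the resulting contour-integral covariance with the Gaussian free field covariance through the homeomorphism of Lemma~\ref{l46}. Since $\Delta^{N}(x,N\kappa,\cdot)$ counts partition entries $\lambda_{s}+\lfloor(1-\kappa)N\rfloor-s\ge x$, the layer-cake formula yields
\[ \int_{\RR}\chi^{j}\,\Delta_{\mathcal{M}}^{N}(N\chi,N\kappa)\,d\chi=\frac{\sqrt{\pi}}{(j+1)\,N^{j+1}}\,p_{j+1}^{(\lfloor(1-\kappa)N\rfloor)}+c_{N,\kappa,j}, \]
with $c_{N,\kappa,j}$ a deterministic constant coming from the endpoints of the $\chi$-support (where $\Delta_{\mathcal{M}}^{N}-\EE\Delta_{\mathcal{M}}^{N}$ vanishes identically). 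Centering cancels $c_{N,\kappa,j}$ and gives $M_{j}^{\kappa}=\frac{\sqrt{\pi}}{(j+1)N^{j+1}}(p_{j+1}^{(\lfloor(1-\kappa)N\rfloor)}-\EE p_{j+1}^{(\lfloor(1-\kappa)N\rfloor)})$, so Theorem~\ref{gff1} at once supplies joint convergence of $\{M_{j}^{\kappa}\}_{j,\kappa}$ in moments to a mean-zero Gaussian vector. It remains to match the limiting covariance with $\mathrm{cov}(\mathbf{M}_{j_1}^{\kappa_1},\mathbf{M}_{j_2}^{\kappa_2})$.

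To that end I would rewrite the integrand of Theorem~\ref{gff1} using the identities $\sum_{i=1}^{n}\frac{z}{n(z-x_i)}+\frac{zH(z)}{1-\kappa}=F_{\kappa,m}(z)$ and $Q(z,w)=\frac{m^{2}z^{m-1}w^{m-1}}{(z^{m}-w^{m})^{2}}=\partial_{z}\partial_{w}\log(z^{m}-w^{m})$. Two integrations by parts on the closed small-circle contours (boundary terms vanish), followed by the change of variables $\chi_{a}=(1-\kappa_{a})F_{\kappa_{a},m}(z_{a})$ under which $F_{\kappa_{a},m}^{j_{a}}F_{\kappa_{a},m}'\,dz_{a}=\chi_{a}^{j_{a}}d\chi_{a}/(1-\kappa_{a})^{j_{a}+1}$, absorb all $(1-\kappa_{a})^{j_{a}+1}$ prefactors and leave
\[ \lim_{N\to\infty}\mathrm{cov}(M_{j_1}^{\kappa_1},M_{j_2}^{\kappa_2})=\frac{\pi}{(2\pi\mathbf{i})^{2}}\iint \chi_{1}^{j_1}\chi_{2}^{j_2}\log(z^{m}-w^{m})\,d\chi_{1}\,d\chi_{2}, \]
with contours given by the image of $\cup_{i}\{|z-x_{i}|=\epsilon\}$ under the substitution.

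Next I would deform each contour to the closed curve formed by the slice $\{z:\kappa_{\mathcal{L}}(z)=\kappa_{a}\}$ in the sector $\{0<\arg z<\pi/m\}$ together with its complex conjugate slice. By Lemma~\ref{l46} this curve encircles the positive real axis (hence all $x_{i}$), and $F_{\kappa_{a},m}$ is analytic in the open sector since its only singularities are the $x_{j}$, the points $x_{j}e^{2\pi\mathbf{i} k/m}$ for $k=1,\dots,m-1$, and $-1/y_{i}$, all lying on the boundary rays or the negative real axis. Parameterizing the four combinations (slice and conjugate slice for each of $z,w$) by $(\chi_{1},\chi_{2})$ and using $\overline{z_{0}^{m}-w_{0}^{m}}=\bar z_{0}^{m}-\bar w_{0}^{m}$ gives
\[ \log(z^{m}-w^{m})-\log(\bar z^{m}-w^{m})-\log(z^{m}-\bar w^{m})+\log(\bar z^{m}-\bar w^{m})=2\log\biggl|\frac{z^{m}-w^{m}}{z^{m}-\bar w^{m}}\biggr|, \]
where $z=z_{0}(\chi_{1},\kappa_{1})$, $w=z_{0}(\chi_{2},\kappa_{2})$ are the sector-valued roots provided by Lemma~\ref{l45}. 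Combining the prefactor $\frac{\pi}{(2\pi\mathbf{i})^{2}}$ with the overall $2$ yields $-\frac{1}{2\pi}\log|(z^{m}-w^{m})/(z^{m}-\bar w^{m})|$, which is exactly $G_{\HH}(z^{m},w^{m})$, i.e.\ the Dirichlet Green's function on $\HH$ pulled back to the sector through the conformal map $z\mapsto z^{m}$. Thus the limit coincides with $\mathrm{cov}(\mathbf{M}_{j_1}^{\kappa_1},\mathbf{M}_{j_2}^{\kappa_2})$, and Wick's theorem promotes the covariance match to joint convergence in moments, proving (\ref{mjk}).

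The most delicate step will be the simultaneous contour deformation in the last paragraph: one must move both the $z$- and $w$-contours off the real axis without crossing the branch locus $\{z^{m}=w^{m}\}$ of $\log(z^{m}-w^{m})$, and track the branches of the logarithm so that the four-term combination genuinely collapses to $2\log|\cdot|$ with no residual multi-valuedness. A practical route is to deform the $z$-contour first while $w$ stays on its small circles, then deform $w$ through a slightly displaced slice and take a limit, using the analyticity of $F_{\kappa,m}$ and $F_{\kappa,m}'$ in the open sector together with Lemma~\ref{l46} (which guarantees that the slice is a simple curve inside the sector). Bookkeeping the branches of $\log(z^{m}-w^{m})$ consistently on each intermediate contour, so that the final answer is a single-valued $\log|\cdot|$, is the technical heart of the argument.
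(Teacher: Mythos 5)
Your proposal is correct and follows essentially the same route as the paper: reduce $M_j^{\kappa}$ to centered power sums by integration by parts, invoke Theorem \ref{gff1}, deform the small circles around the $x_i$ to the level curve $\{\kappa_{\mathcal{L}}(z)=\kappa\}$ together with its complex conjugate (justified by Lemmas \ref{l45} and \ref{l46}), and then identify the contour-integral covariance with the pulled-back Green's function $-\frac{1}{2\pi}\log\bigl|\frac{z^m-w^m}{z^m-\bar w^m}\bigr|$ by a further integration by parts. The only difference is organizational (you integrate by parts in $(z,w)$ before deforming, the paper deforms first), and your attention to the branch bookkeeping of $\log(z^m-w^m)$ is a reasonable technical refinement of a step the paper treats briefly.
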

\begin{proof}By Theorem \ref{gff1}, we have
\begin{eqnarray}
&&\lim_{N\rightarrow\infty}\frac{\mathrm{cov}\left(p_{l_1}^{\left(1-\kappa_1) N\right)},p_{l_2}^{\left((1-\kappa_2)N\right)}\right)}{N^{l_1+l_2}}\label{cor}\\
&=&\frac{(1-\kappa_1)^{l_1}(1-\kappa_2)^{l_2}}{(2\pi\mathbf{i})^2}\sum_{i=1}^{n}\sum_{j=1}^{n}\oint_{|z-x_i|=\epsilon}\oint_{|w-x_j|=\epsilon}\left[F_{\kappa_1,m}(z)\right]^{l_1}\left[F_{\kappa_2,m}(w)\right]^{l_2}Q(z,w)dzdw,\notag
\end{eqnarray}
The poles of $F_{\kappa,m}(z)$ are of 3 types
\begin{enumerate}
\item $x_1,\ldots,x_n$ lying on the positive real axis;
\item $-\frac{1}{y_j}$ for $j\in I_2\cap\{1,2,\ldots,n\}$ lying on the negative real axis;
\item roots of $z^m=x_j^m$ except $x_j$ for $j=1,\ldots,n$, lying on the circle centered at $0$ with radius $x_j$.
\end{enumerate}
We may change the sum of contour integrals in the RHS of (\ref{cor}) into an integral over a contour enclosing all the poles of $F_{\kappa,m}(z)$ of type (1), yet enclosing no poles of types (2) and (3), with respect to both $z$ and $w$.

For $\kappa\in(0,1)$, let 
\begin{eqnarray*}
\mathcal{Z}_1(\kappa)=\left\{z: F_{\kappa,m}(z)=\frac{\chi}{1-\kappa}; \chi\in \left[0, m+\kappa\left(\frac{r}{n}-1\right)\right]; z\ \mathrm{is\ a\ root\ as\ given\ by\ Lemma\ \ref{l45}}\right\}.
\end{eqnarray*}
Let 
\begin{eqnarray*}
\mathcal{Z}_2(\kappa)=\left\{z:\ol{z}\in \mathcal{Z}_1(\kappa)\right\}.
\end{eqnarray*}
and
\begin{eqnarray*}
\mathcal{Z}(\kappa)=\mathcal{Z}_1(\kappa)\cup \mathcal{Z}_2(\kappa).
\end{eqnarray*}

We claim that for $\kappa\in(0,1)$, $\mathcal{Z}(\kappa)$ is a contour in the complex plane $\CC$ enclosing all the poles of $F_{\kappa,m}(z)$ of type (1), yet enclosing no poles of type (2) and (3). By Lemma \ref{l46}, we have
\begin{eqnarray*}
\mathcal{Z}_1(\kappa)\in \{0,+\infty\}\cup \left\{z:0<\mathrm{Arg}z<\frac{\pi}{m}\right\}.
\end{eqnarray*}
Since $\mathcal{Z}_2(\kappa)$ is the complex conjugate of $\mathcal{Z}_1(\kappa)$, then the claim follows.

For $\kappa_1,\kappa_2\in(0,1)$, (\ref{cor}) becomes
\begin{eqnarray*}
&&\lim_{N\rightarrow\infty}\frac{\mathrm{cov}\left(p_{l_1}^{\left(1-\kappa_1) N\right)},p_{l_2}^{\left((1-\kappa_2)N\right)}\right)}{N^{l_1+l_2}}\\
&=&\frac{(1-\kappa_1)^{l_1}(1-\kappa_2)^{l_2}}{(2\pi\mathbf{i})^2}\oint_{z\in \mathcal{Z}(\kappa_1)}\oint_{w\in\mathcal{Z}(\kappa_2)}\left[F_{\kappa_1,m}(z)\right]^{l_1}\left[F_{\kappa_2,m}(w)\right]^{l_2}Q(z,w)dzdw,\\
&=&\frac{1}{(2\pi\mathbf{i})^2}\oint_{z\in \mathcal{Z}(\kappa_1)}\oint_{w\in\mathcal{Z}(\kappa_2)}\left[\chi_{\mathcal{L}}(z)\right]^{l_1}\left[\chi_{\mathcal{L}}(z)\right]^{l_2}Q(z,w)dzdw
\end{eqnarray*}
where
\begin{eqnarray*}
Q(z,w)=\frac{m^2z^{m-1}w^{m-1}}{(z^m-w^m)^2}
\end{eqnarray*}
Integrate (\ref{mjk}) by parts we obtain
\begin{eqnarray*}
M_j^{\kappa}=\frac{N^{-(j+1)}\sqrt{\pi}}{j+1}(p_{j+1}^{\kappa}-\mathbf{E}p_{j+1}^{\kappa}).
\end{eqnarray*}
Hence the random variables $\{M_j^{\kappa}\}_{\kappa\in(0,1),j\in \NN}$ converge to the Gaussian distribution with mean 0 and limit covariance
\begin{eqnarray}
&&\lim_{N\rightarrow\infty}\mathrm{cov}(M_{j_1}^{\kappa_1},M_{j_2}^{\kappa_2})\label{ra}\\
&=&\frac{-1}{4\pi(j_1+1)(j_2+1)}\oint_{z\in \mathcal{Z}(\kappa_1)}\oint_{2\in \mathcal{Z}(\kappa_2)}\chi_{\mathcal{L}}(z)^{j_1+1}\chi_{\mathcal{L}}(w)^{j_2+1}\frac{m^2z^{m-1}w^{m-1}}{(z^m-w^m)^2}dzdw.\notag
\end{eqnarray}
Let 
\begin{eqnarray*}
\mathbb{S}=\left\{z:0<\mathrm{Arg} z<\frac{\pi}{m}\right\}.
\end{eqnarray*}
By definition, the random variables $\{\mathcal{M}_j^{\kappa}\}_{\kappa\in(0,1),j\in \NN}$ are Gaussian with mean 0 and covaraince
\begin{eqnarray}
&&\mathrm{cov}(\mathcal{M}_{j_1}^{\kappa_1},\mathcal{M}_{j_2}^{\kappa_2})\label{rb}\\
&=&\oint_{z\in\mathbb{S}:\kappa_{\mathcal{L}}(z)=\kappa_1}\oint_{w\in \mathbb{S}:\kappa_{\mathcal{L}}(w)=\kappa_2}\chi_{\mathcal{L}}(z)^{j_1}\chi_{\mathcal{L}}(w)^{j_2}\frac{d\chi_{\mathcal{L}}(z)}{dz}\frac{d\chi_{\mathcal{L}}(w)}{dw}\mathcal{G}_{\mathbb{S}}(z,w)dzdw,\notag
\end{eqnarray}
where $\mathcal{G}_{\mathbb{S}}(z,w)$ is the Green's function on $\mathbb{S}$ given by
\begin{eqnarray*}
-\frac{m^2z^{m-1}w^{m-1}}{2\pi}\mathrm{ln}\left|\frac{z^m-w^m}{z^m-\ol{w}^m}\right|
\end{eqnarray*}
Then integration by parts shows that the right hand sides  of (\ref{ra}) and (\ref{rb}) are equal.
\end{proof}

\section{Gaussian Free Field in Piecewise Boundary Condition}\label{gffp}

For $1\leq i\leq n$, let
\begin{eqnarray*}
F_{i,\kappa}(z)=\frac{1}{n}\frac{z}{z-1}+\frac{1}{1-\kappa}zA_i(z);
\end{eqnarray*}
where $A_i(z)$ is defined by (\ref{aj}).

\begin{lemma}For any $\chi>0$, $\kappa\in (0,1)$ and $1\leq i\leq n$ the equation 
\begin{eqnarray}
F_{i,\kappa}(z)=\frac{\chi}{1-\kappa}.\label{fik}
\end{eqnarray}
has at most one pair of complex conjugate roots.
\end{lemma}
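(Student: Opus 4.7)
The plan is to analyze $F_{i,\kappa}(z)$ through its $R$-transform representation, and then to combine a real-axis sign analysis with a degree bound from the argument principle in order to show that the equation $F_{i,\kappa}(z)=\chi/(1-\kappa)$ can have at most one pair of complex-conjugate non-real roots. The conjugation symmetry $F_{i,\kappa}(\bar z)=\overline{F_{i,\kappa}(z)}$ (which holds because the coefficients of $F_{i,\kappa}$ are real) reduces the question to showing that $F_{i,\kappa}^{-1}\!\big(\chi/(1-\kappa)\big)\cap\HH$ is a singleton.

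First I would rewrite $F_{i,\kappa}$ using the identity
\begin{equation*}
uH_{\bm_i}'(u)=\frac{1}{S_{\bm_i}^{-1}(\ln u)}-\frac{u}{u-1},
\end{equation*}
which is a direct consequence of (\ref{hmi}) together with $R_{\bm_i}(z)=1/S^{-1}_{\bm_i}(z)-1/z$. Substituting this into the definition of $F_{i,\kappa}$ and simplifying, the two $z/(z-1)$ contributions combine, and one obtains
\begin{equation*}
F_{i,\kappa}(z)=\frac{1}{n(1-\kappa)}\left[\frac{1}{S_{\bm_i}^{-1}(\ln z)}+\kappa\,\Phi_i(z)\right],
\end{equation*}
where $\Phi_i(z)$ is an explicit sum of M\"obius-type functions of $z$ (namely $-z/(z-1)$, the fractions $y_\ell x_1z/(1+y_\ell x_1z)$ for $\ell\in I_2\cap\{1,\ldots,n\}$, and constants when $i=1$; analogous pieces appear for $i\geq 2$). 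Under Assumption \ref{ap428}, the measure $\bm_i$ is a convex combination of a bounded number (independent of $N$) of extremal measures determined by the partition values $\mu_1>\ldots>\mu_s$, so $S_{\bm_i}^{-1}$ satisfies a polynomial equation of degree at most $s+1$. Consequently, after clearing denominators, $F_{i,\kappa}(z)=\chi/(1-\kappa)$ reduces to a polynomial equation in $z$ whose degree is bounded by a constant depending only on $n$ and $s$.

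Next I would count the real roots on each of the finitely many intervals of $\RR\setminus\{\text{singularities of }F_{i,\kappa}\}$ by elementary calculus: between consecutive singularities the derivative of $F_{i,\kappa}|_\RR$ has a definite sign (the same type of monotonicity-on-the-real-line computation used in Lemma \ref{l45}), giving at most one real root per interval and hence an exact count of real roots. Subtracting this count from the total root count bounded in the previous step, and invoking the conjugation symmetry, leaves at most two non-real roots, which must then form a single complex-conjugate pair. The main obstacle is the real-axis monotonicity step for the piece $1/S_{\bm_i}^{-1}(\ln z)$, since the $R$-transform is \emph{a priori} only a formal power series near $0$; I would handle this by restricting to the simply connected subdomain of $\CC$ on which the principal branch of $S_{\bm_i}^{-1}(\ln z)$ is holomorphic and single-valued, verifying (using the Pick-function property of $S_{\bm_i}^{-1}$) that this domain contains every candidate complex-conjugate root by a continuity argument that links the root to the boundary of the liquid region as $(\chi,\kappa)$ varies.
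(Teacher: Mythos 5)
Your rewriting step is sound: from (\ref{hmi}) and $R_{\bm_i}(w)=1/S_{\bm_i}^{(-1)}(w)-1/w$ one indeed gets $uH_{\bm_i}'(u)=1/S_{\bm_i}^{(-1)}(\ln u)-u/(u-1)=\mathrm{St}_{\bm_i}^{(-1)}(\ln u)-u/(u-1)$, and under Assumption \ref{ap428} the measure $\bm_i$ is a finite union of density-one bands, so $e^{\mathrm{St}_{\bm_i}(t)}$ is a rational function of $t$ of bounded degree and $\mathrm{St}_{\bm_i}^{(-1)}(\ln z)$ is an algebraic function of $z$ (this, not ``$S_{\bm_i}^{(-1)}$ satisfies a polynomial equation of degree $s+1$,'' is the correct statement). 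However, the core of your argument has two genuine gaps. First, the counting logic is inverted: monotonicity of $F_{i,\kappa}$ between consecutive real singularities gives only an \emph{upper} bound of one real root per interval, and an upper bound on real roots says nothing about the number of non-real roots. To conclude ``at most one conjugate pair'' from a total degree bound $d$ you must produce a \emph{lower} bound of $d-2$ real roots, i.e.\ exhibit sign changes (intermediate value arguments across the real poles and band edges), which your proposal never does. Second, eliminating $t=\mathrm{St}_{\bm_i}^{(-1)}(\ln z)$ by clearing denominators produces a polynomial whose roots include solutions of the equation for \emph{every} branch of the multivalued inverse, not only the branch that actually enters $F_{i,\kappa}$ through $H_{\bm_i}$. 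So the quantity ``degree of the eliminant minus number of real roots of $F_{i,\kappa}|_{\RR}$'' does not bound the non-real roots of the branch you care about: you would have to either separate the branches or count real roots of the full eliminant, and your real-axis analysis does neither. Relatedly, the claimed monotonicity ``as in Lemma \ref{l45}'' is not routine here: Lemma \ref{l45} treats an explicit rational function, whereas the relevant branch of $\mathrm{St}_{\bm_i}^{(-1)}(\ln z)$ need not even be real on parts of the positive real axis (when $\ln z$ falls in a gap of the range of $\mathrm{St}_{\bm_i}$ on $\RR\setminus\mathrm{supp}(\bm_i)$), and the closing appeal to Pick functions and ``continuity to the boundary of the liquid region'' is a sketch of intent rather than an argument.

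For comparison, the paper does not prove this lemma internally; it cites Proposition 7.2 of \cite{Li18}, where the standard route is the change of variables $z=\exp(\mathrm{St}_{\bm_i}(t))$ (the same substitution that underlies the homeomorphism lemma that follows), which converts (\ref{fik}) into a single polynomial equation in $t$ with real coefficients of explicitly known degree, after which one exhibits the required number of real roots by sign changes between the poles/band edges. If you reorganize your proof along these lines --- work in the $t$-variable so that there is no branch ambiguity, compute the degree exactly, and prove the existence (not just uniqueness) of real roots in all but one of the relevant intervals --- the strategy can be made to work; as written, the conclusion does not follow from the steps given.
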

\begin{proof}See Proposition 7.2 of \cite{Li18}.
\end{proof}

Let $J_i$ be defined as in (\ref{ji}). Under Assumption \ref{ap32}, we may assume that 
\begin{eqnarray*}
J_i=\left\{\begin{array}{cc}\{d_i,d_i+1,\ldots,d_{i+1}-1\}& \mathrm{if}\ 1\leq i\leq n-1\\ \{d_n, d_n+1,\ldots,s\}& \mathrm{if}\ i=n \end{array}\right.
\end{eqnarray*}
where for $1\leq i\leq n$, 
\begin{eqnarray*}
1=d_1<d_2<\ldots <d_n\leq s
\end{eqnarray*}

\begin{lemma}Let $1\leq i\leq n$. Let $\mathcal{S}_i$ consisting of all the $(\chi,\kappa)$ in $\mathcal{R}$ (the rescaled square-hexagon lattice $\frac{1}{N}\mathcal{R}(\Omega,\check{c})$ in the limit as $N\rightarrow\infty$) such that equation (\ref{fik}) has exactly one pair of complex conjugate roots. Let $\HH$ be the upper half plane defined by
\begin{eqnarray*}
\HH=\{z: \mathrm{Im}z>0\}.
\end{eqnarray*}
 The mapping
\begin{eqnarray*}
T_{\mathcal{S}_i}:\mathcal{S}_i\rightarrow \mathbb{H}
\end{eqnarray*}
maps $(\chi,\kappa)\in \mathcal{S}_i$ to $t:=\mathrm{St}_{\mathbf{m}_i}^{(-1)}(\log z)$, where $z$ is the unique root of (\ref{fik}) in $\HH$. Let
\begin{eqnarray*}
p(t)&=&\sum_{l\in I_2\cap\{1,2,\ldots,n\}}\frac{y_lx_1 \exp[\mathrm{St}_{\mathbf{m}_i}(t)]}{1+y_lx_1 \exp[\mathrm{St}_{\mathbf{m}_i}(t)]}\\
q(t)&=&\frac{\exp(\mathrm{St}_{\mathbf{m}_i}(t))}{\exp(\mathrm{St}_{\mathbf{m}_i}(t))-1}
\end{eqnarray*}
Then $T_{\mathcal{S}_1}$ is a homeomorphism with inverse $t\rightarrow (\chi_{\mathcal{S}_1}(t),\kappa_{\mathcal{S}_1}(t))$ for all $t\in \HH$, given by 
\begin{eqnarray}
\chi_{\mathcal{S}_1}(t)&=&\frac{\ol{t}(p(t)-q(t))-t(p(\ol{t})-q(\ol{t}))-(n-1)(\ol{t}-t)}{n[p(t)-p(\ol{t})-q(t)+q(\ol{t})]}\label{cst1}\\
\kappa_{\mathcal{S}_1}(t)&=&\frac{\ol{t}-t}{p(t)-p(\ol{t})-q(t)+q(\ol{t})}\label{kst1}
\end{eqnarray}
and for $2\leq i\leq n$
\begin{eqnarray}
\chi_{\mathcal{S}_i}(t)&=&\frac{\ol{t}q(t)-tq(\ol{t})+(\ol{t}-t)(n-i)}{n[q(t)-q(\ol{t})]}\label{csti}\\
\kappa_{\mathcal{S}_i}(t)&=&\frac{\ol{t}-t}{-q(t)+q(\ol{t})}\label{ksti}
\end{eqnarray}
\end{lemma}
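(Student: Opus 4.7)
The plan is to exploit a key identity that connects the transcendental function $H_{\bm_i}$ to the rational function $q(t)$. Under the substitution $z = \exp(S_{\bm_i}(t))$ (so that $t = S^{(-1)}_{\bm_i}(\log z)$), direct differentiation of (\ref{hmi}) yields $uH'_{\bm_i}(u) = R_{\bm_i}(\ln u) + \frac{1}{\ln u} - \frac{u}{u-1}$; substituting the definition $R_{\bm_i}(s) = 1/S^{(-1)}_{\bm_i}(s) - 1/s$ at $s=\ln u$ then collapses this to $zH'_{\bm_i}(z) = \frac{1}{t} - \frac{z}{z-1} = \frac{1}{t} - q(t)$. This identity is the bridge that turns (\ref{fik}), a transcendental relation in $z$, into an algebraic relation in $t$.

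Plugging the identity into $(1-\kappa)F_{i,\kappa}(z)=\chi$ and using the explicit form of $A_i$ from (\ref{aj}), the equation becomes affine in the unknowns $(\chi,\kappa)$, with coefficients rational in $p(t), q(t), 1/t$ (when $i=1$) or in $q(t), 1/t$ alone (when $2\le i\le n$). Since $\chi,\kappa\in\RR$ and $p, q$ have real coefficients, taking the complex conjugate yields a second affine relation in which $t$ is replaced by $\bar t$. The resulting $2\times 2$ linear system for $(\chi,\kappa)$ has nonvanishing determinant precisely when $t\ne\bar t$, so Cramer's rule on $\HH$ produces explicit rational expressions for $(\chi_{\mathcal{S}_i}(t),\kappa_{\mathcal{S}_i}(t))$, which after simplification reproduce (\ref{cst1})--(\ref{ksti}). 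This argument works uniformly for all $1\le i\le n$ and simultaneously proves injectivity of $T_{\mathcal{S}_i}$ on $\mathcal{S}_i$ via the existence of a well-defined inverse. Continuity of both $T_{\mathcal{S}_i}$ and its inverse is then automatic from the analyticity of the building blocks $\log$, $S^{(-1)}_{\bm_i}$, $p$, $q$ on their domains.

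The main obstacle I anticipate is verifying that the candidate inverse $t\mapsto(\chi_{\mathcal{S}_i}(t),\kappa_{\mathcal{S}_i}(t))$ genuinely sends $\HH$ into $\mathcal{S}_i$, making the two maps mutually inverse homeomorphisms rather than partial inverses. Three positivity checks are needed for each $t\in\HH$: (a) the corresponding $z=\exp(S_{\bm_i}(t))$ is non-real, which follows from $S_{\bm_i}$ being a local biholomorphism at $0$ with $S'_{\bm_i}(0)=1$, hence mapping a neighborhood of $0$ in $\HH$ into $\HH$; (b) the range constraints $\chi_{\mathcal{S}_i}(t)>0$ and $0<\kappa_{\mathcal{S}_i}(t)<1$, which reduce to a sign analysis of $\mathrm{Im}\, q(t)$ (and, for $i=1$, of $\mathrm{Im}\, p(t)$) via the M\"obius form $q(t)=z/(z-1)$ and the sum-of-M\"obius form of $p$, combined with the positivity of the weights $y_l x_1$; and (c) that $(\chi,\kappa)$ lies in the rescaled contracting region $\mathcal{R}$, which is inherited from the limit shape construction in Proposition \ref{tm1}. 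The preceding lemma (uniqueness of the complex conjugate pair of roots of (\ref{fik})) ensures $T_{\mathcal{S}_i}$ is well defined on $\mathcal{S}_i$ to begin with, thereby closing the homeomorphism argument.
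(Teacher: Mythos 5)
Your derivation of the inverse formulas is, in substance, the same "explicit computations" step the paper performs, but be careful with the convention: the lemma defines $t=\mathrm{St}_{\bm_i}^{(-1)}(\log z)$, and since $S_{\bm_i}(z)=\mathrm{St}_{\bm_i}(1/z)$, i.e.\ $S_{\bm_i}^{(-1)}(w)=1/\mathrm{St}_{\bm_i}^{(-1)}(w)$, differentiating (\ref{hmi}) gives $zH_{\bm_i}'(z)=t-q(t)$, not $\frac{1}{t}-q(t)$. With your substitution $z=\exp(S_{\bm_i}(t))$ the Cramer solution comes out in the variable $1/t$ (which lives in the lower half plane when $t\in\HH$), so it does not literally reproduce (\ref{cst1})--(\ref{ksti}); with the correct identity the equation (\ref{fik}) becomes $t=n\chi+\kappa q(t)+\kappa(n-i)$ (plus the $p$-term when $i=1$), and together with its conjugate this does yield the stated formulas and injectivity. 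This part is a fixable slip.

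The genuine gap is the homeomorphism half, which is where the weight of the lemma lies. You correctly identify as the "main obstacle" that the candidate inverse must send all of $\HH$ into $\mathcal{S}_i$ (equivalently, $T_{\mathcal{S}_i}(\mathcal{S}_i)=\HH$), but your checks (a)--(c) do not establish it: (a) a local biholomorphism of $S_{\bm_i}$ at $0$ says nothing about $z(t)=\exp(\mathrm{St}_{\bm_i}(t))$ for an arbitrary $t\in\HH$; (b) the required inequalities -- e.g.\ $0<\kappa_{\mathcal{S}_i}(t)<1$ amounts to $\mathrm{Im}\,q(t)>\mathrm{Im}\,t>0$, plus the admissible range of $\chi$ -- are precisely what has to be proved and do not follow from positivity of $y_lx_1$ alone; in fact the paper only verifies membership in $(0,\infty)\times(0,1)$ for $|t|$ large, using the expansion $\mathrm{St}_{\bm_i}(t)=\frac1t+\frac{\alpha}{t^2}+\frac{\beta}{t^3}+O(|t|^{-4})$ together with the moment bounds $\alpha\geq n-i+1$ and $\beta-\alpha^2\geq\frac{1}{12}$ that come from Assumption \ref{ap428} -- assumptions your sketch never invokes; and (c) "inherited from Proposition \ref{tm1}" is not an argument, and one must also check that at $(\chi_{\mathcal{S}_i}(t),\kappa_{\mathcal{S}_i}(t))$ equation (\ref{fik}) really has $z(t)$ as its unique non-real root (here the preceding lemma gives at most one conjugate pair, but non-reality of $z(t)$ still needs proof). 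The paper closes exactly this gap by the topological scheme of Proposition 6.2 of \cite{bk}: nonemptiness of $\mathcal{S}_i$ from the large-$|t|$ asymptotics, openness of $\mathcal{S}_i$ and continuity of $T_{\mathcal{S}_i}$ via Rouch\'e's theorem, and then injectivity, continuity of the inverse and surjectivity onto $\HH$ by the connectedness/open-and-closed argument there. Without either that machinery or an actual proof of your global positivity claims, the proposal gives the formulas for the inverse where it exists, but not that $T_{\mathcal{S}_i}$ is a homeomorphism onto $\HH$.
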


\begin{proof}The proof is an adaptation of Proposition 6.2 of \cite{bk}; see also Theorem 2.1 of \cite{DM15}.
 
 It suffices to show all the following statements for each $1\leq i\leq n$:
 \begin{enumerate}
 \item $\mathcal{S}_i$ is nonempty.
 \item $\mathcal{S}_i$ is open.
 \item $T_{\mathcal{S}_i}: \mathcal{S}_i\rightarrow \HH$ is continuous.
 \item $T_{\mathcal{S}_i}: \mathcal{S}_i\rightarrow \HH$ is injective.
 \item $T_{\mathcal{S}_i}: \mathcal{S}_i\rightarrow T_{\mathcal{S}_i}(\mathcal{S}_i)$ has continuous inverse for all $t\in T_{\mathcal{S}_i}(\mathcal{S}_i)$.
 \item $T_{\mathcal{S}_i}(\mathcal{S}_i)=\HH$.
 \end{enumerate}
 
 We first prove (1). Explicit computations show that $(\chi_{\mathcal{S}_i}(t),\kappa_{\mathcal{S}_i}(t))$ satisfies (\ref{cst1}), (\ref{kst1}) when $i=1$ and (\ref{csti}), (\ref{ksti}) when $2\leq i\leq n$. Since $\mathbf{m}_{i}$ is a measure on $\RR$ with compact support, assume that $\mathrm{Support}(\mathbf{m}_i)\subset[a,b]$ where $a,b\in\RR$. The Stieltjes transform satisfies
 \begin{eqnarray*}
\mathrm{St}_{\mathbf{m}_i}(t)=\frac{1}{t}+\frac{\alpha}{t^2}+\frac{\beta}{t^3}+O(|t|^{-4}),
\end{eqnarray*}
where
\begin{eqnarray*}
\alpha&=&\int_a^b x\mathbf{m}_{i}(dx)\\
\beta&=&\int_a^b x^2\mathbf{m}_{i}(dx).
\end{eqnarray*}
Let
\begin{eqnarray*}
c_i=\frac{1}{y_ix_1}
\end{eqnarray*}
After computations we have
\begin{eqnarray*}
\chi_{\mathcal{S}_1}&=&\frac{\alpha-1}{n}-\frac{n-1}{n}+O(|t|^{-1}).\\
\kappa_{\mathcal{S}_1}&=&1+\left(\alpha^2-\beta-\sum_{i\in I_2\cap\{1,2,\ldots,n\}}\frac{c_i}{(1+c_i)^2}\right)\frac{1}{|t|^2}+O(|t|^{-3}).
\end{eqnarray*}
and for $2\leq i\leq n$, 
\begin{eqnarray*}
\chi_{\mathcal{S}_i}&=&\frac{\alpha-1}{n}-\frac{n-i}{n}+O(|t|^{-1}).\\
\kappa_{\mathcal{S}_i}&=&1+\left(\alpha^2-\beta\right)\frac{1}{|t|^2}+O(|t|^{-3}).
\end{eqnarray*}

Let $\lambda$ be the Lebesgue measure on $\RR$. Recall that $\mathbf{m}_{i}$ is the limit counting measure for $\phi^{(i,\si_0)}$ as $N\rightarrow\infty$. By Assumption \ref{ap428}, we have
\begin{eqnarray*}
\alpha\geq n-i+1
\end{eqnarray*}

Similarly,
\begin{eqnarray*}
\beta-\alpha^2\geq \frac{1}{2}\int_{0}^1\int_{0}^1(x-y)^2dxdy=\frac{1}{12}.
\end{eqnarray*}
As a result, $(\chi,\kappa)\in\left(0,\infty\right)\times(0,1)$ whenever $|t|$ is sufficiently large. Then (1) follows.

The facts (2) and (3) follow from Rouch\'e's theorem by the same arguments as in the proof of Proposition 6.2 in \cite{bk}. The facts (4)-(6) can also be obtained by the same arguments as in the proof of Proposition 6.2 in \cite{bk}.
\end{proof}

\begin{theorem}Let $\mathbf{\Delta}_{\mathcal{M}}^{N}(z)$ be a random function corresponding to the random perfect matching of the contracting square-hexagon lattice, as in Theorem \ref{t710}, but with piecewise boundary conditions satisfying Assumptions \ref{ap423}, \ref{ap32} and \ref{ap428}. Then
\begin{eqnarray*}
\mathbf{\Delta}_{\mathcal{M}}^{N}(z)-\EE\mathbf{\Delta}_{\mathcal{M}}^{N}(z)\rightarrow \Xi_1(z)+\Xi_2(z)+\ldots+\Xi_n(z),\ \mathrm{as}\ N\rightarrow\infty.
\end{eqnarray*}
Here for $1\leq i\leq n$ $\Xi_i(z)$'s are $n$ independent Gaussian free fields in $\mathbb{H}$ with zero boundary conditions. The convergence is in the sense that for $0<\kappa\leq 1$, $j\in \NN$,
\begin{eqnarray}
M_j^{\kappa}\rightarrow \mathbf{M}_j^{\kappa},\ \mathrm{as}\ N\rightarrow\infty,\label{mjk}
\end{eqnarray}
where
\begin{eqnarray*}
M_j^{\kappa}=\int_{-\infty}^{+\infty}\chi^j(\mathbf{\Delta}_{\mathcal{M}}^{N}(N\chi,N\kappa)-\mathbb{E}\mathbf{\Delta}_{\mathcal{M}}^{N}(N\chi,N\kappa))d\chi,
\end{eqnarray*}
and
\begin{eqnarray*}
\mathbf{M}_j^{\kappa}=\int_{z\in\HH;\kappa_{\mathcal{L}}(z)=\kappa}\chi_{\mathcal{L}}(z)^j\sum_{i=1}^n\Xi_i(z)d\chi_{\mathcal{L}}(z).
\end{eqnarray*}
\end{theorem}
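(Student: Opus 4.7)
The plan is to parallel the proof of Theorem \ref{t710}, using the multi-level central limit theorem for the piecewise boundary condition (Theorem \ref{clt2}) in place of Theorem \ref{gff1}, and the family of homeomorphisms $T_{\mathcal{S}_i}:\mathcal{S}_i\to \HH$, $1\leq i\leq n$, in place of the single map $T_{\mathcal{L}}$. First I would integrate by parts in the definition of $M_j^\kappa$ to rewrite it, up to a factor $\frac{N^{-(j+1)}\sqrt{\pi}}{j+1}$, as the centered power sum $p_{j+1}^{\lfloor(1-\kappa)N\rfloor}-\mathbf{E}p_{j+1}^{\lfloor(1-\kappa)N\rfloor}$, exactly as in the uniform case. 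Theorem \ref{clt2} combined with Proposition \ref{p615} then gives that the collection $\{M_j^\kappa\}$ converges in distribution to a mean-zero Gaussian family whose limiting covariance splits as
\begin{eqnarray*}
\lim_{N\to\infty}\mathrm{cov}\bigl(M_{j_1}^{\kappa_1},M_{j_2}^{\kappa_2}\bigr)=\sum_{i=1}^{n}C_i(j_1,\kappa_1;j_2,\kappa_2),
\end{eqnarray*}
where each $C_i$ is a double contour integral built from $F_{i,\kappa}(z)$ and the kernel $B_i(z,w)+\frac{1}{(z-w)^2}$. The independence claim in Theorem \ref{clt2} is what guarantees that this decomposition really corresponds to $n$ \emph{independent} Gaussian families indexed by $i$.

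Second, for each fixed $i$, I would deform the $z$- and $w$-contours in $C_i$ from small circles around $z=1$ to contours $\mathcal{Z}_i(\kappa_1)$ and $\mathcal{Z}_i(\kappa_2)$ that pass through the unique pair of complex-conjugate roots of $F_{i,\kappa_j}(z)=\chi/(1-\kappa_j)$ supplied by Lemma~7.2 of \cite{Li18}; this deformation is legal because $z=1$ is the only singularity of the rational part of the integrand that gets enclosed, exactly as in the deformation performed in the proof of Theorem \ref{t710}. I would then change variables via $t=T_{\mathcal{S}_i}(\chi,\kappa)=\mathrm{St}_{\mathbf{m}_i}^{(-1)}(\log z)$, which parametrizes each of these contours by a horizontal slice $\{t\in\HH:\kappa_{\mathcal{S}_i}(t)=\kappa_j\}$ of the upper half plane.

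Third, the key algebraic step is to verify that under this change of variables the kernel $B_i(z,w)+\frac{1}{(z-w)^2}$ pulls back, after accounting for the Jacobian $dz/dt$ and the explicit form of $F_{i,\kappa}$, to the Green's function
\begin{eqnarray*}
G_{\HH}(t_1,t_2)=-\frac{1}{2\pi}\log\left|\frac{t_1-t_2}{t_1-\ol{t}_2}\right|
\end{eqnarray*}
of the Dirichlet Laplacian on $\HH$. Once this identity is in place, each summand $C_i$ is recognized as the covariance of the moments $\int\chi_{\mathcal{S}_i}(t)^{j_1}\Xi_i(t)\,d\chi_{\mathcal{S}_i}(t)$ of a GFF $\Xi_i$ on $\HH$ with zero boundary condition, and the limit field is $\Xi_1+\cdots+\Xi_n$ as asserted.

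The main obstacle will be this final Green's function identity for the pull-back of $B_i(z,w)+\frac{1}{(z-w)^2}$ under the measure-dependent map $z=\exp(\mathrm{St}_{\mathbf{m}_i}(t))$. In contrast with the uniform case, where the analogous computation reduces to the elementary map $z\mapsto z^m$, here one must unwind the definition (\ref{bj}) of $B_i$ in terms of $H_{\mathbf{m}_i}$ and then use the relations $H_{\mathbf{m}_i}'(z)=R_{\mathbf{m}_i}(\log z)/z+\frac{1}{z-1}-\frac{1}{z\log z}$ and $R_{\mathbf{m}_i}=1/\mathrm{St}_{\mathbf{m}_i}^{(-1)}-1/(\cdot)$ to translate everything into the $t$-variable. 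Once this identification is established, the rest of the argument is a direct transcription of the contour-integral computation at the end of the proof of Theorem \ref{t710}, carried out separately for each $i$ and then summed.
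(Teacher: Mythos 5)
Your proposal is correct and follows essentially the route the paper intends: the paper's own proof simply defers to the argument of Theorem 6.3 in \cite{bk}, which is exactly the scheme you describe — reduce $M_j^{\kappa}$ to centered power sums, apply Theorem \ref{clt2} and Proposition \ref{p615} to get the covariance as a sum over $i$ of double contour integrals with kernel $B_i(z,w)+\frac{1}{(z-w)^2}$, deform to the curves through the complex roots of $F_{i,\kappa}(z)=\chi/(1-\kappa)$, and pull back via $t=\mathrm{St}_{\mathbf{m}_i}^{(-1)}(\log z)$ to recognize the Green's function of $\HH$ for each summand. One small point to fix when you carry out the computation: from (\ref{hmi}) one has $H_{\mathbf{m}_i}'(z)=R_{\mathbf{m}_i}(\log z)/z+\frac{1}{z\log z}-\frac{1}{z-1}$, i.e.\ the signs of the last two terms are opposite to those in your sketch.
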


\begin{proof}Similar arguments as the proof of Theorem 6.3 in \cite{bk}.
\end{proof}

\bigskip
\bigskip
\noindent\textbf{Acknowledgements.} ZL's research is supported by National Science Foundation grant DMS 1608896. ZL thanks Vadim Gorin for helpful discussions.

\bibliography{fpm19}
\bibliographystyle{amsplain}
\end{document}